\pgfplotsset{compat=newest}
\newcounter{stepnb}
\newtheorem{theorem}{Theorem}[section]
\newtheorem{lemma}[theorem]{Lemma}
\newtheorem{corol}[theorem]{Corollary}
\newtheorem{hyp}{Hypothesis}
\newtheorem{definition}{Definition}[section]
\newtheorem{remark}{Remark}[section]
\numberwithin{equation}{section}
\newcommand{\R}{\mathbb{R}}
\newcommand{\unpo}{\mathcal{O}(1)}
\newcommand{\ee}{\varepsilon}
\newcommand{\mf}{\mathbf}
\newcommand{\be}{\begin{equation}}
\newcommand{\eq}{\end{equation}}
\newcommand{\loc}{\mathrm{loc}}
\begin{document}

\title[]{Existence of vanishing physical viscosity solutions  of characteristic initial-boundary value problems for systems of conservation laws}

\author[F.~Ancona]{Fabio Ancona}
\address{F.A. Dipartimento di Matematica ``Tullio Levi-Civita", Universit\`a degli Studi di Padova, Via Trieste 63,
35131 Padua, Italy}
\email{ancona@math.unipd.it}
\author[A.~Marson]{Andrea Marson}
\address{A.M. Dipartimento di Matematica ``Tullio Levi-Civita", Universit\`a degli Studi di Padova, Via Trieste 63,
35131 Padua, Italy}
\email{marson@math.unipd.it}
\author[L.~V.~Spinolo]{Laura V.~Spinolo}
\address{L.V.S. IMATI-CNR, via Ferrata 5, I-27100 Pavia, Italy.}
\email{spinolo@imati.cnr.it}
\maketitle
{
\rightskip .85 cm
\leftskip .85 cm
\parindent 0 pt
\begin{footnotesize}

{\sc Abstract.}
We consider initial boundary-value problems for nonlinear systems of conservation laws in one space variable. It is known that in general different viscous mechanisms yield different solutions in the zero-viscosity limit. Here we focus on the most technically demanding case, known as boundary characteristic case, which occurs when one of the characteristic velocities of the system vanishes. We work in small total variation regimes and assume that every characteristic field is either genuinely nonlinear or linearly degenerate. We 
establish existence of \emph{admissible solutions satisfying a boundary condition consistent with the vanishing viscosity approximation} given by a large class of physical (that is, mixed hyperbolic-parabolic) systems. In particular, our results apply to the zero-viscosity limit of the Navier-Stokes and viscous MHD equations, written in both Eulerian and Lagrangian coordinates.  Our analysis relies on a fine boundary layers analysis and is based on the introduction of a new wave front-tracking algorithm. From the technical viewpoint, the most innovative elements are i) a new class of interaction estimates for boundary layers and boundary characteristic wave fronts hitting the boundary, which yields the introduction of a new Glimm-type functional; ii) a detailed analysis of the behavior of the wave front-tracking algorithm close to the boundary, which in turn yields relevant information on the limit. 
\\

\medskip\noindent
{\sc Keywords:} systems of conservation laws, initial-boundary value problems, hyperbolic systems, wave front-tracking, boundary characteristic case, mixed hyperbolic-parabolic systems, boundary layers 

\medskip\noindent
{\sc MSC (2010):  35L65, 35B30}

\end{footnotesize}
}

\section{Introduction} 
We consider initial-boundary value problems for the nonlinear system of conservation laws 
\begin{equation} 
\label{e:claw}
       \mf g (\mf v)_t + \mf f(\mf v)_x = 0, \quad (t, x)\in \R_+ \times \R_+ ,
\end{equation}
where the unknown $\mf v$ attains values in $\R^N$ and $\mf g, \mf f: \R^N \to \R^N$ are smooth functions satisfying suitable assumptions that we discuss in the following. Note that~\eqref{e:claw} can be \emph{formally} obtained as the $\ee \to 0^+$ limit of the viscous approximation
\begin{equation} 
\label{e:vclaw}
       \mf g (\mf v^\ee)_t + \mf f(\mf v^\ee)_x = \ee \Big( \mf D(\mf v^\ee) \mf v^\ee_x \Big)_x, \quad \mf v \in \R^N, 
\end{equation}
where the function $\mf D$ attains values in the space of $N \times N$ matrices and is positive semi-definite. We postpone the discussion on the precise assumptions satisfied by $\mf D$, here we just  mention that we impose conditions, introduced in the fundamental paper by Kawashima and Shizuta~\cite{KawashimaShizuta1}, that are satisfied by several relevant physical systems like the Navier-Stokes equations and the viscous magneto-hydrodynamics (MHD) equations. In particular, we tackle the case of a singular (that is, non-invertible) matrix $\mf D$, which is the most relevant from the physical viewpoint but involves severe technical challenges. In particular, owing to the singularity of $\mf D$, the initial-boundary value problem for~\eqref{e:vclaw} is in general overdetermined if one imposes a \emph{full} boundary condition like $\mf v^\ee (t, 0) = \mf v_b (t)$. In the present work we regard the initial-boundary value problem for~\eqref{e:claw} as limit of~\eqref{e:vclaw} coupled with the initial and boundary conditions 
\be 
\label{e:datavclaw}
   \mf v^\ee(0, \cdot) = \mf v_0, \qquad \boldsymbol{\widetilde \beta} (\mf v^\ee (\cdot, 0), \mf v_b) = \mf 0_{N},
\eq
where $\mf v_0, \mf v_b: \R_+ \to \R$ are functions with finite total variation and the function $\boldsymbol{\widetilde \beta} $ is defined in \S\ref{ss:boucon}. 
Very loosely speaking, the basic idea underpinning the construction of $\boldsymbol{\widetilde \beta}$ is that we impose a full boundary condition on the \emph{parabolic} component of $\mf v^\ee$, whereas on the \emph{hyperbolic} component we impose boundary conditions along the characteristic fields entering the domain. We refer to \S\ref{ss:boucon} and Remark~\ref{r:beta} in \S\ref{s:hyp} for a more detailed discussion.

The analysis of the inviscid limit of~\eqref{e:vclaw} is particularly relevant for the analysis of initial-boundary value problems for~\eqref{e:claw} since it is known that (even in the most elementary linear case) the limit in general depends on $\mf D$, see~\cite{Gisclon}. This marks a fundamental difference with the case of the Cauchy problem, where in the conservative case the limit does not depend on the viscosity matrix, see~\cite{Bianchini}. Loosely speaking this difference is due to the fact that in the initial-boundary value problem the transient behavior in proximity of the domain boundary is described by the so-called \emph{boundary layers}, which are steady solutions of~\eqref{e:vclaw} and as such strongly affected by $\mf D$. Remarkably, the fact 
that solutions of~\eqref{e:claw} depend on the underlying viscous mechanism has also relevant consequences from the numerical viewpoint, see~\cite{MishraSpinolo}. Note that establishing the convergence of~\eqref{e:vclaw} to~\eqref{e:claw} is presently an open problem, even in the case of the Cauchy problem. In the case of initial-boundary value problems, partial results under restrictive assumptions are established for instance in~\cite{AnconaBianchini,ChenFrid,Gisclon,GrenierRousset,JosephLeFloch,Rousset,Spinolo}. See also~\cite{Serre1,Serre2} for a general introduction to hyperbolic initial-boundary value problems.

 To the best of our knowledge, the present paper provides the first global in time existence result for solutions of initial-boundary value problems for~\eqref{e:claw} satisfying a \emph{boundary condition that is consistent with the viscous mechanism~\eqref{e:vclaw}} in the boundary characteristic case, see Remark~\ref{r:bc} below. The boundary characteristic case occurs when one of the eigenvalues of the jacobian matrix of $\mf f$ vanishes and it is known to be particularly challenging since the waves of the characteristic family can have a fairly complex behavior and can alternatively enter the domain, leave it, or even be tangential to the boundary.  

We now provide the definition of the boundary condition we impose on~\eqref{e:claw} and point out that in the following we require that every eigenvector field of the jacobian matrix of the flux $\mf f$ of~\eqref{e:claw} is either genuinely nonlinear or linearly degenerate. 
\begin{definition}
\label{d:equiv} Given system~\eqref{e:vclaw} and $\mf {\bar v}$, $\mf{ v}_b \in \R^N$, we say that ``$\mf{\bar v} \sim_{\mf D} \mf{ v}_b$'' if there is 
$\mf{\underline v} \in \R^N$ such that the following conditions are both satisfied:
\begin{itemize}
\item[i)] $\mf f(\mf{\bar v}) = \mf f(\mf{\underline v})$ and the $0$-speed discontinuity between  $\mf{\bar v}$ (on the right) and $\mf{\underline v}$ (on the left) 
satisfies the Lax admissibility condition;
\item[ii)] there is a so-called ``boundary layer'' $\mf w: \R_+ \to \R^N$ such that
\be
\label{e:bl}
\left\{
\begin{array}{ll}
          \mf D(\mf w) \mf w' = \mf f(\mf w) - \mf f(\mf{\underline v}) \\
           \boldsymbol{\widetilde \beta} (\mf w(0), \mf v_b) =\mf 0, \quad \lim_{y \to + \infty} \mf w(y)=\mf{\underline v}. 
\end{array}
\right.
\eq
\end{itemize}
\end{definition}
Some remarks are here in order. First, the heuristic meaning of the above definition is that $\mf{\bar v} \sim_{\mf D} \mf{ v}_b$ if $\mf v_b$ is connected to $\mf{\bar v}$ by (either or both) a boundary layer and an admissible $0$-speed discontinuity. Second, we refer to~\cite{Dafermos,Lax} for the definition of Lax admissibility condition. Third, assume in condition i) that $\mf{\bar v}$ is close to $\mf{\underline v}$ and that the boundary is not characteristic, that is all the eigenvalues of the jacobian matrix of $\mf f$ are bounded away from $0$; then owing to the Local Invertibility Theorem the condition $\mf f(\mf{\bar v}) = \mf f(\mf{\underline v})$ yields $\mf{\bar v}= \mf{\underline v}$ and the Lax admissibility requirement is trivially satisfied. Fourth, 
contact discontinuities are always Lax admissible and hence the Lax admissibility requirement in condition i) is redundant if  the boundary is characteristic and the boundary characteristic field (that is, the field associate to the eigenvalue that vanishes) is linearly degenerate. Summing up, in the small total variation framework we consider in the following the Lax admissibility requirement in condition i) is only relevant in the boundary characteristic case when the boundary characteristic field is genuinely nonlinear. Finally, we remark in passing that there is a huge literature concerning the analysis of boundary layers for nonlinear systems of conservation laws. Here we only refer to the seminal papers~\cite{Rousset,SerreZumbrun,Xin} and to the references therein for a more extended discussion.

We can now pose the initial-boundary problem for~\eqref{e:claw} by assigning the initial and boundary conditions 
\be
\label{e:ibvp}
         \mf v(0, \cdot) = \mf v_0, \qquad 
         \mf v(\cdot, 0) \sim_{\mf D} \mf v_b          
\eq
Note that another famous way of assigning the boundary condition on~\eqref{e:claw} is discussed in~\cite{DuboisLeFloch} and used in~\cite{Amadori} to formulate the initial-boundary value problem and establish global-in-time existence results in the boundary characteristic case. Note however that the boundary condition given in~\eqref{e:ibvp} differs from the one in~\cite{DuboisLeFloch}, even in the simple case of a linear systems with an invertible viscosity matrix $\mf D$, provided $\mf D$ does not coincide with the identity (see also~\cite{Gisclon} and Remark~\ref{r:bc} below for a more detailed discussion about this point). This implies in particular that the in the boundary characteristic case the solution given by Theorem~\ref{t:main} below is in general different from the one in~\cite{Amadori}.  In all the cases where the convergence of~\eqref{e:claw} to~\eqref{e:vclaw} has been effectively established (see for instance~\cite{AnconaBianchini,Rousset}) the limit satisfies~\eqref{e:ibvp}.  

We can now state our main result. The precise formulation of the hypotheses requires some heavy notation and is therefore postponed to \S\ref{s:hyp}. We now provide instead an heuristic discussion: Hypothesis~\ref{h:normal} states that there is a change of variables $\mf v \leftrightarrow \mf u$ such that, up to the rescale $(t, x)  \leftrightarrow (t/\ee, x/\ee)$,~\eqref{e:vclaw} can be rewritten as
\be \label{e:symmetric}
    \mf E (\mf u) \mf u_t + \mf A (\mf u) \mf u_x = \mf B(\mf u) \mf u_{xx} + \mf G(\mf u, \mf u_x) \mf u_x, 
\eq
with~\eqref{e:symmetric} \emph{in the normal form} in the Kawashima-Shizuta sense~\cite{KawashimaShizuta1}. Hypothesis~\ref{h:ks} is the so-called Kawashima-Shizuta condition, which very loosely speaking is a coupling condition between the hyperbolic and the parabolic component of~\eqref{e:symmetric}. Hypothesis~\ref{h:sh} is a standard strict hyperbolicity assumption, whereas Hypothesis~\ref{h:ldgnl} states that every eigenvector field of~\eqref{e:symmetric} is either genuinely nonlinear or linearly degenerate. Finally, Hypothesis~\ref{h:eulerlag} collects the conditions introduced in~\cite{BianchiniSpinoloARMA,BianchiniSpinolo} to enusure that we can write the boundary layers equation for~\eqref{e:symmetric} in an explicit form (see the analysis in \S\ref{s:bl}). In the statement of the following theorem, $BV$ denotes the space of bounded total variation functions, see~\cite{AmbFuPal}. 
\begin{theorem}
\label{t:main} Assume Hypotheses $1, \cdots, 5$ in \S\ref{s:hyp} and fix $\mf v^\ast \in \R^N$; then there is a constant $\delta^\ast>0$ only depending on the  functions $\mf g$, $\mf f$, $\mf D$ in system~\eqref{e:vclaw}, on the change of variables $\mf v \leftrightarrow \mf u$ and on $\mf v^\ast$, such that the following holds. If $\mf v_0, \mf v_b \in BV(\R_+)$ satisfy
\be \label{e:hp}
     \mathrm{TotVar} \ \mf v_0 + \mathrm{TotVar} \ \mf v_b + |\mf v_0 (0^+) - \mf v_b (0^+)| \leq \delta^\ast, \quad |\mf v_0 (0^+) - \mf v^\ast| \leq \delta^\ast
\eq
there is a global-in-time Lax admissible distributional solution $\mf v \in BV_{\mathrm{loc}} (\R_+ \times \R_+)$ of~\eqref{e:claw} satisfying the initial and boundary\footnote{Note that, since $v \in BV_{\mathrm{loc}} (\R_+ \times \R_+)$, the trace $\mf v(\cdot, 0)$ is a well-defined, locally summable function, see~\cite{AmbFuPal}.} conditions~\eqref{e:ibvp}. Also, if system~\eqref{e:claw} admits a convex entropy then the solution we construct is  entropy admissible.   
\end{theorem}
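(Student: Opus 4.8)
The plan is to obtain $\mf v$ as the limit of a wave front-tracking scheme tailored to the boundary characteristic regime, to control its total variation by means of a new Glimm-type functional, and finally to pass to the limit.

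\emph{Step 1 (front-tracking scheme).} Fix a mesh parameter $\nu>0$ and approximate $\mf v_0$ and $\mf v_b$ by piecewise constant functions with finitely many jumps, all taking values in a small neighborhood of $\mf v^\ast$ (this is where the smallness conditions \eqref{e:hp} enter) and satisfying the bound \eqref{e:hp} uniformly in $\nu$. At each interior jump I would solve the Riemann problem for \eqref{e:claw} using the classical accurate and simplified Riemann solvers of Bressan, introducing non-physical fronts so as to keep the number of fronts finite. At $x=0$ the crucial new ingredient enters: I would solve the boundary Riemann problem by composing admissible $0$-speed discontinuities and boundary layer profiles of \eqref{e:bl} — whose explicit structure is guaranteed by Hypothesis~5 and derived in \S\ref{s:bl} — so that the trace of the approximate solution is linked to $\mf v_b$ through the relation $\sim_{\mf D}$ of Definition~\ref{d:equiv}, up to the mesh error. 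Particular care is needed for the boundary characteristic family, whose fronts may approach, hit, reflect off, or skim along the boundary: these fronts must be tracked together with the current boundary layer ``state'' and reprocessed by a dedicated boundary Riemann solver each time they reach $x=0$.

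\emph{Step 2 (uniform BV bound).} The heart of the argument is the a priori bound on the total variation, uniform in $\nu$. I would introduce a functional of the form $\Upsilon = V + C_1 Q + C_2 Q_{\mathrm{bl}}$, where $V$ is a weighted total strength of the interior fronts, $Q$ is the usual interaction potential, and $Q_{\mathrm{bl}}$ is a new boundary term measuring the coupling between the boundary layer, the boundary characteristic fronts, and the fronts that will eventually hit the boundary. The classical interaction estimates control the interior contributions; the new interaction estimates — relating the change of strength produced when a boundary characteristic front hits the boundary, the corresponding change of the boundary layer, and the emitted reflected fronts — must be shown to make $\Upsilon$ non-increasing across all interaction and boundary-collision events, after a suitable choice of the weights and of $C_1, C_2$. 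This uses in an essential way the Kawashima--Shizuta coupling condition (Hypothesis~2), strict hyperbolicity (Hypothesis~3), and the genuine nonlinearity / linear degeneracy dichotomy (Hypothesis~4), and one must simultaneously verify that the scheme is well defined for all $t$ (no accumulation of fronts, no blow-up of the number of boundary collisions) and that the total strength of non-physical fronts stays below $\nu$.

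\emph{Step 3 (limit and consistency).} With the uniform BV bound and the standard uniform Lipschitz-in-time estimate in $L^1$ in hand, Helly's theorem yields a subsequence converging in $L^1_{\mathrm{loc}}$ to some $\mf v \in BV_{\mathrm{loc}}(\R_+\times\R_+)$. Standard consistency arguments, together with the vanishing of the total strength of non-physical fronts, show that $\mf v$ solves \eqref{e:claw} in the sense of distributions and that each front is a Lax-admissible elementary wave, so that $\mf v$ is Lax admissible; if \eqref{e:claw} carries a convex entropy, the approximate entropy production is shown to be negligible and the entropy inequality passes to the limit. The most delicate point of this step — and the one I expect to be the main obstacle, together with the construction of $Q_{\mathrm{bl}}$ in Step 2 — is the boundary condition $\mf v(\cdot,0) \sim_{\mf D} \mf v_b$: here I would use the detailed description of the algorithm near $x=0$, in particular the behavior of the boundary layer state and of the boundary characteristic fronts accumulating at the boundary, to show that the trace $\mf v(\cdot,0)$, well defined since $\mf v \in BV_{\mathrm{loc}}$, is connected to $\mf v_b$ by an admissible $0$-speed discontinuity and a boundary layer of \eqref{e:bl}, for a.e. $t$. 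The difficulty is that in the boundary characteristic case a boundary characteristic front interacts with the boundary in a genuinely two-way manner — it deposits strength into the boundary layer and a reflected wave is emitted — so the cancellation/amplification bookkeeping is far more subtle than in the non-characteristic case, and designing $Q_{\mathrm{bl}}$ so that $\Upsilon$ is monotone while simultaneously guaranteeing termination of the algorithm is the technically innovative part of the construction.
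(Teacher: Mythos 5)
Your roadmap follows the same architecture as the paper's proof -- front tracking with accurate and simplified interior and boundary Riemann solvers built on the boundary-layer analysis, a Glimm-type functional augmented by boundary terms, and a passage to the limit -- but at the two points you yourself flag as decisive, the ideas that make the argument close are missing, so the proposal has genuine gaps rather than merely unfinished routine steps. Concerning the functional: a term like your $Q_{\mathrm{bl}}$, coupling the boundary-layer strength with fronts headed for the boundary, corresponds to the term $R$ in \eqref{e:upsilon}, but it is not enough. When a $k$-front of strength $s_k$ with small negative speed hits $x=0$, the strengths of the fronts emitted after the interaction are not small compared with $|s_k|$ in any first-order estimate, and no tuning of constant weights in $V+C_1Q+C_2Q_{\mathrm{bl}}$ yields monotonicity. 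What is needed is the second-order interaction estimate \eqref{e:me} of Lemma~\ref{l:me}, whose right-hand side is $C_2|s_k|\big([\varsigma_k(\mf u^+,s_k)]^-+|\xi_k|\big)$, i.e.\ proportional to the negative part of the speed of the hitting front, together with the matching speed-weighted term $S$ in \eqref{e:S}, $\sum|s_\alpha|[\varsigma_\alpha]^-$, which is exactly consumed at the collision. Neither the estimate nor the corresponding term appears in your plan, and this is the innovation one cannot recover by adjusting constants.

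Concerning the boundary condition: even though the trace of the limit is well defined, it need not coincide with the limit of the traces of the approximations, because $k$-shocks with speeds tending to zero can drift to the boundary and produce, in the limit, a nontrivial $0$-speed Lax shock sitting exactly at $x=0$ (the set $G$ inside $\widetilde E$ in \eqref{e:tildeE}). Your plan of ``using the detailed description of the algorithm near $x=0$'' does not address this mechanism. The paper handles it by first proving the uniform bound \eqref{e:tvf} on $\mathrm{TotVar}\,\mf f\circ\mf v_{\ee_m}(\cdot,0)$ (Lemma~\ref{t:tvf}) -- a genuinely nontrivial statement, since the traces $\mf v_{\ee_m}(\cdot,0)$ themselves are not expected to have uniformly bounded variation -- and then splitting $\R_+$ into $\widetilde E$ and its complement and arguing separately (Lemmas~\ref{l:tildeE}, \ref{l:bl}, \ref{l:bl2}, \ref{l:tracciainterna}, \ref{l:bc}). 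Without the flux-trace bound, or some substitute for it, your Step 3 cannot establish $\mf v(\cdot,0)\sim_{\mf D}\mf v_b$ at the times where the approximate traces fail to converge to the trace of the limit. A minor further remark: the Kawashima--Shizuta condition enters through the boundary-layer equation and the construction of the boundary Riemann solver, not through the interaction estimates themselves, so attributing the monotonicity of the functional to Hypothesis~\ref{h:ks} misplaces where that hypothesis does its work.
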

In~\eqref{e:hp} we denote by $\mf v_0 (0^+)$ and $\mf v_b (0^+)$ the right limits at $x=0$ and $t=0$ of the functions $\mf v_0$ and $\mf v_b$, respectively; these limits exist because the functions have bounded total variation.  We now comment on Theorem~\ref{t:main}. 
\begin{itemize}
\item The small total variation assumption is fairly standard for the analysis of one-dimensional system of conservation laws, see~\cite{Bressan,Dafermos}. Note that the counter-examples in~\cite{BCZ2,BCZ,Jen}  imply that this assumption is necessary to obtain global-in-time existence results that apply to general systems of conservation laws. The examples in~\cite{BCZ2,BCZ,Jen} are technically speaking Cauchy problems, but can be easily adapted to the initial-boundary value problem owing to finite propagation speed. 
\item Theorem~\ref{t:main} is an \emph{existence} result. We are confident that one could establish stability and \emph{uniqueness} results by combining the analysis in the present paper with the Standard Riemann Semigroup approach \emph{\'a la} Bressan, see~\cite{Bressan}. However, since the proof of Theorem~\ref{t:main} is already long and technical this is left for future work. 
\item We refer to \S\ref{ss:roadmap} below for a detailed outline of the proof of Theorem~\ref{t:main}, here we point out that it relies on the introduction of a new wave front-tracking algorithm, which involves both wave fronts and boundary layers. As the limit of a wave front-tracking algorithm, the solution constructed in the proof of Theorem~\ref{t:main} enjoys several further regularity properties, see for instance~\cite[Theorem 10.4]{Bressan}. In particular, it satisfies the Lax admissibility condition along shock curves. 
\item The main novelty of Theorem~\ref{t:main} is that we deal with the boundary characteristic case, that is we take into account the possibility that one eigenvalue of the matrix $\mf E^{-1} \mf A$ in~\eqref{e:symmetric} attains the value $0$. In the non characteristic boundary case (i.e., when all the eigenvalues are bounded away from $0$), Goodman and Sabl\'e-Tougeron~\cite{Goodman,SableT} established global-in-time existence results by relying on a modified Glimm scheme~\cite{Glimm}, whereas Amadori~\cite{Amadori} used a wave front-tracking algorithm. The analysis in~\cite{Amadori} covers the boundary characteristic case, but as mentioned before the boundary condition in~\cite{Amadori} is different from~\eqref{e:ibvp}, see Remark~\ref{r:bc} below for a more detailed discussion. The fact that we use~\eqref{e:ibvp} accounts for severe technical challenges that we touch upon in Remark~\ref{r:bc}. As mentioned before, the treatment of the boundary characteristic case is much more complicated than the non-characteristic, for reasons that we now try to explain. At the heuristic level, in the boundary characteristic case there is a deep interplay between  the  behavior of the solution \emph{inside} the domain and the dynamics of boundary layers, which describe the transient behavior from~\eqref{e:vclaw} to~\eqref{e:claw} at the boundary. At a more technical level, one has to take into account the fact that boundary layers may have a component lying on a center manifold, which makes the analysis of their dynamics more delicate. To conclude, we point out that the analysis in~\cite{AnconaBianchini} gives among other things a proof of Theorem~\ref{t:main} in the case where the matrix $\mf D$ in~\eqref{e:vclaw} is the identity. Note however that in most of the physically relevant cases $\mf D$ is not even invertible. 
\item Fix $\mu \in \R$; by applying the change of variables $x  \leftrightarrow x - \mu t$ the analysis in the present paper immediately extends to the case where the boundary condition is posed at the boundary $x = \mu t$ rather than at $x=0$. We are confident that our analysis could also be extended to the case of a Lipschitz continuous moving boundary $x = \mu(t)$, but to ease the exposition we do not deal with this case. 
\item As it is fairly standard (see~\cite{Bressan,Dafermos,Serre1,Serre2}) in the study of one-dimensional system of conservation laws, in the statement of Theorem~\ref{t:main} we assume that every vector field is either genuinely nonlinear or linearly degenerate. This assumption is satisfied in most of the physicallly relevant examples, see \S\ref{ss:appl}, and eases the analysis as it simplifies the structure of the solution of the Riemann problem and of the so-called boundary Riemann problem. In the case of the Cauchy problem, the analysis  in~\cite{AnconaMarson} defines a wave front-tracking algorithm for general strictly hyperbolic system, with no genuine nonlinearity or linear degeneracy assumption. The extension of the analysis in the present work to the case of general systems is in principle possible, but it is likely to require heavy technicalities. 
\end{itemize}
\begin{remark} \label{r:bc}
We now compare our analysis with the one in the paper~\cite{Amadori} by Amadori: the main differences stem from the different way of assigning the boundary condition in the boundary characteristic case. We use Definition~\ref{d:equiv} and~\eqref{e:ibvp}, whereas Amadori follows~\cite{DuboisLeFloch} and introduces condition {\bf (C)} in~\cite[p.2]{Amadori}. To highlight the main differences between~\eqref{e:ibvp} and condition {\bf (C)} let us focus on the simplest case and assume that $\mf f$, $\mf g$ and $\mf D$ are such that one can assign a full boundary condition on~\eqref{e:vclaw} and~\eqref{e:bl}, that is $\boldsymbol{\widetilde \beta}(\mf v, \mf v_b) = \mf v - \mf v_b$. Condition 
 {\bf (C)} requires that for a.e. $t\in \R_+$ the trace $\mf v(t, 0)$ and the value $\mf v_b(t)$ are connected by waves (shocks, rarefactions or contact discontinuities) with nonpositive speed. Instead, in~\eqref{e:ibvp} we require that $\mf v(t, 0)$ and $\mf v_b(t)$  are connected by a $0$-speed shock or contact discontinuity and by a \emph{boundary layer} described by system~\eqref{e:bl}.  
To see that these two conditions do not coincide, we consider the linear case where up to a rescaling of the independent variables~\eqref{e:vclaw} boils down to  
$$
    \mf v_t + \mf F \mf v_x = \mf D \mf v_{xx}, \qquad \mf F, \mf D \in \mathbb{M}^{N \times N}
$$
and assume for simplicity that $\mf D$ is invertible and that $\mf F$ and $\mf D$ are symmetric. Given $\mf v_b \in \R^N$, condition {\bf (C)} dictates that $\mf v (t, 0) - \mf v_b$ lies in the eigenspace of $\mf F$ associated to nonpositive eigenvalues, whereas condition~\eqref{e:ibvp} is satisfied if and only if $\mf v (t, 0) - \mf v_b$ lies in the eigenspace of $\mf D^{-1} \mf F$ associated to nonpositive eigenvalues. Since in general 
$\mf F$ and $\mf D^{-1} \mf F$ have different eigenspaces, these two boundary conditions are different. Note in particular that condition~\eqref{e:ibvp} \emph{depends} on the viscosity matrix $\mf D$, whereas condition {\bf (C)} is unaffected by $\mf D$. 

Wrapping up, the main difference between condition {\bf (C)} and Definition~\ref{d:equiv} is that condition {\bf (C)} does not provide any information of the underlying viscous mechanism, whereas Definition~\ref{d:equiv} takes into account the relevant information concerning the transient behavior from the viscous approximation to the hyperbolic limit. 
This implies that, contrary to~\cite{Amadori}, our analysis of the behavior of the approximate solutions near the domain boundary has to handle the interaction between an \emph{hyperbolic} component given by rarefaction, shock and contact discontinuity 
waves and a \emph{parabolic} (or, more precisely, mixed hyperbolic-parabolic) component given by the boundary layers. In particular, 
we use a different solution of the so-called boundary Riemann problem, see \S\ref{s:rie}, which makes it much harder to obtain useful information (the so-called interaction estimates) on the behavior of the approximate solution near the domain boundary,
see for instance the fairly involved proof of Lemma~\ref{l:me}. 
\end{remark}
\begin{remark}\label{r:bcnc}
The non characteristic boundary case occurs when all the eigenvalues of the jacobian matrix $\mf D \mf f$ are bounded away from $0$. In this case the analysis in~\cite{Amadori} provides a proof of Theorem~\ref{t:main}; indeed, in the non characteristic case the boundary datum is assigned by the condition {\bf(NC)} in ~\cite[p.2]{Amadori}, which is consistent with~\eqref{e:ibvp}, namely boils down to~\eqref{e:ibvp} through a suitable choice of the function $b$. 
\end{remark}
\subsection{Paper outline} For the reader's convenience we provide a roadmap of the proof of Theorem~\ref{t:main} in \S\ref{ss:roadmap}. Also, in \S\ref{ss:notation} we collect the main notation used in the present paper.  In \S\ref{s:hyp} we precisely state the hypotheses of Theorem~\ref{t:main} and check that they are satisfied by the Navier-Stokes and viscous MHD equations, written in both Eulerian and Lagrangian coordinates. In \S\ref{s:rie} we recall the solution of the Riemann and of the so-called boundary Riemann problem, which constitute the building block for the definition of the wave front-tracking algorithm. In \S\ref{s:cinque} we define our wave front-tracking algorithm. Section \ref{s:ie} contains some of the most technically demanding and innovative results of the paper in the form of so-called interaction estimates. In \S\ref{s:functional} we provide uniform bounds on the total variation of the wave front-tracking approximation through the introduction of suitable (and new) functionals. In \S\ref{s:otto} we conclude the proof of the convergence of the wave front-tracking approximation. Finally, \S\ref{s:bc} contain the other most innovative part of the present paper as in that section we show that any limit of our wave front-tracking approximation satisfies the boundary condition in~\eqref{e:ibvp}. The proof is rather technical and involved, and unveils what we feel is interesting information on the limit solution at the domain boundary, see for instance Lemmas~\ref{l:tildeE},\ref{l:bl},\ref{l:bl2} and~\ref{l:tracciainterna}.

\subsection{Proof roadmap} \label{ss:roadmap}
The proof of Theorem~\ref{t:main} relies on the introduction of a new wave front-tracking algorithm. Wave front-tracking algorithms stem from the celebrated Glimm~\cite{Glimm} scheme and were introduced in the pivotal work by Dafermos~\cite{Dafermos72}. We refer to~\cite{Bressan,HoldenRisebro} for an extended overview. In the present paper we build upon the algorithm discussed in~\cite{Bressan}. 

Given $\ee>0$, we recall~\cite[Definition 7.1]{Bressan} of $\ee$-approximate solution of the Cauchy problem: heuristically speaking, it is a piecewise constant function with discontinuities occurring along finitely many straight lines in the $(t, x)$ plane, which solves equation~\eqref{e:claw} and attains the initial datum up to an error ``of size $\ee$". Also, the Lax admissibility condition is satisfied, again up to an error of size $\ee$. 
\begin{definition} \label{d:appsol}
Given $\ee>0$, we say that $\mf v_\ee: \R_+ \times \R_+ \to \R^N$ is an $\ee$-wave front tracking approximation of the initial-boundary value problem \eqref{e:claw},\eqref{e:ibvp} if it is an $\ee$-approximate solution in the sense of~\cite[Definition 7.1]{Bressan} and furthermore 
$$
    \| \mf v_\ee(0, \cdot) - \mf v_0 \|_{L^1} \leq \ee, \quad \mf v_\ee(t, 0) \sim_{\mf D} \mf{\widetilde v}_{\ee b} (t) \; \text{for a.e $t$ and for some $\mf{\widetilde v}_{ \ee b}  \in L^1_{\mathrm{loc}}(\R_+)$ s.t.} \; 
    \| \mf{\widetilde v}_{\ee b} - \mf v_b \|_{L^1} \leq \ee. 
$$
\end{definition}
Theorem~\ref{t:main} directly follows from Theorem~\ref{t:wft} below. 
\begin{theorem}\label{t:wft}
Under the same assumptions as in Theorem~\ref{t:main} for every $\ee>0$ there is an $\ee$-wave front tracking approximation of the initial-boundary value problem~\eqref{e:claw},~\eqref{e:ibvp}, which furthermore can be constructed in such a way that it satisfies the following properties. For every vanishing sequence $\{ \ee_m\}$ there is a subsequence (which we do not relabel) such that the corresponding sequence $\mf v_{\ee_m}$  satisfies 
\be \label{e:16bis}
     \mf v_{\ee m} (t, \cdot) \to  \mf v (t, \cdot)  \quad \text{in $L^1_\loc( \R_+)$ as $m\to + \infty$ for every $t>0$}.
\eq
In the above formula $\mf v \in BV ([0, T] \times \R_+)$ for every $T>0$, is a Lax admissible distributional solution of~\eqref{e:claw} and satisfies the initial and boundary condition~\eqref{e:ibvp} in the sense of traces. 
\end{theorem}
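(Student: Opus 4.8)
\textbf{Proof plan for Theorem~\ref{t:wft}.}
The plan is to carry out the construction in three blocks: (a) build the $\ee$-wave front-tracking approximations, (b) obtain uniform $BV$ and Lipschitz-in-time bounds and pass to the limit, (c) identify the limit and show it satisfies the boundary condition in~\eqref{e:ibvp}. For block (a), I would first discretize the data $\mf v_0$ and $\mf v_b$ by piecewise constant functions $\mf v_0^\ee$, $\mf v_b^\ee$ with finitely many jumps, agreeing with the originals in $L^1$ up to $\ee$ and with no increase of total variation. Then I would run the front-tracking scheme of \S\ref{s:cinque}: at $t=0$ and at each interior interaction point solve the corresponding Riemann problem using the approximate (accurate/simplified) Riemann solver, and at each time a front hits $x=0$ solve the \emph{boundary Riemann problem} recalled in \S\ref{s:rie}, which in our setting outgoing data are determined jointly by an admissible $0$-speed discontinuity, a boundary layer solving~\eqref{e:bl} with datum $\mf{\widetilde v}_{\ee b}(t)$, and the outgoing waves of the positive-speed families. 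One has to check the standard bookkeeping: fronts are straight segments, interaction times do not accumulate (by the usual argument splitting fronts into ``large'' and ``small'' and introducing non-physical fronts of a distinguished family, as in~\cite[Ch.~7]{Bressan}), and the number of fronts and interactions stays finite on $[0,T]$ — here the boundary interactions need a separate count, controlled by the interaction estimates of \S\ref{s:ie}.

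For block (b), the heart is a Glimm-type functional $\Upsilon(t)=\mathrm{TotVar}\,\mf v_\ee(t,\cdot)+\Upsilon_{\mathrm{bl}}(t)+C_1 Q(t)$, where $Q$ is a (modified) interaction potential and $\Upsilon_{\mathrm{bl}}$ is the new boundary-layer term introduced in \S\ref{s:functional}; I would invoke the interaction estimates of \S\ref{s:ie} — in particular the new estimates for boundary layers and boundary-characteristic fronts hitting $x=0$ (the analogue of Lemma~\ref{l:me}) — to show that $\Upsilon$ is non-increasing across interior and boundary interactions, provided $\delta^\ast$ is small. This gives the uniform bound $\mathrm{TotVar}\,\mf v_\ee(t,\cdot)\le C\delta^\ast$ for all $t$, hence uniform $L^\infty$ bounds. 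A standard argument bounding the $L^1$-in-space difference of $\mf v_\ee$ at two times by the speed of fronts (which is bounded, by strict hyperbolicity and the $0$-speed normalization of the boundary-characteristic field) times the time increment yields Lipschitz continuity $\|\mf v_\ee(t,\cdot)-\mf v_\ee(s,\cdot)\|_{L^1_\loc}\le L|t-s|$ uniformly in $\ee$. By Helly's theorem applied at rational times plus this equicontinuity, a diagonal subsequence converges as in~\eqref{e:16bis} to some $\mf v\in BV_\loc$; lower semicontinuity of total variation gives $\mf v\in BV([0,T]\times\R_+)$.

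For block (c), I would first check that the limit is a distributional solution of~\eqref{e:claw}: the front-tracking approximations solve~\eqref{e:claw} exactly away from fronts, and the Rankine–Hugoniot defect summed over fronts is $O(\ee)$ by the very definition of $\ee$-approximate solution, so passing to the limit in the weak formulation (against test functions, possibly not vanishing at $x=0$, so a boundary term appears) is routine. Lax admissibility along shocks is inherited because each front is either $\ee$-Lax-admissible or a non-physical front of vanishing total strength. The genuinely new and hardest part is showing the trace $\mf v(\cdot,0)$ satisfies $\mf v(\cdot,0)\sim_{\mf D}\mf v_b$: one must analyze the behavior of the algorithm in a boundary strip $[0,h]\times\R_+$ as first $\ee\to0$ and then $h\to0$, tracking how the boundary layers in the approximate boundary Riemann solutions converge and how the $0$-speed discontinuities accumulate at $x=0$. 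This is exactly the content of \S\ref{s:bc} (Lemmas~\ref{l:tildeE}, \ref{l:bl}, \ref{l:bl2}, \ref{l:tracciainterna}); the mechanism is that the interior trace $\mf v(\cdot,0^+)$ is connected to $\mf v_b$ through a limiting boundary layer plus a $0$-speed admissible discontinuity, which is precisely Definition~\ref{d:equiv}.

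\textbf{Main obstacle.} I expect the decisive difficulty to be in block (c): proving that any limit satisfies the boundary condition~\eqref{e:ibvp}. Unlike the non-characteristic case, boundary-characteristic fronts can enter, leave, or be tangent to $x=0$, and the approximate boundary data $\mf{\widetilde v}_{\ee b}$ differ from $\mf v_b$ by the action of a boundary layer possibly living on a center manifold; reconstructing the limiting layer and the limiting $0$-speed discontinuity from the front-tracking data — and showing the Lax condition survives in the limit — requires the delicate strip analysis and the new interaction estimates. A secondary but substantial obstacle is establishing the monotonicity of the Glimm functional at boundary interactions, i.e.\ proving the boundary interaction estimates (the analogue of Lemma~\ref{l:me}), because there the hyperbolic fronts genuinely interact with the mixed hyperbolic-parabolic boundary-layer component, and the cancellation structure is far less transparent than in the interior.
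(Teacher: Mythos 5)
Your plan is essentially the paper's own proof: the algorithm of \S\ref{s:cinque} with accurate/simplified Riemann and boundary Riemann solvers, the boundary interaction estimates of \S\ref{s:ie} (in particular Lemma~\ref{l:me}), monotonicity of a Glimm-type functional in \S\ref{s:functional}, the Bressan-style finiteness/compactness and convergence arguments of \S\ref{s:otto}, and the trace analysis of \S\ref{s:bc} for the boundary condition, and you correctly single out the two genuinely hard points (the characteristic boundary interaction estimate and the verification of~\eqref{e:ibvp} for the limit). The only under-specified detail is the functional itself: beyond the total variation, the interaction potential and a boundary-layer term in $|\xi_k|$, monotonicity at interactions of $k$-fronts with the boundary forces the speed-weighted term $S=\sum |s_\alpha|\,[\varsigma_k(\mf u_\alpha,s_\alpha)]^-$ together with the residual boundary-datum variation $Z$, exactly as in~\eqref{e:upsilon}, since the right-hand side of~\eqref{e:me} is proportional to $|s_k|\big([\varsigma_k]^-+|\xi_k|\big)$ and only such a term can absorb it.
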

As pointed out before, as a limit of a wave front-tracking approximation, the solution $\mf v$ actually enjoys further regularity properties, see the analysis in~\cite[Ch.10]{Bressan}. Note furthermore that we state that the convergence of the wave front-tracking approximation occurs up to subsequences because technically speaking we have not established uniqueness results for solutions of the initial-boundary value problem~\eqref{e:claw},\eqref{e:ibvp}. As mentioned above, it is highly likely that one could establish uniqueness of a suitable boundary Standard Riemann Semigroup and this would imply that the whole family $\mf v_{\ee}$ converges as $\ee \to 0^+$. Finally, we recall Remark~\ref{r:bcnc} and conclude that the analysis in~\cite{Amadori} provides a proof of Theorem~\ref{t:wft} in the non characteristic boundary case. Hence, in the following we focus on the characteristic boundary case, namely we assume that one eigenvalue of $\mf D \mf f$ may attain the value $0$, see equation~\eqref{e:uast} for a more precise definition.  
      
The building blocks of our wave front-tracking algorithm are the solution of the Riemann problem, which is a Cauchy problem where the initial datum has a single jump discontinuity at the origin, and of the so-called boundary Riemann problem, which is the initial-boundary value problem~\eqref{e:claw},\eqref{e:ibvp} in the case where the data $\mf v_0$ and $\mf v_b$ are (in general, distinct) constant values in $\R^N$. An admissible solution of the Riemann problem is constructed in the celebrated work by Lax~\cite{Lax}, whereas the solution of the boundary Riemann problem is discussed in~\cite{AnconaBianchini,BianchiniSpinoloARMA,BianchiniSpinolo}. Note that~\cite{BianchiniSpinoloARMA,BianchiniSpinolo} deal with general hyperbolic systems, but the fact that we assume that the boundary characteristic field is either genuinely nonlinear or linearly degenerate leads to considerable simplifications that we use in our analysis. In \S\ref{s:rie} we provide a quite detailed overview of~\cite{BianchiniSpinoloARMA,BianchiniSpinolo} in the specific frameworks of genuinely nonlinear and linearly degenerate boundary characteristic field, and for the reader's convenience we outline the main ideas and the basic steps in the construction in \S\ref{ss:speroserva}. Note that the starting point of the construction of the solution of the boundary Riemann problem is the boundary layer equation at the first line of~\eqref{e:bl}, and indeed in \S\ref{s:bl} we have to spend some work to write this equation in a sufficienly explicit form, a nontrivial task owing to the singularity of the matrix $\mf D$. Note furthermore that a key point for the construction of the solution of the boundary Riemann solver is the definition of the so-called \emph{characteristic wave fan curve of admissible states} $\boldsymbol{\zeta}_k$. Loosely speaking, in the case of a genuinely nonlinear boundary characteristic vector field, the curve $\boldsymbol{\zeta}_k(\mf{\widetilde u}, \cdot)$ contains all the states that can be connected to the value $\mf{\widetilde u} \in \R^N$ by rarefactions and Lax admissible shocks with nonnegative speed, and by boundary layers lying on a suitable center manifold. In the case of a linearly degenerate boundary characteristic vector field, the curve $\boldsymbol{\zeta}_k(\mf{\widetilde u}, \cdot)$ contains all the states that can be connected to the value $\mf{\widetilde u} \in \R^N$ by contact discontinuities with nonnegative speed, and by boundary layers lying on a suitable center manifold. We refer to Lemma~\ref{l:cwfc} and \S\ref{ss:case2} for the detailed analysis.

After this preliminary analysis, in \S\ref{s:cinque} we define our wave front-tracking algorithm. The construction itself is fairly standard, but for the reader's convenience we overview it here. As a first step,  we approximate the initial and boundary data with piecewise constant functions with finitely many discontinuities. At each jump discontinuity of the initial datum we construct an approximate solution of the Riemann problem  through an algorithm termed \emph{accurate Riemann solver}, which yields a piecewise constant solution in the $(t, x)$ plane with finitely many jump discontinuities along straight lines (the wave fronts after which the whole algorithm is named) emanating from the origin. At each jump discontinuity point of the boundary datum we instead use an algorithm that we term \emph{accurate boundary Riemann solver.} We then juxtapose the solutions of the Riemann problems and consider the first \emph{interaction time}, i.e. the first time at which either two discontinuity lines interact (that is, they intersect),  or a discontinuity line reaches the boundary. In the first case, we solve the corresponding Riemann problem, in the second the corresponding boundary Riemann problem. We then extend the solution up to the next interaction time, solve the Riemann or boundary Riemann problem, and so on. To rule out the possibility that the total number of discontinuity lines blows up in finite time we introduce, as in~\cite{Bressan} a technical correction and in some situations, defined in \S\ref{ss:whenas}, we use so-called \emph{simplified Riemann} and \emph{boundary Riemann solvers} which introduce a minimal amount of new waves.

As a matter of fact, a key point in the proof of the convergence of a wave front-tracking algorithm is to establish the so-called \emph{interaction estimates}, i.e. to obtain very precise information on the behavior of the approximate solution after an interaction time. In the case of our algorithm, this is done in \S\ref{s:ie}. In particular, Lemmas~\ref{l:me} and~\ref{l:melindeg} provide very refined estimates on the structure of the solution when a wave front of the boundary characteristic family hits the boundary. These estimates are in our opinion fairly innovative and of independent interest and, to the best of our knowledge, are the first boundary interaction estimates  involving the positive part of the wave speed. Note furthermore that the proof of Lemma~\ref{l:me}, in turn, relies on estimate~\eqref{e:me1}, which accurately controls the difference between Lax's wave fan curve of admissible states and the characteristic wave fan curve $\boldsymbol{\zeta}_k$. The proof of these results is based on a very careful analysis of the structure of the curve $\boldsymbol{\zeta}_k$ and uses the assumption that the characteristic vector field is either linearly degenerate or genuinely nonlinear. 

In \S\ref{s:functional} we show that the total variation of the $\ee$-wave front-tracking approximation is uniformly bounded with respect to $\ee$ and time. As in~\cite{Bressan}, the proof relies on the introduction of suitable Glimm-type functionals. The explicit form of our functional, given in \S\ref{ss:deff}, is new and in particular involves the waves speed besides their strength.  In \S\ref{s:otto} we establish the convergence result in~\eqref{e:16bis}, and show that the limit is a distributional solution of~\eqref{e:claw}. The proof follows 
the same main steps as in~\cite{Bressan}, but the presence of the boundary and the consequent more complicated expression of our functionals accounts for much higher technicalities. 

In \S\ref{s:bc} we show that the limit function in~\eqref{e:16bis} attains the boundary condition in~\eqref{e:ibvp}.  From the technical viewpoint, one of the main challenges in the analysis is the fact that, since the boundary is characteristic, in  Definition~\ref{d:equiv} in general the boundary layer does not have as asymptotic state the trace $\mf{\bar v}$, but rather a possibly different state $\mf{\underline v}$ which is connected to $\mf{\bar v}$ through a $0$-speed Lax admissible shock (if the boundary characteristic field is genuinely nonlinear) or a $0$-speed contact discontinuity (if the boundary characteristic field is linearly degenerate). Let us now focus on the case of a genuinely nonlinear boundary characteristic field, which yields a richer and more interesting analysis. From the wave front-tracking approximation standpoint, the fact that $\mf{\bar v} \neq \mf{\underline v}$ and hence that there is a non-trivial $0$-speed Lax admissible shock exactly located at the domain boundary may be due to three different mechanisms: (i) in the wave front-tracking approximation there is a $0$-speed Lax admissible shock located at the domain boundary, which persists in the limit; (ii) in the wave front-tracking approximation there is a non-trivial center component of the boundary layer, which in the limit gives origin to a $0$-speed Lax admissible shock located at the boundary; (iii) in the wave front-tracking approximation there are one or more shocks with vanishing speed that are located inside the domain but in the limit approach the domain boundary. Mechanisms (i) and (ii) are taken into account by Lemma~\ref{l:bl2}, whereas the analysis of mechanism (iii) is the most complicated from the technical viewpoint and relies on the definition of the set $\widetilde E$ in~\eqref{e:tildeE}.  Up to negligible sets, $\widetilde E$ is loosely speaking the set of points at which mechanism (iii) occurs and to show that the boundary condition~\eqref{e:ibvp} is satisfied on $\widetilde E$ we rely on a series of technical results. In particular, Lemma~\ref{t:tvf} provides a uniform in $\ee$ bound on the total variation (with respect to time) of the flux function $\mf f(\mf v_{\ee})$ evaluated at the domain boundary. Note that this is a fairly non-trivial result as we do not expect that the traces $\mf v_{\ee} (\cdot, 0)$ and $\mf v(\cdot, 0)$ have uniformly bounded total variation, see for instance the related counter-example~\cite[\S4.3]{DMS}. To show that the boundary condition~\eqref{e:ibvp} is satisfied on the complement $\R \setminus \widetilde E$ we also rely on a considerably technical analysis; see in particular the proof of Lemma~\ref{l:tildeE}, which requires a careful control of the behavior of the wave front-tracking approximation close to the domain boundary.  To achieve this control we use among other things techniques introduces in~\cite[Ch10]{Bressan} to study the fine regularity properties of the limit of the wave front-tracking approximation. 
\subsection{Notation} \label{ss:notation}
\subsubsection*{Characteristic vs boundary characteristic vector field}
We term $i$-th \emph{characteristic field} the map $\mf u \mapsto \mf r_i(\mf u)$, where $\mf r_i$ is an eigenvalue of $\mf E^{-1} \mf A(\mf u)$ associated to the $i$-th eigenvalue. We say that the $k$-th vector field is \emph{boundary characteristic} if the eigenvalue $k$-th eigenvalue of $\mf E^{-1} \mf A(\mf u)$ may vanish.  
\subsubsection*{General mathematical symbols}
We denote matrices by capital bold letters, column vectors by lowercase bold letters, and scalars by lowercase normal letters, so $\mf C$ is a matrix, $\mf c$ is a column vector, $\mf c^t$ is a row vector and $c$ is a scalar. 
\begin{itemize}
\item $\R_+: = [0, + \infty[$ 
\item $BV$: the space of bounded total variation functions, see~\cite{AmbFuPal}
\item $\mathrm{Tot Var} \; \mf w$: the total variation of the function $\mf w: I \to \R^N$, where $I$ is an interval $I \subseteq \R$ 
\item $[\cdot]^-$: the negative part
\item $\mf 0_d $: the zero vector in $\R^d$
\item $\mathbb M^{n \times m}$: the space of $n \times m$ matrices
\item $\mf 0_{n \times m}$: the zero matrix in $\mathbb M^{n \times m}$
\item $C^{1,1}$: the space of continuously differentiable functions with Lipschitz continuous first derivative
\item $\mathcal M_{\mathrm{loc}}(\R_+)$: the space of Radon measures on $\R_+$
\item $\mf C^t$: the transpose of the matrix $\mf C$
\item $\mathcal L^1, \mathcal L^2$: the standard Lebesgue measure on $\R$ and $\R^2$, respectively
\end{itemize}
\subsubsection*{Symbols introduced in the present paper}
\begin{itemize}
\item $N$: the dimension of the system, i.e. in~\eqref{e:claw} $\mf v \in \R^N$
\item $\delta^\ast$: the same constant as in the statement of Theorem~\ref{t:main}
\item $\boldsymbol{\widetilde \beta}, \boldsymbol{\beta}$: the functions used to assign the boundary condition. The function $\boldsymbol{\beta}$ is defined in~\S\ref{ss:boucon} and $\boldsymbol{\widetilde \beta}$ is obtained from $\boldsymbol{\beta}$ by passing to the $\mf v$ coordinates
\item $h$: dimension of the kernel of $\mf B$, see~\eqref{e:B}
\item $k-1$: the number eigenvalues of $\mf E^{-1} \mf A$ that are negative and bounded away from $0$, see~\eqref{e:sh} and~\eqref{e:uast}
\item $\mf u_\ast$: the same value as in~\eqref{e:uast}
\item $\mf A_{11}$: see~\eqref{e:blockae}
\item $\ell$: the number non-positive eigenvalues of $\mf A_{11}$. In case B) in Hypothesis~\ref{h:eulerlag} it depends on $\mf u_b$ and it is denoted by $\ell (\mf u_b)$
\item $(\mf r_1, \lambda_1), \dots, (\mf r_N, \lambda_N)$: eigencouples of $\mf E^{-1}\mf A$ 
\item $\boldsymbol{\pi}_{\mf u}:$ the projection in~\eqref{e:proj}
\item $\mf i_i$: the rarefaction curve, see~\eqref{e:rarefdef} 
\item $\mf h_i$: the Hugoniot locus, see~\eqref{e:rh}
\item $\sigma_i (\mf{\widetilde u}, s)$: the shock speed satisfying~\eqref{e:rh}
\item $\mf t_i$: the $i$-th admissible wave fan curve, see~\eqref{e:laxc}
\item $\boldsymbol{\zeta}_k$: the characteristic wave fan curve of admissible states, see~\eqref{e:uguale1} and~\eqref{e:regcwfc} in the genuinely nonlinear case and~\eqref{e:mcld} in the linearly degenerate case
\item $\underline s$: see~\eqref{e:underlines}
\item $\bar s$: see~\eqref{e:baresse}
\item ${\boldsymbol \phi} $: the same function as in~\eqref{e:bl10}
\item $\mf u^\ee$: the $\ee$-wave front-tracking approximation constructed in \S\ref{s:cinque}
\item $\mf u_0^\ee$, $\mf u_0^\ee$: the initial and boundary condition of $\mf u^\ee$, see~\eqref{e:51} and~\eqref{e:52}
\item $r_\ee$: the bound on the maximal strength of new rarefaction wave-fronts, see~\cite[p.129]{Bressan}
\item $\omega_\ee$: threshold that discriminates between the use of the accurate and simplified Riemann and boundary Riemann solvers, see~\S\ref{ss:whenas}
\item $\varsigma_k (\mf{\widetilde u}, s_k)$: see~\eqref{e:speedwft}
\item $\mf{\hat u}$: see~\eqref{e:hatu}.
\item $\vartheta_\alpha$: see~\eqref{e:s}
\item $\Upsilon$: the functional defined in~\eqref{e:upsilon}
$\Delta \Upsilon (t) : = \Upsilon(t^+) - \Upsilon(t^-)$
\item $I, L_\alpha:$ the functionals in~\eqref{elle}
\end{itemize}
\vspace{1cm}

\subsubsection*{Constants}
\begin{itemize}
\item $\unpo$: a constant only depending on the data  $\mf g$, $\mf f$, $\mf D$ and on the change of variables $\mf u$, and on the value $\mf u^\ast$ in~\eqref{e:hp}. Its precise value can vary from occurrence to occurrence
\item $\delta^\ast$: the same constant as in~\eqref{e:hp}
\item $c$: the same constant as in~\eqref{e:sh}
\item $d$: the same constant as in~\eqref{e:gnl}
\item $C_1$: the same constant as in~\eqref{e:nc}
\item $C_2$: the same constant as in~\eqref{e:me} 
\item $C_3$: the same constant as in~\eqref{e:jd1}
\item $r_\ee$: the bound on the maximal strength of a rarefaction wave front, see~\S\ref{ss:arie}
\item $\hat \lambda$: the speed of non-physical fronts, see~\eqref{e:npspeed} 
\item $\omega_\ee$: threshold  to discriminate between accurate and simplified Riemann and boundary Riemann solvers, see \S\ref{ss:whenas}
\item $K_1$, $K_2$ and $K_3$: the same constants as in~\eqref{e:upsilon}
\item $A$: the same constant as in~\eqref{e:s} 
\item $\vartheta_\alpha$: the weighted signed strength of the wave front $\alpha$, see~\eqref{e:s}
\item $\delta$: bound on $\Upsilon$, see~\eqref{e:assumption}
\item $C_7$: the same constant as in~\eqref{e:cisette}. Note that $C_7>1$ by assumption
\item $C_6$: the same constant as in~\eqref{e:jd1}
\item $C_3$: the same constant as in~\eqref{e:dadim3}
\item $C_4$: the same constant as in~\eqref{e:msigma}
\item $C_5$: the same constant as in~\eqref{e:idv}
\item $C_8$: the same constant as in~\eqref{e:rarepiccola}
\item $C_9$: the same constant as in~\eqref{e:raref}
\item $C_{10}$: the same constant as in~\eqref{e:melindeg}
\item $C_{11}$: the same constant as in~\eqref{e:pallido}
\item $C_{12}$: the same constant as in~\eqref{e:pallido2}
\item $K_5$: the same constant as in~\eqref{elle}
\item $C_{13}$: the same constant as in~\eqref{e:tvf}
\item $C_{14}:$ the same constant as in~\eqref{e:settedue}
\item $K_4$: the same constant as in~\eqref{e:Lambda}
\end{itemize}
\subsubsection{Relation between the various constants} \label{sss:constants}
We now list the inequalities that in what follows we need to impose on the constants, and show that they are consistent. 
We point out that as it will be clear in the following the constants $C_i$ are given by the problem, whereas the constants $A$, $K_i$, $\delta$ and $\delta^\ast$ are to be chosen in the proof so that they satisfy suitable constraints.  More precisely, we require that the above constants satisfy the following relations:
\begin{equation}
\label{e:uno}
        \max \{C_7 [1 + K_1 \delta], C_1+ K_1[C_1 + C_{12} + C_1 C_{12} ] \delta  \} + 1 \leq A
\end{equation}
and 
\begin{equation}
\label{e:due}
        C_2 + K_1 [2 C_8+ C_2] \delta) + 1 \leq K_2, \qquad  C_2 + K_1[2 C_8+ C_2] \delta + 1 \leq  K_1.
\end{equation}
and 
\be \label{e:cisette3}
      C_2 C_7+ K_1 [2 C_8+ C_2C_7] \delta) + 1 \leq K_2, \qquad  C_2C_7+ K_1[2 C_8+ C_2C_7] \delta + 1 \leq  K_1.
\eq
and 
\be \label{e:4}
      C_6 +  K_1 \delta  \big( 2 C_6 + [1+ C_6] C_{11} \big) 
   +1 \leq  K_3
\eq
and
\be 
\label{e:tre}
    A C_5  + 1 +K_2 ( \delta  C_4 C_5 + 2 C_3  )  \leq  K_1  (1- 2 A \delta C_5- A^2\delta^2 C_5^2) 
\eq
and
\be \label{e:sole}
   C_2 \delta \leq 1, \quad C_8 \delta \leq \frac{1}{2}, \quad C_2 [1+ C_4] \delta \leq 1
\eq
and 
\be \label{e:delta1}
    \max\{ C_{14}, 1\}  \delta^\ast \leq \delta
\eq
To this end we first choose $A$ and $K_2$ in such a way that 
$$
    A \ge  \max \{2 C_7, C_1 + 1     \}+ 1
$$ 
and $K_2 \ge 2  C_7C_2  + 1$. Next, we choose $K_1$ and $K_3$ such that 
$$
    K_1 \ge \max \{  2[C_2C_7+1],  2A C_5  + 2  +2 K_2 ( 1+ 2 C_3  ) \}  , \quad 
K_3 \ge  C_6 + 2
$$
Finally, we choose $\delta$ sufficiently small to have 
\begin{equation*} \begin{split}
     &K_1 \delta \leq 1, \quad  K_1[C_1 + C_{12} + C_1 C_{12} ] \delta \leq 1, \quad  C_7C_2 \delta \leq \frac{1}{2}, \quad \delta  C_4 C_5 \leq 1, \\ & 2 A \delta C_5 \leq \frac{1}{4}, \quad 
     A^2 \delta^2 C^2_5 \leq \frac{1}{4}, \quad K_1 \delta  \big( 2 C_6 + [1+ C_6] C_{11} \big) \delta \leq 1.
 \end{split}
 \end{equation*}
and $\delta^\ast$ sufficiently small to have~\eqref{e:delta1}.

%
%

\section{Hypotheses}\label{s:hyp}
In this section we state the hypotheses imposed on system~\eqref{e:symmetric} in the present work. In \S\ref{ss:boucon} we define the function $\boldsymbol{\beta}$ used to assign the boundary condition on the mixed hyperbolic-parabolic system~\eqref{e:symmetric}. Finally, in \S\ref{ss:appl} we show that our assumptions are satisfied by the Navier-Stokes and viscous MHD equations, written in both Eulerian and Lagrangian coordinates.  
\subsection{Statement of the hypotheses}
To ease the exposition we state \emph{global} assumptions, i.e. assumptions that hold for every $\mf u \in \R^N$. However, all our analysis is \emph{local}, in the sense that the solution $\mf u$ and the approximate solutions $\mf u_{\ee_m}$ attain values in a small neighborhood of the value $\mf u(\mf v^\ast)$, where $\mf v^\ast$ is the same as in~\eqref{e:hp}, and hence it suffices to assume that our assumptions hold in that neighborhood. 
\begin{hyp}
\label{h:normal} 
System~\eqref{e:symmetric} is of the normal form, in the Kawashima Shizuta sense, see~\cite{KawashimaShizuta1}.  More precisely, the coefficients in~\eqref{e:symmetric} are smooth and satisfy the following assumptions:
\begin{itemize}
\item[i)] The matrix $\mf B$ satisfies the block decomposition 
\begin{equation}
\label{e:B}
    \mf B(\mf u) = 
    \left(
    \begin{array}{cc}
    \mf 0_{h \times h} & \mf 0_{N-h}^t \\
    \mf 0_{N-h} & \mf B_{22} (\mf u) \\
    \end{array}
    \right)
\end{equation}
for some symmetric and positive definite matrix $\mf B_{22} \in \mathbb M^{(N-h) \times (N-h)}$.
\item[ii)] For every $\mf u$, the matrix $\mf A (\mf u)$ is symmetric.
\item[iii)] For every $\mf u$, the matrix $\mf E (\mf u)$ is symmetric, positive definite and block diagonal, namely
\be 
\label{e:e}
    \mf E(\mf u) = 
    \left(
    \begin{array}{cc}
    \mf E_{11} (\mf u) & \mf 0_{N-h}^t \\
    \mf 0_{N-h} & \mf E_{22} (\mf u) \\
    \end{array}
    \right),
\end{equation}
where $\mf E_{11} \in \mathbb{M}^{h \times h}$ and $\mf E_{22} \in \mathbb{M}^{(N-h) \times (N- h)}$. 
\item[iv)] The second order term can be written in the form $\mf G(\mf u, \mf u_x) \mf u_x$, with  
$\mf G$ satisfying
\be
\label{e:G}
        \mf G(\mf u,  \mf u_x) = 
    \left(
    \begin{array}{cc}
    \mf 0_{h \times h} & \mf 0_{N-h}^t \\
    \mf G_1 & \mf G_2 \\
    \end{array}
    \right), \qquad 
\eq
for suitable functions $\mf G_1 \in \mathbb M^{h \times (N-h)}$, $\mf G_2
\in \mathbb  M^{(N-h) \times (N-h)}$. The function $\mf G_1$ depends on $\mf u$ and on the last $N-h$ components of $\mf u_x$ and vanishes when all these components vanish. The function $\mf G_2$ depends on $\mf u$ and $\mf u_x$ and vanishes when $\mf u_x = \mf 0_N$. 
 \end{itemize} 
\end{hyp}
Note that the point iv) implies that 
\be \label{e:G2} 
   \mf G_1 (\mf u, \mf 0_{N-h}) = \mf 0_{h \times (N-h)}, \quad 
   \mf G_2 (\mf u, \mf 0_N) = \mf 0_{(N-h) \times (N-h)}, \quad \text{for every $\mf u \in \R^N$}. 
\eq

\begin{hyp}[Kawashima-Shizuta condition]
\label{h:ks}
For every $\mf u$ the matrices $\mf E$, $\mf A$ and $\mf B$ satisfy
\be 
\label{e:ks}
   \Big\{ \text{eigenvectors of} \;  \mf E^{-1}(\mf u) \mf A (\mf u) \Big\} \cap  \text{kernel} \; \mf B (\mf u) = \emptyset. 
   \eq
\end{hyp}
Since the matrices $\mf A$ and $\mf E$ are both symmetric by Hypothesis~\ref{h:normal}, then owing to a classical result (see for instance~\cite[Lemma10.1]{BianchiniSpinolo}) the matrix $\mf E^{-1} (\mf u) \mf A(\mf u)$
has  $N$ real eigenvalues, provided each eigenvalue is counted according to its multiplicity. We now introduce the standard hypothesis that the system is \emph{strictly hyperbolic}. 
\begin{hyp}
\label{h:sh}
    For every $\mf u$ the matrix $\mf E^{-1} (\mf u) \mf A(\mf u)$ has $N$ distinct eigenvalues. 
\end{hyp}
We term $\lambda_1 (\mf u), \dots, \lambda_N (\mf u)$ the eigenvalues of $\mf E^{-1} (\mf u) \mf A(\mf u)$, and $\mf r_1(\mf u), \dots, \mf r_N(\mf u)$ a set of corresponding eigenvectors. In the following, we mostly focus on the \emph{boundary characteristic case}. More precisely, we assume that 
\begin{equation}
\label{e:sh}
    \lambda_1 (\mf u) < \dots < \lambda_{k-1} (\mf u)
    \leq - c< 0  <c \leq  \lambda_{k+1} (\mf u) < \dots 
    < \lambda_N(\mf u), \quad \text{for every $\mf u$}
\end{equation}
for some $k=1, \dots, N$ and $c>0$,  and that eigenvalue $\lambda_k(\mf u)$ can attain the value $0$, i.e. there is $\mf u^\ast$ such that 
\be \label{e:uast}
    \lambda_k (\mf u^\ast) =0.
\eq
In the following we assume that $\mf u^\ast$ is sufficiently close to our data, otherwise we can assume $\lambda_k \neq 0$ because all our analysis is confined in a a small neighborhood of the data. Just to fix the ideas, we assume that $\mf v^\ast = \mf v(\mf u^\ast)$  satisfies~\eqref{e:hp}. 
Next, we recall that the $k$-th vector field is termed \emph{genuinely nonlinear} if 
\be 
\label{e:gnl} 
        \nabla \lambda_k (\mf u) \cdot \mf r_k (\mf u) \geq d >0, \quad \text{for every $\mf u \in \R^N$} 
\eq
and a suitable constant $d>0$. The $k$-th vector field is \emph{linearly degenerate} if  
\be 
\label{e:lindeg} 
        \nabla \lambda_k (\mf u) \cdot \mf r_k (\mf u) = 0, \quad \text{for every $\mf u \in \R^N$}.
\eq
We now introduce the fairly standard assumption that every vector field is either linearly degenerate or genuinely nonlinear.
\begin{hyp}
\label{h:ldgnl}
For every $i=1, \dots, N$, the $i$-th vector field is either genuinely nonlinear or linearly degenerate. 
\end{hyp}
To state our last assumption, we introduce the block decomposition of $\mf A$ corresponding to the block decomposition~\eqref{e:B}, that is
\be
\label{e:blockae}
        \mf A (\mf u) = 
    \left(
    \begin{array}{cc}
    \mf A_{11} (\mf u)  & \mf A_{21}^t (\mf u) \\
    \mf  A_{21} (\mf u) & \mf A_{22} (\mf u), \\
    \end{array}
    \right) \qquad  \mf A_{11} \in \mathbb{M}^{h \times h}, \mf A_{22}  \in \mathbb{M}^{(N-h) \times (N-h)}, 
    \mf  A_{21} \in \mathbb{M}^{(N-h) \times h},
    \eq
    where $ \mf A_{21}^t $ denotes the transpose of the matrix $ \mf A_{21}$. 
\begin{hyp}
\label{h:eulerlag}
The matrix $\mf A$ satisfies one of the following conditions:
\begin{itemize}
\item[A)] we have 
\be \label{e:a11nz}
      \mathrm{det}  \ \mf A_{11}(\mf u) \neq 0, \quad \text{for every  $\mf u \in \R^N$}; 
\eq
\item[B)] we have
\begin{equation} \label{e:alpha}
      \mf A_{11} (\mf u) = \alpha (\mf u ) \mf E_{11}(\mf u)
\eq
for\footnote{Note that condition~\eqref{e:alpha} is only relevant if $h>1$. If $h=1$ then $\mf E_{11}$ is actually a scalar function which only attains strictly positive values since the matrix $\mf E$ is positive definite owing to Hypothesis~\ref{h:normal}, item (iii); hence, we can always set $\alpha (\mf u) : = \mf E_{11}^{-1} \mf A_{11} (\mf u)$.} some \emph{scalar} function $\alpha: \R^N \to \R$. Also, if for some $\mf u \in \R^N$ we have $\alpha (\mf u) =0$ then 
\be \label{e:lincomb}
   \nabla \alpha (\mf u) = (\mf 0_h^t, \mf{s}^t)
\eq
for some $\mf s \in \R^{N-h}$ that is a linear combination of the columns of $\mf A_{21} (\mf u)$.  
\item[C)] we have 
\be \label{e:lagr2}
     \mf A_{11} (\mf u) = \mf 0_{h \times h} , \quad \text{for every $\mf u \in \R^N$}.
\eq
\end{itemize}
\end{hyp}
Some remarks are here in order. First, conditions~\eqref{e:alpha} and~\eqref{e:lincomb} are Hypotheses 4 and 5 in~\cite[\S2.1]{BianchiniSpinolo}, respectively, and we refer to~\cite[\S2.1]{BianchiniSpinolo} for some further comment. Second, as pointed out in \S\ref{ss:appl} the Navier-Stokes and viscous MHD equations (with both null and positive electrical resistivity) in Lagrangian coordinates satisfy~\eqref{e:lagr2}, whereas the Navier-Stokes and viscous MHD equations (with both null and positive electrical resistivity) in Eulerian coordinates satisfy~\eqref{e:a11nz} if the fluid velocity is bounded away from $0$, and condition B) above otherwise. Third, Hypothesis~\ref{h:eulerlag} allows to write the boundary layers equation in a manageable form, see the analysis in~\S\ref{s:bl}. Finally, by combining the analysis in~\cite{BianchiniSpinoloARMA,BianchiniSpinolo} we could handle more general assumptions than Hypothesis~\ref{h:eulerlag}, but to ease the exposition we decided to focus on conditions i), ii) and iii) above. 
\subsection{The boundary condition for the mixed hyperbolic-parabolic system} \label{ss:boucon}
We recall the construction of the function $\boldsymbol{\beta}$ given in~\cite{BianchiniSpinoloARMA,BianchiniSpinolo} by distinguishing the 3 cases considered in the statement of Hypothesis~\ref{h:eulerlag}. The function  $\boldsymbol{\widetilde \beta}$ is then obtained from $\boldsymbol{\beta}$ by passing to the $\mf v$ coordinates. \\
{\sc Case A):} owing to item ii) in Hypothesis~\ref{h:normal}, the matrix $\mf A_{11}$ is symmetric and hence all its eigenvalues are real. We term $\ell \leq h$ the number of negative eigenvalues of $\mf A_{11}$ and  
$\boldsymbol{\pi}_{11}: \R^h \to \R^{h}$ the projection onto the subspace of $\R^h$ generated by the eigenvectors associated to positive eigenvalues (remember that $0$ cannot be an eigenvalue of $\mf A_{11}$ owing to~\eqref{e:a11nz}). To define $\boldsymbol{\beta}$, we introduce the decomposition 
\be \label{e:decompose}
   \mf u : = 
   \left( 
       \begin{array}{cc}
          \mf u_1 \\
          \mf u_2 \\
       \end{array}
   \right), \qquad  \mf u_1 \in \R^h, \; \mf u_2 \in \R^{N-h}
\eq
and set 
\begin{equation} \label{e:betagnl}
    \boldsymbol{\beta} : \R^N \times \R^N \to \R^{N}, \qquad \boldsymbol{\beta}(\mf u, \mf u_b) : =
     \left( 
       \begin{array}{cc}
          \boldsymbol{\pi}_{11}(\mf u_1 - \mf u_{1 b}) \\
          \mf u_2  - \mf u_{2b}\\
       \end{array}
   \right)    
\eq
{\sc Case B):} the function $\boldsymbol{\beta}$ is defined in~\cite[\S2.2]{BianchiniSpinolo}, see in particular~\cite[(2.17)]{BianchiniSpinolo}. \\
{\sc Case C):} we recall the decomposition~\eqref{e:decompose} and define $\boldsymbol{\beta}: \R^N \times \R^N\to \R^{N}$ by setting 
\be \label{e:betalindeg}
\boldsymbol{\beta}(\mf u, \mf u_b): =
    \left(
    \begin{array}{cc}
     \mf 0_h \\ 
    \mf u_2 - \mf u_{2b}
    \end{array}
    \right) 
\eq 
\begin{remark} \label{r:beta}
A word of warning. We explicitely point out, that even if the function $\boldsymbol{\beta}$ always attains values in $\R^N$, its explicit form implies that by imposing say $\boldsymbol{\beta} (\mf u(t, 0), \mf u_b)= \mf 0_N$ we are actually imposing on $\mf u(t, 0)$ the following number of 
conditions:  $N-\ell$ conditions in {\sc case A}) and  $N-h$ conditions in {\sc case C)}. In {\sc case B)} we are imposing $N - \ell(\mf u_b)$ conditions, where $\ell (\mf u_b)$ denotes the number of non-positive eigenvalues of $\mf A_{11}(\mf u_b)$ and depends on $\mf u_b$. 
\end{remark}
\subsection{Applications to the inviscid limit of the Navier-Stokes and viscous MHD equations in Eulerian and Lagrangian coordinates}\label{ss:appl}
In this paragraph we verify that Hypotheses~\ref{h:ks},$\dots$,\ref{h:eulerlag} are satisfied by the Navier-Stokes and viscous MHD equation written in both Lagrangian and Eulerian coordinates. We recall that the Navier-Stokes equations describe the motion of a viscous and compressible fluid, whereas the viscous MHD equations in one space dimension describe the propagation of plane waves in an electrically charged viscous and compressible fluid. 
\subsubsection{Navier-Stokes equation in Eulerian coordinates} \label{sss:appl1}
We refer to~\cite[\S2.3]{BianchiniSpinolo} and conclude that in this case $h=1$ and 
$$
    \mf E^{-1}\mf A (\mf u)=
    \left( 
    \begin{array}{ccc}
      u & \rho & 0 \\
      R \theta/ \rho & u & R \\
      0 & R\theta/ e_\theta & u \\
    \end{array}
    \right), \qquad \mf u = (\rho, u, \theta)
$$
In the above expression, $\rho$, $u$ and $\theta$ denote  the fluid density,
the fluid velocity, and
the absolute temperature, respectively. The internal energy $e$ is a function of $\theta$ and satisfies  
$e_\theta>0$. We are focusing on the case of a polytropic gas, with pressure law $p = R \theta \rho$. 
The eigencouples of $\mf E^{-1} \mf A$ are 
$$
   \lambda_1 (\mf u) = u-c, \; \mf r_1 (\mf u) = 
   \left(
   \begin{array}{ccc}
   \rho e_\theta \\
  - e_\theta c \\
   R \theta \\
   \end{array}
   \right), \quad 
   \lambda_2 (\mf u) = u, \; \mf r_2 (\mf u) = 
   \left(
   \begin{array}{ccc}
   - \rho \\
  0 \\
    \theta \\
   \end{array}
   \right), 
   \quad
   \lambda_3(\mf u) = u+c, \; \mf r_3 (\mf u) = 
  \left(
   \begin{array}{ccc}
   \rho e_\theta \\
  e_\theta c \\
   R \theta \\
   \end{array}
   \right)
$$
where $c = \sqrt{R \theta + R^2 \theta/e_\theta}$ denotes the sound speed. Since $e_\theta>0$, $c_\theta>0$, the first and third characteristic fields are genuinely nonlinear, the second is linearly degenerate. Summing up, when the fluid velocity is close to $c$ the boundary is characteristic and $k=1$, when it is close to $-c$ the boundary is again characteristic and $k=3$; in both cases,~\eqref{e:gnl},~\eqref{e:a11nz} are both satisfied. When the fluid velocity vanishes the boundary is characteristic with linearly degenerate boundary characteristic field; also, the system satisfies conditions~\eqref{e:alpha} and~\eqref{e:lincomb}, see~\cite[\S2.3]{BianchiniSpinolo} for the explicit computations. 
\subsubsection{Navier-Stokes equations in Lagrangian coordinates}
By using the analysis in~\cite[\S4.1]{Rousset} we conclude that Hypotheses \ref{h:normal},\ref{h:ks} and point C) in Hypothesis~\ref{h:eulerlag} are satisfied with $h=1$ and furthermore 
$$
    \mf E^{-1}\mf A (\mf u)=
    \left( 
    \begin{array}{ccc}
      0 & -1 & 0 \\
      -R\theta/\tau^2 & 0 & R/\tau \\
      0 & R \theta/\tau e_\theta &  0 \\
    \end{array}
    \right), \qquad \mf u = (\tau, u, \theta),
$$
where $\tau= \rho^{-1}$ is the specific volume.  The eigencouples of the above matrix are 
$$
   \lambda_1 = - \frac{c}{\tau}, \; \mf r_1 = \left(
   \begin{array}{ccc}
   e_\theta \\
   c e_\theta/\tau \\
   -R \theta/\tau
   \end{array}
   \right), \quad 
    \lambda_2 =0, \; \mf r_2 = \left(
   \begin{array}{ccc}
  - \tau/\theta \\
   0 \\
   1
   \end{array}
   \right),
   \quad 
   \lambda_3 =  \frac{c}{\tau}, \; \mf r_3 = \left(
   \begin{array}{ccc}
   - e_\theta \\
   c e_\theta/\tau \\
   R \theta /\tau
   \end{array}
   \right).
$$
The first and third characteristic fields are genuinely nonlinear, the second is linearly degenerate. 
\subsubsection{Viscous MHD equation in Eulerian coordinates} \label{sss:mhdeuler}We refer to~\cite[\S2.3]{BianchiniSpinolo} and conclude that Hypotheses~\ref{h:normal},\ref{h:ks} are both satisfied. Note that $h=1$ if the electrical resistivity is strictly positive, and $h=3$ when the electrical resistivity is $0$. Also, we have 
$$
    \mf E^{-1}\mf A (\mf u)=
    \left( 
    \begin{array}{ccccccc}
      u                   & \mf 0_2^t       & \rho     & \mf 0_2^t                           & 0 \\
      \mf 0_2          & u \mf I_2        &  \mf b  & - \beta \mf I_2                   & \mf 0_2 \\
      R \theta/ \rho &  \mf b^t /\rho     &  u        &         \mf 0_2^t                    & R \\
      \mf 0_2         & - \beta \mf I_2 & \mf 0_2 & u \mf I_2                        & \mf 0_2 \\
      0                  & \mf 0_2^t         &R\theta/ e_\theta & \mf 0_2^t &     u \\
    \end{array}
    \right), \qquad \mf u = (\rho, \mf b,  u, \mf w, \theta),
$$
where $(\beta, \mf b)$, $\beta>0$ denotes the magnetic field and $(u, \mf w)$ is the fluid velocity. 
By direct computations, one can verify that if $\mf b \neq \mf 0_2$ the matrix $   \mf E^{-1}\mf A$ satisfies Hypothesis~\ref{h:sh}. For almost every given value $\mf
u$, Hypothesis~\ref{h:ldgnl} is satisfied in a suitable neighborhood of $\mf u$.
Also, the fields corresponding to the eigenvectors $\lambda_2 (\mf u) = u - \beta \sqrt{\rho}$, $\lambda_4 (\mf u) =u$, $\lambda_6 (\mf u) = u+ \beta \sqrt{\rho}$ are linearly degenerate as the corresponding eigenvectors are 
$$
    \mf r_2 (\mf u) = 
   \left(
   \begin{array}{ccc}
  0 \\
 - b_2 \sqrt{\rho}\\
   b_1\sqrt{\rho} \\
  0\\
  -b_2 \\
  b_1 \\
  0
   \end{array}
   \right), \quad  \mf r_4 (\mf u) = 
   \left(
   \begin{array}{ccc}
   - \rho \\
  0 \\
  0 \\
  0 \\
  0\\
  0\\
    \theta \\
   \end{array}
   \right), 
   \quad \mf r_6 (\mf u) = 
   \left(
   \begin{array}{ccc}
  0 \\
  -b_2 \sqrt{\rho} \\
   b_1\sqrt{\rho}\\
  0\\
  -b_2 \\
  b_1 \\
  0
   \end{array}
   \right).
$$
If $u$ vanishes then the fourth characteristic field is boundary characteristic, it satisfies~\eqref{e:lindeg} and by relying on the analysis in~\cite[\S2.4]{BianchiniSpinolo} we conclude that the system satisfies conditions~\eqref{e:alpha} and~\eqref{e:lincomb} in both cases of null and positive electrical resistivity. If the eigenvalue $\lambda_i(\mf u)$ vanishes for some $i \neq 4$ then by relying on~\cite[\S2.4]{BianchiniSpinolo} one can show that~\eqref{e:a11nz} is satisfied in  both cases of null and positive electrical resistivity. 
\subsubsection{Viscous MHD equation in Lagrangian coordinates} 
By relying on~\cite[\S4.2]{Rousset} we get that Hypotheses~\ref{h:normal} and~\ref{h:ks} are satisfied  and moreover
$$
    \mf E^{-1}\mf A (\mf u)=
    \left( 
    \begin{array}{ccccccc}
      0                & \mf 0_2^t                     & -1              & \mf 0_2^t                                  & 0 \\
      \mf 0_2          & \mf 0_{2\times 2}       &  \mf b/\tau  & - \beta /\tau\mf I_2                   & \mf 0_2 \\
     -R \theta/\tau^2 &  \mf b^t                       &  0           &         \mf 0_2^t                    & R/\tau\\
      \mf 0_2         & - \beta \mf I_2 & \mf 0_2 & \mf 0_{2 \times 2}                     & \mf 0_2 \\
      0                  & \mf 0_2^t         &R \theta/ \tau e_\theta & \mf 0_2^t &     0 \\
    \end{array}
    \right), \qquad \mf u = (\tau, \mf b,  u, \mf w, \theta).
$$
Again for almost every given value $\mf
u$, Hypothesis~\ref{h:ldgnl} is satisfied in a suitable neighborhood of $\mf u$.
Note furthermore that the characteristic fields associated to the eigenvalues $\lambda_2 (\mf u)= - \beta/\sqrt{\tau}$, $\lambda_4 (\mf u)=0$ and $\lambda_6(\mf u) = \beta/\sqrt{\tau}$ are linearly degenerate as the corresponding eigenvectors are  
$$
    \mf r_2 (\mf u) = 
   \left(
   \begin{array}{ccc}
  0 \\
 - b_2 /\sqrt{\tau}\\
   b_1/\sqrt{\tau} \\
  0\\
  -b_2 \\
  b_1 \\
  0
   \end{array}
   \right) \quad 
    \mf r_4 (\mf u) = 
   \left(
   \begin{array}{ccc}
   \tau \\
  0 \\
  0 \\
  0 \\
  0\\
  0\\
    \theta \\
   \end{array}
   \right), 
    \quad 
  \mf r_6 (\mf u) = 
   \left(
   \begin{array}{ccc}
  0 \\
 b_2 /\sqrt{\tau}\\
 -  b_1/\sqrt{\tau} \\
  0\\
  -b_2 \\
  b_1 \\
  0
   \end{array}
   \right)
$$   
By relying on~\cite[\S4.2]{Rousset} we also get~\eqref{e:lagr2}   in both cases of null and positive electrical resistivity. 
 \section{Boundary layers analysis} \label{s:bl}
 Boundary layers are steady solutions of~\eqref{e:symmetric} and hence satisfy the equation  
\be \label{e:bltw}
    \mf A (\mf u) \mf u' = \mf B(\mf u) \mf u'' + \mf G(\mf u, \mf u') \mf u'. 
\eq 
Since the matrix $\mf B$ is singular, see~\eqref{e:B}, some work is required to write the above equation in an explicit form, which is done in this section by following the analysis in~\cite{BianchiniSpinoloARMA,BianchiniSpinolo} and separately considering cases A), B) and C) in Hypothesis~\ref{h:eulerlag}. We also provide the construction of the center manifold, which is relevant in our analysis because we deal with the boundary characteristic case and hence we have to take into account that in general the boundary layers have a component lying on the center manifold. The exposition is organized as follows: in \S\ref{ss:bld} we consider case A) in Hypothesis~\ref{h:eulerlag}, in \S\ref{ss:beuler} case B) and in \S\ref{ss:blagr} case C). 
\subsection{Case A) of Hypthesis~\ref{h:eulerlag}} \label{ss:bld}
\subsubsection{Boundary layers equation}
We assume~\eqref{e:gnl} and~\eqref{e:a11nz} and introduce the decomposition~\eqref{e:decompose} 
and by
using Hypothesis~\ref{h:normal} we recast~\eqref{e:bltw} as 
\be \label{e:bltw2} 
    \boldsymbol{\omega}' = \mf a( \boldsymbol{\omega}) 
\eq
provided $ \boldsymbol{\omega} \in \R^{2N-h}$ 
and $\mf a: \R^{2N-h} \to \R^{2N-h}$ are defined by setting 
\be \label{e:gamma} 
      \boldsymbol{\omega}\!: =\!\!
    \left(
    \begin{array}{cc}
    \mf u_1 \\
    \mf u_2 \\
    \mf z_2 \\
    \end{array}
    \right) \ 
    \mf a ( \boldsymbol{\omega})
   \!: =\!\!
    \left(
   \begin{array}{cc}
   -  \mf A_{11}^{-1} \mf A_{21}^t \mf z_2 \\
   \mf z_2 \\
   \mf B^{-1}_{22} \displaystyle{\big[ 
   ( \mf G_1 -\mf A_{21} ) \mf A_{11}^{-1} \mf A_{21}^t  + \mf A_{22}  - \mf G_2  \big] \mf z_2} \\
    \end{array}
    \right) \ \mf u_1 \in \R^h, \mf u_2 \in \R^{N-h}, \mf z_2 \in \R^{N-h}.
\eq
Since we need it in the following we introduce the projection 
\be \label{e:proj}
     \boldsymbol{\pi}_{\mf u}: \R^{2N-h} \to \R^N, \quad  \boldsymbol{\pi}_{\mf u} (\mf u, \mf z) : = \mf u
\eq
We linearize system~\eqref{e:bltw2} at $ \boldsymbol{\omega}^\ast: = (\mf u^\ast,  \mf 0_{N-h})$, where $\mf u^\ast$ is the same as in~\eqref{e:uast} and $\mf G_1$ and $\mf G_2$ satisfy~\eqref{e:G2}. To compute its center and stable space, we use the results that follow. 
\begin{lemma}
\label{l:kernel} 
If the vector  $\mf p (\mf u) \in \R^{N-h}$ and the scalar $\mu(\mf u)$ satisfy 
\be \label{e:eige}
   \big[ 
    -\mf A_{21}  \mf A_{11}^{-1} \mf A_{21}^t  (\mf u) + \mf A_{22}(\mf u) - \mu (\mf u)\mf B_{22}   (\mf u) \big] \mf p (\mf u)= \mf 0_{N-h}
\eq
then 
\be \label{e:errei}
    \boldsymbol{\rho}_j (\mf u): = 
    \left(
    \begin{array}{cc}
     -     \mf A_{11} ^{-1}\mf A_{21}^t (\mf u) \mf p(\mf u) \\
     \mf p (\mf u)\\
     \end{array} 
    \right)
\eq
satisfies $[\mf A(\mf u) - \mu (\mf u) \mf B(\mf u) ] \boldsymbol{\rho}(\mf u)= \mf 0_N$. Conversely, every vector belonging to the kernel of $[\mf A(\mf u) - \mu\mf B(\mf u)]$ can be written in the form~\eqref{e:errei}, with $\mf p$ satisfying~\eqref{e:eige}. 
\end{lemma}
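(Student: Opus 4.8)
The plan is to reduce the $N\times N$ generalized kernel equation $[\mf A(\mf u)-\mu(\mf u)\mf B(\mf u)]\boldsymbol{\rho}=\mf 0_N$ to the $(N-h)\times(N-h)$ equation~\eqref{e:eige} by a block elimination (Schur complement) argument. The structural inputs are the block decomposition~\eqref{e:blockae} of $\mf A$, the block form~\eqref{e:B} of $\mf B$, and the invertibility~\eqref{e:a11nz} of $\mf A_{11}$ granted by case A) of Hypothesis~\ref{h:eulerlag}.

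Write a generic vector of $\R^N$ as $\boldsymbol{\rho}=(\mf q^t,\mf p^t)^t$ with $\mf q\in\R^h$ and $\mf p\in\R^{N-h}$. By~\eqref{e:blockae} and~\eqref{e:B}, the identity $[\mf A-\mu\mf B]\boldsymbol{\rho}=\mf 0_N$ is equivalent to the pair
\[
\mf A_{11}\,\mf q+\mf A_{21}^t\,\mf p=\mf 0_h,\qquad
\mf A_{21}\,\mf q+\big(\mf A_{22}-\mu\,\mf B_{22}\big)\,\mf p=\mf 0_{N-h}.
\]
For the direct implication I would start from $\mf p,\mu$ satisfying~\eqref{e:eige}, set $\mf q:=-\mf A_{11}^{-1}\mf A_{21}^t\mf p$ so that $\boldsymbol{\rho}$ is exactly the vector in~\eqref{e:errei}; the first relation then holds by construction, and substituting this $\mf q$ into the second relation reproduces~\eqref{e:eige}, hence $[\mf A-\mu\mf B]\boldsymbol{\rho}=\mf 0_N$. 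For the converse, given $\boldsymbol{\rho}=(\mf q^t,\mf p^t)^t$ in the kernel, the first relation together with~\eqref{e:a11nz} forces $\mf q=-\mf A_{11}^{-1}\mf A_{21}^t\mf p$, i.e.\ $\boldsymbol{\rho}$ has the form~\eqref{e:errei}; plugging this back into the second relation yields~\eqref{e:eige}.

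There is no genuine obstacle here: the statement is a routine consequence of Gaussian elimination on the top block, and the only care needed is bookkeeping of the transpose $\mf A_{21}^t$ in~\eqref{e:blockae} and using~\eqref{e:a11nz} exactly once, to solve the top block for $\mf q$. If convenient for the subsequent computation of the center and stable spaces of the linearization of~\eqref{e:bltw2}, one may also note in passing that $\mf A_{11}$, hence $\mf A_{11}^{-1}$, is symmetric by Hypothesis~\ref{h:normal}, so the reduced matrix $-\mf A_{21}\mf A_{11}^{-1}\mf A_{21}^t+\mf A_{22}$ is symmetric and, since $\mf B_{22}$ is symmetric positive definite,~\eqref{e:eige} is a symmetric generalized eigenvalue problem with $N-h$ real eigenvalues; this is not needed for the lemma itself but simplifies what follows.
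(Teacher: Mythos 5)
Your block-elimination argument is correct and is exactly the routine computation the lemma rests on; the paper in fact states Lemma~\ref{l:kernel} without proof, since under~\eqref{e:a11nz} the top block equation $\mf A_{11}\mf q+\mf A_{21}^t\mf p=\mf 0_h$ is uniquely solved by $\mf q=-\mf A_{11}^{-1}\mf A_{21}^t\mf p$ and the bottom block then reproduces~\eqref{e:eige}, just as you write. Your closing remark on symmetry is also consistent with how the paper proceeds right after the lemma, where the symmetry of $\mf B_{22}^{-1}[-\mf A_{21}\mf A_{11}^{-1}\mf A_{21}^t+\mf A_{22}]$ is used for the signature count in Lemma~\ref{l:4.7}.
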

Next, we point out that the matrix $\mf B_{22}^{-1}[
 -\mf A_{21} \mf A_{11}^{-1} \mf A_{21}^t  + \mf A_{22} ] \in \mathbb{M}^{(N-h)\times (N-h)}$ is symmetric and use a particular case of~\cite[Lemma 4.7]{BianchiniSpinoloARMA}. Note that owing to~\eqref{e:a11nz} the number $0$ cannot be an eigenvalue of $\mf A_{11}$ and that the following lemma implies in particular that $\ell \leq k-1$. 
\begin{lemma}
\label{l:4.7} Let $\ell$ denote the number of negative eigenvalues of $\mf A_{11}$\footnote{Owing to assumption~\eqref{e:a11nz}, $\ell$ does not depend on $\mf u$}, then 
the signature of the matrix $\mf B_{22}^{-1}[
 -\mf A_{21} \mf A_{11}^{-1} \mf A_{21}^t  + \mf A_{22} ](\mf u^\ast)$ is as follows: $k-1-\ell$ eigenvalues are strictly negative (each of them is counted according to its multiplicity), $0$ is an eigenvalue with multiplicity $1$, and $N-h-k+ \ell$ eigenvalues are strictly positive.  
\end{lemma}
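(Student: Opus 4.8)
The plan is to reduce the statement to Sylvester's law of inertia, reading off the signature of the matrix under consideration from that of $\mf A(\mf u^\ast)$ through two elementary reductions: a Schur-complement congruence and a symmetrization. Throughout I write $\mathrm{In}(\mf C)$ for the triple of numbers of negative, zero and positive eigenvalues of a matrix $\mf C$ with real spectrum.

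First I would record the inertia of $\mf A(\mf u^\ast)$ itself. Since $\mf E(\mf u^\ast)$ is symmetric positive definite by Hypothesis~\ref{h:normal}~(iii), the matrix $\mf E^{-1}(\mf u^\ast)\mf A(\mf u^\ast)$ is conjugate, via $\mf E^{1/2}(\mf u^\ast)$, to the symmetric matrix $\mf E^{-1/2}(\mf u^\ast)\mf A(\mf u^\ast)\mf E^{-1/2}(\mf u^\ast)$, which in turn is congruent to $\mf A(\mf u^\ast)$ through the symmetric invertible matrix $\mf E^{-1/2}(\mf u^\ast)$. Hence $\mathrm{In}(\mf A(\mf u^\ast))$ equals the number of negative, zero and positive eigenvalues of $\mf E^{-1}(\mf u^\ast)\mf A(\mf u^\ast)$, which by~\eqref{e:sh} and~\eqref{e:uast} is $(k-1,1,N-k)$.

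Next I would use the block factorization, valid at $\mf u^\ast$ because $\mf A_{11}(\mf u^\ast)$ is invertible by~\eqref{e:a11nz},
\[
 \begin{pmatrix} \mf I_h & \mf 0 \\ -\mf A_{21}\mf A_{11}^{-1} & \mf I_{N-h}\end{pmatrix}
 \begin{pmatrix} \mf A_{11} & \mf A_{21}^t \\ \mf A_{21} & \mf A_{22}\end{pmatrix}
 \begin{pmatrix} \mf I_h & -\mf A_{11}^{-1}\mf A_{21}^t \\ \mf 0 & \mf I_{N-h}\end{pmatrix}
 = \begin{pmatrix} \mf A_{11} & \mf 0 \\ \mf 0 & \mf S \end{pmatrix},
\]
where $\mf S := \mf A_{22}-\mf A_{21}\mf A_{11}^{-1}\mf A_{21}^t$ is the Schur complement and all matrices are evaluated at $\mf u^\ast$. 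The matrix on the right of $\mf A$ is the transpose of the one on the left (using that $\mf A_{11}$ is symmetric), so this is a congruence, and Sylvester's law gives $\mathrm{In}(\mf A(\mf u^\ast))=\mathrm{In}(\mf A_{11}(\mf u^\ast))+\mathrm{In}(\mf S(\mf u^\ast))$. Since $\mf A_{11}(\mf u^\ast)$ is symmetric, nonsingular and has exactly $\ell$ negative eigenvalues, $\mathrm{In}(\mf A_{11}(\mf u^\ast))=(\ell,0,h-\ell)$; subtracting, $\mathrm{In}(\mf S(\mf u^\ast))=(k-1-\ell,\,1,\,N-h-k+\ell)$. In particular the first entry is nonnegative, which is the asserted inequality $\ell\le k-1$.

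Finally, since $\mf B_{22}(\mf u^\ast)$ is symmetric positive definite (Hypothesis~\ref{h:normal}~(i)) and $\mf S(\mf u^\ast)$ is symmetric, the matrix in the statement, $\mf B_{22}^{-1}(\mf u^\ast)\mf S(\mf u^\ast)$, is conjugate, via $\mf B_{22}^{1/2}(\mf u^\ast)$, to the symmetric matrix $\mf B_{22}^{-1/2}(\mf u^\ast)\mf S(\mf u^\ast)\mf B_{22}^{-1/2}(\mf u^\ast)$, which is congruent to $\mf S(\mf u^\ast)$; hence it has the same inertia $(k-1-\ell,1,N-h-k+\ell)$ as $\mf S(\mf u^\ast)$, which is precisely the claim. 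I do not anticipate a genuine obstacle here: the only points requiring a little care are that one must pass through the symmetrized versions $\mf E^{-1/2}\mf A\mf E^{-1/2}$ and $\mf B_{22}^{-1/2}\mf S\mf B_{22}^{-1/2}$ before invoking Sylvester's law --- because $\mf E^{-1}\mf A$ and $\mf B_{22}^{-1}\mf S$ need not themselves be symmetric --- and that the block factorization above relies on the invertibility of $\mf A_{11}$, which is exactly~\eqref{e:a11nz}.
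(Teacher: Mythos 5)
Your argument is correct, and each step checks out: the conjugation of $\mf E^{-1}\mf A$ and of $\mf B_{22}^{-1}\mf S$ to symmetric matrices is legitimate because $\mf E$ and $\mf B_{22}$ are symmetric positive definite; the block factorization is a genuine congruence since $\mf A_{11}$ is symmetric and, by~\eqref{e:a11nz}, invertible at $\mf u^\ast$; and Sylvester's law (in the Haynsworth additivity form $\mathrm{In}(\mf A)=\mathrm{In}(\mf A_{11})+\mathrm{In}(\mf S)$) together with $\mathrm{In}(\mf A(\mf u^\ast))=(k-1,1,N-k)$, which follows from~\eqref{e:sh} and~\eqref{e:uast}, gives exactly the stated signature, with the inequality $\ell\le k-1$ falling out as a byproduct. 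The route is, however, different from the paper's: the paper does not prove this lemma at all, but invokes it as a particular case of Lemma 4.7 in the cited work of Bianchini--Spinolo, where the count is obtained within the general boundary-layer machinery (essentially by relating the spectrum of $\mf B_{22}^{-1}\bigl[-\mf A_{21}\mf A_{11}^{-1}\mf A_{21}^t+\mf A_{22}\bigr]$ to the generalized eigenvalue problem $(\mf A-\mu\mf B)\boldsymbol{\rho}=\mf 0_N$, in the spirit of Lemma~\ref{l:kernel}, rather than by a direct inertia computation). What your approach buys is a short, self-contained and purely linear-algebraic proof at the point $\mf u^\ast$, independent of the external reference; what the cited approach buys is uniformity in $\mf u$ and compatibility with the hyperbolic--parabolic structure used elsewhere (e.g.\ the analogous counts in case C), where $\mf A_{11}=\mf 0_{h\times h}$ and your Schur-complement reduction is not available in this form). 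One cosmetic remark: the matrix $\mf B_{22}^{-1}\mf S$ is in general only symmetrizable, not symmetric, so your detour through $\mf B_{22}^{-1/2}\mf S\mf B_{22}^{-1/2}$ is indeed necessary and correctly handled.
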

\subsubsection{Center manifold construction}
We apply the above lemma and conclude that the  center space of $\mathbf D \mf g (\boldsymbol{\omega}^\ast)$ has dimension $N+1$ and it is given by 
\be 
\label{e:cs}
  M^c : = \R^N \times \mathrm{span} <\mf p_k > ,   
\eq 
 where $\mf p_k$ is a nonzero vector in the kernel of  $\mf B^{-1}_{22} \big[ -\mf A_{21} \mf A_{11}^{-1} \mf A_{21}^t  + \mf A_{22} \big] (\mf u^\ast)$. We apply the Center Manifold Theorem and fix a center manifold\footnote{The center manifold is in general not unique, but we can arbitrarily fix one.} $\mathcal M^c$. By arguing as in~\cite[\S4]{BianchiniBressan} we establish the following lemma. 
 \begin{lemma} \label{l:emmec}
 There is a map $\mf p_c$, defined in a sufficiently small neighborhood of $(\mf u^\ast, 0)$, attaining values in  $\R^{N-h}$ and satisfying the following properties: 
\begin{itemize}
\item[a)] $\mf p_c (\mf u^\ast, 0)$ is a nontrivial vector in the kernel of  $ \big[ -\mf A_{21} \mf A_{11}^{-1} \mf A_{21}^t  + \mf A_{22} \big] (\mf u^\ast)$. 
\item[b)] 
  in a small enough neighborhood of $(\mf u^\ast, 0)$, 
\be \label{e:emmec}
    (\mf u, \mf z_2 ) \in \mathcal M^c \iff \mf z_2 = z_c \mf p_c (\mf u, z_c) \quad \text{for some $z_c \in \R$}. 
\eq
\item[c)] We have 
\be \label{e:propc} \mf p^t_c \mf B_{22}  \mf p_c \equiv 1. \eq 
\end{itemize}
    Also, by restricting system~\eqref{e:bltw} on $\mathcal M^c$ we obtain
    \be 
    \label{e:bltw3}
    \left\{
    \begin{array}{ll}
    \mf u_1' =  - z_c \mf A_{11}^{-1} \mf A_{21}^t \mf p_c \\
    \mf u_2' = z_c \mf p_c \\
    z_c' = z_c  \theta_c  \\
    \end{array}
    \right.
    \eq
 where the function $\theta_c$ is defined in a sufficiently small neighborhood of $(\mf u^\ast, 0)$, attains real values and satisfies 
 \be
 \label{e:lambdac}
      \theta_c (\mf u^\ast, 0)=0. 
 \eq  
\end{lemma}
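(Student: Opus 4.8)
The plan is to follow the center-manifold reduction of~\cite[\S4]{BianchiniBressan}, adapted to the first-order system~\eqref{e:bltw2}; I proceed in three steps. \emph{Linearization.} First I compute $\mathbf D\mf a(\boldsymbol{\omega}^\ast)$. By~\eqref{e:gamma} the field $\mf a$ vanishes identically on the equilibrium manifold $\mathcal E:=\{\mf z_2=\mf 0_{N-h}\}$, and $\mf G_1(\mf u^\ast,\mf 0_{N-h})=\mf 0$, $\mf G_2(\mf u^\ast,\mf 0_N)=\mf 0$ by~\eqref{e:G2}, so the derivatives in the $\mf u_1,\mf u_2$ directions vanish and
\[
  \mathbf D\mf a(\boldsymbol{\omega}^\ast)=\begin{pmatrix}\mf 0&\mf 0&-\mf A_{11}^{-1}\mf A_{21}^t\\ \mf 0&\mf 0&\mf I_{N-h}\\ \mf 0&\mf 0&\mf B_{22}^{-1}\big[-\mf A_{21}\mf A_{11}^{-1}\mf A_{21}^t+\mf A_{22}\big]\end{pmatrix}(\mf u^\ast).
\]
Thus $\R^N\times\{\mf 0_{N-h}\}$ lies in the kernel, and the remaining eigenvalues are those of $\mf B_{22}^{-1}[-\mf A_{21}\mf A_{11}^{-1}\mf A_{21}^t+\mf A_{22}](\mf u^\ast)$, which by Lemma~\ref{l:4.7} consist of $k-1-\ell$ strictly negative ones, a simple eigenvalue $0$ with eigenvector $\mf p_k$, and $N-h-k+\ell$ strictly positive ones. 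Reading off the third block-column, $(\mf 0_h,\mf 0_{N-h},\mf p_k)$ is mapped by $\mathbf D\mf a(\boldsymbol{\omega}^\ast)$ to the nonzero kernel vector $(-\mf A_{11}^{-1}\mf A_{21}^t\mf p_k,\mf p_k,\mf 0_{N-h})$, so it is a generalized eigenvector of rank two; hence the center space is $(N+1)$-dimensional and equals $M^c$ as in~\eqref{e:cs}, while Lemma~\ref{l:kernel} (with~\eqref{e:eige}--\eqref{e:errei}) identifies $M^s,M^u$ and gives $\dim M^s+\dim M^u=N-h-1$, so in particular $\ell\le k-1$.

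\emph{Center manifold and factorization.} I apply the Center Manifold Theorem to get an $(N+1)$-dimensional invariant manifold $\mathcal M^c$ through $\boldsymbol{\omega}^\ast$, tangent to $M^c$, of class $C^k$ for any finite $k$. By the standard characterization of center manifolds (cf.~\cite[\S4]{BianchiniBressan}) $\mathcal M^c$ contains every orbit that stays in a small neighborhood of $\boldsymbol{\omega}^\ast$ for all times, in particular the equilibrium manifold $\mathcal E$. Using the graph representation over $M^c$, the $\mf u$-component and the $\mf p_k$-coefficient of a point of $\mathcal M^c$ form, by tangency to $M^c$, a local coordinate system $(\mf u,z_c)\in\R^N\times\R$ on $\mathcal M^c$; write $\mf z_2=\boldsymbol{\Phi}(\mf u,z_c)$ for the $\mf z_2$-component. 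Inside $\mathcal M^c$ the codimension-one submanifold $\mathcal E$ is a graph $z_c=g(\mf u)$ with $g(\mf u^\ast)=0$, $\nabla g(\mf u^\ast)=\mf 0$; after the shift $z_c\mapsto z_c-g(\mf u)$ we may assume $\mathcal E=\{z_c=0\}$, so $\boldsymbol{\Phi}(\mf u,0)=\mf 0_{N-h}$ and hence $\boldsymbol{\Phi}(\mf u,z_c)=z_c\,\mf p_c(\mf u,z_c)$ with $\mf p_c$ smooth (Hadamard's lemma), which is~\eqref{e:emmec}. Since $\partial/\partial z_c$ at $\boldsymbol{\omega}^\ast$ lies in $M^c$ but not in $T_{\boldsymbol{\omega}^\ast}\mathcal E=\R^N\times\{\mf 0\}$, the vector $\mf p_c(\mf u^\ast,0)=\partial_{z_c}\boldsymbol{\Phi}(\mf u^\ast,0)$ is a nonzero multiple of $\mf p_k$, hence lies in $\ker\mf B_{22}^{-1}[-\mf A_{21}\mf A_{11}^{-1}\mf A_{21}^t+\mf A_{22}](\mf u^\ast)=\ker[-\mf A_{21}\mf A_{11}^{-1}\mf A_{21}^t+\mf A_{22}](\mf u^\ast)$, proving a). Finally, $\mf B_{22}$ being symmetric positive definite and $\mf p_c(\mf u^\ast,0)\ne\mf 0$, the scalar $\mf p_c^t\mf B_{22}(\mf u)\mf p_c$ is positive near $(\mf u^\ast,0)$; replacing $\mf p_c$ by $\mf p_c/(\mf p_c^t\mf B_{22}\mf p_c)^{1/2}$ and rescaling $z_c$ accordingly (which leaves both $\{z_c=0\}$ and the relation $\mf z_2=z_c\mf p_c$ unchanged) gives~\eqref{e:propc}.

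\emph{Reduced equations.} Restricting~\eqref{e:bltw2} to $\mathcal M^c$ in the coordinates $(\mf u_1,\mf u_2,z_c)$ and using $\mf z_2=z_c\mf p_c$, the first two blocks of~\eqref{e:gamma} give directly $\mf u_1'=-z_c\mf A_{11}^{-1}\mf A_{21}^t\mf p_c$ and $\mf u_2'=z_c\mf p_c$, the first two equations of~\eqref{e:bltw3}. For the third, I differentiate $\mf z_2=z_c\mf p_c(\mf u,z_c)$, substitute $\mf u_1',\mf u_2'$ (both $O(z_c)$), equate with the third block of~\eqref{e:gamma}, and take the scalar product with $\mf B_{22}\mf p_c$; using~\eqref{e:propc} this yields
\[
  z_c'\bigl(1+z_c\,O(1)\bigr)=z_c\Bigl(\mf p_c^t\big[(\mf G_1-\mf A_{21})\mf A_{11}^{-1}\mf A_{21}^t+\mf A_{22}-\mf G_2\big]\mf p_c+z_c\,O(1)\Bigr),
\]
with $\mf G_1,\mf G_2$ evaluated along the restricted flow. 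Since the coefficient of $z_c'$ is close to $1$ near the origin, solving gives $z_c'=z_c\,\theta_c(\mf u,z_c)$ with $\theta_c$ smooth (consistency of this scalar ODE with the full vector equation is automatic since $\mathcal M^c$ is invariant). Evaluating at $(\mf u^\ast,0)$ and using $\mf G_1(\mf u^\ast,\mf 0_{N-h})=\mf 0$, $\mf G_2(\mf u^\ast,\mf 0_N)=\mf 0$ from~\eqref{e:G2} together with $\mf p_c(\mf u^\ast,0)\in\ker[-\mf A_{21}\mf A_{11}^{-1}\mf A_{21}^t+\mf A_{22}](\mf u^\ast)$, the right-hand side vanishes, so $\theta_c(\mf u^\ast,0)=0$, which is~\eqref{e:lambdac}. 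The matrix computations and the differentiations above are routine; the delicate point is the factorization in the second step, namely that the equilibrium manifold $\mathcal E$ genuinely lies inside the fixed center manifold $\mathcal M^c$ and is cut out by a good coordinate $z_c$, which is where the standard properties of center manifolds (and the freedom in choosing $\mathcal M^c$) are used.
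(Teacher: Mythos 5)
Your proof is correct and follows essentially the same route the paper intends: the paper gives no detailed argument but simply invokes the center-manifold reduction of Bianchini--Bressan, and your write-up is exactly that standard argument, computing the center space as in~\eqref{e:cs} via Lemma~\ref{l:4.7}, using the fact that the equilibrium manifold $\{\mf z_2=\mf 0_{N-h}\}$ lies on any center manifold to get the factorization $\mf z_2=z_c\,\mf p_c(\mf u,z_c)$, normalizing to obtain~\eqref{e:propc}, and deducing $\theta_c(\mf u^\ast,0)=0$ from property a) together with~\eqref{e:G2}, which is the same mechanism the paper points to after~\eqref{e:lambdac2}. No essential step is missing (your shift $z_c\mapsto z_c-g(\mf u)$ is in fact vacuous, since equilibria have vanishing $\mf p_k$-coefficient, but this does not affect the argument).
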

Since we need it in the following, we introduce the notation 
\be \label{e:errec}
      \mf r_c (\mf u, z_c)= \left(
     \begin{array}{cc}
                - \mf A_{11}^{-1} \mf A_{21}^t \mf p_c \\
                \mf p_c \\
     \end{array}
     \right)
\eq
and write down the explicit expression of $\theta_c$:  
\be 
\label{e:lambdac2} \begin{split}
        \theta_c (\mf u, z_c): = \frac{1}{1 +z_c  \mf p_c^t\,  \partial_{z_c} \mf p_c}
      \mf p_c^t \Big[ &
      ( \mf G_1 -\mf A_{21} ) \mf A_{11}^{-1} \mf A_{21}^t  + \mf A_{22}  - \mf G_2       -z_c  \mf D_{\mf u_1} \mf p_c  \mf A_{11}^{-1} \mf A_{21}^t \mf p_c  -
      z_c \mf D_{\mf u_2} \mf p_c \  
      \Big] \mf p_c 
\end{split}
\eq 
Note that by combining property a) above with~\eqref{e:G2} we get~\eqref{e:lambdac} from~\eqref{e:lambdac2}.

The next lemma collects some properties 
of $ \theta_c$ we use in the following. 
\begin{lemma}
\label{l:lambdac} We have 
\be 
\label{e:thetaczero}
    \lambda_k (\mf u)=   \theta_c (\mf u, 0) a (\mf u) \quad \text{for some smooth function $a (\mf u) \ge  a_0>0$},
\eq
where  $a_0 >0$ is a suitable constant. Also, if $\lambda_k (\mf u) =0$ then $\mf r_c (\mf u, 0)= \mf r_k (\mf u)$, where $\mf r_c$ and $\mf r_k$ are as in~\eqref{e:errec} and~\eqref{e:errei}, respectively. 
\end{lemma}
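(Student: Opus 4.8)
The strategy is to identify $\theta_c(\mf u,0)$ with a distinguished eigenvalue of the matrix $\mf M(\mf u):=\mf B_{22}^{-1}\bigl[-\mf A_{21}\mf A_{11}^{-1}\mf A_{21}^t+\mf A_{22}\bigr](\mf u)$ occurring in Lemma~\ref{l:4.7}, and then to relate that eigenvalue to $\lambda_k(\mf u)$ via the characteristic polynomials of $\mf E^{-1}\mf A$ and of $\mf M$. Fix $\mf u$ close to $\mf u^\ast$. Since the vector field $\mf a$ in~\eqref{e:gamma} vanishes identically on $\{\mf z_2=\mf 0_{N-h}\}$, and $\mf G_1(\mf u,\mf 0_{N-h})=\mf 0$, $\mf G_2(\mf u,\mf 0_N)=\mf 0$ by~\eqref{e:G2}, a direct computation shows that $\R^N\times\{\mf 0_{N-h}\}$ is the kernel of the linearization $\mathbf D\mf a(\mf u,\mf 0_{N-h})$, that its nonzero eigenvalues coincide with those of $\mf M(\mf u)$, and that the $\mf z_2$-component of $\mathbf D\mf a(\mf u,\mf 0_{N-h})\,(\mf 0_N,\mf p_c(\mf u,0))$ equals $\mf M(\mf u)\mf p_c(\mf u,0)$. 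By Lemma~\ref{l:emmec}~b) with $z_c=0$ the equilibrium $(\mf u,\mf 0_{N-h})$ lies on $\mathcal M^c$, and from the parametrization~\eqref{e:emmec} its tangent space there is $\bigl(\R^N\times\{\mf 0_{N-h}\}\bigr)\oplus\mathrm{span}\bigl\langle(\mf 0_N,\mf p_c(\mf u,0))\bigr\rangle$. Local invariance of $\mathcal M^c$ forces $\mathbf D\mf a(\mf u,\mf 0_{N-h})$ to preserve that tangent space; since $\mathbf D\mf a(\mf u,\mf 0_{N-h})$ kills the summand $\R^N\times\{\mf 0_{N-h}\}$, this is possible only if $\mf M(\mf u)\mf p_c(\mf u,0)$ is a scalar multiple of $\mf p_c(\mf u,0)$. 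As the eigenvalue $0$ of $\mf M(\mf u^\ast)$ is simple and isolated from the rest of the spectrum (Lemma~\ref{l:4.7}) with eigenvector $\mf p_c(\mf u^\ast,0)$ (Lemma~\ref{l:emmec}~a)), for $\mf u$ near $\mf u^\ast$ the matrix $\mf M(\mf u)$ has a unique, simple, smoothly varying eigenvalue $\mu_c(\mf u)$ near $0$ and $\mf M(\mf u)\mf p_c(\mf u,0)=\mu_c(\mf u)\mf p_c(\mf u,0)$. Plugging $z_c=0$ into~\eqref{e:lambdac2}, where the explicit $z_c$-terms drop and $\mf G_1,\mf G_2$ vanish, gives $\theta_c(\mf u,0)=\mf p_c(\mf u,0)^t\bigl[-\mf A_{21}\mf A_{11}^{-1}\mf A_{21}^t+\mf A_{22}\bigr](\mf u)\,\mf p_c(\mf u,0)=\mu_c(\mf u)$, using~\eqref{e:propc}.

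It remains to relate $\mu_c(\mf u)$ to $\lambda_k(\mf u)$. Set $\mf S(\mf u):=\mf A_{22}(\mf u)-\mf A_{21}\mf A_{11}^{-1}\mf A_{21}^t(\mf u)$, $F(\mf u,\lambda):=\det\bigl(\lambda\mf E(\mf u)-\mf A(\mf u)\bigr)=\det\mf E(\mf u)\prod_i(\lambda-\lambda_i(\mf u))$ and $G(\mf u,\mu):=\det\bigl(\mu\mf B_{22}(\mf u)-\mf S(\mf u)\bigr)=\det\mf B_{22}(\mf u)\prod_j(\mu-\mu_j(\mf u))$, the $\mu_j$ being the eigenvalues of $\mf M$. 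Expanding $F(\mf u,0)=\det(-\mf A(\mf u))$ by the Schur complement with respect to the invertible block $\mf A_{11}$ (case A) of Hypothesis~\ref{h:eulerlag}) yields $F(\mf u,0)=(-1)^h\det\mf A_{11}(\mf u)\,G(\mf u,0)$. Because $\lambda_k$ and $\mu_c$ are simple roots varying smoothly in $\mf u$ (Hypothesis~\ref{h:sh}, Lemma~\ref{l:4.7}), polynomial division gives $F(\mf u,\lambda)=(\lambda-\lambda_k(\mf u))P(\mf u,\lambda)$ and $G(\mf u,\mu)=(\mu-\mu_c(\mf u))Q(\mf u,\mu)$ with $P,Q$ smooth and $P(\mf u,0)=\det\mf E(\mf u)\prod_{i\neq k}(-\lambda_i(\mf u))$, $Q(\mf u,0)=\det\mf B_{22}(\mf u)\prod_{j\neq c}(-\mu_j(\mf u))$ nonvanishing near $\mf u^\ast$ (all their factors are bounded away from $0$ by~\eqref{e:sh} and Lemma~\ref{l:4.7}). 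Comparing the two expressions of $F(\mf u,0)$ gives $\lambda_k(\mf u)=a(\mf u)\,\mu_c(\mf u)$ with $a(\mf u):=(-1)^h\det\mf A_{11}(\mf u)\,Q(\mf u,0)/P(\mf u,0)$, smooth and nonzero near $\mf u^\ast$. A sign count at $\mf u^\ast$ finishes: $\det\mf A_{11}(\mf u^\ast)$ has sign $(-1)^\ell$; by Lemma~\ref{l:4.7} the $\mu_j(\mf u^\ast)$ with $j\neq c$ split into $k-1-\ell$ negative and $N-h-k+\ell$ positive, so $Q(\mf u^\ast,0)$ has sign $(-1)^{N-h-k+\ell}$ (as $\det\mf B_{22}>0$); by~\eqref{e:sh} the $\lambda_i(\mf u^\ast)$ with $i\neq k$ split into $k-1$ negative and $N-k$ positive, so $P(\mf u^\ast,0)$ has sign $(-1)^{N-k}$ (as $\det\mf E>0$); hence $\mathrm{sign}\,a(\mf u^\ast)=(-1)^{h+\ell+(N-h-k+\ell)-(N-k)}=(-1)^{2\ell}=+1$. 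By continuity $a\geq a_0>0$ on the small neighborhood where the whole analysis is confined, and with $\mu_c=\theta_c(\mf u,0)$ this gives $\lambda_k(\mf u)=\theta_c(\mf u,0)\,a(\mf u)$.

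Finally, for the eigenvector statement assume $\lambda_k(\mf u)=0$. Then $a(\mf u)\neq0$ forces $\mu_c(\mf u)=0$, so $\mf M(\mf u)\mf p_c(\mf u,0)=\mf 0$, hence $\mf S(\mf u)\mf p_c(\mf u,0)=\mf 0$; i.e.\ $\mf p_c(\mf u,0)$ and $\mu=0$ satisfy~\eqref{e:eige}, and Lemma~\ref{l:kernel} yields $\mf A(\mf u)\mf r_c(\mf u,0)=\mf 0_N$. Thus $\mf r_c(\mf u,0)$ lies in $\ker\mf A(\mf u)$, which equals the eigenspace of $\mf E^{-1}\mf A(\mf u)$ for the eigenvalue $\lambda_k(\mf u)=0$; by strict hyperbolicity this eigenspace is one-dimensional and spanned by $\mf r_k(\mf u)$. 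Since $\mf r_c(\mf u,0)$ and $\mf r_k(\mf u)$ are both of the form~\eqref{e:errec} (equivalently~\eqref{e:errei}) with second block in the one-dimensional kernel of $\mf S(\mf u)$, they span the same line, and with the $\mf B_{22}$-normalization~\eqref{e:propc} and the orientation adopted for $\mf r_k$ they coincide, $\mf r_c(\mf u,0)=\mf r_k(\mf u)$.

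The main obstacle is the identification $\theta_c(\mf u,0)=\mu_c(\mf u)$: one has to exploit the invariance of $\mathcal M^c$ at the equilibrium $(\mf u,\mf 0_{N-h})$ to conclude that $\mf p_c(\mf u,0)$ is \emph{exactly} an eigenvector of $\mf M(\mf u)$ (otherwise $\mf p_c(\mf u,0)^t\mf S(\mf u)\mf p_c(\mf u,0)$ would only be a Rayleigh-quotient approximation to $\mu_c(\mf u)$, and $\theta_c(\mf u,0)$ and $\lambda_k(\mf u)$ would not vanish together). The remaining ingredients, namely the Schur-complement identity and the signature count, are routine.
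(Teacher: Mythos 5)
Your proof is correct, but it reaches the conclusion by a genuinely different route than the paper. For the key fact that $\theta_c(\mf u,0)$ is an exact eigenvalue of the pencil $\bigl(\mf A_{22}-\mf A_{21}\mf A_{11}^{-1}\mf A_{21}^t,\ \mf B_{22}\bigr)$ with eigenvector $\mf p_c(\mf u,0)$, the paper argues directly from the reduced dynamics: plugging $\mf z_2=z_c\mf p_c$ into \eqref{e:bltw2}, dividing by $z_c$ and evaluating at $z_c=0$ (where $\mf p_c'=\mf 0$) gives $\theta_c(\mf u,0)\,\mf B_{22}\mf p_c=\bigl[-\mf A_{21}\mf A_{11}^{-1}\mf A_{21}^t+\mf A_{22}\bigr]\mf p_c$ and hence \eqref{e:ab} via Lemma~\ref{l:kernel}; you obtain the same fact by combining the invariance of $T_{(\mf u,\mf 0_{N-h})}\mathcal M^c$ under the linearization of \eqref{e:bltw2} (which forces $\mf M(\mf u)\mf p_c(\mf u,0)\in\mathrm{span}\langle\mf p_c(\mf u,0)\rangle$) with the explicit formula \eqref{e:lambdac2} and the normalization \eqref{e:propc} — valid, though the paper's computation is the more economical, since \eqref{e:lambdac2} was itself derived from it. Where you genuinely diverge is in comparing $\theta_c(\mf u,0)$ with $\lambda_k(\mf u)$: the paper exploits the symmetry of $\mf A,\mf B,\mf E$ and pairs \eqref{e:ab} against $\mf r_k$, obtaining \eqref{e:abe} and $a(\mf u)=\mf r_k^t\mf B\mf r_c/\mf r_k^t\mf E\mf r_c$, positive near $\mf u^\ast$ by continuity from the values $1$ and $\mf r_k^t\mf E\mf r_k>0$ computed at $\mf u^\ast$; you instead compare the characteristic polynomials of $\mf E^{-1}\mf A$ and of $\mf M$ at $\lambda=\mu=0$ through the Schur complement of the invertible block $\mf A_{11}$, and fix the sign of $a$ by a parity count using the signatures of $\mf A_{11}$, of Lemma~\ref{l:4.7} and of \eqref{e:sh}. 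Your route avoids the symmetric-pairing trick and would work for any pencil with a simple isolated zero eigenvalue, but it pays by invoking the full spectral bookkeeping of Lemma~\ref{l:4.7}; both arguments deliver $a\ge a_0>0$ only on a neighborhood of $\mf u^\ast$, which is all the paper's standing locality convention requires. Your handling of the case $\lambda_k(\mf u)=0$ (via $\mf M\mf p_c=\mf 0$, Lemma~\ref{l:kernel}, strict hyperbolicity, and a choice of normalization of $\mf r_k$) matches the paper's, which likewise resolves the scalar ambiguity by choosing $\mf r_k$ so that $\mf r_k(\mf u^\ast)=\mf r_c(\mf u^\ast,0)$.
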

Note that, if $\lambda_k (\mf u)=0$ then $\mu (\mf u) =0$ satisfies the assumptions of Lemma~\ref{l:kernel}. 
\begin{proof}[Proof of Lemma~\ref{l:lambdac}]
We plug the equality $\mf z_2 = z_c \mf p_c$ into~\eqref{e:bltw2} and~\eqref{e:gamma}, use the equality $z'_c = \theta_c z_c$ and get 
$$
    \theta_c z_c \mf p_c + z_c \mf p'_c = \mf B^{-1}_{22} \big[ 
   ( \mf G_1 -\mf A_{21} ) \mf A_{11}^{-1} \mf A_{21}^t  + \mf A_{22}  - \mf G_2  \big]  z_c \mf p_c. 
$$
We divide both sides of the above equality by $z_c$, evaluate the result at $(\mf u, 0)$, recall~\eqref{e:G} and point out that, owing to~\eqref{e:bltw3}, $\mf p'_c = \mf 0_{N-h}$ when $z_c=0$. 
We arrive at $ \theta_c \mf B_{22}   \mf p_c =   \big[ 
   -\mf A_{21}  \mf A_{11}^{-1} \mf A_{21}^t  + \mf A_{22}\big]  \mf p_c$, which in turn by applying Lemma~\ref{l:kernel} with $\mu=\theta_c$ yields 
\be \label{e:ab}
     [\mf A (\mf u) - \theta_c (\mf u, 0)\mf B(\mf u) ] \mf r_c (\mf u, 0) = \mf 0_N, 
     \;
\eq
provided $\mf r_c $ is the same vector as in~\eqref{e:errec}. 
Owing to property a) in the statement of Lemma~\ref{l:emmec} and to Lemma~\ref{l:kernel},  $\mf r_c (\mf u^\ast, 0)$ is an eigenvector of $\mf A(\mf u^\ast)$ associated to the eigenvalue $\lambda_k(\mf u^\ast)=0$. We  then define the continuous function $\mf r_k (\mf u)$ in such a way that $\mf r_k (\mf u)$
is an eigenvector of $\mf A(\mf u)$ associated to the eigenvalue $\lambda_k(\mf u)$, and $\mf r_k (\mf u^\ast) = \mf r_c (\mf u^\ast, 0)$. Note that the corresponding 
vector $\mf p_k(\mf u)$ given by Lemma~\ref{l:kernel} satisfies $\mf p_k(\mf u^\ast)= \mf p_c (\mf u^\ast, 0). $
 
 We now use~\eqref{e:ab}, recall that all the matrices $\mf A$, $\mf B$ and $\mf E$ are symmetric and conclude that 
 \be \label{e:abe}
     \mf r^t_k (\mf u) \mf A(\mf u) \mf r_c (\mf u, 0) =  \theta_c (\mf u, 0) \mf r^t_k (\mf u) \mf B(\mf u) \mf r_c (\mf u, 0)=
     \lambda_k (\mf u)  \mf r^t_k (\mf u) \mf E(\mf u) \mf r_c (\mf u, 0). 
 \eq
 Note that 
 $$
     \mf r^t_k (\mf u^\ast) \mf B(\mf u^\ast) \mf r_c (\mf u^\ast, 0) \stackrel{\eqref{e:B},\eqref{e:errei},\eqref{e:errec}}{=}\mf p^t_k (\mf u^\ast) \mf B_{22} \mf p_c (\mf u^\ast, 0) \stackrel{\mf p_k(\mf u^\ast)= \mf p^t_c (\mf u^\ast, 0)}{=}\mf p^t_c  \mf B_{22} \mf p_c (\mf u^\ast, 0) \stackrel{\eqref{e:propc}}{=}1
 $$
 and that 
 $$
      \mf r^t_k (\mf u^\ast) \mf E(\mf u^\ast) \mf r_c (\mf u^\ast, 0) \stackrel{\mf r_k(\mf u^\ast)= \mf r^t_c (\mf u^\ast, 0)}{=}
        \mf r^t_k (\mf u^\ast) \mf E(\mf u^\ast) \mf r_k (\mf u^\ast) \stackrel{\text{Hypothesis~\ref{h:normal}}}{>} 0. 
 $$
 This implies that both $\mf r^t_k (\mf u) \mf B(\mf u) \mf r_c (\mf u, 0)$ and $ \mf r^t_k (\mf u) \mf E(\mf u) \mf r_c (\mf u, 0)$ are strictly positive if $\mf u$ is sufficiently close to $\mf u^\ast$ and hence that~\eqref{e:abe} implies~\eqref{e:thetaczero}, provided $a (\mf u) : = \mf r^t_k (\mf u) \mf B(\mf u) \mf r_c (\mf u, 0)/  \mf r^t_k (\mf u) \mf E(\mf u) \mf r_c (\mf u, 0) $. By combining~\eqref{e:thetaczero} with~\eqref{e:ab} we conclude that we can choose $\mf r_k$ in such a way that $\mf r_c (\mf u, 0) = \mf r_k(\mf u)$ when $\lambda_k (\mf u) =0$. 
 \end{proof} 
\subsection{Case B) of Hypothesis~\ref{h:eulerlag}} \label{ss:beuler}
We refer to~\cite{BianchiniSpinolo}. In particular, the boundary layers equation is discussed in~\cite[\S3.1]{BianchiniSpinolo}, whereas the construction of the center component of the boundary layers is given in~\cite[\S4]{BianchiniSpinolo}. 
\subsection{Case C) of Hypothesis~\ref{h:eulerlag}} \label{ss:blagr}
\subsubsection{Boundary layers equation}
\begin{lemma}
Under~\eqref{e:lagr2} we have that for every $\mf u \in \R^N$ the rank of the matrix $\mf A_{21}(\mf u)$ is $h$. 
\end{lemma}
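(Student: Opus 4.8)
The plan is to exploit the structure of the matrix $\mf E^{-1}\mf A$ under the hypotheses already in force, most importantly strict hyperbolicity (Hypothesis~\ref{h:sh}) together with the symmetry of $\mf A$ and the positive definiteness of $\mf B_{22}$ (Hypothesis~\ref{h:normal}). First I would argue by contradiction: suppose that for some $\mf u$ the rank of $\mf A_{21}(\mf u)\in\mathbb M^{(N-h)\times h}$ is strictly less than $h$; equivalently, there is a nonzero $\mf q\in\R^h$ with $\mf A_{21}(\mf u)\mf q=\mf 0_{N-h}$. Using~\eqref{e:lagr2}, i.e. $\mf A_{11}(\mf u)=\mf 0_{h\times h}$, the block decomposition~\eqref{e:blockae} of $\mf A$ then gives
\be
   \mf A(\mf u)\left(\begin{array}{c}\mf q\\ \mf 0_{N-h}\end{array}\right)
   =\left(\begin{array}{c}\mf A_{11}(\mf u)\mf q+\mf A_{21}^t(\mf u)\,\mf 0_{N-h}\\ \mf A_{21}(\mf u)\mf q+\mf A_{22}(\mf u)\,\mf 0_{N-h}\end{array}\right)
   =\mf 0_N.
\eq
Hence $(\mf q,\mf 0_{N-h})^t$ is a nonzero vector in the kernel of $\mf A(\mf u)$, so $\mf A(\mf u)$ is singular and $0$ is an eigenvalue of $\mf A(\mf u)$, equivalently of $\mf E^{-1}(\mf u)\mf A(\mf u)$.

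That alone is not yet a contradiction, since the boundary characteristic eigenvalue $\lambda_k$ is allowed to vanish. So the second, and main, step is to show that $0$ must in fact be a \emph{multiple} eigenvalue of $\mf E^{-1}(\mf u)\mf A(\mf u)$, which contradicts Hypothesis~\ref{h:sh}. To see this, I would produce a second, linearly independent null vector of $\mf A(\mf u)$. The natural source is Lemma~\ref{l:kernel} with $\mu=0$: any $\mf p\in\R^{N-h}$ solving $[-\mf A_{21}\mf A_{11}^{-1}\mf A_{21}^t+\mf A_{22}]\mf p=\mf 0$ gives a kernel vector of $\mf A$ — but this cannot be applied verbatim here because $\mf A_{11}=\mf 0$ is not invertible. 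Instead I would redo the elementary linear algebra directly: $(\mf u_1,\mf u_2)^t\in\ker\mf A(\mf u)$ iff $\mf A_{21}^t(\mf u)\mf u_2=\mf 0_h$ and $\mf A_{21}(\mf u)\mf u_1+\mf A_{22}(\mf u)\mf u_2=\mf 0_{N-h}$. If $\mathrm{rank}\,\mf A_{21}(\mf u)<h$ then also $\mathrm{rank}\,\mf A_{21}^t(\mf u)<h$, so $\ker\mf A_{21}^t(\mf u)\subseteq\R^{N-h}$ has dimension $\geq (N-h)-(h-1)$; pick any nonzero $\mf p$ in it and then solve $\mf A_{21}(\mf u)\mf u_1=-\mf A_{22}(\mf u)\mf p$ for $\mf u_1$, which is consistent since $\mathrm{Range}\,\mf A_{21}(\mf u)$ together with the null vector $\mf q$ found above gives enough room — more carefully, a dimension count on the linear map $\mf A(\mf u):\R^N\to\R^N$ shows $\dim\ker\mf A(\mf u)\geq N-\mathrm{rank}\,\mf A_{21}(\mf u)-\mathrm{rank}\,\mf A_{21}^t(\mf u)\geq N-2(h-1)\geq 2$ whenever $h\geq 1$ and the rank drops. (Here one uses that the only nonzero blocks of $\mf A$ are $\mf A_{21}$, $\mf A_{21}^t$, $\mf A_{22}$, and that the image of $\mf A$ is contained in $\mathrm{Range}\,\mf A_{21}^t\times(\mathrm{Range}\,\mf A_{21}+\mathrm{Range}\,\mf A_{22})$.) Thus $0$ is an eigenvalue of $\mf A(\mf u)$ of algebraic (indeed geometric) multiplicity at least $2$, hence so is it for $\mf E^{-1}(\mf u)\mf A(\mf u)$ since $\mf E(\mf u)$ is invertible, contradicting Hypothesis~\ref{h:sh}. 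Therefore $\mathrm{rank}\,\mf A_{21}(\mf u)=h$ for every $\mf u$.

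The step I expect to be the actual obstacle is making the dimension count airtight: one must be careful that $\mf A_{22}(\mf u)$ may itself be singular, so the rank–nullity bookkeeping has to account for possible overlaps between $\ker\mf A_{21}^t(\mf u)$, $\mathrm{Range}\,\mf A_{21}(\mf u)$ and $\ker\mf A_{22}(\mf u)$. The cleanest way around this is probably to avoid the crude bound and instead observe that $(\mf q,\mf 0_{N-h})^t\in\ker\mf A(\mf u)$ is orthogonal, with respect to the $\mf E(\mf u)$-inner product, to a genuinely different eigenvector direction: since $\mf E$ is \emph{block diagonal}~\eqref{e:e}, the $\mf E$-orthogonal complement of $(\mf q,\mf 0_{N-h})^t$ still contains plenty of the hyperbolic block, and one can run the symmetric-pencil argument of~\cite[Lemma 4.7]{BianchiniSpinoloARMA} on the reduced space to locate a second null direction. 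Alternatively — and this is the slickest route — note that $h=\dim\ker\mf B(\mf u)$ and that Hypothesis~\ref{h:ks} (Kawashima–Shizuta) forbids any eigenvector of $\mf E^{-1}\mf A$ from lying in $\ker\mf B$; combining this with the explicit form of $\ker\mf B$ from~\eqref{e:B} forces $\mf A_{21}$ to be "as nondegenerate as possible'' on the $\mf u_1$-block, which under~\eqref{e:lagr2} is exactly the full-rank statement. I would present the contradiction argument as the main line and invoke the Kawashima–Shizuta condition to close the multiplicity gap cleanly.
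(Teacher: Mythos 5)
Your main line does not work. The contradiction you aim for requires $0$ to be an eigenvalue of $\mf E^{-1}\mf A$ of multiplicity at least $2$, and that claim is simply false without further input: take $N=2$, $h=1$, $\mf A_{21}=0$, $\mf A_{22}=1$, $\mf E=\mf I$, so that $\mf A=\mathrm{diag}(0,1)$; then $\mf A_{21}$ has rank $0<h$, yet the eigenvalues $0$ and $1$ are distinct and Hypothesis~\ref{h:sh} is untouched. Correspondingly your dimension count fails exactly where you suspect, but for the opposite reason you give: the second block row $(\mf A_{21}\ \mf A_{22})$ can have full rank $N-h$ no matter how degenerate $\mf A_{21}$ is, because $\mf A_{22}$ fills the range; all one can extract is $\mathrm{rank}\,\mf A\leq \mathrm{rank}\,\mf A_{21}^t+(N-h)\leq N-1$, i.e. $\dim\ker\mf A\geq 1$, which you already had. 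So no patching of the rank--nullity bookkeeping, nor the $\mf E$-orthogonal reduction you sketch, can rescue an argument based on strict hyperbolicity: rank deficiency of $\mf A_{21}$ is perfectly compatible with $N$ distinct eigenvalues.

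The correct proof is the ``slickest route'' you relegate to an aside, and it is exactly the paper's argument: the nonzero vector $(\mf q,\mf 0_{N-h})^t$ you produce in your first step is annihilated by $\mf A(\mf u)$, hence it is an eigenvector of $\mf E^{-1}(\mf u)\mf A(\mf u)$ with eigenvalue $0$, and by \eqref{e:B} it lies in $\ker\mf B(\mf u)=\R^h\times\{\mf 0_{N-h}\}$; this directly violates the Kawashima--Shizuta condition \eqref{e:ks} of Hypothesis~\ref{h:ks}. There is no ``multiplicity gap'' to close: the Kawashima--Shizuta condition alone gives the contradiction, with no appeal to strict hyperbolicity, to Lemma~\ref{l:kernel}, or to a second null direction. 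You have all the pieces, but the architecture is inverted -- the flawed multiplicity argument is presented as the main line, while the actual proof appears only as an unjustified afterthought (``forces $\mf A_{21}$ to be as nondegenerate as possible''); promote that observation to the whole proof and state it explicitly.
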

\begin{proof}
Assume by contradiction that there is $\mf x \in \R^h$ such that $\mf A_{21}^t \mf x = \mf 0_{N-h}$ then 
$$
    \left(
    \begin{array}{cc}
    \mf 0_{h \times h} & \mf A_{21}^t \\
    \mf A_{21} & \mf A_{22} 
    \end{array}
    \right)
    \left(
    \begin{array}{cc}
    \mf x\\
    \mf 0_{N-h} \\
    \end{array}
    \right)=
    \mf 0_N
$$ 
and this contradicts the Kawashima-Shizuta condition~\eqref{e:ks}. 
\end{proof}
Up to a permutation of columns, we can assume that 
\be \label{e:C}
     \mf A_{21} = \left(
     \begin{array}{cc}
     \mf{\widetilde A_{21}} \\
     \mf{\widetilde A_{31}} \\
     \end{array}
     \right), \quad \mf{\widetilde A_{21}} \in \mathbb M^{h \times h}, \;  \mf{\widetilde A_{31}}  \in \mathbb M^{(N-2h) \times h}, \; 
      \mf{\widetilde A_{21}} \in \mathbb{GL}(h),
\eq
that is the matrix $  \mf{\widetilde A_{21}}(\mf u)$ is invertible for every $\mf u$. We now introduce the block decomposition of $\mf A$ and $\mf B$ corresponding to the above decomposition of $\mf A_{21}$ and we arrive at 
$$
   \mf A(\mf u) = 
    \left(
     \begin{array}{ccc}
     \mf 0_{h \times h} &     \mf{\widetilde A_{21}}^t &
     \mf{\widetilde A_{31}}^t \\
      \mf{\widetilde A_{21}} &  \mf{\widetilde A_{22}} &  \mf{\widetilde A_{32}}^t \\
     \mf{\widetilde A_{31}} & \mf{\widetilde A_{32}} &  \mf{\widetilde A_{33}}\\
     \end{array}
     \right) \; \mf{\widetilde A_{22}} \in \mathbb{M}^{h \times h}, \mf{\widetilde A_{32}} \in 
     \mathbb{M}^{(N-2h) \times h},  \mf{\widetilde A_{33}} \in \mathbb{M}^{(N-2h) \times (N -2h)}
$$
and  
$$
     \mf B(\mf u) = 
    \left(
     \begin{array}{ccc}
     \mf 0_{h \times h} &    \mf 0_{h \times h} & \mf 0_{h \times (N-2h)} \\
    \mf 0_{h \times h} &  \mf{\widetilde B_{22}} &  \mf{\widetilde B_{32}}^t \\
    \mf 0_{(N - 2h) \times h} & \mf{\widetilde B_{32}} &  \mf{\widetilde B_{33}}\\
     \end{array}
     \right), \; \mf{\widetilde B_{22}} \in \mathbb{M}^{h \times h}, \mf{\widetilde B_{32}} \in 
     \mathbb{M}^{(N-2h) \times h},  \mf{\widetilde B_{33}} \in \mathbb{M}^{(N-2h) \times (N -2h)}.
$$
Next, we write 
$$
   \mf u' = \left(
     \begin{array}{ccc}
     \mf z_1 \\
     \mf z_2 \\
     \mf z_3 \\
     \end{array}
   \right), \; \mf z_1 \in \R^h, \; \mf z_2 \in \R^h, \mf z_3 \in \R^{N-2h}, \quad 
    \mf G(\mf u, \mf u_x)=
    \left(
    \begin{array}{ccc}
    \mf 0_{h \times h} & \mf 0_h & \mf 0_{N-2h}^t \\
    \mf{\widetilde G_{21}} &  \mf{\widetilde G_{22}} &  \mf{\widetilde G_{23}} \\
     \mf{\widetilde G_{31}} & \mf{\widetilde G_{32}} &  \mf{\widetilde G_{33}}\\
    \end{array}
    \right)
$$
and point out that~\eqref{e:bltw} yields  
\be \label{e:systemtilde}
    \left\{
     \begin{array}{lll}
        \mf{\widetilde A_{21}}^t \mf z_2 + \mf{\widetilde A_{31}}^t       \mf z_3 = \mf 0_h  \\
     \mf{\widetilde A_{21}} \mf z_1 + \mf{\widetilde A_{22}}\mf z_2 + \mf{\widetilde A_{32}}^t \mf z_3= 
     \mf{\widetilde B_{22}} \mf z'_2 + \mf{\widetilde B_{32}}^t \mf z'_3 +
      \mf{\widetilde G_{21}} \mf z_{1} + \mf{\widetilde G_{22}}\mf z_{2} + \mf{\widetilde G_{23}} \mf z_{3}\\
     \mf{\widetilde A_{31}} \mf z_1 + \mf{\widetilde A_{32}}\mf z_2 + \mf{\widetilde A_{33}} \mf z_3= 
     \mf{\widetilde B_{32}} \mf z'_2 + \mf{\widetilde B_{33}} \mf z'_3 +
     \mf{\widetilde G_{31}} \mf z_{1} + \mf{\widetilde G_{32}}\mf z_{2} + \mf{\widetilde G_{33}}^t \mf z_{3}.
     \end{array}
     \right.
\eq
We solve the first equation for $\mf z_2$, which yields 
\be \label{e:z2}
   \mf z_2   =-  [\mf{\widetilde A_{21}}^t]^{-1}  \mf{\widetilde A_{31}}^t       \mf z_3 
   \implies \mf z_2'= -  [\mf{\widetilde A_{21}}^t]^{-1}  \mf{\widetilde A_{31}}^t       \mf z'_3
   + \mf H_1 (\mf u, \mf z_3) \mf z_1 +  \mf H_3 (\mf u, \mf z_3) \mf z_3   
\eq
for suitable functions $\mf H_1 \in \mathbb M^{h \times h}$, $\mf H_3 \in \mathbb M^{h \times (N-2h)}$. The exact expression of $\mf H_1$ and $\mf H_3$ is not relevant here, what is important is that 
\be \label{e:acca13}
    \mf H_1 (\mf u, \mf 0_{N-2h})= \mf 0_{h \times h}, \; 
   \mf H_3 (\mf u, \mf 0_{N-2h})= \mf 0_{h \times (2N-h)} \; \text{for every $\mf u$}. 
\eq
From the second equation in~\eqref{e:systemtilde} we get 
\begin{equation} \label{e:z1}
\begin{split}
    [ \mf{\widetilde A_{21}}+  \mf H_1+ \mf{\widetilde G_{21}} ] \mf z_1  = &\mf{\widetilde A_{22}}
    [\mf{\widetilde A_{21}}^t]^{-1}  \mf{\widetilde A_{31}}^t       \mf z_3  - \mf{\widetilde A_{32}}^t \mf z_3
    -
     \mf{\widetilde B_{22}}  [\mf{\widetilde A_{21}}^t]^{-1}  \mf{\widetilde A_{31}}^t       \mf z'_3\\&
    + \mf{\widetilde B_{22}}  \mf H_3 (\mf u, \mf z_3) \mf z_3 
    + \mf{\widetilde B_{32}}^t \mf z'_3 
     - \mf{\widetilde G_{22}} [\mf{\widetilde A_{21}}^t]^{-1}  \mf{\widetilde A_{31}}^t       \mf z_3 +
       \mf{\widetilde G_{23}} \mf z_{3}
\end{split}
\end{equation}
We recall that by assumption $\mf{\widetilde G_{21}}$ and $\mf{\widetilde G_{31}}$ only depend on $\mf u$, $\mf z_2$ and $\mf z_3$ and hence, owing to~\eqref{e:z2}, on $\mf u$ and $\mf z_3$, and vanish when $\mf z_3 = \mf 0_{N-2h}$. Owing to~\eqref{e:acca13}, this implies that in a neighborhood of $\mf z_3 = \mf 0_{N-2h}$ the matrix  
$  [ \mf{\widetilde A_{21}}+  \mf H_1+ \mf{\widetilde G_{21}} ]$ is invertible as $\mf{\widetilde A_{21}}$ and hence by using the above expression we can write $\mf z_1$ as a function of $\mf u$, $\mf z_3$ and $\mf z_3'$. By plugging this function into the third line of~\eqref{e:systemtilde} and using again~\eqref{e:z2} we get 
\begin{equation*}
\begin{split}
    \Big[\mf{\widetilde B_{33}}  &+  \mf{\widetilde A_{31}} [ \mf{\widetilde A_{21}}+  \mf H_1+ \mf{\widetilde G_{21}} ]^{-1}   [ \mf{\widetilde B_{22}} [ \mf{\widetilde A_{21}}^t]^{-1}  \mf{\widetilde A_{31}}^t -  \mf{\widetilde B_{32}}^t]
- \mf{\widetilde B_{32}}  [\mf{\widetilde A_{21}}^t]^{-1}  \mf{\widetilde A_{31}}^t   \Big] \mf z'_3\\ &
=  \mf{\widetilde A_{31}} [ \mf{\widetilde A_{21}}+  \mf H_1+ \mf{\widetilde G_{21}} ]^{-1}\Big[ \mf{\widetilde A_{22}}
    [\mf{\widetilde A_{21}}^t]^{-1}  \mf{\widetilde A_{31}}^t    - \mf{\widetilde A_{32}}^t + \mf{\widetilde B_{22}}  \mf H_3 (\mf u, \mf z_3)  - \mf{\widetilde G_{22}} [\mf{\widetilde A_{21}}^t]^{-1}  \mf{\widetilde A_{31}}^t   +
       \mf{\widetilde G_{23}} \Big] \mf z_3 \\
& \quad - \mf{\widetilde A_{32}}  [\mf{\widetilde A_{21}}^t]^{-1}  \mf{\widetilde A_{31}}^t \mf z_3+
     \mf{\widetilde A_{33}} \mf z_3 -  \mf{\widetilde B_{32}} \mf H_1 (\mf u, \mf z_3) \mf z_1 - \mf{\widetilde B_{32}}  \mf H_3 (\mf u, \mf z_3) \mf z_3 - \mf{\widetilde G_{31}} \mf z_{1} \\ & \quad+ \mf{\widetilde G_{32}}[\mf{\widetilde A_{21}}^t]^{-1}  \mf{\widetilde A_{31}}^t       \mf z_3 + \mf{\widetilde G_{33}}^t \mf z_{3} \phantom{\Big]}
\end{split}
\end{equation*}
 We have the following lemma
\begin{lemma}\label{l:invertible}
For every $\mf u$ the matrix 
$$
      \Big[\mf{\widetilde B_{33}} +  \mf{\widetilde A_{31}} [ \mf{\widetilde A_{21}} ]^{-1}  [  \mf{\widetilde B_{22}}  [\mf{\widetilde A_{21}}^t]^{-1}  \mf{\widetilde A_{31}}^t -  \mf{\widetilde B_{32}}^t]
- \mf{\widetilde B_{32}}  [\mf{\widetilde A_{21}}^t]^{-1}  \mf{\widetilde A_{31}}^t   \Big]
$$
is invertible. 
\end{lemma}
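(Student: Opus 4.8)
The plan is to recognize the matrix in the statement as a symmetric congruence of the positive definite block $\mf B_{22}$ and then to read off invertibility from positive definiteness.

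First I would record the structural fact that in the $(h,h,N-2h)$ block decomposition of $\mf B$ used above the lower--right $(N-h)\times(N-h)$ corner is
$$
\mf B_{22} = \left(\begin{array}{cc} \mf{\widetilde B_{22}} & \mf{\widetilde B_{32}}^t \\ \mf{\widetilde B_{32}} & \mf{\widetilde B_{33}} \end{array}\right),
$$
which is symmetric and positive definite by Hypothesis~\ref{h:normal}, item i). Next I would abbreviate $\mf P := [\mf{\widetilde A_{21}}^t]^{-1}\mf{\widetilde A_{31}}^t \in \mathbb M^{h\times(N-2h)}$, which is well defined thanks to~\eqref{e:C}; taking transposes one gets $\mf P^t = \mf{\widetilde A_{31}}[\mf{\widetilde A_{21}}]^{-1}$, so that the matrix in the statement, call it $\mf M$, rewrites as
$$
\mf M = \mf{\widetilde B_{33}} - \mf{\widetilde B_{32}}\mf P - \mf P^t\mf{\widetilde B_{32}}^t + \mf P^t\mf{\widetilde B_{22}}\mf P.
$$

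The key step is then to observe that, setting $\mf Q := \left(\begin{array}{c} -\mf P \\ \mf I_{N-2h}\end{array}\right) \in \mathbb M^{(N-h)\times(N-2h)}$, a one-line block multiplication gives $\mf M = \mf Q^t\mf B_{22}\mf Q$. Since the lower block of $\mf Q$ is the identity, $\mf Q$ has full column rank, hence $\mf Q\mf x \neq \mf 0_{N-h}$ whenever $\mf x \neq \mf 0_{N-2h}$; combined with positive definiteness of $\mf B_{22}$ this yields $\mf x^t\mf M\mf x = (\mf Q\mf x)^t\mf B_{22}(\mf Q\mf x) > 0$. Thus $\mf M$ is symmetric positive definite, in particular invertible, and since the argument is pointwise in $\mf u$ the conclusion holds for every $\mf u$.

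I do not expect a genuine obstacle here: the entire content is spotting the congruence $\mf M = \mf Q^t\mf B_{22}\mf Q$, after which positive definiteness (hence invertibility) is automatic from Hypothesis~\ref{h:normal}. Note that $\mf M$ is \emph{not} the Schur complement of the $\mf{\widetilde B_{22}}$-block of $\mf B_{22}$, since $\mf P$ is built from the $\mf A$-blocks rather than from $\mf{\widetilde B_{22}}^{-1}\mf{\widetilde B_{32}}^t$; it is genuinely the congruence above, which is why positivity of $\mf B_{22}$ alone suffices and no information on the $\mf A$-blocks beyond~\eqref{e:C} is needed.
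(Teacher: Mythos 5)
Your proof is correct and is essentially the paper's argument: the paper's proof also evaluates the quadratic form $\mf q^t\mf B_{22}\mf q$ at the vector $\mf q=(-[\mf{\widetilde A_{21}}^t]^{-1}\mf{\widetilde A_{31}}^t\mf s;\ \mf s)$, i.e.\ precisely your congruence $\mf M=\mf Q^t\mf B_{22}\mf Q$, and invokes positive definiteness of $\mf B_{22}$. The only cosmetic difference is that the paper phrases it as a contradiction (a nonzero kernel vector of $\mf M$ would make this quadratic form vanish), while you state the positive definiteness of $\mf M$ directly.
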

\begin{proof}
The lemma is a particular case of~\cite[Lemma 4.3]{BianchiniSpinoloARMA}, so we only provide a sketch of the proof. 
Assume by contradiction there is $\mf s \in \R^{N -2r}$, $\mf s \neq \mf 0_{N-2r}$ such that $$ \Big[\mf{\widetilde B_{33}} +  \mf{\widetilde A_{31}} [ \mf{\widetilde A_{21}} ]^{-1}  [  \mf{\widetilde B_{22}}  [\mf{\widetilde A_{21}}^t]^{-1}  \mf{\widetilde A_{31}}^t -  \mf{\widetilde B_{32}}^t]
- \mf{\widetilde B_{32}}  [\mf{\widetilde A_{21}}^t]^{-1}  \mf{\widetilde A_{31}}^t   \Big] \mf s = \mf 0_{N-2r}.$$ 
Set 
$$
   \mf q : = \left( 
   \begin{array}{cc}
   -  [\mf{\widetilde A_{21}}^t]^{-1}  \mf{\widetilde A_{31}}^t \mf s \\
   \mf s
\end{array}
   \right) \implies 
   \mf q^t \mf B_{22} \mf q = 
\Big(
  - \mf p^t \mf{\widetilde A_{31}} [\mf{\widetilde A_{21}}]^{-1}; \mf s^t \Big)
  \left( 
   \begin{array}{cc}
    \mf{\widetilde B_{22}} &  \mf{\widetilde B_{32}}^t \\
    \mf{\widetilde B_{32}} &  \mf{\widetilde B_{33}}\\
\end{array}
   \right) \left( 
   \begin{array}{cc}
   -  [\mf{\widetilde A_{21}}^t]^{-1}  \mf{\widetilde A_{31}}^t \mf s \\
   \mf s
\end{array}
   \right)=0
$$
and this contradicts the assumption that $\mf B_{22}$ is positive definite. 
\end{proof}
By using Lemma~\ref{l:invertible} and the fact that $\mf H_1$ and $\mf{\widetilde G}_{21}$ vanish when the derivative vanishes we can eventually write an explicit expression for $\mf z_3'$ in terms of $\mf u$ and $\mf z_3$. By arguing as in \S\ref{ss:bld} we can then rewrite~\eqref{e:bltw} as a first order ODE for $\mf u$ and $\mf z_3$, 
\begin{equation} \label{e:sylagr}
      \boldsymbol{\omega}' = \mf g( \boldsymbol{\omega}), \qquad 
       \boldsymbol{\omega}= \left(
      \begin{array}{cc}
             \mf u \\
             \mf z_3 \\
      \end{array}
       \right)
\end{equation}
The exact expression of the right-hand side $\mf g$ is not relevant here, what is important is that by linearizing at $ \boldsymbol{\omega}^\ast=(\mf u^\ast, \mf 0_{N-2h})$, where $\mf u^\ast$ satisfies~\eqref{e:uast}, we get  the matrix
$$
\mf D\mf g ( \boldsymbol{\omega}^\ast) =
\left(
\begin{array}{cc}
\mf 0_{N\times N} & \mf L \\
\mf 0_{(N-2h) \times N} & 
\mf N \\
\end{array}
\right), 
$$
where 
\begin{equation*}\begin{split}
    \mf N : = &  \Big[\mf{\widetilde B_{33}}  +  \mf{\widetilde A_{31}} [ \mf{\widetilde A_{21}} ]^{-1} [ \mf{\widetilde B_{22}} [ \mf{\widetilde A_{21}}^t]^{-1}  \mf{\widetilde A_{31}}^t -  \mf{\widetilde B_{32}}^t]
- \mf{\widetilde B_{32}}  [\mf{\widetilde A_{21}}^t]^{-1}  \mf{\widetilde A_{31}}^t   \Big]^{-1} \\ & \times\Big[
\mf{\widetilde A_{31}} [ \mf{\widetilde A_{21}} ]^{-1} \big[\mf{\widetilde A_{22}}
    [\mf{\widetilde A_{21}}^t]^{-1}  \mf{\widetilde A_{31}}^t    - \mf{\widetilde A_{32}}^t \big] - \mf{\widetilde A_{32}}  [\mf{\widetilde A_{21}}^t]^{-1}  \mf{\widetilde A_{31}}^t+
     \mf{\widetilde A_{33}} \Big].
\end{split}
\end{equation*}
and 
$$
    \mf L = 
    \left(
    \begin{array}{cc}
    \mf{\widetilde A_{21}}^{-1}\mf{\widetilde A_{22}}
    [\mf{\widetilde A_{21}}^t]^{-1}  \mf{\widetilde A_{31}}^t   - \mf{\widetilde A_{21}}^{-1} \mf{\widetilde A_{32}}^t \mf z_3
    -
     \mf{\widetilde B_{22}}  [\mf{\widetilde A_{21}}^t]^{-1}  \mf{\widetilde A_{31}}^t  \mf N \\
     -  [\mf{\widetilde A_{21}}^t]^{-1}  \mf{\widetilde A_{31}}^t\\
\mf I_{(N-2h) \times (N-2h)}\\
    \end{array}
    \right). 
$$
Next, we point out that the matrix $\mf N$ is symmetric, and that by direct computations one can check that $\mf N \boldsymbol{\xi}= \mu \boldsymbol{\xi}$ if and only if $(\mf A - \mu \mf B) \mf L \boldsymbol{\xi}= \mf 0_N$.  Next, we recall~\eqref{e:uast} and~\eqref{e:lagr2} and apply~\cite[Lemma 10.3]{BianchiniSpinolo} (see also~\cite[Lemma 4.7]{BianchiniSpinoloARMA}) to conclude that the matrix $\mf N$ has $k-1-h$ eigenvalues with strictly negative real part (each of them counted according to its multiplicity) and the eigenvalue $0$ has multiplicity $1$. 
This implies that the stable manifold of system~\eqref{e:sylagr} linearized at the equilibrium point $\boldsymbol{\omega}^\ast$ has dimension $k-1-h$. 
\section{Solutions of the Riemann and boundary Riemann problem} \label{s:rie}
In this section we go over the construction of the solutions of the so-called Riemann and boundary Riemann problems, which is pivotal to the construction of our wave front-tracking algorithm. The analysis relies on~\cite{BianchiniSpinoloARMA,BianchiniSpinolo,Lax}. Note however that the assumptions that the $k$-th (boundary characteristic) field is either linearly degenerate or genuinely nonlinear provides considerable simplifications with respect to the original analysis in~\cite{BianchiniSpinoloARMA,BianchiniSpinolo}, and hence for the reader's convenience we provide the complete construction of the solution of the boundary Riemann problem in these cases. Since the discussion in the present section is fairly technical, but also very important for the subsequent analysis, in \S\ref{ss:speroserva}  for the reader's convenience we provide an informal overview of the main ideas. In the next paragraphs we then provide the complete  construction with the technical details. 
\subsection{An informal roadmap to the construction of the solution of the boundary Riemann problem} \label{ss:speroserva}
We first recall that the Riemann problem is a Cauchy problem posed by coupling~\eqref{e:claw} with the initial datum 
\be \label{e:datirie}
    \mf v(0, x) = \left\{
    \begin{array}{ll}
    \mf v^- & x < 0 \\
     \mf v^+ & x>0 \\
    \end{array}
    \right., \quad \mf v^-, \mf v^+ \in \R^N. 
\eq
In his pioneering work~\cite{Lax}, Lax constructed an admissible solution of~\eqref{e:claw},\eqref{e:datirie} under the main assumptions that $|\mf v^+ - \mf v^-|$ is sufficiently small and that every vector field is either genuinely nonlinear or linearly degenerate. As a matter of fact, Lax's analysis implies that to solve the Riemann problem~\eqref{e:claw},\eqref{e:datirie} one has to determine $(s_1, \dots, s_N)\in \R^N$ in such a way that 
\be \label{e:risolvoRieP}
    \mf u^- = \mf t_1(\cdot, s_1) \circ \mf t_2 (\cdot, s_2) \circ \dots \circ \mf t_N (\mf u^+, s_N),
\eq
where $\mf u^\pm = \mf u(\mf v^{\pm})$ and we are using the $\mf u$ variables since it will be more convenient for what follows. Also, in~\eqref{e:risolvoRieP} $\mf t_i$ denotes the so-called \emph{wave fan curves of admissible states}\footnote{Note that for technical reasons we introduce a slight difference with respect to the original paper~\cite{Lax}. In particular, $\mf t_i (\widetilde{\mf u}, \cdot)$ is the wave fan curve of admissible \emph{left} states, whereas in the original work~\cite{Lax} Lax constructed the wave fan curve of admissible \emph{right} states.\label{foot}}, whose construction we review in~\S\ref{ss:lax}. 

Let us now move towards the so-called \emph{boundary Riemann problem}: to pose it, we consider the underlying viscous approximation~\eqref{e:vclaw} and correspondigly assign the boundary condition according to Definition~\ref{d:equiv}. The boundary Riemann problem is the initial-boundary value problem posed by coupling~\eqref{e:claw} with the data 
\be \label{e:briedata}
   \mf v(0, \cdot) = \mf v^+, \qquad \mf v(\cdot, 0) \sim_{\mf D} \mf v_b,
\eq
for given data $\mf v^+$, $\mf v_b \in \R^N$. The very basic idea in~\cite{BianchiniSpinoloARMA,BianchiniSpinolo} is to look for an identity like~\eqref{e:risolvoRieP}; to proceed step by step we first consider the simplest possible case and assume that the matrix $\mf B$ in~\eqref{e:symmetric} is positive definite (and henceforth invertible) and that the boundary is not characteristic, which means that all eigenvalues of the matrix $\mf E^{-1}\mf A$ in~\eqref{e:symmetric} are bounded away from $0$.
To fix the notation, we term $n$ the number of strictly negative eigenvalues, and hence $N-n$ is the number of strictly positive eigenvalues. Since the matrix $\mf B$ is positive definite, we can impose a full boundary condition on the solution of~\eqref{e:symmetric}. This is consistent with \S\ref{ss:boucon} and Remark~\ref{r:beta} since in this case $h=\ell=0$ and $\boldsymbol{\beta} (\mf u, \mf u_b) = \mf u- \mf u_b$. Wrapping up, the boundary condition in~\eqref{e:briedata} boils down to the requirement that there is a \emph{boundary layer} $\mf w: \R_+ \to \R$ such that (recall~\eqref{e:bltw})
\be \label{e:andiamoav}
\left\{
\begin{array}{ll}
         \mf A (\mf w) \mf w' = \mf B(\mf w) \mf w'' + \mf G(\mf w, \mf w') \mf w' \\
        \mf w(0) = \mf u_b, \quad \lim_{y \to + \infty} \mf w(y) = \mf{\bar u},
\end{array}
\right.
\eq
where $\mf u_b = \mf u(\mf v_b)$ and the trace $\mf{\bar u} = \mf u (\mf v(t, 0))$ does not depend on time since the solution we construct is self-similar. To study~\eqref{e:andiamoav}, we employ the standard Stable Manifold Theorem, which dictates that~\eqref{e:andiamoav} has a solution if and only if $\mf u_b$ lies on the so-called \emph{stable manifold} computed at the point $\mf{\bar u}$. A by-now classical lemma (see for instance~\cite{BGSZ} or also~\cite[Lemma 10.2]{BianchiniSpinolo}) states that the dimension of the stable manifold is $n$. In the following, we term $\boldsymbol{\psi}(\mf{\bar u}, \cdot)$ a map parameterizing the stable manifold. To solve the boundary Riemann problem we then replace~\eqref{e:risolvoRieP} with the identity 
\be \label{e:risolvoBRieP}
 \mf u_b = \boldsymbol{\psi}(\cdot, \xi_1, \dots, \xi_n)  \circ \mf t_{n+1} (\cdot, s_{n+1}) \dots \circ \mf t_N (\mf u^+, s_N),
\eq 
and by relying again on~\cite[Lemma 10.2]{BianchiniSpinolo} we conclude that this uniquely determines the values $(\xi_1, \dots, \xi_n, s_{n+1}, \dots, s_N)$. Once these values are known, we set $\mf{\bar u} : = \mf t_{n+1} (\cdot, s_{n+1}) \dots \circ \mf t_N (\mf u^+, s_N)$ and point out that, by construction, $\mf{\bar u}$ can be connected to $\mf u^+$ by rarefaction waves, shocks or contact discontinuities with \emph{positive} speed, and to $\mf{u_b}$ by a boundary layer satisfying~\eqref{e:andiamoav}. 

We now move forward and consider the case where the matrix $\mf B$ is still invertible, but the boundary is characteristic, namely one eigenvalue of $\mf E^{-1}\mf A$ can attain the value $0$. As in~\eqref{e:uast}, we assume that the vanishing eigenvalue is the $k$-th.  In this case, the basic idea of the construction is still that the trace $\mf{\bar u}$ must be connected to $\mf u_b$ by a boundary layer, but the analysis of~\eqref{e:andiamoav} is much more delicate. Loosely speaking, this is first of all due to the fact that by linearizing at the point $\mf{\bar u}$ one obtains a non-trivial center space. In other words, in general there are solutions of the equation at the first line of~\eqref{e:andiamoav} that lie on the stable manifold and hence converge exponentially fast, but there might also be converging solutions lying on a center manifold, and hence having a slower decay at infinity. Also, there might be shock or rarefaction waves (if the $k$-th vector field is genuinely nonlinear) or contact discontinuities (if the $k$-th vector field is linearly degenerate) of the $k$-th family with non-negative speed, which should be taken into account. To handle these challenges, we proceed as follows. 

First, we construct the so-called \emph{characteristic wave fan curve of admissible states} $\boldsymbol{\zeta}_k$. In \S\ref{ss:zetakgnl} we assume that the $k$-th vector field is genuinely nonlinear: in this case, the curve $\boldsymbol{\zeta}_k (\mf{\widetilde u}, \cdot)$ describes the states that can be connected to $\mf{\widetilde u}$ by a shock or a rarefaction with \emph{non-negative} speed close to $\lambda_k (\mf{\widetilde u})$, and by boundary layers (that is, solutions of~\eqref{e:bltw}) lying on the\footnote{The center manifold obtained by linearizing a given system at a given point is in general not unique, but me can arbitrarily fix one} center manifold. In \S\ref{ss:case2} we assume that the $k$-th vector field is linearly degenerate: the curve $\boldsymbol{\zeta}_k (\mf{\widetilde u}, \cdot)$ describes the states that can be connected to $\mf{\widetilde u}$ by a contact discontinuity with \emph{non-negative} speed equal to $\lambda_k (\mf{\widetilde u})$, or by boundary layers lying on the center manifold.

In \S\ref{sss:cbla} we then complete the analysis by considering \emph{all} boundary layers, not only those lying on the center manifold.  From the technical standpoint, the main tool we use is the Slaving Manifold Theorem, a classical result in dynamical systems theory, see~\cite{KatokH} for an extended discussion and~\cite[\S11]{BianchiniSpinolo} for a very brief exposition oriented towards the applications we need in the following. We now provide a very handwaving exposition of the main ideas underpinning the construction of the slaving manifold, and we refer to~\cite[\S11]{BianchiniSpinolo} for the precise statements. Fix an autonomous systems of ODEs\footnote{Note that we are using a different notation compared to~\cite[\S11]{BianchiniSpinolo} and here we denote by $ \boldsymbol{\omega}$ and $\mf a$ what in~\cite{BianchiniSpinolo} is termed $\mf v$ and $\mf g$, respectively. \label{foot3}} like~\eqref{e:bltw2}
an equilibrium point $\boldsymbol{\omega}^\ast$, $\mf a (\boldsymbol{\omega}^\ast) = \mf 0$, and an orbit $\boldsymbol{\omega}_0$ of~\eqref{e:bltw2} lying on a center manifold and confined in a sufficiently small neighborhood of $\boldsymbol{\omega} ^\ast$. Very loosely speaking, the Slaving Manifold Theorem characterizes the orbits of~\eqref{e:bltw2} that converge exponentially fast to $\boldsymbol{\omega}_0$ at $+ \infty$. Also, it provides a description of these orbits: each of them is given by $\boldsymbol{\omega}_0$, plus an orbit lying on the so-called uniformly stable manifold (whose dimension is the same as the number of eiegenvalues of $\mf D \mf a(\boldsymbol{\omega}^\ast)$ with strictly negative real part) and a ``perturbation term". The ``perturbation term" (which could also be called ``interaction term") is due to the nonlinearity of $\mf a$ and it is small, in a suitable quantified sense, with respect to the previous two, see~\cite[eq. (11.6)]{BianchiniSpinolo}. 
Note that in the applications to boundary layers discussed in the following the orbit $\boldsymbol{\omega}_0$ is obtained from the curve $\boldsymbol{\zeta}_k$, and can boil down to a single equilibrium point in some cases\footnote{We refer to the comments after~\eqref{e:festanatale2} below for some further details about the instances where the orbit $\boldsymbol{\omega}_0$ boils down to an equilibrium point.}. At the end of the day, by applying the Slaving Manifold Lemma we arrive at the formula 
\be \label{e:risolvoBRieP2}
   {\boldsymbol \phi} (\cdot, \xi_1, \dots, \xi_{k-1}, s_k)\circ \mf t_{k+1}(\cdot, s_{k+1}), \dots, t_N (\mf u^+, s_N)= \mf u_b,
\eq
which is the analogous of~\eqref{e:risolvoRieP} and~\eqref{e:risolvoBRieP}. In the above expression, 
${\boldsymbol \phi} $ is a suitable function whose precise expression is provided in \S\ref{sss:cbla}, see in particular formula~\eqref{e:bl10}. One can then show that the identity~\eqref{e:risolvoBRieP2} uniquely determines the values of $(\xi_1, \dots, \xi_{k-1}, s_{k}, \dots, s_N)$ and from them one reconstructs the solution of the boundary Riemann problem.

To complete our overview, we are left to add one last item to the picture, namely the fact that the matrix $\mf B$ in~\eqref{e:symmetric} is in general not invertible. As pointed out in \S\ref{ss:boucon}, this first of all requires a more complicated formulation of the boundary condition, which is expressed by the function $\boldsymbol{\beta}$ introduced in there. Note that this also applies to the boundary layers, so the system we have to consider is now 
\be
\label{e:andiamoav2}
 \left\{
\begin{array}{ll}
         \mf A (\mf w) \mf w' = \mf B(\mf w) \mf w'' + \mf G(\mf w, \mf w') \mf w' \\
        \boldsymbol{ \beta}(\mf w(0), \mf u_b) = \mf 0_N, \quad \lim_{y \to + \infty} \mf w(y) = \mf{\underline u},
\end{array}
\right.
\eq
rather than~\eqref{e:andiamoav}. In the above system, due to the fact that we are considering the boundary characteristic case the asymptotic state $ \mf{\underline u}$ is in general different from the trace $\mf{\bar u} = \mf u(t, 0)$. Note that Definition~\ref{d:equiv} dictates the relation between the asymptotic state
$\mf{\underline u}$ and $\mf{\bar u}$: either they coincide or there is a $0$-speed Lax admissible shock (if the $k$-th vector field is genuinely nonlinear) or $0$-speed contact discontinuity (if the $k$-th vector field is linearly degenerate) connecting $\mf{\underline u}$ (on the left) and $\mf{\bar u}$ (on the right). 

To write down the analogous of~\eqref{e:risolvoRieP},\eqref{e:risolvoBRieP} and~\eqref{e:risolvoBRieP2} it is convenient to split between {\sc cases A)} and {\sc C)} in Hypothesis~\ref{h:eulerlag} on one side and {\sc case B)} on the other side. 
In cases A) and C) we impose 
\be \label{e:solbrparma}
    \boldsymbol{\beta} ( {\boldsymbol \phi} (\cdot, \xi_{\ell +1}, \dots, \xi_{k-1}, s_k)\circ \mf t_{k+1}(\cdot, s_{k+1}), \dots, t_N (\mf u^+, s_N), \mf u_b) = \mf 0_{N}. 
\eq
In the above expression, $\ell$ denotes the number of negative eigenvalues of $\mf A_{11}$ in {\sc case A)}, and $\ell=h$ (the dimension of the kernel of $\mf B$) in {\sc case C)}. Note that this yields that the dimension of the vector $(\xi_{\ell +1}, \dots, \xi_{k-1}, s_{k}, \dots, s_N)$ is exactly the number of boundary conditions imposed through the function $\boldsymbol{\beta}$ given by~\eqref{e:betagnl} and \eqref{e:betalindeg}, respectively, see Remark~\ref{r:beta}. The analysis in~\cite{BianchiniSpinolo} dicates that~\eqref{e:solbrparma} uniquely determines the values of $(\xi_{\ell +1}, \dots, \xi_{k-1}, s_{k}, \dots, s_N)$ and henceforth the solution of the boundary Riemann problem. We now consider {\sc case B)} in Hypothesis~\ref{h:eulerlag}: in this case the number of non-positive eigenvalues of $\mf A_{11}$ \emph{depends} on $\mf u_b$, and is denoted by $\ell (\mf u_b)$. For instance, in the case of the Navier-Stokes and viscous MHD equations written in Eulerian coordinates this number depends on the sign of the fluid velocity, which is the second component of $\mf u_b$. In place of~\eqref{e:solbrparma} we have then 
 \be \label{e:solbrcpam}
    \boldsymbol{\beta} ( {\boldsymbol \phi} (\cdot, \xi_{\ell (\mf u_b) +1}, \dots, \xi_{k-1}, s_k)\circ \mf t_{k+1}(\cdot, s_{k+1}), \dots, t_N (\mf u^+, s_N), \mf u_b) = \mf 0_{N}, 
\eq  
and as a consequence of the analysis in~\cite{BianchiniSpinolo} we have that the above identity 
 uniquely determines the values of $(\xi_{\ell +1}, \dots, \xi_{k-1}, s_{k}, \dots, s_N)$ and henceforth the solution of the boundary Riemann problem. 
\subsection{The Lax wave fan curves of admissible states}\label{ss:lax} 
The key point in Lax's pioneering paper~\cite{Lax} on the solution of the Riemann problem is the construction of the so-called \emph{$i$-th wave fan curve of admissible states}, which we denote by $\mf t_i$. We now briefly recall its definition and fix some notation. We first consider the genuinely nonlinear case, next the linearly degenerate case. 
\subsubsection{Genuinely nonlinear vector field} 
Fix a state $\mf {\widetilde u}$ and $i=1, \dots, N$ and assume that the $i$-th vector field is genuinely nonlinear~\eqref{e:gnl}. We term $\mf i_i(\mf {\widetilde u}, \cdot)$ the rarefaction wave curve, which is the integral curve passing through $\mf {\widetilde u}$ of the vector field $\mf r_i$, namely the solution of the Cauchy problem
\begin{equation} \label{e:rarefdef}
          \displaystyle{\frac{\partial \mf i_i  (\mf {\widetilde u}, s) }{\partial s} = \mf r_i (\mf i_i(\mf {\widetilde u}, s))}
          \qquad 
          \mf i_i (\mf {\widetilde u}, 0) = \mf {\widetilde u}. 
\end{equation}
We also term $\mf h_i (\mf {\widetilde u}, \cdot)$ the curve of the states that can be connected to $\mf {\widetilde u}$ by a (non necessarily Lax admissible) shock  with speed close to $\lambda_i (\mf {\widetilde u})$, that is the set of points such that the Rankine-Hugoniot conditions 
\begin{equation} \label{e:rh}
    \mf f\circ \mf v (\mf h_i(\mf {\widetilde u}, s) ) - \mf f \circ \mf v(\mf {\widetilde u}) = \sigma_i (\mf {\widetilde u}, s) [\mf g \circ \mf v(\mf h_i) -\mf g \circ \mf v(\mf{\widetilde u})]
\end{equation}
hold for some $\sigma_i(\mf {\widetilde u}, s)$ close to $\lambda_i (\mf {\widetilde u})$. In the previous expression, $\mf v$ denotes the change of variables which allows to pass from~\eqref{e:symmetric} to~\eqref{e:vclaw}.  The analysis in~\cite{Lax} dictates that the \emph{$i$-th
wave fan curve of admissible states} is defined by setting\footnote{We recall footnote~\ref{foot} on page~\pageref{foot} and explicitely remark that, since we are defining the wave fan curve of admissible \emph{left} states, the \emph{negative} values of $s$ correspond to rarefactions, and the \emph{positive} to shocks.} 
\begin{equation} \label{e:laxc}
    \mf t_i (\mf {\widetilde u}, s) : = 
     \left\{
          \begin{array}{ll}
               \mf i_i (\mf {\widetilde u},s) & s <0  \\
               \mf h_i (\mf {\widetilde u},s) & s > 0                                             \\ 
         \end{array}
   \right.
\end{equation}
and comprises all the states $\mf u^-$ (on the left) that can be connected to $\mf {\widetilde u}$ on the right by either a Lax admissible shock (if $s>0$) or by a rarefaction (if $s<0$). 

Since we will need it in the following, we introduce some further notation: we recall that $\sigma_i (\mf {\widetilde u}, s)$ denotes the speed of the shock connecting $\mf {\widetilde u}$ with $\mf h_i (\mf {\widetilde u}, s)$, that is it satisfies~\eqref{e:rh}.
If the $k$-th characteristic field is genuinely nonlinear~\eqref{e:gnl}  the map $s \mapsto \sigma_k (\mf {\widetilde u}, s)$ is strictly monotone and henceforth invertible. 
We term $\underline s(\mf {\widetilde u})$ the unique value satisfying 
\be \label{e:underlines}
      \sigma_k (\mf {\widetilde u}, \underline s(\mf {\widetilde u})) =0. 
\eq
Note that, by the Implicit Function Theorem we have that the map $\mf {\widetilde u}  \mapsto  \underline s(\mf {\widetilde u})$ is Lipschitz continuous, namely 
\be \label{e:lipunders}
      | \underline s(\mf u_1) - \underline s (\mf u_2) | \leq \unpo |\mf u_1 - \mf u_2|. 
\eq
Owing again to genuine nonlinearity, the map $s \mapsto \lambda_k (\mf i_k(\mf{\widetilde u}, s))$ is also strictly monotone, we term $ \bar s (\mf{\widetilde u})$ the unique value satisfying 
\begin{equation} \label{e:baresse}
   \lambda_k (\mf i_k(\mf{\widetilde u}, \bar s (\mf{\widetilde u})) = 0. 
   \end{equation}
\subsubsection{Linearly degenerate vector field}
If the $i$-th vector field is linearly degenerate we set 
\begin{equation}
\label{e:lax:lindeg}
      \mf t_i (\mf {\widetilde u},s)=:  \mf i_i (\mf {\widetilde u},s) ,
\end{equation}
where $\mf i$ is the same as in~\eqref{e:rarefdef}. 
\subsection{The characteristic wave fan curve $\boldsymbol{\zeta}_k$ in the genuinely nonlinear case} \label{ss:zetakgnl}
To ease the exposition we assume~\eqref{e:a11nz}, that is case A) in Hypothesis~\ref{h:eulerlag} (the analysis in the other cases is similar).  Note this covers the case of the inviscid limit of the Navier-Stokes equations written in Eulerian coordinates when the fluid velocity is close to $\pm c$, where $c$ denotes the sound speed, see~\S\ref{sss:appl1}. 
\begin{lemma} \label{l:cwfc} Assume~\eqref{e:gnl} and let $\mf u^\ast$ be the same as in~\eqref{e:uast}. There is a sufficiently small constant $\nu>0$\footnote{The constant $\nu$ only depends on the functions $\mf E$, $\mf A$, $\mf B$, $\mf G$, and on the value $\mf u^\ast$} such that if $|\mf{\widetilde u} - \mf u^\ast| \leq \nu$, then there is curve $\boldsymbol{\zeta}_k (\mf{\widetilde u}, \cdot): ]-\nu, \nu[ \to \R^N$ satisfying the following properties:
\begin{itemize}
\item[1.] $\boldsymbol{\zeta}_k (\mf{\widetilde u}, 0) = \mf{\widetilde u}$. 
\item[2.] For every $s \in ]-\nu, \nu[$ one (and only one) of the following conditions holds true:
\begin{itemize}
\item[i)] $\mf{\widetilde u}$ (on the right) and $\boldsymbol{\zeta}_k (\mf{\widetilde u}, s)$ (on the left) are connected by a Lax admissible shock with nonnegative speed
\item[ii)] there is a value $\bar{\mf u}$ (possibly coinciding with either $\mf{\widetilde u}$ or $\boldsymbol{\zeta}_k (\mf{\widetilde u}, s)$) such that 
\begin{itemize}
\item[ii$_1$)] 
$\mf{\widetilde u}$ (on the right) and 
$\bar{\mf u}$ (on the left) are connected by a rarefaction wave with nonnegative speed 
\item[ii$_2$)] there is a \emph{boundary layer} $\mf w: [0, + \infty[ \to \R^N$ such that 
\be \label{e:bleq}
\left\{
\begin{array}{ll}
         \mf A (\mf w) \mf w' = \mf B(\mf w) \mf w'' + \mf G(\mf w, \mf w') \mf w' \\
         \boldsymbol{\beta}(\mf  w(0) , \boldsymbol{\zeta}_k (\mf{\widetilde u}, s)) = \mf 0, \quad \lim_{x \to + \infty} \mf w(x) = \bar{\mf u}. 
\end{array}
\right. 
\eq
\end{itemize}
\end{itemize}
\item[3.] The function $\boldsymbol{\zeta}_k$ is Lipschitz continuous with respect to both variables. It is also differentiable with respect to $s$ at $s=0$ and satisfies 
\be \label{e:derzeta}
   \left. \frac{d \boldsymbol{\zeta}_k (\mf{\widetilde u}, s)}{d s} \right|_{s =0}=
   \left\{
   \begin{array}{ll}
   \mf r_k (\mf{\widetilde u}) &  \lambda_k (\mf{\widetilde u}) \ge 0 \\
   \mf r_c (\mf{\widetilde u}, 0) & \lambda_k (\mf{\widetilde u}) < 0. \\ 
   \end{array}
   \right.
\eq
\end{itemize}
\end{lemma}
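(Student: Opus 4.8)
The plan is to build $\boldsymbol{\zeta}_k(\mf{\widetilde u}, \cdot)$ by concatenating at most three arcs, joined at the states where the $k$-th characteristic speed vanishes, in the spirit of the constructions of~\cite{BianchiniSpinoloARMA,BianchiniSpinolo} (with simplifications coming from~\eqref{e:gnl}). The first is a \emph{Lax-shock arc}: the portion of the Hugoniot locus $\mf h_k(\mf{\widetilde u},\cdot)$ on which the shock speed satisfies $\sigma_k\ge 0$, that is, beyond the value $\underline s(\mf{\widetilde u})$ of~\eqref{e:underlines}. The second is a \emph{rarefaction arc}: the portion of the rarefaction curve $\mf i_k(\mf{\widetilde u},\cdot)$ along which $\lambda_k\ge 0$, that is, between $\bar s(\mf{\widetilde u})$ (see~\eqref{e:baresse}) and $0$. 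The third is a \emph{boundary-layer arc}, obtained by taking the orbits of the reduced center-manifold flow~\eqref{e:bltw3} that converge, as $y\to+\infty$, to the equilibrium $(\bar{\mf u}_0,0)$, where $\bar{\mf u}_0$ is the endpoint of the rarefaction arc (the unique zero-speed point of $\mf i_k(\mf{\widetilde u},\cdot)$ when $\lambda_k(\mf{\widetilde u})\ge 0$, and $\bar{\mf u}_0=\mf{\widetilde u}$ when $\lambda_k(\mf{\widetilde u})<0$), projecting these orbits via $\boldsymbol{\pi}_{\mf u}$ and reading off the corresponding boundary state through $\boldsymbol{\beta}$. One then reparameterizes the concatenation by a single $s\in\,]-\nu,\nu[$, with $s=0$ corresponding to $\mf{\widetilde u}$, choosing the reparameterizations on the individual arcs so that the concatenated curve is Lipschitz and, at $s=0$, differentiable.

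To check Property~2 I would run through the arcs. On the Lax-shock arc alternative i) holds by construction, since $\sigma_k\ge 0$ there and the positive-strength points of the Hugoniot locus yield Lax-admissible $k$-shocks. On the rarefaction arc alternative ii) holds with $\bar{\mf u}=\boldsymbol{\zeta}_k(\mf{\widetilde u},s)$ and the trivial boundary layer $\mf w\equiv\bar{\mf u}$ (which solves the boundary-layer equation and satisfies $\boldsymbol{\beta}(\bar{\mf u},\bar{\mf u})=\mf 0$), the rarefaction having nonnegative speed because $\lambda_k\ge0$ along that arc. On the boundary-layer arc alternative ii) holds with $\bar{\mf u}=\bar{\mf u}_0$: by Lemma~\ref{l:emmec} an orbit of~\eqref{e:bltw3} converging to $(\bar{\mf u}_0,0)$ is precisely a boundary layer solving~\eqref{e:bleq}; such orbits exist because $\lambda_k(\bar{\mf u}_0)=\theta_c(\bar{\mf u}_0,0)\,a(\bar{\mf u}_0)\le 0$ by Lemma~\ref{l:lambdac}; and $\bar{\mf u}_0$ is joined to $\mf{\widetilde u}$ by a nonnegative-speed rarefaction. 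The clause ``one and only one'' follows from the arcs being disjoint apart from the shared zero-speed endpoints, together with a sign-of-characteristic-speed argument (using~\eqref{e:gnl}) showing that a zero-speed Lax $k$-shock cannot simultaneously be realized as rarefaction-plus-boundary-layer.

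For Property~3, each arc is Lipschitz with a Lipschitz dependence on $\mf{\widetilde u}$: the Lax curves $\mf h_k,\mf i_k$ are smooth, the thresholds $\underline s(\cdot)$ and $\bar s(\cdot)$ are Lipschitz by the Implicit Function Theorem (as in~\eqref{e:lipunders}), and the center manifold, the reduced flow~\eqref{e:bltw3} and the parameterization of the converging orbits depend smoothly on the data, uniformly for $\mf{\widetilde u}$ in a small ball around $\mf u^\ast$ — which also produces the uniform radius $\nu$. Continuous concatenation then gives $\boldsymbol{\zeta}_k\in\mathrm{Lip}$. For differentiability at $s=0$: if $\lambda_k(\mf{\widetilde u})>0$ the curve coincides near $s=0$ with the Lax curve $\mf t_k(\mf{\widetilde u},\cdot)$, differentiable at $0$ with derivative $\mf r_k(\mf{\widetilde u})$ (the rarefaction and shock branches being both tangent to $\mf r_k$); if $\lambda_k(\mf{\widetilde u})<0$ the curve lies near $s=0$ on the boundary-layer arc, where the first two lines of~\eqref{e:bltw3} give $(\mf u_1,\mf u_2)'=z_c\,\mf r_c$, so after reparameterization the derivative is a positive multiple of $\mf r_c(\mf{\widetilde u},0)$, normalized to $\mf r_c(\mf{\widetilde u},0)$; if $\lambda_k(\mf{\widetilde u})=0$ the curve has a Lax branch on one side of $s=0$ and the boundary-layer arc on the other, with one-sided derivatives $\mf r_k(\mf{\widetilde u})$ and $\mf r_c(\mf{\widetilde u},0)$, which coincide by the second assertion of Lemma~\ref{l:lambdac}. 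This yields~\eqref{e:derzeta}.

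The main obstacle is the boundary-layer arc near the state where $\lambda_k$ vanishes, i.e. the equilibrium $(\bar{\mf u}_0,0)$ at which $\theta_c(\bar{\mf u}_0,0)=0$ — a degenerate equilibrium of the scalar equation $z_c'=z_c\theta_c$. Genuine nonlinearity~\eqref{e:gnl}, through Lemma~\ref{l:lambdac}, makes this equilibrium semi-stable rather than merely nonhyperbolic, so the converging orbits still form a Lipschitz one-parameter family reaching $(\bar{\mf u}_0,0)$; nevertheless one must control this family quantitatively — this is where the slaving-manifold machinery of~\cite[\S11]{BianchiniSpinolo} enters — and choose the parameterization so that the concatenation with the adjacent arc is genuinely (two-sidedly) differentiable at $s=0$, not merely one-sidedly. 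A subsidiary technical point is verifying that a single $\nu$ works uniformly as $\mf{\widetilde u}$ varies near $\mf u^\ast$, which forces the center-manifold and slaving constructions to be carried out once near $(\mf u^\ast,0)$ and shown to localize uniformly.
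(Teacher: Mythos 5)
Your overall architecture coincides with the paper's: $\boldsymbol{\zeta}_k$ is exactly the concatenation of the nonnegative-speed shock/rarefaction portion of $\mf t_k$ with a boundary-layer arc lying on the center manifold, matched at the zero-speed states, with Property 2 checked arc by arc and Property 3 by a case analysis on the sign of $\lambda_k(\mf{\widetilde u})$ (compare \eqref{e:uguale1} and \eqref{e:regcwfc}). The genuine gap is precisely at the point you flag and then defer: the construction, parameterization and regularity of the boundary-layer arc near the degenerate equilibrium where $\theta_c$ vanishes, together with the identification of its endpoint with the zero-speed Lax shock state. Your proposed fix --- semi-stability plus the slaving-manifold machinery --- does not close it: the Slaving Manifold Lemma of~\cite[\S 11]{BianchiniSpinolo} controls the uniformly stable directions \emph{transverse} to the center manifold (and in this paper it is invoked only later, in \S\ref{sss:cbla}, to describe general boundary layers), whereas the difficulty here lives entirely \emph{inside} the center manifold, in the degenerate scalar equation $z_c'=z_c\,\theta_c$ with $\theta_c$ vanishing at the equilibrium. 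That ``the orbits converging to a semi-stable equilibrium form a Lipschitz one-parameter family with prescribed tangent'' is exactly what must be proved, not an available input.

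The paper's device, which your proposal never introduces, is a desingularization: divide \eqref{e:bltw3} by $z_c$ to obtain the smooth rescaled system \eqref{e:rescala} and define the arc as $\mf b_k(\mf{\widetilde u},s)$, the solution of \eqref{e:rescala},\eqref{e:id} evaluated at rescaled time $s$, see \eqref{e:bkappa}. Genuine nonlinearity then enters quantitatively through Lemma~\ref{l:convessa}: the monotonicity \eqref{e:giallo} yields the quadratic lower bound \eqref{e:esplode} on $z_c$, which (i) produces, when $\lambda_k(\mf{\widetilde u})<0$, the value $\underline s$ with $z_c(\underline s)=0$ and the zero-speed viscous profile \eqref{e:etw}, so that the boundary-layer arc meets the Hugoniot curve exactly at the zero-speed shock state and one can normalize as in \eqref{e:pera} (this is also what makes your ``one and only one'' claim and the matching in the case $\lambda_k(\mf{\widetilde u})<0$ rigorous); and (ii) shows, via the change of variables $dx_c/d\tau=1/z_c$ in \eqref{e:caux} and the divergence \eqref{e:esplode2}, that each rescaled-time segment really corresponds to a boundary layer on $[0,+\infty[$ with the asserted asymptotic state (Lemmas~\ref{l:min0} and~\ref{l:min02}), which is what item ii$_2$) of the statement requires. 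Finally, the $C^2$ regularity of $\mf b_k$ (classical ODE theory for \eqref{e:rescala}) and the tangency \eqref{e:siattacca} give the $C^{1,1}$ matching \eqref{e:uguale2} at $\bar s(\mf{\widetilde u})$ and the two-sided derivative \eqref{e:derzeta}; without the rescaling, your differentiability claim at $s=0$ in the case $\lambda_k(\mf{\widetilde u})=0$, and the uniform Lipschitz dependence on $\mf{\widetilde u}$ near such points, remain unproved.
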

Note that in point ii) if $\bar{\mf u}$ coincides with $\mf{\widetilde u}$ then condition ii$_1)$ becomes trivial, if $\bar{\mf u}$ coincides with $\boldsymbol{\zeta}_k (\mf{\widetilde u}, s)$ then condition ii$_2)$ becomes trivial. A more technical observation is provided in Remark~\ref{r:fabio}.
\begin{remark} \label{r:fabio}
One might wonder why in item ii)$_1$ in the statement of Lemma~\ref{l:cwfc} one does not take into account the possibility that $\mf{\widetilde u}$ (on the right) and $\bar{\mf u}$ (on the left) are connected by a Lax admissible shock with nonnegative speed: this is due to the genuine nonlinearity assumption. Indeed, assume by contradiction that  $\mf{\widetilde u}$ (on the right) and $\bar{\mf u}$ (on the left) are connected by a Lax admissible shock with nonnegative speed; then $\lambda_k (\mf{\widetilde u})>0$ and hence there is no nontrivial orbit satisfying~\eqref{e:bleq} and lying on a center manifold. 
\end{remark}
We now provide the proof of Lemma~\ref{l:cwfc} by constructing  $\boldsymbol{\zeta}_k$. 
The exposition is organized as follows: in \S\ref{sss:bikappa} we define the function $\mf b_k$ describing the point connected to $\mf{\widetilde u}$ by a boundary layer, and describe some of its properties. In \S\ref{sss:defz} we define the function $\boldsymbol{\zeta}_k$ and discuss the heuristic ideas underpinning the definition. In \S\ref{sss:concl} we show that  $\boldsymbol{\zeta}_k$ satisfies the properties in the statement of Lemma~\ref{l:cwfc}. 
\subsubsection{The boundary layer function $\mf b_k (\mf{\widetilde u}, \cdot)$} \label{sss:bikappa}
We consider the Cauchy problem obtained by coupling 
\begin{equation} \label{e:rescala}
     \displaystyle{\frac{d \mf u}{d \tau} =  \mf r_c  (\mf u, z_c)} \qquad     \displaystyle{\frac{d z_c}{d \tau}=  \theta_c  (\mf u, z_c)} 
     \eq 
with the initial datum
\be
     \label{e:id}
  \mf u(\tau=0) = \mf{\widetilde u}, \qquad \qquad   z_c(\tau=0) = 0
\eq
where $\mf r_c$ is the same vector as in~\eqref{e:errec}. Note that~\eqref{e:rescala} is formally obtained from~\eqref{e:bltw3} by dividing by $z_c$. 
We define $ \mf b_k (\mf{\widetilde u}, s) $ by setting  
\be \label{e:bkappa}
    \mf b_k (\mf{\widetilde u}, s) : = \left(
    \begin{array}{cc}
         \mf u \\
          z_c \mf p_c (\mf u, z_c) \\ 
          \end{array}\right) (\tau=s),
\eq
where $\mf p_c$ is the same vector as in Lemma~\ref{l:emmec} and $(\mf u, z_c)$ is the solution of~\eqref{e:rescala},\eqref{e:id} at $\tau=s$. Note that the  function $\mf b_k$ attains values in $\R^{2N-h}$ and lies on the same center manifold 
$\mathcal M^c$ as in Lemma~\ref{l:emmec}. 
\begin{lemma}
The function $\mf b_k$ is of class $C^2$ with respect to both $\mf{\widetilde u}$ and $s$. Also,  
\be \label{e:siattacca}
       \lambda_k (\mf{\widetilde u}) =0 \implies \left. \frac{\partial \mf b_k (\mf{\widetilde u}, s)}{\partial s}\right|_{s=0} = 
       \left(
       \begin{array}{cc}
       \mf r_k (\mf{\widetilde u})\\
       \mf 0_{N-h} \\
       \end{array}
       \right),
\eq
where $\mf r_k$ is an eigenvector of $\mf E^{-1} \mf A$ associated to $\lambda_k$. 
\end{lemma}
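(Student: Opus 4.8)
The plan is to derive both assertions from the smooth dependence of the solutions of the ODE system~\eqref{e:rescala} on time and on the initial datum, together with the identities for $\theta_c$ and $\mf r_c$ established in Lemmas~\ref{l:emmec} and~\ref{l:lambdac}. First, concerning \emph{regularity}: since the coefficients of~\eqref{e:symmetric}, hence those of system~\eqref{e:bltw2}, are smooth, the Center Manifold Theorem produces, for every prescribed finite regularity, a center manifold of that regularity, and I would fix once and for all a center manifold $\mathcal M^c$ smooth enough that the map $\mf p_c$ of Lemma~\ref{l:emmec} is of class $C^3$. Then $\mf r_c$ in~\eqref{e:errec} is of class $C^3$, and, reading off the explicit formula~\eqref{e:lambdac2}, in which $\theta_c$ is built out of $\mf p_c$ and its first order derivatives, $\theta_c$ is of class $C^2$. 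Thus the right-hand side of~\eqref{e:rescala} is a $C^2$ vector field in a neighbourhood of $(\mf u^\ast,0)$; since $\mf r_c(\mf u^\ast,0)\neq\mf 0_N$ the point $(\mf{\widetilde u},0)$ is not an equilibrium, so for $|\mf{\widetilde u}-\mf u^\ast|$ and $|s|$ small the solution of~\eqref{e:rescala},~\eqref{e:id} is well defined and remains in the domain of $\mf p_c$. By the classical theorem on the $C^k$ dependence of solutions of ODEs on time and on the initial condition, the map $(\mf{\widetilde u},s)\mapsto(\mf u(s),z_c(s))$ is of class $C^2$; composing with the $C^3$ map $(\mf u,z_c)\mapsto(\mf u,z_c\,\mf p_c(\mf u,z_c))$ and recalling~\eqref{e:bkappa} shows that $\mf b_k$ is $C^2$ with respect to both $\mf{\widetilde u}$ and $s$.

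For \emph{the derivative at $s=0$}, assume $\lambda_k(\mf{\widetilde u})=0$. By~\eqref{e:thetaczero} we have $\lambda_k(\mf{\widetilde u})=\theta_c(\mf{\widetilde u},0)\,a(\mf{\widetilde u})$ with $a(\mf{\widetilde u})\ge a_0>0$, whence $\theta_c(\mf{\widetilde u},0)=0$, and Lemma~\ref{l:lambdac} also gives $\mf r_c(\mf{\widetilde u},0)=\mf r_k(\mf{\widetilde u})$. Writing $(\mf u(\cdot),z_c(\cdot))$ for the solution of~\eqref{e:rescala},~\eqref{e:id}, we have $\mf u'(0)=\mf r_c(\mf{\widetilde u},0)=\mf r_k(\mf{\widetilde u})$, $z_c'(0)=\theta_c(\mf{\widetilde u},0)=0$, and $z_c(0)=0$. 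Differentiating the two blocks of~\eqref{e:bkappa} at $s=0$, and using the product rule on the second one, the first block equals $\mf u'(0)=\mf r_k(\mf{\widetilde u})$ while the second equals $z_c'(0)\,\mf p_c(\mf{\widetilde u},0)+z_c(0)\,\partial_s\!\big[\mf p_c(\mf u(s),z_c(s))\big]\big|_{s=0}=\mf 0_{N-h}$, which is precisely~\eqref{e:siattacca}.

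The \emph{main obstacle} is not the derivative formula, which is the short computation above once $\theta_c(\mf{\widetilde u},0)=0$ and $\mf r_c(\mf{\widetilde u},0)=\mf r_k(\mf{\widetilde u})$ are in hand, but the regularity statement: center manifolds are only finitely differentiable, and $\theta_c$ already costs one derivative of $\mf p_c$ through~\eqref{e:lambdac2}, so one must take care to fix $\mathcal M^c$ regular enough that~\eqref{e:rescala} has a $C^2$ right-hand side; after that bookkeeping, everything reduces to standard ODE theory.
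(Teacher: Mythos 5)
Your proposal is correct and follows essentially the same route as the paper: regularity of $\mf b_k$ from smooth dependence of solutions of~\eqref{e:rescala} on $(\mf{\widetilde u},s)$, and the derivative formula~\eqref{e:siattacca} from the identities $z_c'(0)=\theta_c(\mf{\widetilde u},0)$ together with Lemma~\ref{l:lambdac}, which give $\theta_c(\mf{\widetilde u},0)=0$ and $\mf r_c(\mf{\widetilde u},0)=\mf r_k(\mf{\widetilde u})$ when $\lambda_k(\mf{\widetilde u})=0$. The only addition is your explicit bookkeeping on the finite smoothness of the center manifold (choosing $\mf p_c$ regular enough that the field in~\eqref{e:rescala} is $C^2$), which the paper subsumes under ``classical theory of ODEs''; the remark that $(\mf{\widetilde u},0)$ is not an equilibrium is not needed for that argument.
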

\begin{proof}
The regularity of $\mf b_k$ follows from the classical theory of ODEs. The implication~\eqref{e:siattacca} follows from Lemma~\ref{l:lambdac} and the equalities
$\partial z_c (s=0)/\partial s =\theta_c (\mf{\widetilde u}, 0)$. 
\end{proof}
\begin{lemma}\label{l:convessa} Assume that $\mf u$ and $z_c$ solve the Cauchy problem~\eqref{e:rescala},\eqref{e:id}. There is a sufficiently small constant $\nu>0$ such that  if $|\mf u(s) - \mf u^\ast|<\nu$ and $|z_c (s)|<\nu$  then
\be \label{e:edadim}
   \left. \frac{d \lambda_k (\mf u (\tau))}{d\tau } \right|_{\tau =s} \ge   \frac{d}{2}>0, 
\eq
and
\be \label{e:giallo}
   \left. \frac{d \theta_c  (\mf u (\tau), z_c (\tau))}{d\tau } \right|_{\tau =s} \ge   \frac{d}{2 a_1}>0, 
\eq
provided $d$ is the same constant as in~\eqref{e:gnl} and $a_1$ is an upper bound for the function $a (\mf u)$ in the statement of Lemma~\ref{l:lambdac}.
\end{lemma}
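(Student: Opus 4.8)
The plan is to prove both \eqref{e:edadim} and \eqref{e:giallo} as first‑order continuity estimates at the base point $(\mf u^\ast,0)$: along a solution of \eqref{e:rescala} each of the two left‑hand sides is the value, at $(\mf u(s),z_c(s))$, of an explicit \emph{continuous} function of $(\mf u,z_c)$, so it will be enough to evaluate that function at $(\mf u^\ast,0)$ and then shrink $\nu$. The resulting $\nu$ depends only on the moduli of continuity of $\mf E,\mf A,\mf B,\mf G$ (hence of $\lambda_k,\mf r_k,\mf r_c,\theta_c$) near $(\mf u^\ast,0)$, i.e.\ only on the data and on $\mf u^\ast$, as in Lemma~\ref{l:cwfc}. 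For \eqref{e:edadim}, the first equation of \eqref{e:rescala} and the chain rule give $\tfrac{d}{d\tau}\lambda_k(\mf u(\tau))=\nabla\lambda_k(\mf u(\tau))\cdot\mf r_c(\mf u(\tau),z_c(\tau))$. Since $\lambda_k(\mf u^\ast)=0$, the second part of Lemma~\ref{l:lambdac} yields $\mf r_c(\mf u^\ast,0)=\mf r_k(\mf u^\ast)$, so at $(\mf u^\ast,0)$ this quantity equals $\nabla\lambda_k(\mf u^\ast)\cdot\mf r_k(\mf u^\ast)\ge d$ by genuine nonlinearity \eqref{e:gnl}. Continuity of $(\mf u,z_c)\mapsto\nabla\lambda_k(\mf u)\cdot\mf r_c(\mf u,z_c)$ then gives the bound $\ge d/2$ on a small enough $\nu$‑ball, which is \eqref{e:edadim}.

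For \eqref{e:giallo} I would use \emph{both} equations of \eqref{e:rescala} to write, along the orbit,
\[
  \frac{d}{d\tau}\,\theta_c(\mf u(\tau),z_c(\tau)) = \nabla_{\mf u}\theta_c(\mf u,z_c)\cdot\mf r_c(\mf u,z_c) + \partial_{z_c}\theta_c(\mf u,z_c)\,\theta_c(\mf u,z_c),
\]
which is continuous in $(\mf u,z_c)$ since $\theta_c$ is $C^1$ and $\mf r_c$ is continuous (from \eqref{e:errec}, \eqref{e:lambdac2} and the regularity of the center manifold, which is also what underlies the $C^2$‑regularity of $\mf b_k$). At $(\mf u^\ast,0)$ the last summand vanishes because $\theta_c(\mf u^\ast,0)=0$ by \eqref{e:lambdac}. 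Differentiating the identity $\lambda_k(\mf u)=\theta_c(\mf u,0)\,a(\mf u)$ of \eqref{e:thetaczero} with respect to $\mf u$ and evaluating at $\mf u^\ast$ (where $\theta_c(\mf u^\ast,0)=0$, so the $\nabla a$‑term drops) gives $\nabla_{\mf u}\theta_c(\mf u^\ast,0)=\nabla\lambda_k(\mf u^\ast)/a(\mf u^\ast)$; combined with $\mf r_c(\mf u^\ast,0)=\mf r_k(\mf u^\ast)$ and \eqref{e:gnl}, the displayed expression at $(\mf u^\ast,0)$ equals $a(\mf u^\ast)^{-1}\nabla\lambda_k(\mf u^\ast)\cdot\mf r_k(\mf u^\ast)\ge d/a(\mf u^\ast)\ge d/a_1$, using $a_1\ge a(\cdot)$. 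Shrinking $\nu$ once more so that the displayed expression stays $\ge d/(2a_1)$ on the $\nu$‑ball proves \eqref{e:giallo}.

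I do not expect a genuine obstacle: the whole argument is a Taylor/continuity computation at the single point $(\mf u^\ast,0)$. The two points requiring care are (i) using Lemma~\ref{l:lambdac} to identify $\mf r_c(\mf u^\ast,0)$ with $\mf r_k(\mf u^\ast)$ \emph{as vectors}, not merely up to a scalar, so that \eqref{e:gnl} is applied with the correct orientation — this also involves the tacit check that the $\mf r_k$ in \eqref{e:gnl} is the continuously chosen eigenvector field pinned to $\mf r_c(\mf u^\ast,0)$ as in the proof of Lemma~\ref{l:lambdac}; and (ii) differentiating \eqref{e:thetaczero} precisely at the point where $\lambda_k$ vanishes, which is exactly what decouples $\nabla_{\mf u}\theta_c(\mf u^\ast,0)$ from $\nabla a(\mf u^\ast)$ and produces the factor $1/a(\mf u^\ast)$ responsible for the $1/a_1$ in \eqref{e:giallo}.
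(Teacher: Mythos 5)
Your proposal is correct and follows essentially the same route as the paper: both prove \eqref{e:edadim} and \eqref{e:giallo} by the chain rule along \eqref{e:rescala} together with a continuity (first-order) evaluation at $(\mf u^\ast,0)$, using Lemma~\ref{l:lambdac} both to identify $\mf r_c(\mf u^\ast,0)=\mf r_k(\mf u^\ast)$ and to differentiate the identity $\lambda_k(\mf u)=\theta_c(\mf u,0)\,a(\mf u)$ (where the $\nabla a$ term drops since $\theta_c(\mf u^\ast,0)=0$), and finally invoking genuine nonlinearity \eqref{e:gnl} and the bound $a\le a_1$. The paper writes the same computation with $\unpo|\mf u(s)-\mf u^\ast|+\unpo|z_c(s)|$ error terms at a generic nearby point rather than evaluating at the base point and appealing to continuity, but this is only a cosmetic difference.
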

\begin{proof}
To establish~\eqref{e:edadim} we point out that
\[ \begin{split}
   \left.  \frac{d \lambda_k (\mf u (\tau))}{d\tau } \right|_{\tau =s} & \stackrel{\eqref{e:rescala}}{=} 
    \nabla \lambda_k (\mf u(s)) \cdot \mf r_c (\mf u (s), z_c (s)) =
    \nabla \lambda_k (\mf u^\ast) \cdot \mf r_c (\mf u^\ast, 0) + \unpo
    |\mf u(s) - \mf u^\ast| + \unpo |z_c (s)| \\ &
    \stackrel{\eqref{e:uast}, \, \text{Lemma~\ref{l:lambdac}}}{=}
     \nabla \lambda_k (\mf u^\ast) \cdot \mf r_k (\mf u^\ast) 
     + \unpo
    |\mf u(s) - \mf u^\ast| + \unpo |z_c (s)| \\ & \stackrel{\eqref{e:gnl}}{\ge} d + 
    \unpo
    |\mf u(s) - \mf u^\ast| + \unpo |z_c (s)| \ge \frac{d}{2},
     \end{split}
\]
provided the constant $\nu$ is sufficiently small. We now establish~\eqref{e:giallo}:
\begin{equation*}
\begin{split}
           & \left.  \frac{d \theta_c  (\mf u (\tau), z_c (\tau))}{d\tau } \right|_{\tau =s}    =  
           \nabla_{\mf u} \theta_c  (\mf u (\tau), z_c (\tau))\frac{d \mf u}{d \tau} + \frac{\partial \theta_c}{\partial z_c } \frac{d z_c}{d \tau} \\ & 
           \stackrel{\eqref{e:rescala}}{=}   \nabla_{\mf u} \theta_c  (\mf u (\tau), 0) \cdot \mf r_c (\mf u, 0)+ \unpo |z_c (s)| + \unpo |\theta_c (\mf u(\tau), z_c(\tau))| \\
          & \stackrel{\eqref{e:thetaczero}}{=} a (\mf u)^{-1} \nabla \lambda_k (\mf u) \cdot \mf r_c (\mf u, 0) + 
             \unpo |\theta_c (\mf u,0) |   + \unpo |z_c (s)| + \unpo |\mf u(s) - \mf u^\ast| \\
          & \stackrel{\mathrm{Lemma}~\ref{l:lambdac}}{\ge} \frac{1}{ a_1}  \nabla \lambda_k (\mf u^\ast) \cdot \mf r_k (\mf u^\ast)
          + a_1 |z_c (s)| + \unpo |\mf u(s) - \mf u^\ast|  \ge  \frac{d}{2 a_1}.  \qedhere
\end{split} 
\end{equation*}
\end{proof}
\begin{lemma}
\label{l:min0} Assume $\lambda_k (\mf{\widetilde u})<0$ and that $z_c$ is the same as in~\eqref{e:rescala}; then there there is $\underline s > 0$, depending on $\mf{\widetilde u}$,  such that 
\be \label{e:robinia} 
z_c (\tau=\underline s)=0. 
\eq
Also, there is a $0$-speed traveling wave $\mf w: \R \to \R^N$ satisfying 
\be \label{e:etw}
  \left\{
\begin{array}{ll}
         \mf A (\mf w) \mf w' = \mf B(\mf w) \mf w'' + \mf G(\mf w, \mf w') \mf w' \\
          \lim_{x \to - \infty} \mf w(x) = \boldsymbol{\pi}_{\mf u} \circ \mf b_k(\mf{\widetilde u}, \underline s),  \quad \lim_{x \to + \infty} \mf w(x) =  \mf{\widetilde u},
\end{array}
\right. 
\eq
where $\mf b_k$ is as in~\eqref{e:bkappa}. 
If $s < \underline s$ then there is a boundary layer $\mf w: [0, + \infty[ \to \R^N$ satisfying~\eqref{e:bleq} with $\boldsymbol{\zeta}_k (\mf{\widetilde u}, s) = \boldsymbol{\pi}_{\mf u} \circ \mf b_k (\widetilde{\mf u}, s)$, where the projection $\boldsymbol{\pi}_{\mf u}$ is the same as in~\eqref{e:proj}, and $\bar{\mf u} = \widetilde{\mf u}.$ 
\end{lemma}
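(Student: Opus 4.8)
The idea is to extract everything from the reparametrized center–manifold dynamics \eqref{e:rescala}: Lemma~\ref{l:lambdac} controls $\theta_c$ near $\mf u^\ast$, Lemma~\ref{l:convessa} controls its derivative along trajectories, and a desingularizing change of variable converts the arc $\mf b_k(\mf{\widetilde u},\cdot)$ into genuine boundary layers and $0$-speed profiles. Throughout, $\eta:=|\mf{\widetilde u}-\mf u^\ast|\le\nu$.

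\emph{Existence of $\underline s$.} Let $(\mf u(\cdot),z_c(\cdot))$ solve \eqref{e:rescala},\eqref{e:id}. Since $\lambda_k(\mf{\widetilde u})<0$, \eqref{e:thetaczero} gives $z_c'(0)=\theta_c(\mf{\widetilde u},0)=\lambda_k(\mf{\widetilde u})/a(\mf{\widetilde u})<0$, and since $\lambda_k(\mf u^\ast)=0$ we have $|\theta_c(\mf{\widetilde u},0)|\le\unpo\,\eta$. By Lemma~\ref{l:convessa}, as long as the trajectory stays in the neighbourhood where that lemma holds, $\tau\mapsto\theta_c(\mf u(\tau),z_c(\tau))$ is strictly increasing with derivative $\ge d/(2a_1)$; hence it vanishes at a first time $\tau_1>0$ with $\tau_1\le\unpo\,\eta$, the function $z_c$ is strictly decreasing on $[0,\tau_1]$ with $|z_c(\tau_1)|\le\unpo\,\eta^2$, and for $\tau>\tau_1$ one has $z_c(\tau)\ge z_c(\tau_1)+\tfrac{d}{4a_1}(\tau-\tau_1)^2$, so $z_c$ returns to $0$ at a first time $\underline s\le\unpo\,\eta$. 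Since on $[0,\underline s]$ one has $|\mf u(\tau)-\mf{\widetilde u}|+|z_c(\tau)|\le\unpo\,\underline s\le\unpo\,\eta$, a routine continuation argument shows that, after shrinking $\nu$, the whole arc stays in the neighbourhood of $(\mf u^\ast,0)$ where Lemma~\ref{l:convessa} applies, so the above is consistent. Moreover $z_c<0$ on $]0,\underline s[$, while $z_c'(0)=\theta_c(\mf{\widetilde u},0)<0$ and $z_c'(\underline s)=\theta_c(\mf u(\underline s),0)>0$, so $z_c$ has simple zeros at $0$ and at $\underline s$.

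\emph{The $0$-speed profile.} The arc $\tau\mapsto\mf b_k(\mf{\widetilde u},\tau)=(\mf u(\tau),z_c(\tau)\mf p_c(\mf u(\tau),z_c(\tau)))$ lies on $\mathcal M^c$ by \eqref{e:bkappa} and \eqref{e:emmec}, and \eqref{e:rescala} is \eqref{e:bltw3} ``divided by $z_c$''; hence, reparametrizing $\tau\mapsto x$ through $dx/d\tau=1/z_c(\tau)$ turns the $\tau$-solution of \eqref{e:rescala} into an $x$-solution of \eqref{e:bltw3}, i.e.\ into a solution of \eqref{e:bltw2} lying on $\mathcal M^c$, whose $\boldsymbol{\pi}_{\mf u}$-projection therefore solves \eqref{e:bltw}. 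Because $z_c<0$ on $]0,\underline s[$ with simple zeros at the endpoints, the integrals $\int^{0^+}z_c^{-1}$ and $\int^{\underline s^-}z_c^{-1}$ diverge, so $x(\cdot)$ maps $]0,\underline s[$ decreasingly and diffeomorphically onto $\R$, with $x(\tau)\to+\infty$ as $\tau\to0^+$ and $x(\tau)\to-\infty$ as $\tau\to\underline s^-$; writing $\tau(\cdot)$ for the inverse, the map
\[
  \mf w(x):=\boldsymbol{\pi}_{\mf u}\bigl(\mf b_k(\mf{\widetilde u},\tau(x))\bigr),\qquad x\in\R,
\]
solves \eqref{e:bltw} and satisfies $\mf w(+\infty)=\boldsymbol{\pi}_{\mf u}\circ\mf b_k(\mf{\widetilde u},0)=\mf{\widetilde u}$ and $\mf w(-\infty)=\boldsymbol{\pi}_{\mf u}\circ\mf b_k(\mf{\widetilde u},\underline s)$, which is exactly \eqref{e:etw}.

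\emph{Boundary layers for $s<\underline s$, conclusion, and main obstacle.} For $s=0$ take $\mf w\equiv\mf{\widetilde u}$. For $0<s<\underline s$, let $\mf w$ be the restriction of the profile above to $[x(s),+\infty[$, translated so that its left endpoint is at $0$. For $s<0$ one has $z_c(s)>0$ (again $\theta_c$ is strictly increasing in $\tau$ by Lemma~\ref{l:convessa}, so $z_c$ is strictly decreasing on $]-\nu,0]$), and the same reparametrization on $[s,0[$, where $z_c>0$ with a simple zero at $0$, sends $\tau\to0^-$ to $x\to+\infty$; take $\mf w$ to be the $\boldsymbol{\pi}_{\mf u}$-image of this forward arc translated to start at $0$. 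In every case $\mf w:[0,+\infty[\to\R^N$ solves \eqref{e:bltw} with $\mf w(0)=\boldsymbol{\pi}_{\mf u}\circ\mf b_k(\mf{\widetilde u},s)$ and $\mf w(+\infty)=\mf{\widetilde u}$. Setting $\boldsymbol{\zeta}_k(\mf{\widetilde u},s):=\boldsymbol{\pi}_{\mf u}\circ\mf b_k(\mf{\widetilde u},s)$ and $\bar{\mf u}:=\mf{\widetilde u}$, the equality $\mf w(0)=\boldsymbol{\zeta}_k(\mf{\widetilde u},s)$ together with $\boldsymbol{\beta}(\mf q,\mf q)=\mf 0$ for every $\mf q\in\R^N$ (immediate from \eqref{e:betagnl}) yields $\boldsymbol{\beta}(\mf w(0),\boldsymbol{\zeta}_k(\mf{\widetilde u},s))=\mf 0$, so \eqref{e:bleq} holds. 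I expect the main obstacle to be precisely the passage between the two parametrizations in the last two steps: one must make sure that the \eqref{e:bltw3}-orbit produced from $\mf b_k(\mf{\widetilde u},\cdot)$ genuinely converges to the prescribed equilibria at $x=\pm\infty$, rather than merely accumulating on the line of equilibria $\{z_c=0\}$ — which is exactly what the simple-zero structure of $z_c$ at $0$ and at $\underline s$ (a consequence of Lemma~\ref{l:lambdac}, giving $\theta_c(\mf{\widetilde u},0)<0$, and of the strict monotonicity of $\theta_c$ along trajectories from Lemma~\ref{l:convessa}) guarantees — together with the continuation argument that keeps the arc inside the domain of validity of Lemma~\ref{l:convessa}.
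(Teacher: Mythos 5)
Your proposal is correct and follows essentially the same route as the paper: existence of $\underline s$ via the quadratic lower bound on $z_c$ coming from Lemma~\ref{l:convessa}, and then the desingularizing reparametrization $dx/d\tau = 1/z_c$, with divergence of $\int 1/z_c$ at the (simple) zeros of $z_c$ producing both the boundary layers for $s<\underline s$ and the $0$-speed profile~\eqref{e:etw}. The only differences are organizational — you reparametrize the whole interval $]0,\underline s[$ at once and justify divergence at $\tau=\underline s$ via $z_c'(\underline s)=\theta_c(\mf u(\underline s),0)>0$, and you spell out the $s<0$ case and the trivial $\boldsymbol\beta$ identity, where the paper treats $s=\underline s$ separately with a tangent-line estimate and calls $s<0$ "analogous" — so the substance is the same.
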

\begin{proof}We proceed according to the following steps.\\
{\sc Step 1:} we show the existence of $\underline s>0$ satisfying~\eqref{e:robinia}. 
First, we recall that $z_c (\tau=0) =0$ and point out that 
\be \label{e:dermin0}
     \left.  \frac{d z_c (\tau)}{d\tau } \right|_{\tau =0} \stackrel{\eqref{e:rescala},\eqref{e:id}}{=}
     \theta_c (\mf{\widetilde u}, 0) \stackrel{\eqref{e:thetaczero}}{=} a^{-1}(\mf{\widetilde u}) \lambda_k (\mf{\widetilde u}) <0. 
\eq
and hence $z_c(\tau) <0$ for $\tau>0$ sufficiently small.
On the other hand, 
\begin{equation}\label{e:esplode}
    z_c (\tau) = \int_0^{\tau} \theta_c (\mf u(\xi), z_c (\xi)) d\xi \stackrel{\eqref{e:giallo}}{\ge} \underbrace {\theta_c (\mf{\widetilde u}, 0)}_{= a(\mf{\widetilde u})\lambda_k (\mf{\widetilde u}) <0} \tau + \frac{d}{4 a_1} \tau^2,
\end{equation}
where as before $a_1$ is an upper bound for the function $a (\mf u)$ in the statement of Lemma~\ref{l:lambdac}. We can then conclude that $z_c$ attains the value $0$ for some $\underline s>0$. \\
{\sc Step 2:} we fix $s < \underline s$ and exhibit  a boundary layer $\mf w: [0, + \infty[ \to \R^N$ satisfying~\eqref{e:bleq} with $\boldsymbol{\zeta}_k (\mf{\widetilde u}, s) = \boldsymbol{\pi}_{\mf u} \circ \mf b_k (\widetilde{\mf u}, s)$, and $\bar{\mf u} = \widetilde{\mf u}.$ Just to fix the ideas we assume $s>0$ (the proof in the case $s<0$ is analogous). Owing to~\eqref{e:dermin0} and by definition~\eqref{e:robinia} of $\underline s$ we have $
    z_c(\tau) <0$ for every $\tau \in ]0, s]$.
We consider the Cauchy problem 
\be \label{e:caux}
\displaystyle{\frac{d x_c}{d \tau} = \frac{1}{z_c (\tau)} } \qquad 
x_c (s) = 0. 
\eq
Owing to~\eqref{e:esplode},  
\be \label{e:esplode2}
    \lim_{\tau\to 0^+} x_c(\tau) \stackrel{\eqref{e:caux}}{=}   \lim_{\tau\to 0^+} \left( -  \int_\tau^{s} \frac{1}{z_c (\xi)} d \xi \right) \stackrel{\eqref{e:esplode}}{\ge}  
    \lim_{\tau\to 0^+} \left( -  \int_\tau^{s} \frac{1}{  \theta_c (\mf{\widetilde u}, 0) \tau} d \xi \right) \stackrel{  \theta_c (\mf{\widetilde u}, 0) < 0}{=}+ \infty
\eq
and hence $x_c$ is an invertible map from $]0, s]$ onto $[0, + \infty[$. We term $x_c^{-1}$ its inverse and we consider the maps $\mf u(x): = \mf  u (x_c^{-1} (x))$, 
$z_c (x) := z_c (x_c^{-1} (x))$. By combining~\eqref{e:rescala} and~\eqref{e:caux} we conclude that $(\mf u, z_c)$ is a solution of~\eqref{e:bltw3} and hence 
by taking $\mf w= \mf u$ we obtain a solution of the equation at the first line of~\eqref{e:bleq}.  By recalling~\eqref{e:esplode2} and~\eqref{e:id}
we get $\mf w(0)= \boldsymbol{\pi}_{\mf u} \circ \mf b_k (\widetilde{\mf u}, s)$ and $\lim_{x \to + \infty} \mf w(y)= \widetilde{\mf u} $. \\
{\sc Step 3:} we conclude the proof of the lemma by considering the case $s=\underline s$.
We consider the Cauchy problem obtained by coupling the same ODE as in~\eqref{e:caux} with the condition $x_c (\underline s/2) =0$. By arguing as in~\eqref{e:esplode2} we get that $\lim_{\tau \to 0^+} x_c (\tau) = + \infty.$ Owing to~\eqref{e:giallo} the function $\tau \mapsto \theta_c (\mf u(\tau), z_c (\tau))$ is monotone increasing and, since $z_c (\underline s) =0$, then 
$$
    z_c (\tau) \ge \theta_c (\mf u(\underline s), z_c (\underline s)) (\tau - \underline s) \quad \text{for every $\tau \in [0, \underline s]$.}
$$
Note furthermore that $\theta_c (\mf u(\underline s), z_c (\underline s)) >0$. We conclude that
$$
       \lim_{\tau\to \underline s^-} x_c(\tau) \stackrel{\eqref{e:caux}}{=}   \lim_{\tau\to \underline s^-}   \int_{\underline s/2}^\tau \frac{1}{z_c (\xi)} d \xi \leq  
    \lim_{\tau\to \underline s^-}   \int_{\underline s/2}^\tau \frac{1}{\theta_c (\mf u(\underline s), z_c (\underline s))(\tau - \underline s) } d \xi = - \infty. 
$$
By arguing as before we establish~\eqref{e:etw}. 
\end{proof}
We recall that the value $\underline s(\mf{\widetilde u})$ is defined by~\eqref{e:underlines}. 
Note that, owing to~\eqref{e:etw}, there is a Lax admissible 0-speed shock connecting 
$\mf{\widetilde u}$ (on the right) with $\boldsymbol{\pi}_{\mf u} \circ\mf b_k (\mf{\widetilde u}, \underline s)$ (on the left). This implies that  
we can choose the parametrization of the curve $\mf t_k$ in such a way that $\underline s = \underline s (\mf{\widetilde u})$ and  that 
\be \label{e:pera}
     (\mf t_k (\mf{\widetilde u}, \underline s (\mf{\widetilde u})), \mf 0_{N-h}) = \mf b_k (\mf{\widetilde u}, \underline s (\mf{\widetilde u})) \quad \text{if $\lambda_k (\mf{\widetilde u}) \leq0$}. 
\eq
We also have the following result. 
\begin{lemma}
\label{l:min02} Assume $\lambda_k (\mf{\widetilde u})=0$, then for every $s<0$  there is a boundary layer $\mf w: [0, + \infty[ \to \R^N$ satisfying~\eqref{e:bleq} with $\boldsymbol{\zeta}_k (\mf{\widetilde u}, s) = \boldsymbol{\pi}_{\mf u} \circ \mf b_k (\widetilde{\mf u}, s)$ and $\bar{\mf u} = \widetilde{\mf u}.$ 
\end{lemma}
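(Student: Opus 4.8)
The plan is to reproduce, in the degenerate situation $\lambda_k(\mf{\widetilde u})=0$, the change of variables used in Step~2 of the proof of Lemma~\ref{l:min0}, this time following the orbit of the Cauchy problem~\eqref{e:rescala}--\eqref{e:id} in the \emph{negative} $\tau$ direction. I write $(\mf u(\tau),z_c(\tau))$ for the solution of~\eqref{e:rescala}--\eqref{e:id}; for $|\mf{\widetilde u}-\mf u^\ast|$ small enough it is defined on $[s,0]$ and stays in the neighborhood of $(\mf u^\ast,0)$ where Lemma~\ref{l:convessa} applies, so that $s<0$ can be taken in $]-\nu,0[$.

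First I would pin down the sign and the size of $z_c$. Since $\lambda_k(\mf{\widetilde u})=0$, formula~\eqref{e:thetaczero} gives $\theta_c(\mf{\widetilde u},0)=0$, so that $z_c(0)=0$ and $\dot z_c(0)=\theta_c(\mf{\widetilde u},0)=0$; in particular the linear term that drove Step~1 of Lemma~\ref{l:min0} now vanishes. Integrating the strict monotonicity estimate~\eqref{e:giallo} from $\xi$ to $0$ yields $\theta_c(\mf u(\xi),z_c(\xi))\le\frac{d}{2a_1}\,\xi<0$ for every $\xi\in[s,0)$, hence $z_c(\tau)=\int_0^\tau\theta_c(\mf u(\xi),z_c(\xi))\,d\xi=-\int_\tau^0\theta_c(\mf u(\xi),z_c(\xi))\,d\xi>0$ for every $\tau\in[s,0)$, while $z_c(0)=0$. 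The same computation, estimated from below, gives $z_c(\tau)\ge\frac{d}{4a_1}\tau^2$; on the other hand, since $\xi\mapsto\theta_c(\mf u(\xi),z_c(\xi))$ is $C^1$ and vanishes at $\xi=0$, one has $|\theta_c(\mf u(\xi),z_c(\xi))|\le\unpo\,|\xi|$, whence $z_c(\tau)\le\unpo\,\tau^2$.

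Next I would perform the change of variables: as in~\eqref{e:caux}, solve $x_c'=1/z_c(\tau)$, $x_c(s)=0$. Since $z_c>0$ on $[s,0)$ the map $x_c$ is strictly increasing there, and the two-sided bound $\frac{d}{4a_1}\tau^2\le z_c(\tau)\le\unpo\,\tau^2$ makes $1/z_c$ non-integrable at $\tau=0$, so $x_c(\tau)\to+\infty$ as $\tau\to0^-$; thus $x_c:[s,0)\to[0,+\infty)$ is a bijection, and I denote by $x_c^{-1}$ its inverse. Setting $\mf w(x):=\mf u(x_c^{-1}(x))$ and $z_c(x):=z_c(x_c^{-1}(x))$, combining~\eqref{e:rescala} with $x_c'=1/z_c$ shows that $(\mf u,z_c)$ solves~\eqref{e:bltw3} in the variable $x$; since~\eqref{e:bltw3} is~\eqref{e:bltw} restricted to the center manifold $\mathcal M^c$ via $\mf z_2=z_c\mf p_c$ (Lemma~\ref{l:emmec}), the function $\mf w$ solves the first line of~\eqref{e:bleq}. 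From $x_c(s)=0$ and definition~\eqref{e:bkappa} we get $\mf w(0)=\boldsymbol{\pi}_{\mf u}\circ\mf b_k(\mf{\widetilde u},s)=:\boldsymbol{\zeta}_k(\mf{\widetilde u},s)$, and from $\lim_{\tau\to0^-}\mf u(\tau)=\mf{\widetilde u}$ we get $\lim_{x\to+\infty}\mf w(x)=\mf{\widetilde u}=:\bar{\mf u}$. Finally, the boundary condition $\boldsymbol{\beta}(\mf w(0),\boldsymbol{\zeta}_k(\mf{\widetilde u},s))=\boldsymbol{\beta}(\mf w(0),\mf w(0))=\mf 0$ is trivially verified by the explicit form~\eqref{e:betagnl}--\eqref{e:betalindeg} of $\boldsymbol{\beta}$, so~\eqref{e:bleq} holds with $\bar{\mf u}=\mf{\widetilde u}$, as required.

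The main obstacle is exactly the blow-up $x_c(\tau)\to+\infty$ as $\tau\to0^-$: in Lemma~\ref{l:min0} the transversality $\theta_c(\mf{\widetilde u},0)\ne0$ supplied a \emph{linear} lower bound for $|z_c(\tau)|$ in terms of $|\tau|$ and hence the divergence of $\int d\tau/\tau$, whereas here that term vanishes and one must instead exploit the \emph{double} degeneracy $z_c(0)=\dot z_c(0)=0$, combining the quadratic upper bound for $z_c$ (coming from the $C^1$ regularity of $\theta_c$ and $\theta_c(\mf{\widetilde u},0)=0$) with the quadratic lower bound and positivity of $z_c$ (coming from the strict monotonicity~\eqref{e:giallo} of Lemma~\ref{l:convessa}) to obtain $z_c(\tau)$ squeezed between two positive multiples of $\tau^2$, and thus non-integrability of $1/z_c$ at the origin.
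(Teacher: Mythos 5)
Your proof is correct and follows essentially the same route as the paper: the paper's own (very terse) argument likewise notes $\theta_c(\mf{\widetilde u},0)=0$ via~\eqref{e:thetaczero}, deduces $z_c(\tau)>0$ for $\tau<0$ by the estimate of~\eqref{e:esplode}, and then repeats the change of variables~\eqref{e:caux} from Step~2 of Lemma~\ref{l:min0}. You merely make explicit the point the paper leaves implicit, namely that with the transversality term gone one needs an upper bound on $z_c$ (your $z_c(\tau)\le\unpo\,\tau^2$; even the Lipschitz bound $\unpo|\tau|$ suffices) to get the non-integrability of $1/z_c$ and hence $x_c(\tau)\to+\infty$, which is a faithful completion of the same argument rather than a different one.
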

\begin{proof}
Owing to~\eqref{e:thetaczero} we have $\theta_c (\mf{\widetilde u}, 0)=0$. We now consider the Cauchy problem~\eqref{e:rescala},\eqref{e:id} and by arguing as in~\eqref{e:esplode} we conclude that 
$z_c(s)>0$ for $s<0$. We then consider the Cauchy problem~\eqref{e:caux} and by arguing as in {\sc Step 2} in the proof of Lemma~\ref{l:min0} we get the desired result. 
\end{proof}
\subsubsection{Definition of $\boldsymbol{\zeta}_k (\mf{\widetilde u}, \cdot)$} \label{sss:defz}
We now provide the definition of the curve $\boldsymbol{\zeta}_k (\mf{\widetilde u}, \cdot)$ by distinguishing the cases $\lambda_k (\mf{\widetilde u}) \ge 0$ and  
$\lambda_k (\mf{\widetilde u}) <0$. \\
{\sc Case $\lambda_k (\mf{\widetilde u}) \ge 0$} We recall that $\underline s (\mf{\widetilde u})$ and $\bar s (\mf{\widetilde u})$ are defined by~\eqref{e:underlines} and~\eqref{e:baresse}, respectively. If $\lambda_k (\mf{\widetilde u}) \ge 0$ then owing to genuine nonlinearity~\eqref{e:gnl} we have $\bar s (\mf{\widetilde u})\leq 0$. We then set 
\begin{equation} \label{e:uguale1}
    \boldsymbol{\zeta}_k (\mf{\widetilde u}, s):= 
    \left\{
    \begin{array}{ll}
    \mf t_k(\mf{\widetilde u}, s) & s \ge  \bar s (\mf{\widetilde u}) \\
     \boldsymbol{\pi}_{\mf u}\circ \mf b_k(\mf t_k (\mf{\widetilde u},  \bar s (\mf{\widetilde u})), s - \bar s (\mf{\widetilde u})) & s \leq  \bar s (\mf{\widetilde u}) \\
    \end{array}
    \right. \quad \text{if $\lambda_k (\mf{\widetilde u}) \ge 0$}.
\end{equation}
Note that  
\begin{equation} \label{e:uguale2}
    \left. \frac{\partial  \mf t_k(\mf{\widetilde u}, s))}{\partial s} \right|_{s=\bar s(\widetilde u)} = \mf r_k (   \mf t_k(\mf{\widetilde u}, \bar s(\mf{\widetilde u})))
    \stackrel{\eqref{e:baresse},\eqref{e:siattacca}}{ =}
     \left. \frac{\partial [ \boldsymbol{\pi}_{\mf u} \circ\mf b_k (\mf t_k (\mf{\widetilde u},  \bar s (\mf{\widetilde u})), s -  \bar s (\mf{\widetilde u}))] }{ \partial s}
    \right|_{s=\bar s(\widetilde u)} 
\end{equation}
and hence the curve is of class $C^{1, 1}$ in this case. 
Very loosely speaking, the basic idea underpinning definition~\eqref{e:uguale1} is the following: if $s>0$, $\mf{\widetilde u}$ (on the right) and $ \boldsymbol{\zeta}_k (\mf{\widetilde u}, s)= \mf t_k(\mf{\widetilde u}, s)$ (on the left) are connected by a Lax admissible shock that has positive speed since $\lambda_k (\mf{\widetilde u}) \ge 0$. For $s <0$, $ \boldsymbol{\zeta}_k (\mf{\widetilde u}, \cdot)$ ``follows" $\mf t_k(\mf{\widetilde u}, \cdot)$ as long as the latter describes a rarefaction with positive speed. When the speed would became negative, $ \boldsymbol{\zeta}_k (\mf{\widetilde u}, \cdot)$ ``switches" to the boundary layers curve.   \\
{\sc Case $\lambda_k (\mf{\widetilde u}) < 0$.} If $\lambda_k (\mf{\widetilde u}) < 0$ then owing again to genuine nonlinearity~\eqref{e:gnl} we have $\underline s (\mf{\widetilde u})> 0$. We recall~\eqref{e:pera}
and set 
\be
\label{e:regcwfc}    
      \boldsymbol{\zeta}_k (\mf{\widetilde u}, s): =
      \left\{
      \begin{array}{ll}
                  \boldsymbol{\pi}_{\mf u}\circ \mf b_k(\mf{\widetilde u}, s) & s \leq \underline s (\mf{\widetilde u}) \\
                 \mf t_k (\mf{\widetilde u}, s) & s \ge \underline s (\mf{\widetilde u}) \\
      \end{array}
      \right. \quad \text{if $\lambda_k (\mf{\widetilde u}) < 0$}.
      \eq
Note that in this case the curve is of class $C^{1,1}$ away from the point $s=  \underline s (\mf{\widetilde u})$, where the first derivative may be discontinuous. 
In a nutshell, the very basic idea behind definition~\eqref{e:regcwfc} is that the values $s < \underline s (\mf{\widetilde u})$ correspond to boundary layers, whereas the values $s > \underline s (\mf{\widetilde u})$ correspond to Lax admissible shocks with non-negative speed.
\subsubsection{Proof of Lemma~\ref{l:cwfc}}\label{sss:concl}
We now show that the curve  $\boldsymbol{\zeta}_k (\mf{\widetilde u}, \cdot)$ defined at the previous paragraph satisfies the properties in the statement of Lemma~\ref{l:cwfc}.

We first consider the case $\lambda_k (\widetilde{\mf u}) \ge 0$, which yields~\eqref{e:uguale1} and $\bar s (\widetilde{\mf u})<0$, and this implies that item i) in the statement of Lemma~\ref{l:cwfc} is satisfied. If $\bar{s} (\mf{\widetilde u}) \leq s<0$, then we set $\mf{\bar  u}: =  \boldsymbol{\zeta}_k (\mf{\widetilde u}, s)$ and item ii) in the statement of Lemma~\ref{l:cwfc} is satisfied with item ii$_2)$ trivially satisfied by the constant function $\mf w \equiv \mf{\bar  u} $.  The most interesting behavior occurs when $s< \bar{s} (\mf{\widetilde u})$. In this case we set 
$ \mf{\bar  u}: = \mf t_k (\mf{\widetilde u}, \bar s (\mf{\widetilde u}))$, and in this way item ii$_1)$ is satisfied. To establish ii$_2)$ one can then rely on the change of variables~\eqref{e:caux},\eqref{e:esplode2}.

We now assume $\lambda_k (\widetilde{\mf u}) < 0$. We recall that $\boldsymbol{\zeta}_k (\mf{\widetilde u}, \cdot)$ is defined by~\eqref{e:regcwfc} and that in this case $\underline s(\mf{\widetilde u})>0$. If $s \ge \underline s(\mf{\widetilde u})>0$ then item ii) is satisfied provided $\mf{\bar u} = \mf t_k (\mf{\widetilde u}, s)$ and $\mf w \equiv \mf{\bar u}$ in item ii)$_2$. If $s <  \underline s(\mf{\widetilde u})$, then 
we set $\mf{\bar u}:=\mf{\widetilde u}$. In this way item ii$_1)$ is trivially satisfied, whereas item ii$_2)$ is satisfied by Lemma~\ref{l:min0}. 
\subsection{Boundary layers analysis in the linearly degenerate case}
\label{ss:case2} We assume~\eqref{e:lindeg}  and to ease the exposition in the following we directly focus on the most interesting case, that is B) in Hypothesis~\eqref{h:eulerlag}.
This is the case of the Navier-Stokes and viscous MHD equations written in Eulerian coordinates when the fluid velocity vanishes. Note that another interesting case involving a linearly degenerate field is the case of the Navier-Stokes and viscous MHD equations written in Lagrangian coordinates. However, in that case $\lambda_k (\mf u) \equiv 0$ and the analysis trivializes: there are no boundary layers lying on the center manifold or $k$-th waves entering the domain and we set 
$
    \boldsymbol{\zeta}_k  (\mf{\widetilde u}, s) : = \mf t_k (\mf{\widetilde u}, s). 
$ 
Finally, we mention the case of the viscous MHD equations when $u = \pm \beta \sqrt{\rho}$: in this case the boundary characteristic field is also linearly degenerate and we have~\eqref{e:a11nz}. This case can be handled by combining the analysis at the previous paragraph with the proof of~\cite[Lemma 4.6]{BianchiniSpinolo}.  

The detailed construction of the curve $ \boldsymbol{\zeta}_k$ in case B) of Hypothesis~\ref{h:eulerlag} is provided in~\cite{BianchiniSpinolo}, see in particular~\cite[Lemma 4.6]{BianchiniSpinolo} for the construction of the characteristic wave fan curve. Since we need it in what follows, we now briefly overview~\cite[\S4.2]{BianchiniSpinolo} and in particular Lemma 4.6.  Note that 
\begin{equation}
\label{e:mcld}
        \boldsymbol{\zeta}_k  (\mf{\widetilde u}, s): = 
        \left\{
        \begin{array}{ll}
                   \mf t_k  (\mf{\widetilde u}, s)  & \lambda_k (\mf{\widetilde u}) \ge 0 \\
                   \boldsymbol{\pi}_{\mf u} \circ \mf b_k (\mf{\widetilde u}, s)  & \lambda_k (\mf{\widetilde u}) < 0, \\
        \end{array}
        \right.
\end{equation}
where $ \mf t_k  (\mf{\widetilde u}, s) $ is the same as in~\eqref{e:lax:lindeg} and $  \mf b_k(\mf{\widetilde u}, s) \in \R^{N+1}$ is defined in ~\cite[(4.4)]{BianchiniSpinolo} and is constructed from the solution, evaluated at $\tau =s$ of the  Cauchy problem 
\begin{equation}\label{e:cplindeg}
    \frac{d \mf u}{d\tau } = \mf r_c (\mf u, z_{00}, 0), 
    \quad   \frac{d z_{00}}{d\tau } = \theta_{00}  (\mf u, z_{00}, 0), \quad \mf u(0) = \mf{\widetilde u}, \quad z_{00}(0)=0,
\end{equation}
namely $  \mf b_k(\mf{\widetilde u}, s) = (\mf u(s), z_{00}(s)$.
In the above expression, we have set  
$$
   \mf r_c (\mf u, z_{00}, \sigma) : =
   \left(
   \begin{array}{cc}
        -e_{11}^{-1} \mf d^t \mf r_{00} \\
        \mf{R}_0 \mf r_{00} \\
   \end{array}
   \right) (\mf u, z_{00}, \sigma).
$$ 
and the functions  $e_{11}$, $\mf d$, $ \mf r_{00}$, $\theta_{00}$ are as in~\cite[Lemma 4.2]{BianchiniSpinolo}. Note that, owing to~\cite[Lemmas 4.4, 4.5]{BianchiniSpinolo} we have 
\begin{equation}
\label{e:errecl}
      \theta_{00}(\mf u, 0, \lambda_k (\mf u)) =0, \qquad    \mf r_c (\mf u, 0, \lambda_k (\mf u))= \mf r_k(\mf u). 
\end{equation}
Very loosely speaking, the rationale underpinning definition~\eqref{e:mcld} is the following. If $\lambda_k (\mf{\widetilde u}) \ge 0$ then there is a contact discontinuity with nonnegative speed joining $\mf{\widetilde u}$ and $ \boldsymbol{\zeta}_k  (\mf{\widetilde u}, s)$. If $\lambda_k (\mf{\widetilde u}) < 0$
then by~\cite[Lemma 4.6 Aii)]{BianchiniSpinolo} there is a boundary layer connecting $\mf{\widetilde u}$ and $ \boldsymbol{\zeta}_k  (\mf{\widetilde u}, s)$, i.e. there is a solution of~\eqref{e:bltw} such that 
\be \label{e:allafineserve}
   \mf u(0) =  \boldsymbol{\zeta}_k  (\mf{\widetilde u}, s), \qquad \lim_{y \to + \infty} \mf u(y) = \mf{\widetilde u}. 
\eq
\subsection{Complete boundary layers analysis} \label{sss:cbla}
The curve $\boldsymbol{\zeta}_k$ we discuss at the previous paragraphs describes, among other things, the boundary layers  (that is, solutions of~\eqref{e:bltw} with finite limit at $+ \infty$) lying on the center manifold. 
We now describe \emph{all} the boundary layers, not necessarily lying on the center manifold, and we refer to \S\ref{ss:speroserva} for the main ideas involved on the construction, which relies on the Slaving Manifold Lemma. Here instead we provide the technical details of the construction and to ease the exposition, we assume~\eqref{e:a11nz}. Case B) in Hypothesis~\ref{h:eulerlag} is discussed in~\cite{BianchiniSpinolo}, whereas the case of~\ref{e:lagr2} is discussed in~\cite{BianchiniSpinoloARMA} and is analogous to what follows. 

To complete the boundary layer analysis we recall the analysis in~\cite[\S11]{BianchiniSpinolo} and apply it to the case where the function $\mf a$ in the  ODE~\eqref{e:bltw2} is\footnote{Concerning the notation we recall the footnote \footref{foot3} at page \pageref{foot3}} is defined by~\eqref{e:gamma}, and hence~\eqref{e:bltw2} is equivalent to the boundary layers equation~\eqref{e:bltw}. We then proceed as follows. 
\begin{itemize}
\item We apply~\cite[Lemma 11.1]{BianchiniSpinolo} to~\eqref{e:bltw2} in the case where the function $\mf a$ is given by~\eqref{e:gamma}, and in this case one can show (see~\cite[Lemma 4.7]{BianchiniSpinoloARMA}) that $n_-= k-1-\ell$. We recall that $n_-$ denotes the number of eigenvalues of $\mf D \mf a (\boldsymbol{\omega}^\ast)$ with strictly negative real part, and $\ell$ is the number of negative eigenvalues of $\mf A_{11}$.  We also define the equilibrium points $\boldsymbol{\omega}^\ast$ and $\boldsymbol{\check \omega}$ by setting
\be \label{e:chicosa}
      \boldsymbol{\omega}^\ast: = (\mf u^\ast,  \mf 0_{N-h}), 
       \qquad 
      \boldsymbol{\check \omega}: = (\boldsymbol{\zeta}_k (\mf{\widetilde u}, s_k),  \mf 0_{N-h}),
\eq
where $\mf u^\ast$ and $\widetilde{\mf u}$ are the same as in the statement of Lemma~\ref{l:cwfc}. Lemma 11 in~\cite{BianchiniSpinolo} yields the existence of an $(k-1-\ell)$-dimensional  invariant manifold, the so-called uniformly stable manifold, which is parametrized by the function $\mf m_-( \boldsymbol{\check{\omega}}, \cdot)$. The solutions of~\eqref{e:bltw2} lying on the uniformly stable manifold converge exponentially fast to $  \boldsymbol{\check \omega}= (\boldsymbol{\zeta}_k (\mf{\widetilde u}, s_k),  \mf 0_{N-h})$.
 The Lipschitz continuous  function $\mf m_-$ attains values in $\R^{2N-h}$: we set  
\be \label{e:phiesse}
    \boldsymbol{\psi}_s (\boldsymbol{\zeta}_k (\mf{\widetilde u}, s_k), \xi_{\ell +1}, \dots, \xi_{k-1}) : = 
     \boldsymbol{\pi}_{\mf u}\circ \mf m_-(\boldsymbol{\check{\omega}}, \xi_{\ell +1}, \dots, \xi_{k-1})
\eq
where $\boldsymbol{\pi}_{\mf u}$ denotes the projection onto the first $N$ components. 
\item We now have to introduce some notation and we do so by first considering the genuinely nonlinear~\eqref{e:gnl} case. We recall the definition of $\boldsymbol{\zeta}_k(\mf{\widetilde u}, \cdot)$, namely~\eqref{e:uguale1} and~\eqref{e:regcwfc}, and conclude that if either $\lambda_k(\mf{\widetilde u}) <0$ and $s \ge \underline s (\mf{\widetilde u})$ or $\lambda_k(\mf{\widetilde u}) \ge 0$ and $s \ge \bar s (\mf{\widetilde u})$ then there is no boundary layer lying on the center manifold and therefore the function $\mf m_- $ describes all the boundary layers. 
If either $\lambda_k(\mf{\widetilde u}) <0$ and $s < \underline s (\mf{\widetilde u})$ or $\lambda_k(\mf{\widetilde u}) \ge 0$ and $s < \bar s (\mf{\widetilde u})$ then
there are boundary layers lying on the center manifold. To complete our construction we first  
introduce the function $\boldsymbol{\gamma}_k (\mf{\widetilde u}, \cdot)$  by setting 
\be \label{e:chicosa2}
     \boldsymbol{\gamma}_k (\mf{\widetilde u}, s_k) : =  
      \left\{ 
      \begin{array}{ll}
       \mf b_k(\mf{\widetilde u}, s_k)  &
       \text{if $\lambda_k (\mf{\widetilde u}) <0$ and $s_k < \underline s (\mf{\widetilde u})$ } \\
       (\mf t_k(\mf{\widetilde u}, s_k), \mf 0_{N-h}) & 
       \text{if $\lambda_k (\mf{\widetilde u}) <0$ and $s_k \ge \underline s (\mf{\widetilde u})$ } \\
        \mf b_k(\mf t_k (\mf{\widetilde u},  \bar s (\mf{\widetilde u})), s_k - \bar s (\mf{\widetilde u})) &
      \text{if $\lambda_k (\mf{\widetilde u}) \ge0$ and $s_k < \bar s (\mf{\widetilde u})$}\\
       (\mf t_k(\mf{\widetilde u}, s_k), \mf 0_{N-h}) &
        \text{if $\lambda_k (\mf{\widetilde u}) \ge0$ and $s_k \ge \bar s (\mf{\widetilde u})$}. \\
      \end{array}
     \right.
     \quad
\eq
Note that, owing to~\eqref{e:siattacca}, if $\lambda_k  (\mf{\widetilde u}) \ge0$ then the curve $\gamma_k ( \mf{\widetilde u}, \cdot)$ is of class $C^{1, 1}$.
Note furthermore that $ \boldsymbol{\gamma}_k (\mf{\widetilde u}, 0) = (\mf{\widetilde u}, \mf 0_{N-h})$ for every $\mf{\widetilde u}$, and 
\be \label{e:stessacosa}
     \boldsymbol{\pi}_{\mf u} \circ  \boldsymbol{\gamma}_k (\mf{\widetilde u}, s_k)  \stackrel{\eqref{e:uguale1},\eqref{e:regcwfc}}{=}
      \boldsymbol{\zeta}_k (\mf{\widetilde u}, s_k).
\eq
\item We now consider the linearly degenerate~\eqref{e:lindeg} case and we recall the definition of the curve $\boldsymbol{\zeta}_k (\mf{\widetilde u}, )$, namely~\eqref{e:mcld}. If $\lambda_k (\mf{\widetilde u}) \ge 0$, then there is no boundary layer lying on the center manifold, otherwise there is. In this case we set 
\be \label{e:servepure}
 \boldsymbol{\gamma}_k (\mf{\widetilde u}, s_k) : =  
      \left\{ 
      \begin{array}{ll}
       \mf b_k(\mf{\widetilde u}, s_k)  &
       \text{if $\lambda_k (\mf{\widetilde u}) <0$}  \\
       (\mf t_k(\mf{\widetilde u}, \xi_k), \mf 0_{N-h}) &
        \text{if $\lambda_k (\mf{\widetilde u}) \ge0$}. \\
      \end{array}
     \right.
\eq
\item We are now in a position to complete our analysis and provide a description of \emph{all} boundary layers, not necessarily lying on either the center manifold or the uniformly stable manifold. 
We rely on a Slaving Manifold Theorem. More precisely, in both the linearly degenerate and in the genuinely nonlinear case we apply~\cite[Lemma 11.2]{BianchiniSpinolo}  with $\boldsymbol{\omega}^\ast$ and $\boldsymbol{\check \omega}$ defined as in~\eqref{e:chicosa}, and 
\be \label{e:festanatale2}
 \boldsymbol{\omega}_0(0): =  \boldsymbol{\gamma}_k (\mf{\widetilde u}, s_k). 
 \eq
Note that owing to~\eqref{e:servepure} $ \boldsymbol{\omega}_0(0)$ is the initial datum for an orbit $\boldsymbol{\omega}_0$ of~\eqref{e:bltw2} lying on the center manifold. Note furthermore that by comparing~\eqref{e:chicosa2} and~\eqref{e:servepure} we get the equality $\boldsymbol{\pi}_{\mf u} ( \boldsymbol{\omega}_0(0))= \boldsymbol{\pi}_{\mf u} ( \boldsymbol{\check \omega})$. In all cases where we have the stronger equality $\boldsymbol{\omega}_0(0)=  \boldsymbol{\check \omega}$ (which holds for instance if $\lambda_k (\mf{\widetilde u}) <0$ and $s_k \ge \underline s (\mf{\widetilde u})$, see again~\eqref{e:servepure}) the orbit $\boldsymbol{\omega}_0$ of~\eqref{e:bltw2} with initial datum $\boldsymbol{\omega}_0(0)$ boils down to a single equilibrium point.
Lemma 11.2 in~\cite{BianchiniSpinolo}   yields the existence of a Lipschitz continuous  map $\mf m_p (\boldsymbol{\check \omega},  \boldsymbol{\omega}_0(0), \cdot): \R^{k - \ell -1}\to \R^{2N-h}$, which has the following property: let  $\boldsymbol{\omega}$ and $\boldsymbol{\omega}_0$ be the solutions of the Cauchy problem obtained by coupling~\eqref{e:bltw2} with the initial datum 
\be \label{e:festanatale}
    \boldsymbol{\omega}(0): = \mf m_-(\boldsymbol{\check{\omega}}, \xi_{\ell +1}, \dots, \xi_{k-1}) - \boldsymbol{\check{\omega}}+ \mf m_p (\boldsymbol{\check \omega},  \boldsymbol{\omega}_0(0),  \xi_{\ell +1}, \dots, \xi_{k-1})+ \boldsymbol{\omega}_0(0),
\eq
and $\boldsymbol{\omega}_0(0)$, respectively; then 
\be \label{e:slavingman}
\lim_{y \to + \infty}|\boldsymbol{\omega}_0 (y)- \boldsymbol{\omega}(y)|=0.
\eq
 We postpone to \S\ref{sss:casignl} and \S\ref{sss:casild} below an extended discussion on how this asymptotic behavior translates in the specific framework of the boundary layers system~\eqref{e:bltw}. We set 
\be \label{e:psipi}
    \boldsymbol{\psi}_p (\mf{\widetilde u} , \xi_{\ell +1}, \dots, \xi_{k-1}, s_k): = 
     \boldsymbol{\pi}_{\mf u}\circ \mf m_p ( \boldsymbol{\check \omega}, \boldsymbol{\omega}_0(0), \xi_{\ell +1}, \dots, \xi_{k-1}),
\eq
where as usual $ \boldsymbol{\pi}_{\mf u}$ denotes the projection onto the first $N$ coordinates. We also set  
\be \label{e:bl00}
    {\boldsymbol \phi} (\mf{\widetilde u}, \xi_{\ell +1}, \dots, \xi_{k-1}, s_k) : =\boldsymbol{\pi}_{\mf u}( \boldsymbol{\omega}(0)),
\eq
where $ \boldsymbol{\omega}(0)$ is the same as in~\eqref{e:festanatale}. By combining~\eqref{e:bl00} with~\eqref{e:festanatale},~\eqref{e:phiesse},\eqref{e:psipi} and the equality  $\boldsymbol{\pi}_{\mf u} ( \boldsymbol{\omega}_0(0))= \boldsymbol{\pi}_{\mf u} ( \boldsymbol{\check \omega})$ we arrive at 
\be 
\label{e:bl10}
     {\boldsymbol \phi} (\mf{\widetilde u}, \xi_{\ell +1}, \dots, \xi_{k-1}, s_k): = 
       {\boldsymbol \psi}_s  ( \boldsymbol{\zeta}_k (\widetilde{\mf u}, s_k), \xi_{\ell +1}, \dots, \xi_{k-1})    +       
      \boldsymbol{\psi}_p (\mf{\widetilde u}, \xi_{\ell +1}, \dots, \xi_{k-1}, s_k). 
\eq
Note that, if the $k$-th vector field is genuinely nonlinear, the function ${\boldsymbol \phi} (\mf{\widetilde u}, \cdot)$ describes the states that can be connected to $\mf{\widetilde u}$ by either Lax admissible shocks or rarefaction waves with non-negative speed, and by boundary layers. If the $k$-th vector field is linearly degenerate, the function ${\boldsymbol \phi} (\mf{\widetilde u}, \cdot)$ describes the states that can be connected to $\mf{\widetilde u}$ by contact discontinuities with non-negative speed, and by boundary layers. Note furthermore that, owing to~\cite[formula (11.6)]{BianchiniSpinolo} we have 
\be \label{e:p}
    |   \boldsymbol{\psi}_p ( \mf{\widetilde u}, \xi_{\ell +1}, \dots, \xi_{k-1}, s_k)| \leq \unpo 
    \left\{
    \begin{array}{lll}
    |s_k| \sum_{i=1}^{k-1} |\xi_i| &  \text{if $\lambda_k (\mf{\widetilde u}) <0$ and $s_k < \underline s (\mf{\widetilde u})$ } \\
    0 & \text{if $\lambda_k (\mf{\widetilde u}) <0$ and $s_k \ge \underline s (\mf{\widetilde u})$ } \\
   \left| s_k - \bar s (\mf{\widetilde u})\right| \sum_{i=1}^{k-1} |\xi_i|& \text{if $\lambda_k (\mf{\widetilde u}) \ge 0$ and $ s_k < \bar s (\mf{\widetilde u})$ } \\
     0 & \text{if $\lambda_k (\mf{\widetilde u}) \ge 0$ and $ s_k \ge \bar s (\mf{\widetilde u})$ } & 
    \end{array} 
    \right.  
\eq
in the genuinely nonlinear case, and 
\be \label{e:p2}
    |  \boldsymbol{\psi}_p ( \mf{\widetilde u}, \xi_{\ell +1}, \dots, \xi_{k-1}, s_k)| \leq \unpo 
    \left\{
    \begin{array}{lll}
    |s_k| \sum_{i=1}^{k-1} |\xi_i| &  \text{if $\lambda_k (\mf{\widetilde u}) <0$ } \\
    0 & \text{if $\lambda_k (\mf{\widetilde u}) \ge 0$  } \\
    \end{array} 
    \right.  
\eq
in the linearly degenerate case. 
\end{itemize}
\subsection{The solution of the boundary Riemann problem} \label{sss:blvero}
We now discuss the solution of the Riemann problem posed by coupling~\eqref{e:claw} with~\eqref{e:briedata}. As pointed out in \S\ref{ss:speroserva}, in either {\sc Case A)} or {\sc Case C)} in Hypothesis~\ref{h:eulerlag} we have to solve~\eqref{e:solbrparma} for $(\xi_{\ell+1}, \dots, \xi_{k-1}, s_{k}, \dots, s_N)$. In {\sc Case B)} in Hypothesis~\ref{h:eulerlag} we have to use~\eqref{e:solbrcpam}. 

For the reader's convenience we now go over the main properties of the solution of the boundary Riemann problem that we need in the following and we separately consider the case where the $k$-th vector field is genuinely nonlinear and the case where it is linearly degenerate.  
\subsubsection{Genuinely nonlinear $k$-th vector field} \label{sss:casignl}
To simplify the notation, we assume~\eqref{e:a11nz}, that is {\sc case A)} in Hypothesis~\ref{h:eulerlag}. As mentioned before, this applies to the inviscid limit of the Navier-Stokes equations written in Eulerian coordinates when the fluid velocity is close to $\pm c$, where $c$ denotes the sound speed, see~\S\ref{sss:appl1}.

To discuss the main properties of the solution of~\eqref{e:claw},~\eqref{e:briedata} consider $(\xi_{\ell+1}, \dots, \xi_{k-1}, s_{k}, \dots, s_N)$ satisfying~\eqref{e:solbrparma} and set 
\be \label{e:hatu2}
        \mf{\hat u} : = \mf t_{k+1} (\cdot, s_{k+1}) \circ \dots \mf t_N (\mf u^+, s_N),
\eq
see Figure~\ref{f:fabio} for a representation.
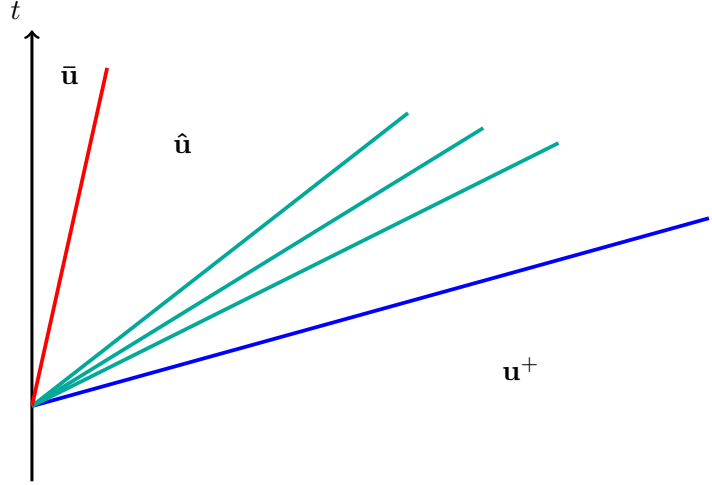
\begin{figure}
\begin{center}
\caption{The solution of the boundary Riemann problem} 
\label{f:fabio}
\begin{tikzpicture}
\draw[line width=0.4mm,->] (0,0) -- (0,6) node[anchor=south east] {$t$};
\draw[line width=0.5mm,blue]   (0, 1) -- (9, 3.5); 
\draw[line width=0.5mm,JungleGreen]   (0, 1) -- (6, 4.7); 
\draw[line width=0.5mm,JungleGreen]   (0, 1) -- (7, 4.5); 
\draw[line width=0.5mm,JungleGreen]   (0, 1) -- (5, 4.9); 
\draw[line width=0.5mm,red]   (0, 1) -- (1, 5.5); 
\draw  (2,4.5) node {$\mf{\hat u}$};
\draw  (0.5,5.4) node {$\mf{\bar u}$};
\draw  (6.5,1.5) node {$\mf{u}^+$};
\end{tikzpicture}
\end{center}
\end{figure}
Note that $\mf{u}^+$ and $\mf{\hat u}$ are connected by waves (shocks, rarefactions, contact discontinuities) of the families $k+1, \dots, N$ with strictly positive and bounded away from $0$ speed. 
To complete the description of the solution of the boundary Riemann solver we now consider the characteristic wave fan curve of admissible states $\boldsymbol{\zeta}_k (\mf{\hat u}, \cdot)$, whose construction is given in \S\ref{ss:zetakgnl}. Note that here compared with the notation in \S\ref{ss:zetakgnl} we have $\mf{\hat u}$ and $s_k$ in place of $\mf{\widetilde u}$ and $s$, respectively. We distinguish between the following cases. 
\begin{itemize} 
\item[i)] If $\lambda_k (\mf{\hat u}) \ge 0$ and  $s_k>0$ then owing to~\eqref{e:uguale1} we have $\boldsymbol{\zeta}_k (\mf{\hat u}, s_k)= \mf t_k (\mf{\hat u}, s_k)$ and the solution of the boundary Riemann problem contains a Lax
admissible shock between $\mf{\hat u}$ (on the right) and $\boldsymbol{\zeta}_k (\mf{\hat u}, s_k)$ (on the left).
In this case the trace $\mf{\bar u} = \mf u(t, 0)$ is $\mf{\bar u}= \boldsymbol{\zeta}_k (\mf{\hat u}, s_k)$ and satisfies $\lambda_k (\mf{\bar u}) >0$.  Note that we have 
\be 
\label{e:brs0}
   \boldsymbol{\beta} ( {\boldsymbol \phi} (\mf{\bar u}, \xi_{\ell+1}, \dots, \xi_{k}), \mf u_b) = \mf 0_{N}
\eq 
provided $\xi_k=0$. To establish the above identity, we recall that $\mf{\bar u} =\boldsymbol{\zeta}_k (\mf{\hat u}, s_k)$ and that $\lambda_k (\mf{\hat u})>0$, $s_k>0\ge  \bar s(\mf{\hat u})$, which owing to~\eqref{e:p} yields 
\be \label{e:psip}
    \boldsymbol{\psi}_p (\mf{\hat u}, \xi_{\ell+1}, \dots, \xi_{k-1}, s_k) = \mf 0_N 
\eq
and by~\eqref{e:bl10} this yields 
\be \label{e:2112} \begin{split}
    {\boldsymbol \phi} &(\mf{\hat u}, \xi_{\ell+1}, \dots, \xi_{k-1}, s_k) \stackrel{\eqref{e:bl10},\eqref{e:psip}}{=} 
    {\boldsymbol \psi_s} (\boldsymbol{\zeta}_k (\mf{\hat u}, s_k), \xi_{\ell+1}, \dots, \xi_{k-1})
   \\ & =  {\boldsymbol \psi_s} (\mf{\bar u}, \xi_{\ell+1}, \dots, \xi_{k-1})
    \stackrel{\eqref{e:bl10},\eqref{e:psip}}{=} {\boldsymbol \phi} (\mf{\bar u}, \xi_{\ell+1}, \dots, \xi_{k-1}, 0).
\end{split}
\eq
By combining~\eqref{e:solbrparma},\eqref{e:hatu2} and~\eqref{e:2112} we arrive at~\eqref{e:brs0} with $\xi_k=0$. 
Note furthermore that~\eqref{e:2112} combined with the definition~\eqref{e:phiesse} of $\boldsymbol{\psi}_s$ yields the existence of a solution of~\eqref{e:bltw} with initial datum 
$ {\boldsymbol \psi_s} (\mf{\bar u}, \xi_{\ell+1}, \dots, \xi_{k-1})$ and converging exponentially fast to $\mf{\bar u}$. This yields the existence 
of a boundary layer $\mf w$ satisfying~\eqref{e:andiamoav2} with asymptotic state $\mf{\underline u}= \mf{\bar u}$. 
\item[ii)] If $\lambda_k (\mf{\hat u}) \ge 0$ and $\bar s(\mf{\hat u}) \leq s_k \leq 0$, then again owing to~\eqref{e:uguale1} we have $\boldsymbol{\zeta}_k (\mf{\hat u}, s_k)= \mf t_k (\mf{\hat u}, s_k)$. The solution of the boundary Riemann problem contains a rarefaction wave with rightmost state $\mf{\hat u}$ and leftmost state $\boldsymbol{\zeta}_k (\mf{\hat u}, s_k)$ and the trace $\mf{\bar u} = \mf u(t, 0)$ is $\mf{\bar u} = \boldsymbol{\zeta}_k (\mf{\hat u}, s_k)$ and satisfies $\lambda_k (\mf{\bar u}) \ge 0$. By arguing as in case i) we get~\eqref{e:brs0} with $\xi_k=0$ and the existence of a boundary layer $\mf w$ satisfying~\eqref{e:andiamoav2} with asymptotic state $\mf{\underline u}= \mf{\bar u}$. 
\item[iii)]  If $\lambda_k (\mf{\hat u}) \ge 0$ and $s_k < \bar s (\mf{\hat u})$ we recall~\eqref{e:uguale1} and conclude that the  solution of the boundary Riemann problem contains a rarefaction wave with rightmost state $\mf{\hat u}$ and leftmost state $\boldsymbol{\zeta}_k (\mf{\hat u}, \bar s(\mf{\hat u}))= \mf t_k (\mf{\hat u}, \bar s(\mf{\hat u}))$. We set $\mf{\bar u} = \mf t_k (\mf{\hat u}, \bar s(\mf{\hat u}))$ and point out that $\mf{\bar u} = \mf u(t, 0)$, that is $\mf{\bar u}$ is the trace of the solution of the boundary Riemann problem and satisfies $\lambda_k(\mf{\bar u})=0$ and henceforth $\underline s(\mf{\bar u})=0=\bar s (\mf{\bar u})$, by the very definition of $\bar s(\mf{\hat u})$ and $\underline s (\mf{\hat u})$, see~\eqref{e:baresse} and~\eqref{e:underlines}, respectively. 
We now recall~\eqref{e:bl10} and point out that, under the assumptions of case iii), 
\begin{equation*}
\begin{split}
         {\boldsymbol \psi_s} &(\boldsymbol{\zeta}_k (\mf{\hat u}, s_k), \xi_{\ell+1}, \dots, \xi_{k-1})
    \stackrel{\eqref{e:uguale1}}{=}
    {\boldsymbol \psi_s} (\boldsymbol{\pi}_{\mf u} \circ\mf b_k (\mf{\bar u}, s_k-
    \bar s(\mf{\hat u})), \xi_{\ell+1}, \dots, \xi_{k-1})
     \\ & 
     \stackrel{\eqref{e:uguale1}}{=}
    {\boldsymbol \psi_s} (\boldsymbol{\zeta}_k (\mf{\bar  u}, s_k-
    \bar s(\mf{\hat u})), \xi_{\ell+1}, \dots, \xi_{k-1})
\end{split}
\end{equation*}
Concerning  ${\boldsymbol \psi_p}$, we recall~\eqref{e:psip} and that $\boldsymbol{\check \omega}$ and $\boldsymbol{\omega}_0(0)$ are given by 
$$ 
   \boldsymbol{\check \omega}\stackrel{\eqref{e:chicosa}}{=}
    \boldsymbol{\zeta}_k (\mf{\hat u  }, s_k) 
  \stackrel{\eqref{e:uguale1}}{=}
   \boldsymbol{\zeta}_k (\mf{\bar  u}, s_k-
    \bar s(\mf{\hat u}) \; \text{and} \;
   \boldsymbol{\omega}_0(0) \stackrel{\eqref{e:festanatale2}}{=}
   \boldsymbol{\gamma}_k (\mf{\hat u  }, s_k) 
    \stackrel{\eqref{e:chicosa2}}{=}
   \boldsymbol{\gamma}_k (\mf{\bar  u}, s_k-
    \bar s(\mf{\hat u}))
$$ 
respectively. By combining the above equalities with~\eqref{e:bl10} we arrive at 
\be \label{e:21122} 
    {\boldsymbol \phi} (\mf{\hat u}, \xi_{\ell+1}, \dots, \xi_{k-1}, s_k) =
 {\boldsymbol \phi} (\mf{\bar u}, \xi_{\ell+1}, \dots, \xi_{k-1}, s_k-
    \bar s(\mf{\hat u})).
\eq
and by combining~\eqref{e:solbrparma} with~\eqref{e:hatu2}this yields~\eqref{e:brs0} with $\xi_k = s_k-
\bar s(\mf{\hat u}) \leq 0 = \bar s(\mf{\bar u})$. We now recall~\eqref{e:bl00} and that $\boldsymbol{\omega}(0)$ in there is the initial point of an orbit of~\eqref{e:bltw2} satisfying~\eqref{e:slavingman}, 
where $\boldsymbol{\omega}_0$ is an orbit of~\eqref{e:bltw2} with initial datum given by~\eqref{e:festanatale2}. In our case, owing to~\eqref{e:chicosa2} the initial datum 
$\boldsymbol{\omega}_0(0)$ is $\mf b_k (\mf{\bar  u}, s_k-
    \bar s(\mf{\hat u}))$ and owing to Lemma~\ref{l:min02} we conclude that $\boldsymbol{\omega}_0$ converges to $(\mf{\bar u}, \mf 0_{N-h})$ as $y \to + \infty$. Wrapping up, \eqref{e:slavingman} yields the 
existence of a boundary layer $\mf w$ satisfying~\eqref{e:andiamoav2} with asymptotic state $\mf{\underline u}= \mf{\bar u}$. 
\item[iv)] If $\lambda_k (\mf{\hat u}) < 0$ and $s_k > \underline s (\mf{\hat u})$ then owing to~\eqref{e:regcwfc} we have $\boldsymbol{\zeta}_k (\mf{\hat u}, s_k)= \mf t_k (\mf{\hat u},s_k)$ and the solution of the boundary Riemann problem contains a Lax
admissible shock between $\mf{\hat u}$ (on the right) and $\boldsymbol{\zeta}_k (\mf{\hat u}, s_k)$ (on the left).
The trace is $\mf{\bar u}= \boldsymbol{\zeta}_k (\mf{\hat u}, s_k)$, satisfies $\lambda_k (\mf{\bar u}) >0$ and by arguing as in case i) we get~\eqref{e:brs0} with $\xi_k=0$ and the existence of a boundary layer $\mf w$ satisfying~\eqref{e:andiamoav2} with asymptotic state $\mf{\underline u}= \mf{\bar u}$. 
\item[v)] If $\lambda_k (\mf{\hat u}) < 0$ and $s_k = \underline s (\mf{\hat u})$ then owing to~\eqref{e:regcwfc} we have $\boldsymbol{\zeta}_k (\mf{\hat u}, s_k)= \mf t_k (\mf{\hat u}, s_k)$. We set $\mf{\underline u}: =  \mf t_k (\mf{\hat u}, s_k)$ and point out that $\mf{\hat u}$ (on the right) and 
$\mf{\underline u}$ (on the left) are connected by a $0$-speed Lax admissible shock\footnote{We explicitely point out that this shock does \emph{not} appear in the solution since it loosely sits at the domain boundary}. The solution of the boundary Riemann problem contains no wave of the $k$-th family  and the trace of the solution of the boundary Riemann problem is $\mf{\bar u}= \mf{\hat u}$, which satisfies $\lambda_k (\mf{\bar u}) <0$. Since $\mf{\hat u}=\mf{\bar u}$ by combining~\eqref{e:solbrparma} with~\eqref{e:hatu2} we arrive at~\eqref{e:brs0} with $\xi_k = s_k = \underline s (\mf{\hat u})$. Note furthermore that owing to~\eqref{e:p} we have~\eqref{e:psip}, which yields 
$$
    {\boldsymbol \phi} (\mf{\hat u}, \xi_{\ell+1}, \dots, \xi_{k-1}, s_k) \stackrel{\eqref{e:bl10},\eqref{e:psip}}{=} 
    {\boldsymbol \psi_s} (\boldsymbol{\zeta}_k (\mf{\hat u}, s_k), \xi_{\ell+1}, \dots, \xi_{k-1})
 $$
and hence, by arguing as in case i) we get the existence of a boundary layer $\mf w$ satisfying~\eqref{e:andiamoav2} with asymptotic state $\mf{\underline u}= \boldsymbol{\zeta}_k (\mf{\hat u}, s_k) = \mf t_k (\mf{\hat u}, \underline s(\mf{\hat u})) $. 
\item[vi)] If $\lambda_k (\mf{\hat u}) < 0$ and $s_k < \underline s (\mf{\hat u})$ then owing to~\eqref{e:regcwfc} the solution of the boundary Riemann problem contains no wave of the $k$-th family. The trace of the solution is $\mf{\bar u}= \mf{\hat u}$, satisfies $\lambda_k (\mf{\bar u}) <0$ and by recalling~\eqref{e:solbrparma} and~\eqref{e:hatu2} we arrive at~\eqref{e:brs0} with $\xi_k = s_k$. We also have 
\begin{equation*} \begin{split}
    {\boldsymbol \phi} &(\mf{\hat u}, \xi_{\ell+1}, \dots, \xi_{k-1}, s_k)
    \stackrel{\mf{\hat u}= \mf{\bar u}}{=}
     {\boldsymbol \phi} (\mf{\bar u}, \xi_{\ell+1}, \dots, \xi_{k-1}, s_k)\\ &
 \stackrel{\eqref{e:bl10}}{=} 
    {\boldsymbol \psi_s} (\underbrace{\boldsymbol{\zeta}_k (\mf{\bar u}, s_k)}_{= \boldsymbol{\pi}_{\mf u}
   \circ \mf b_k (\mf{\bar u}, s_k) \ \text{by~\eqref{e:regcwfc}}}, \xi_{\ell+1}, \dots, \xi_{k-1})+
      {\boldsymbol \psi_p} (\mf{\bar u}, \xi_{\ell+1}, \dots, \xi_{k-1}, s_k).
\end{split}
\end{equation*}
By arguing as just after formula~\eqref{e:21122} and using Lemma~\ref{l:min0} rather than Lemma~\ref{l:min02} we conclude that there is a boundary layer $\mf w$ solving~\eqref{e:andiamoav2} with asymptotic state $\mf{\underline u}= \mf{\hat u}= \mf{\bar u}$. 
\end{itemize}
We now highlight a property of the trace $\mf{\bar u}= \mf u(t, 0)$ of the solution of the boundary Riemann problem that we will use a lot in the following: if the $k$-th vector field is genuinely nonlinear and $\xi_k$ is the same as in~\eqref{e:brs0} by direct check on cases {\rm i)},$\dots$,{\rm vi)} above one can verify that 
\be \label{e:protra}
 \xi_k  \leq \underline s (\mf{\bar u} ) \; \text{if} \; \lambda_k  (\mf{\bar u} )  \leq 0, 
       \qquad 
       \xi_k =0  \; \text{if} \; \lambda_k  (\mf{\bar u}) )  > 0, 
\eq
whence 
\be \label{e:sigmatraccia}
    \sigma_k (\mf{\bar u}, \xi_k) \leq 0 \quad \text{if $\lambda_k  (\mf{\bar u} )  \leq 0$}.
\eq
\subsubsection{Linearly degenerate $k$-th vector field}\label{sss:casild}
To ease the exposition, we focus on the most interesting case and consider {\sc case B)} in Hypothesis~\ref{h:eulerlag}, which is the case of the inviscid limit of the Navier-Stokes and viscous MHD equations written in \emph{Eulerian} coordinates when the fluid velocity vanishes, see~\S\ref{sss:appl1} and~\S\ref{sss:mhdeuler}. Note that~\eqref{e:lagr2} occurs for instance in the case of the Navier-Stokes and viscous MHD equations written in \emph{Lagrangian} coordinates, which is much simpler since the $k$-th vector fields vanishes \emph{identically}. 

To discuss the structure of the solution of the boundary Riemann problem we recall~\eqref{e:solbrcpam} and \eqref{e:hatu2} and we separately consider the following cases.  
\begin{itemize}
\item If $\lambda_k (\mf{\hat u})>0$ then owing to~\eqref{e:mcld} we have $\boldsymbol{\zeta}_k (\mf{\hat u}, s_k)= \mf t_k (\mf{\hat u}, s_k)$ and the solution of the boundary Riemann problem contains a contact discontinuity with right state $\mf{\hat u}$ and left state $\boldsymbol{\zeta}_k (\mf{\hat u}, s_k)$. The trace of the solution of the boundary Riemann problem is $\mf{\bar u}= \boldsymbol{\zeta}_k (\mf{\hat u}, s_k)$, which satisfies 
\begin{equation}
 \label{e:brs0lindeg}
   \boldsymbol{\beta} ( {\boldsymbol \phi} (\mf{\bar u}, \xi_{\ell (\mf u_b)+1}, \dots, \xi_{k}), \mf u_b) = \mf 0_{N},
\eq 
provided $\xi_k=0$. By arguing as in case i) in \S\ref{sss:casignl} and using~\eqref{e:p2} we get the existence of a boundary layer $\mf w$ solving~\eqref{e:andiamoav2} with asymptotic state $\mf{\underline u}= \mf{\bar u}= \boldsymbol{\zeta}_k (\mf{\hat u}, s_k)$. 
\item If $\lambda_k (\mf{\hat u})=0$ then owing to~\eqref{e:mcld} we have $\boldsymbol{\zeta}_k (\mf{\hat u}, s_k)= \mf t_k (\mf{\hat u}, s_k)$ and $\mf{\hat u}$ and $\boldsymbol{\zeta}_k (\mf{\hat u}, s_k)$ are connected by a $0$-speed contact discontinuity\footnote{As in case v) in \S\ref{sss:casignl} we explicitely point out that this contact discontinuity does \emph{not} appear in the solution since loosely speaking it sits at the domain boundary}.  The solution of the boundary Riemann problem does not contain
any wave of the $k$-th family and its trace is $\mf{\bar u}= \mf{\hat u}$. By arguing as in case v) in \S\ref{sss:casignl} we get~\eqref{e:brs0lindeg} with $\xi_k=s_k$ and the existence of a boundary layer solving~\eqref{e:andiamoav2} with asymptotic state $\mf{\underline u} = \boldsymbol{\zeta}_k (\mf{\hat u}, s_k)$. 
\item If $\lambda_k (\mf{\hat u})<0$ then the solution of the boundary Riemann problem does not contain
any wave of the $k$-th family, its trace is $\mf{\bar u}= \mf{\hat u}$ and this yields~\eqref{e:brs0lindeg} with $\xi_k=s_k$. By arguing as in case vi) in \S\ref{sss:casignl} we also get  the existence of a boundary layer solving~\eqref{e:andiamoav2} with asymptotic state $\mf{\underline u} =\mf{\hat u}=\mf{\bar u}$.
\end{itemize}
Note that by the above analysis, the trace $\mf{\bar u}= \mf u(t, 0)$ of the solution of the boundary Riemann problem satisfies the following property: if $\xi_k$ is the same as in~\eqref{e:brs0lindeg} then 
\be \label{e:protra2}
 \xi_k =0  \quad \text{if} \; \lambda_k  (\mf{\bar u} )  > 0. 
\eq
\section{Construction of the wave front-tracking approximation} \label{s:cinque}
In this section we describe our wave front-tracking algorithm. In the following sections we will show that this algorithm yields for every $\ee>0$ an $\ee$-approximate solution of the initial-boundary value problem~\eqref{e:claw},~\eqref{e:ibvp} in the sense of Definition~\ref{d:appsol}, and that the other properties stated in Theorem~\ref{t:wft} are satisfied.  We basically follow the same construction as in~\cite[\S7.2]{Bressan} and in particular we introduce both \emph{accurate} and \emph{simplified} solvers. The only differences with the analysis in~\cite[\S7.2]{Bressan} are:
\begin{itemize}
\item[(i)] owing to the specific expression of our interaction functionals in \S\ref{s:functional} we have to put particular care in choosing the speed of the wave fronts of the $k$-th family;
\item[(ii)] we have to construct the solution of the boundary Riemann problems that occur at the origin, at discontinuity points of $\mf u_b^\ee$ and when a wave front hits the boundary.  
\end{itemize}
The exposition is organized as follows: in \S\ref{ss:id} we briefly discuss the construction of the approximate initial and boundary datum, in \S\ref{ss:ries} we define the accurate and simplified approximate solutions of the Riemann problem, in \S\ref{ss:abrie} the accurate approximate solution of the boundary Riemann problem, in \S\ref{ss:sbrie} the simplified solution of the boundary Riemann problem and finally in \S\ref{ss:whenas} we specify when we use the accurate solvers and when we use the simplified solvers. We do not outline the rest of the construction of the wave front-tracking approximation as one can exactly proceed as in~\cite[\S7.2]{Bressan}.
\subsection{Approximation of the initial datum and of the boundary datum} \label{ss:id} We fix $\ee>0$, recall~\eqref{e:hp} and set $\mf u_0: = \mf u(\mf v_0)$, $\mf u_b: = \mf u(\mf v_b)$. We conclude that there are two functions $\mf u_{0}^\ee, \mf u_{b}^\ee: \R_+ \to \R$ that are piecewise constant with a finite number of jump discontinuities and satisfy
\be \label{e:51}
    \| \mf u_0^\ee - \mf u_0 \|_{L^1} \leq \ee, \; \mathrm{TotVar}\ \mf u_0^\ee \leq  \mathrm{TotVar} \ \mf u_0, \quad 
     \| \mf u_b^\ee - \mf u_b \|_{L^1} \leq \ee, \; \mathrm{TotVar} \ \mf u_b^\ee \leq  \mathrm{TotVar} \ \mf u_b, 
\eq
and 
\be \label{e:52}
    |\mf u^\ee_0(0) - \mf u^\ee_b(0)| \leq  |\mf u_0(0) - \mf u_b(0)|. 
\eq
\subsection{Riemann solvers} \label{ss:ries}
We consider a Riemann problem for~\eqref{e:claw} and we construct two approximate solutions by using either the \emph{accurate} or the \emph{simplified} solver discussed below. In \S\ref{ss:whenas} we specify when we use the accurate and when the simplified solver.
\subsubsection{Accurate Riemann solver} \label{ss:arie}
As mentioned above, the only point where we cannot simply follow the construction in~\cite[\S 7.2]{Bressan} is that we have to put extra-care in choosing the speed of the wave fronts of the $k$-th family. To this end, we  recall~\cite[Remark 7.1 p.132]{Bressan} and~\cite[Remark 7.2 p.142]{Bressan}. We consider a wave-front $\alpha$ and we term $\mf u_\alpha$ its right state, by $s_\alpha$ its size, by $\mf t_k (\mf u_\alpha, s_\alpha)$ its left state and by $\mathrm{sp}_\alpha$ its speed. We want to choose $\mathrm{sp}_\alpha$ in such a way that 
\be \label{incidono}
    \mathrm{sp}_\alpha < 0 \implies  \varsigma_k (\mf{ u}_\alpha, s_\alpha) < 0 \; \text{if~\eqref{e:gnl}}, \qquad \quad \mathrm{sp}_\alpha < 0 \implies  \lambda_k (\mf{u}_\alpha) < 0 \; \text{if~\eqref{e:lindeg}},
\eq
where 
\be \label{e:speedwft} 
   \varsigma_k (\mf{u}, s_k) =
    \left\{
    \begin{array}{ll}
                \sigma_k (\mf{ u}, s_k)  & s_k \ge 0 \\
                \lambda_k ( \mf t_k (\mf{u}, s_k)) & s_k < 0, \\
    \end{array}
    \right.
\eq
and $ \sigma_k (\mf{u}, s_k) $ is the same as in~\eqref{e:rh}. 
To this end we set the following rules:
\begin{itemize}
\item if the $k$-th characteristic field is genuinely nonlinear~\eqref{e:gnl} and $s_\alpha>0$ (i.e. if $s_\alpha$ is a shock front) we can choose $\mathrm{sp}_\alpha$ in such a way that it satisfies both the Lax condition  
$
     \lambda_k (\mf{ u}_\alpha) < \mathrm{sp}_\alpha < 
   \lambda_k (\mf t_k (\mf{ u}_\alpha, s_\alpha))
$
and the implication~\eqref{incidono}\footnote{The reason why we cannot simply set $\mathrm{sp}_\alpha: = \sigma_k (\mf u_\alpha, s_\alpha)$ is because, as in~\cite{Bressan}, we want to avoid interactions of more than two wave-fronts, and this implies that we may have to very slightly modify the speed of some wave-fronts}.
\item if either the $k$-th characteristic field is linearly degenerate~\eqref{e:lindeg} or it is genuinely nonlinear and $s_\alpha<0$ (i.e., $\alpha$ is a rarefaction front)   we recall \cite[(FT4) p.143]{Bressan} and set 
\be \label{e:speedrare}
    \mathrm{sp}_\alpha   =  \lambda_k (\mf{ u}_\alpha).
\eq
\end{itemize}
We also recall that following the construction in~\cite[p.129]{Bressan}, we fix $r_\ee>0$\footnote{In~\cite[p.129]{Bressan} this threshold is actually termed $\delta$} and we make sure that each rarefaction front has strength bounded by $r_\ee$ by splitting, if needed, big rarefaction fronts into smaller ones. As in~\cite{Bressan}, we make an exception to this rule when a rarefaction front of the $i$-th family interacts with another wave and generates a rarefaction front of the $i$-th family: in this case the front exiting the interaction is not partitioned, even if its strength is greater than $r_\ee$.
\subsubsection{Simplified Riemann solver} \label{ss:srie} 
We follow the same construction as in~\cite{Bressan}[p.131] and we fix once and for all the speed of the non-physical wave fronts as $\hat \lambda>0$ satisfying 
\be \label{e:npspeed}
    \hat \lambda \ge \sup |\lambda_i (\mf u)| \quad \text{for every $i=1, \dots, N$.}
\eq 
\begin{remark} \label{r:destrasinistra}
A word of warning concerning the use of the simplified Riemann solver. We recall footnote~\footref{foot} at page~\pageref{foot} and the construction in~\cite[pp.131-132]{Bressan} and realize that when using the simplified Riemann solver we have to use (rather than the admissible wave-fan curves of \emph{left} states $\mf t_i$) the admissible wave-fan curves of 
\emph{right} states, the one introduced in the original paper by Lax~\cite{Lax}, which in the following we term $\boldsymbol{m}_i$, $i=1,\dots, N$. In this way, physical fronts can travel with very large and \emph{positive} speed. Note however that from the point of view of the strength of the wave fronts, the two constructions are completely equivalent since (up to a suitable choice of the parametrization) we have the relation 
\be \label{e:avantindietro}
      \mf t_i (\mf u^+, s) = \mf u^-  \iff  \boldsymbol{m}_i (\mf u^-, -s) = \mf u^+, \; \text{for every $s$, $\mf u^+$ and $i=1, \dots, N$}. 
\eq
\end{remark}
\subsection{Accurate boundary Riemann solver} \label{ss:abrie}
In this paragraph we consider the boundary Riemann problem obtained by coupling~\eqref{e:claw} with the boundary and initial conditions~\eqref{e:briedata}. We pass to the $\mf u$ coordinates and set $\mf u^+= \mf u(\mf v^+)$, $\mf u_b = \mf u(\mf v_b)$. We separately consider the cases of a genuinely nonlinear and a linearly degenerate $k$-th family. 
\subsubsection{Accurate boundary Riemann solver (genuinely nonlinear case)} \label{ss:abriegnl}
We assume~\eqref{e:gnl} and to fix the notation we assume~\eqref{e:a11nz}. Cases B) and C) in Hypothesis~\ref{h:eulerlag} can be handled analogously.   We consider $(\xi'_{\ell+1}, \dots, \xi'_{k-1}, s'_k, s'_{k+1}, \dots, s'_N)$ satisfying
\be 
\label{e:brs}
     \boldsymbol{\beta} ({\boldsymbol \phi} ( \cdot,  \xi'_{\ell+1}, \dots, \xi'_{k-1}, s'_k) \circ \mf t_{k+1} (\cdot, s'_{k+1}) 
\circ \dots \circ  \mf t_{N} (\mf u^+, s'_{N}), \mf u_b)= \mf 0_{N-\ell},   
\eq
where $ \boldsymbol{\beta}$ and ${\boldsymbol \phi}$ are the same as in~\eqref{e:betagnl} and~\eqref{e:bl10}, respectively. We recall cases i),$\dots$, vi) in \S\ref{sss:casignl} and define the accurate boundary Riemann solver as follows. \\
{\sc Step 1:} we define $\mf{\hat u}$ by setting
\be \label{e:hatu}
\mf{\hat u} : = \mf t_{k+1} (\cdot, s'_{k+1}) \circ \dots \mf t_N (\mf u^+, s'_N), 
\eq
see Figure~\ref{f:fabio}, and we use the accurate Riemann solver to solve the Riemann problem between $\mf u^+$ (on the right) and $\mf{\hat u}$. \\
{\sc Step 2} we separately consider the following cases 
\begin{itemize} 
\item[i)] If $\lambda_k (\mf{\hat u}) \ge 0$ and $s'_k>0$ we use the accurate Riemann solver to solve the Lax admissible shock between $\mf{\hat u}$ (on the right) and $\boldsymbol{\zeta}_k (\mf{\hat u}, s'_k)$ (on the left).
\item[ii)]  If $\lambda_k (\mf{\hat u}) \ge 0$ and $\bar s(\mf{\hat u}) \leq s'_k \leq 0$ we use the accurate Riemann solver to solve the rarefaction between $\mf{\hat u}$ (on the right) and $\boldsymbol{\zeta}_k (\mf{\hat u}, s'_k)$ (on the left). If the boundary Riemann problem~\eqref{e:claw},~\eqref{e:briedata} is originated at the interaction between a wave front and the boundary and the hitting wave front is a rarefaction of the $k$-th family  then the outgoing rarefaction front  between $\mf{\hat u}$ and $\boldsymbol{\zeta}_k (\mf{\hat u}, s_k)$ is not partitioned, even if its strength is greater than the threshold $r_\ee$. 
\item[iii)] If $\lambda_k (\mf{\hat u}) \ge 0$ and  $s'_k < \bar s (\mf{\hat u})$ we set $\mf{\bar u}: = \boldsymbol{\zeta}_k (\mf{\hat u}, \bar s(\mf{\hat u})$ and we use the accurate Riemann solver described at the previous point to solve the rarefaction between $\mf{\hat u}$ (on the right) and $\bar{\mf u}$ (on the left). If the boundary Riemann problem~\eqref{e:claw},~\eqref{e:briedata} is originated at the interaction between a wave front and the boundary and the hitting wave front is of the $k$-th family, then the rarefaction front  between $\mf{\hat u}$ and $\mf{\bar u}$ is not partitioned, even if its strength is greater than the threshold $r_\ee$. 
\item[iv)] If  $\lambda_k (\mf{\hat u}) < 0$ and $s'_k > \underline s (\mf{\hat u})$ we use the accurate Riemann solver to solve the Lax admissible shock between $\mf{\hat u}$ (on the right) and $\boldsymbol{\zeta}_k (\mf{\hat u}, s'_k)$ (on the left). 
\item[v)] If  $\lambda_k (\mf{\hat u}) < 0$ and $s'_k = \underline s (\mf{\hat u})$ there is no wave of the $k$-th family entering the domain after the interaction.
\item[vi)] If  $\lambda_k (\mf{\hat u}) <0$ and $s'_k < \underline s (\mf{\hat u})$ there is no wave of the $k$-th family entering the domain after the interaction. 
\end{itemize}
\subsubsection{Accurate boundary Riemann solver (linearly degenerate case)} \label{sss:532}
We assume~\eqref{e:lindeg}  and to fix the notation we focus on case B) in Hypothesis~\ref{h:eulerlag}. Cases A) and C) in Hypothesis~\ref{h:eulerlag} can be handled analogously. We recall the analysis in \S\ref{sss:casild}, solve 
\be 
\label{e:brslindeg}
     \boldsymbol{\beta} ({\boldsymbol \phi} ( \cdot,  \xi'_{\ell(\mf u_b)+1}, \dots, \xi'_{k-1}, s'_k) \circ \mf t_{k+1} (\cdot, s'_{k+1}) 
\circ \dots \circ  \mf t_{N} (\mf u^+, s'_{N}), \mf u_b)= \mf 0_{N},  
\eq
and consider the same value $\mf{\hat u}$ as in~\eqref{e:hatu}. We use the accurate Riemann solver to solve the Riemann problem between $\mf u^+$ (on the right) and $\mf{\hat u}$ (on the left) and 
\begin{itemize}
\item if $\lambda_k(\mf{\hat u})>0$ we accurately solve the Riemann problem with a contact discontinuity between $\mf u^+$ (on the right) and $\mf t_k (\mf{\hat u}, s'_k)$ (on the left);
\item  if $\lambda_k(\mf{\hat u})\leq 0$  there is no wave of the $k$-th family entering the domain after the interaction. 
\end{itemize}
\begin{remark}\label{r:1812}
Let $\bar{\mf u}'$ denote the trace of the wave front-tracking approximation after the interaction. Owing to the analysis in \S\ref{sss:casignl} and \S\ref{sss:casild} we have~\eqref{e:protra} and~\eqref{e:sigmatraccia}  (in the genuinely nonlinear case) and~\eqref{e:protra2} (in the linearly degenerate case) with 
$\xi'_k$ and $\bar{\mf u}'$ in place of $\xi_k$ and $\bar{\mf u}$, respectively. Here $\xi'_k$ satisfies  $\boldsymbol{\beta} ( {\boldsymbol \phi} (\mf{\bar u}', \xi'_{\ell+1}, \dots, \xi'_{k}), \mf u_b) = \mf 0_{N}$ and $\boldsymbol{\beta} ( {\boldsymbol \phi} (\mf{\bar u}', \xi'_{\ell (\mf u_b)+1}, \dots, \xi'_{k}), \mf u_b) = \mf 0_{N}$
in case A) and B) in Hypothesis~\ref{h:eulerlag}, respectively. The explicit expression of $\xi'_k$ in terms of $s'_k$ can be obtained by arguing as in \S\ref{sss:casignl} and \S\ref{sss:casild}.
\end{remark}
\subsection{Simplified boundary Riemann solver} \label{ss:sbrie}
To define the simplified boundary Riemann solver we consider a wave-front hitting the boundary and we term $\mf u^+$ and $\mf u^- $ the states at the right and left-hand side of the wave, respectively. We term $j$ the family of the hitting wave front and we separately consider the following cases. \\ 
{\sc Case $j<k$:} the simplified boundary Riemann solver is a non-physical wave front traveling with speed $\hat \lambda$ and with left and right states $\mf u^-$ and $\mf u^+$, respectively. \\
{\sc Case $j=k$:} we consider the solution of~\eqref{e:claw},~\eqref{e:briedata} and $\mf{\hat u}$ defined by~\eqref{e:hatu}. To define the simplified boundary Riemann solver 
\begin{enumerate}
\item we accurately solve the boundary Riemann problem~\eqref{e:claw},\eqref{e:briedata} with $\mf v^+$ replaced by $\mf v(\mf{\hat u})$. Note that by the definition of $\mf{\hat u}$ the solution only involves boundary layers and a wave front of the $k$-th family (if any);
\item we introduce a non-physical front between $\mf{\hat u}$ (on the left) and $\mf u^+$ (on the right);
\item we juxtapose the two.  
\end{enumerate}
Note that, if the $k$-th vector field is genuinely nonlinear, then the trace $\mf{\bar u}$ of the wave front-tracking approximation after the interaction (solved by using the simplified boundary Riemann solver) satisfies again~\eqref{e:protra}.
\subsection{Discrimination between accurate and simplified Riemann and boundary Riemann solvers} \label{ss:whenas}
We fix a parameter $\omega_\ee$, to be determined in the following, and set the following rules:
\begin{itemize}
\item Riemann problems involving the initial datum and boundary Riemann problems involving discontinuities in the boundary datum are always solved accurately;
\item at interactions between two wave fronts with strengths $|s_\alpha|$ and $|s_\beta|$ inside the domain we use the accurate Riemann solver if $|s_\alpha s_\beta| \ge \omega_\ee$, the simplified Riemann solver otherwise;
\item when a wave front of strength $|s|$ and family  $j<k$ hits the boundary, we use the accurate boundary Riemann solver if $|s|\ge \omega_\ee$, the simplified boundary Riemann solver otherwise;
\item assume that a $k$-th wave front of strength $|s|$ hits the boundary, and the $k$-th characteristic field is genuinely nonlinear~\ref{e:gnl}. In this case we use the accurate boundary Riemann solver if 
$|s|\big([\varsigma_k (\mf u^+, s)]^-+ |\xi_k|\big)\ge \omega_\ee$, the simplified boundary Riemann solver otherwise. In the previous expression, $\varsigma_k$ is the same as in~\eqref{e:speedwft}, $\mf u^+$ denotes the right state of the hitting wave front and $|\xi_k|$ the strength of the center component of the boundary layer. That is, if $\mf u_b$ denotes the boundary datum and if we have~\eqref{e:a11nz} then
$$
   \boldsymbol{\beta} ( {\boldsymbol \phi} (\cdot, \xi_{\ell +1}, \dots, \xi_{k})\circ \mf t_{k} (\mf u^+, s), \mf u_b) = \mf 0_{N- \ell}
$$
for some $ \xi_{\ell +1}, \dots, \xi_{k-1}$. 
\item when a $k$-th wave front hits the boundary, and the $k$-th characteristic field is linearly degenerate~\ref{e:lindeg} we use the accurate boundary Riemann solver if 
$|s| \big([\lambda_k (\mf u^+)+ |\xi_k|\big)\ge \omega_\ee$, the simplified boundary Riemann solver otherwise.
Here $\xi_k$ is the same as at the previous item. 
\end{itemize}
The motivation for the above choices comes from Lemma~\ref{l:nc},~\ref{l:me} and~\ref{l:melindeg} below, from the interaction estimates~\cite[(7.31),(7.32)]{Bressan}, and from the consequent expression of the functionals defined in \S\ref{s:functional}.
\section{Main interaction estimates}\label{s:ie}  
In this section we establish new interaction estimates: more precisely, we accurately track the strength of the new wave fronts that are generated when a wave front interacts with the boundary. From the technical viewpoint, the main estimate~\eqref{e:me} is one of the most innovative results of the paper. The exposition is organized as follows: in \S\ref{ss:iej} we focus on the interaction of a wave front of the family $j$, $j<k$, with the boundary. In \S~\ref{ss:ik} we state our main results concerning the interaction between a wave front of the $k$-th family with the boundary, and in \S\ref{ss:proofme} we give the proof in the case of a genuinely nonlinear $k$-th characteristic field, and in \S\ref{ss:prooflindeg} we deal with the linearly degenerate case. In \S\ref{ss:fie} we establish some further interaction estimates we need in the following sections. 
\subsection{Interaction between the boundary and a wave front of the $j$-th family, $j< k$} \label{ss:iej}
We fix $j < k$ and consider a wave front of the $j$-th family hitting the boundary $x=0$, see Figure~\ref{f} (left). 
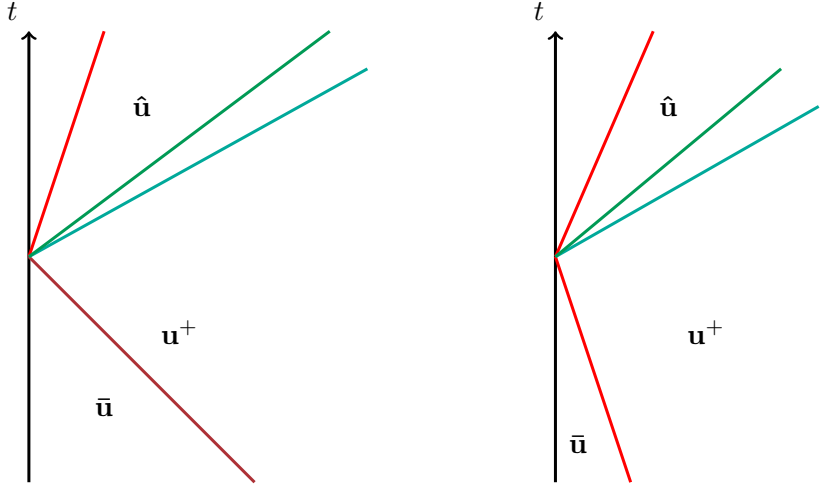
\begin{figure}
\begin{center}
\caption{A wave front of the $j$-th family, $j<k$, (left) and of the $k$-th family (right) hitting the boundary and the wave fronts that are generated at the interaction.} 
\label{f}
\begin{tikzpicture}
\draw[line width=0.4mm,->] (0,0) -- (0,6) node[anchor=south east] {$t$};
\draw[line width=0.4mm,Maroon]   (3, 0) -- (0, 3); 
\draw[line width=0.4mm,red]   (0, 3) -- (1, 6); 
\draw[line width=0.4mm,JungleGreen]   (0, 3) -- (4.5, 5.5); 
\draw[line width=0.4mm,ForestGreen]   (0, 3) -- (4, 6); 
\draw  (1,1) node {$\mf{\bar u}$};
\draw  (2,2) node {$\mf{u}^+$};
\draw  (1.5,5) node {$\mf{\hat u}$};
\draw[line width=0.4mm,->] (7,0) -- (7,6) node[anchor=south east] {$t$};
\draw[line width=0.4mm,red]   (8, 0) -- (7, 3); 
\draw[line width=0.4mm,red]   (7, 3) -- (8.3, 6); 
\draw[line width=0.4mm,JungleGreen]   (7, 3) -- (10.5, 5); 
\draw[line width=0.4mm,ForestGreen]   (7, 3) -- (10, 5.5); 
\draw  (7.3,0.5) node {$\mf{\bar u}$};
\draw  (9,2) node {$\mf{u}^+$};
\draw  (8.5,5) node {$\mf{\hat u}$};
\end{tikzpicture}
\end{center}
\end{figure}
To ease the exposition, we assume~\eqref{e:a11nz}, but the analysis in cases B) and C) in Hypothesis~\ref{h:eulerlag} is entirely analogous. 
We introduce the following notation: we term $\mf{\bar u}$ and $\mf u^+$ the left and the right state, respectively, of the hitting wave front of the $j$-th family, which yields 
\be
\label{e:lax2}
    \mf{\bar u} = \mf t_j (\mf u^+, s_j), \quad j<k,
\eq
for some suitable $s_j$. In the previous expression, $\mf t_j$ denotes the wave fan curve of admissible states, see~\cite{Lax} and \S\ref{ss:lax}. We also term  $\mf u_b$ the ``boundary datum", 
namely we assume $\mf v(\mf{\bar u})\sim_{\mf D}\mf v(\mf u_b)$ in the sense of Definition~\ref{d:equiv}. Note that owing to the analysis in \S\ref{sss:casignl} and \S\ref{sss:casild} this implies 
~\eqref{e:brs0} for some $(\xi_{\ell+1}, \dots, \xi_k)$ with $\xi_k$ satisfying~\eqref{e:protra} (if the $k$-th field in genuinely nonlinear) or~\eqref{e:protra2} (if the $k$-th vector field is linearly degenerate). To solve the boundary Riemann problem, we have to determine $\xi'_{\ell+1}, \dots, \xi'_{k-1}, s'_k, s'_{k+1}, \dots, s'_N$
in such a way that~\eqref{e:brs} holds. 
\begin{lemma}
\label{l:nc}
Assume~\eqref{e:brs0},\eqref{e:lax2} and~\eqref{e:brs}, then there is a constant $C_1>0$ only depending on the functions $\mf A$, $\mf E$, $\mf B$ and $\mf G$ such that 
\be
\label{e:nc}
      \sum_{i=\ell + 1}^{k-1} |\xi_i - \xi_i'| + |s'_k - \xi_k| + \sum_{i=k+1}^N |s'_i|  \leq C_1 |s_j|.
\eq
\end{lemma}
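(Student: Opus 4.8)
The plan is to view both the ``incoming'' data $(\xi_{\ell+1},\dots,\xi_k)$ attached to $\mf{\bar u}$ via~\eqref{e:brs0} and the ``outgoing'' data $(\xi'_{\ell+1},\dots,\xi'_{k-1},s'_k,s'_{k+1},\dots,s'_N)$ attached to $\mf u^+$ via~\eqref{e:brs} as the unique solutions of two instances of the boundary Riemann problem, and to compare them by an implicit-function/Lipschitz argument. Concretely, I would set up the map
\[
   \mf F(\mf u;\,\xi_{\ell+1},\dots,\xi_{k-1},s_k,s_{k+1},\dots,s_N)
   := \boldsymbol{\beta}\big({\boldsymbol \phi}(\cdot,\xi_{\ell+1},\dots,\xi_{k-1},s_k)\circ\mf t_{k+1}(\cdot,s_{k+1})\circ\cdots\circ\mf t_N(\mf u,s_N),\ \mf u_b\big),
\]
so that~\eqref{e:brs0} says $\mf F(\mf{\bar u};\,\xi_{\ell+1},\dots,\xi_{k-1},\xi_k,0,\dots,0)=\mf 0$ (the $k{+}1,\dots,N$ slots vanish because $\mf{\bar u}$ is itself the trace, with no $k{+}1,\dots,N$ waves between it and itself, and the $k$-slot parameter is $\xi_k$), while~\eqref{e:brs} says $\mf F(\mf u^+;\,\xi'_{\ell+1},\dots,\xi'_{k-1},s'_k,s'_{k+1},\dots,s'_N)=\mf 0$. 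The analysis of~\cite{BianchiniSpinoloARMA,BianchiniSpinolo}, recalled in \S\ref{sss:blvero}, guarantees that for each fixed right state the solution of this system is unique and depends Lipschitz-continuously on the right state; combining this with~\eqref{e:lax2}, which gives $|\mf{\bar u}-\mf u^+|\le \unpo\,|s_j|$, yields exactly a bound of the form~\eqref{e:nc}.

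\textbf{Key steps, in order.} First I would record the well-posedness and Lipschitz dependence of the boundary Riemann solver: using $\boldsymbol{\zeta}_k$ (Lemma~\ref{l:cwfc}), the function ${\boldsymbol \phi}$ from~\eqref{e:bl10}, and the fact that ${\boldsymbol \phi}$, the wave-fan curves $\mf t_i$, and $\boldsymbol{\beta}$ are all Lipschitz, the Jacobian of $\mf F$ with respect to the parameters is invertible with uniformly bounded inverse at the reference point (this is precisely the non-degeneracy built into the construction in~\cite{BianchiniSpinolo}); hence the implicit function theorem in Lipschitz form gives a Lipschitz solution map $\mf u\mapsto(\xi'(\mf u),s'(\mf u))$. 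Second, I would note that the parameters $(\xi_{\ell+1},\dots,\xi_{k-1},\xi_k,0,\dots,0)$ associated to $\mf{\bar u}$ are themselves the value of this solution map at the right state $\mf{\bar u}$: indeed $\mf{\bar u}\sim_{\mf D}\mf u_b$ means, by \S\ref{sss:casignl}--\S\ref{sss:casild}, exactly that~\eqref{e:brs0} holds with the $k{+}1,\dots,N$ components zero, which is the solution of the boundary Riemann problem with right state $\mf{\bar u}$. Third, apply the Lipschitz estimate of the solution map between the two right states $\mf{\bar u}$ and $\mf u^+$ and use~\eqref{e:lax2} to bound $|\mf{\bar u}-\mf u^+|\le \unpo\,|s_j|$; this produces $\sum_{i=\ell+1}^{k-1}|\xi_i-\xi'_i| + |s'_k-\xi_k| + \sum_{i=k+1}^N|s'_i|\le \unpo\,|\mf{\bar u}-\mf u^+|\le C_1|s_j|$, with $C_1$ depending only on $\mf A,\mf E,\mf B,\mf G$ through the Lipschitz constants of $\boldsymbol{\zeta}_k$, ${\boldsymbol \phi}$, $\mf t_i$ and $\boldsymbol{\beta}$.

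\textbf{Main obstacle.} The delicate point is that the solution map of the boundary Riemann problem is \emph{not} globally $C^1$: the curve $\boldsymbol{\zeta}_k$ (and hence ${\boldsymbol \phi}$) is only $C^{1,1}$, with a possible corner at $s=\underline s(\mf{\widetilde u})$ (see~\eqref{e:regcwfc}), and the structure of the solution genuinely changes across $\lambda_k=0$, across $s_k=\bar s(\mf{\widetilde u})$, and across $s_k=\underline s(\mf{\widetilde u})$ (cases i)--vi) of \S\ref{sss:casignl}). So the uniform invertibility of the Jacobian has to be argued through the Lipschitz version of the implicit function theorem (or Clarke's generalized Jacobian), checking that \emph{every} one-sided derivative is nonsingular with uniformly bounded inverse; this uses the derivative formula~\eqref{e:derzeta} and~\eqref{e:siattacca} together with strict hyperbolicity~\eqref{e:sh}. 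A related nuisance is that $\ell(\mf u_b)$ may depend on $\mf u_b$ in case B); but since $\mf u_b$ is held fixed throughout this interaction, the number of parameters is fixed and the argument is unaffected. Once uniform Lipschitz dependence of the solution map is in hand, the estimate~\eqref{e:nc} is immediate, so the whole difficulty is concentrated in step one.
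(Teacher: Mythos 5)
Your proposal is correct and follows essentially the same route as the paper: the paper also reduces the boundary condition $\boldsymbol{\beta}=\mf 0_N$ to a projection identity, invokes the Lipschitz (Clarke-type) inverse function theorem for the composite map in~\eqref{e:brs} (citing the analysis of~\cite{AnconaBianchini,BianchiniSpinoloARMA,BianchiniSpinolo} and~\cite{Clarke2}), identifies~\eqref{e:brs0} as the same system evaluated at the right state $\mf{\bar u}=\mf t_j(\mf u^+,s_j)$, and concludes via the Lipschitz continuity of ${\boldsymbol \phi}$ in its base point together with $|\mf t_j(\mf u^+,s_j)-\mf u^+|\leq \unpo |s_j|$. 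Your phrasing in terms of Lipschitz dependence of the solution map on the right state is just an equivalent packaging of the paper's "Lipschitz inverse plus Lipschitz dependence of ${\boldsymbol \phi}$ on $\mf u$" argument.
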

Note that the above Lemma holds in both cases of genuinely nonlinear and linearly degenerate $k$-th vector field.
\begin{proof}[Proof of Lemma~\ref{l:nc}]
First, we recall~\eqref{e:betagnl} and point out that $\boldsymbol{\beta} (\mf u, \mf u_b) = \mf 0_{N}$ if and only if $\mf u_{2}= \mf u_{2b}$ and $\boldsymbol{\pi}_{11} (\mf u) = \boldsymbol{\pi}_{11} (\mf u_b) $. In other words,   $\boldsymbol{\beta} (\mf u, \mf u_b) = \mf 0_{N}$ if and only if $\boldsymbol{\pi} (\mf u) = \boldsymbol{\pi} (\mf u_b) $, where $\boldsymbol{\pi}$ notes the projection onto a suitable $N - \ell$ dimensional subspace. Owing to the analysis in~\cite{AnconaBianchini,BianchiniSpinoloARMA,BianchiniSpinolo} we also have that the map at the left hand side of~\eqref{e:brs} satisfies the assumptions of~\cite[Theorem 1]{Clarke2}. Owing to the Lipschitz continuity of the inverse function, by combining~\eqref{e:brs0},~\eqref{e:lax2} and~\eqref{e:brs} we conclude 
\begin{equation*}
\begin{split}
     \sum_{i=\ell + 1}^{k-1}& |\xi_i - \xi_i'| + |s'_k - \xi_k| + \sum_{i=k+1}^N |s'_i|   \leq \unpo 
    |\boldsymbol{\pi} (\mf u_b ) - \boldsymbol{\pi}  \circ {\boldsymbol \phi} ( \mf u^+,   \xi_{\ell +1}, \dots, \xi_{k-1}, \xi_k)  |\\
    & \stackrel{\eqref{e:brs0},\eqref{e:lax2}}{=} \unpo
    | \boldsymbol{\pi} \circ  {\boldsymbol \phi} ( \cdot,  \xi_{\ell +1}, \dots, \xi_{k-1}, \xi_k) \circ \mf t_j (\mf u^+, s_j) -   \boldsymbol{\pi} \circ {\boldsymbol \phi} ( \mf u^+,   \xi_{\ell +1}, \dots, \xi_{k-1}, \xi_k)| \\
    & \leq  \unpo |  {\boldsymbol \phi} ( \cdot,   \xi_{\ell +1}, \dots, \xi_{k-1}, \xi_k) \circ \mf t_j (\mf u^+, s_j) -   {\boldsymbol \phi} ( \mf u^+,   \xi_{\ell +1}, \dots, \xi_{k-1}, \xi_k)|. \phantom{\int}
\end{split}
\end{equation*}
By the Lipschitz continuity of ${\boldsymbol \phi}$ with respect to $\mf u$ the above term is controlled by $\unpo |\mf t_j (\mf u^+, s_j)  - \mf u^+|$ and this in turn is controlled by $C_1 |s_j|$ for some suitable constant $C_1$.
\end{proof}
\subsection{Interaction of the boundary with a wave front of the $k$-th family} \label{ss:ik}
We now consider the case of a wave front of the $k$-th family hitting the boundary, see Figure~\ref{f} (right)
As in the previous paragraph we term $\mf{\bar u}$ and $\mf u^+$ the left and the right state of the hitting wave front, respectively, which implies 
\be
\label{e:lax}
    \mf{\bar u} = \mf t_k (\mf u^+, s_k),
\eq
for some value $s_k$. The estimates we need in the case the hitting wave front belongs to the $k$-th family are much more delicate and precise than~\eqref{e:nc}, and we obtain them by distinguishing between the genuinely nonlinear case~\eqref{e:gnl} and the linearly degenerate case~\eqref{e:lindeg}. 
\subsubsection{Genuinely nonlinear case}
We consider the genuinely nonlinear case~\eqref{e:gnl} and to fix the notation we also assume~\eqref{e:a11nz}.  The argument in case B) or C) in Hypothesis~\ref{h:eulerlag} is entirely 
analogous. Note that we have~\eqref{e:brs0} for some $(\xi_{\ell+1}, \dots, \xi_k)$ with $\xi_k$ satisfying~\eqref{e:protra} and~\eqref{e:sigmatraccia}, and by combining these inequalities with~\eqref{e:lax}  we obtain 
\be \label{e:ceblayer}
       \xi_k  \leq \underline s (\mf t_k (\mf u^+, s_k) ) \; \text{if} \; \lambda_k  (\mf t_k (\mf u^+, s_k) )  \leq 0, 
       \qquad 
       \xi_k =0  \; \text{if} \; \lambda_k  (\mf t_k (\mf u^+, s_k) )  > 0 \eq
and  $\varsigma_k (\mf u^+, s_k) <0$. We now consider the values $(\xi'_{\ell+1}, \dots, \xi'_{k-1}, s'_k, \dots, s'_N)$ satisfying~\eqref{e:brs} and state our main interaction estimate.
\begin{lemma}
\label{l:me} 
Assume~\eqref{e:gnl},~\eqref{e:brs0},~\eqref{e:brs},~\eqref{e:lax} with $\varsigma_k (\mf u^+, s_k) <0$, and~\eqref{e:ceblayer}. There is a constant $C_2>0$ only depending on the functions $\mf A$, $\mf E$, $\mf B$ and $\mf G$ and on the value $\mf u^\ast$ such that 
\be 
\label{e:me}
     \sum_{i=\ell+ 1}^{k-1} |\xi_i - \xi_i'| + |s'_k - (\xi_k + s_k)| + \sum_{i=k+1}^N |s'_i|  \leq C_2 |s_k| \Big(   [\varsigma_k (\mf u^+, s_k)]^-  + |\xi_k|   \Big),
\eq
where the symbol $[\cdot]^-$ denotes the negative part.
 \end{lemma}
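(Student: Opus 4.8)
The plan is to test the pre-interaction boundary configuration as a \emph{trial} solution of the boundary Riemann problem solved after the interaction, and to quantify the resulting defect. First I would set up the reduction. As in the proof of Lemma~\ref{l:nc}, the map
\[
(\xi'_{\ell+1},\dots,\xi'_{k-1},s'_k,\dots,s'_N)\longmapsto \boldsymbol{\pi}\circ{\boldsymbol \phi}(\cdot,\xi'_{\ell+1},\dots,\xi'_{k-1},s'_k)\circ\mf t_{k+1}(\cdot,s'_{k+1})\circ\dots\circ\mf t_N(\mf u^+,s'_N)
\]
is, by the analysis of~\cite{BianchiniSpinolo} recalled in \S\ref{sss:casignl}, a bi-Lipschitz parametrization, so that~\cite[Theorem~1]{Clarke2} applies. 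Hence, plugging in the trial values $\xi'_i=\xi_i$ for $\ell+1\le i\le k-1$, $s'_k=\xi_k+s_k$ and $s'_i=0$ for $i>k$, the distance from the genuine solution of~\eqref{e:brs} is controlled by $\unpo\,\big|\boldsymbol{\pi}\circ{\boldsymbol \phi}(\mf u^+,\xi_{\ell+1},\dots,\xi_{k-1},\xi_k+s_k)-\boldsymbol{\pi}(\mf u_b)\big|$. Since $\boldsymbol{\pi}(\mf u_b)=\boldsymbol{\pi}\circ{\boldsymbol \phi}(\mf t_k(\mf u^+,s_k),\xi_{\ell+1},\dots,\xi_{k-1},\xi_k)$ by~\eqref{e:brs0} and~\eqref{e:lax}, estimate~\eqref{e:me} follows once I prove the key comparison bound
\[
\big|{\boldsymbol \phi}(\mf u^+,\xi_{\ell+1},\dots,\xi_{k-1},\xi_k+s_k)-{\boldsymbol \phi}(\mf t_k(\mf u^+,s_k),\xi_{\ell+1},\dots,\xi_{k-1},\xi_k)\big|\le \unpo\,|s_k|\big([\varsigma_k(\mf u^+,s_k)]^-+|\xi_k|\big),
\]
which is precisely the estimate the paper refers to as~\eqref{e:me1}: through the decomposition~\eqref{e:bl10} it reduces to comparing the characteristic wave fan curve $\boldsymbol{\zeta}_k$ evaluated along $\mf t_k(\mf u^+,s_k)$ with the Lax branch of $\boldsymbol{\zeta}_k$ along $\mf u^+$.

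Next I would split ${\boldsymbol \phi}={\boldsymbol \psi}_s(\boldsymbol{\zeta}_k(\cdot,\cdot),\cdot)+{\boldsymbol \psi}_p$ as in~\eqref{e:bl10}. The core is the geometric comparison
\[
\big|\boldsymbol{\zeta}_k(\mf t_k(\mf u^+,s_k),\xi_k)-\boldsymbol{\zeta}_k(\mf u^+,\xi_k+s_k)\big|+\big|\boldsymbol{\gamma}_k(\mf t_k(\mf u^+,s_k),\xi_k)-\boldsymbol{\gamma}_k(\mf u^+,\xi_k+s_k)\big|\le \unpo\,|s_k|\big([\varsigma_k(\mf u^+,s_k)]^-+|\xi_k|\big),
\]
which I would establish by a case analysis on the sign of $\lambda_k(\mf u^+)$ and on whether the hitting front is a shock ($s_k\ge0$) or a rarefaction ($s_k<0$), using the explicit piecewise definitions~\eqref{e:uguale1},\eqref{e:regcwfc},\eqref{e:chicosa2} and the compatibility relation~\eqref{e:pera}. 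The analytic inputs are: (i) $\mf r_c(\mf u,0)=\mf r_k(\mf u)$ and $\theta_c(\mf u,0)=\lambda_k(\mf u)/a(\mf u)$ when $\lambda_k(\mf u)=0$ (Lemma~\ref{l:lambdac} and~\eqref{e:thetaczero},\eqref{e:siattacca}), so that near the switch point of $\boldsymbol{\zeta}_k$ the boundary-layer curve $\boldsymbol{\pi}_{\mf u}\circ\mf b_k$ agrees with the Lax curve $\mf t_k$ up to second order, while the center coordinate $z_c$ along any boundary layer starting from $\mf u^+$ is of size $\unpo|s_k|([\varsigma_k]^-+|s_k|)$, so that re-initializing $z_c$ to $0$ is a perturbation of that size; (ii) the flow property of the rarefaction curve $\mf i_k$ together with the cocycle identity $\mf b_k(\mf t_k(\mf{\widetilde u},\underline s(\mf{\widetilde u})),\cdot)=\mf b_k(\mf{\widetilde u},\underline s(\mf{\widetilde u})+\cdot)$ coming from~\eqref{e:pera} and~\eqref{e:robinia}; (iii) the genuine nonlinearity bound $[\varsigma_k(\mf u^+,s_k)]^-\ge d\,|s_k-s_\ast|$, where $s_\ast$ denotes the relevant switch parameter ($\bar s(\mf u^+)$ if $\lambda_k(\mf u^+)\ge0$, $\underline s(\mf u^+)$ if $\lambda_k(\mf u^+)<0$), combined with $|s_k-s_\ast|\le|s_k|$ and $|\xi_k|\le\unpo|s_k-s_\ast|$ (the latter from~\eqref{e:ceblayer} and the fact that $\varsigma_k=\lambda_k(\mf t_k(\mf u^+,s_k))$ for rarefactions and $\varsigma_k=\sigma_k(\mf u^+,s_k)$ for shocks). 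These combine so that every quadratic quantity in $|s_k-s_\ast|$ and $|\xi_k|$ produced by the Taylor expansions is $\le\unpo|s_k|([\varsigma_k]^-+|\xi_k|)$.

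For the $\boldsymbol{\psi}_p$ piece I would \emph{not} bound the two terms separately via~\eqref{e:p} — that would lose a power — but observe instead that the arguments $\boldsymbol{\check\omega}$ and the orbits $\boldsymbol{\omega}_0$ entering $\mf m_p$ in the two evaluations differ exactly by the quantities estimated in the $\boldsymbol{\zeta}_k$/$\boldsymbol{\gamma}_k$ comparison above; since $\mf m_p$ is Lipschitz in those arguments with weight $\sum_{i=\ell+1}^{k-1}|\xi_i|\le\unpo\,\delta^\ast$ (the content of~\cite[Lemma~11.2 and~(11.6)]{BianchiniSpinolo}), the difference of the two ${\boldsymbol \psi}_p$ terms is again $\le\unpo|s_k|([\varsigma_k]^-+|\xi_k|)$. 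Adding the ${\boldsymbol \psi}_s$ and ${\boldsymbol \psi}_p$ contributions and using Lipschitz continuity of ${\boldsymbol \psi}_s$ in its first argument then yields~\eqref{e:me1}, and the lemma follows from the reduction of the first paragraph (with $C_2$ absorbing $C_1$ and the various $\unpo$ constants). Along the way one also recovers the crude bound~\eqref{e:nc} as a special case, which is consistent since $[\varsigma_k]^-+|\xi_k|\le\unpo$.

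The hard part will be the case analysis proving the $\boldsymbol{\zeta}_k$–comparison with the \emph{precise} right-hand side $|s_k|([\varsigma_k]^-+|\xi_k|)$, rather than the weaker $|s_k|$, $|s_k|^2$ or $[\varsigma_k]^-$ one would get from a naive argument. The difficulty is twofold. First, $\boldsymbol{\zeta}_k(\mf u^+,\cdot)$ is in general only $C^{1,1}$ and, when $\lambda_k(\mf u^+)<0$, its derivative may jump at $s=\underline s(\mf u^+)$, so near that corner one has only linear-in-distance control; one must check that precisely in the subcases where the trial parameter $\xi_k+s_k$ lands near a corner the distance from the corner is itself $\unpo|s_k|([\varsigma_k]^-+|s_k|)$, using the Lipschitz dependence of $\underline s$ and $\bar s$ together with the sign constraints~\eqref{e:protra},\eqref{e:sigmatraccia}. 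Second, one must carefully track whether $\mf t_k(\mf u^+,s_k)$ is a shock or a rarefaction and the sign of $\lambda_k$ at the intermediate state $\mf{\hat u}$ and at the trace $\mf{\bar u}$, since these select which branch of each piecewise curve is active; this bookkeeping mirrors the case list i)--vi) of \S\ref{sss:casignl}. Once~\eqref{e:me1} is established the remainder of the argument is routine.
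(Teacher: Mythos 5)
Your plan follows essentially the same route as the paper: reduce, via the Lipschitz‑invertibility argument of Lemma~\ref{l:nc}, to comparing ${\boldsymbol \phi}$ evaluated at $(\mf t_k(\mf u^+,s_k),\xi_{\ell+1},\dots,\xi_k)$ and at $(\mf u^+,\xi_{\ell+1},\dots,\xi_k+s_k)$, split through~\eqref{e:bl10} using the Lipschitz dependence of $\boldsymbol{\psi}_s$ and $\boldsymbol{\psi}_p$ on the $\boldsymbol{\zeta}_k$/$\boldsymbol{\gamma}_k$ data, and prove the sharp curve comparison by the branch-by-branch case analysis (genuine nonlinearity, $C^{1,1}$/$C^2$ Taylor expansions at the switch parameters, Lipschitz continuity of $\underline s$, $\bar s$, and the sign constraints~\eqref{e:ceblayer}) that the paper carries out in Lemmas~\ref{l:speed} and~\ref{l:proseguo}. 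The only inessential discrepancy is labeling: the key comparison you isolate is~\eqref{e:anguria2}/\eqref{e:anguria} (Lemma~\ref{l:proseguo}), of which~\eqref{e:me1} (Lemma~\ref{l:speed}) is only the $\xi_k=0$ ingredient, and some of your auxiliary bounds (e.g.\ $|s_k-s_\ast|\le|s_k|$, $|\xi_k|\le\unpo|s_k-s_\ast|$) hold only in certain subcases, exactly the bookkeeping the paper's case analysis resolves.
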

 As mentioned before, from the technical viewpoint estimate~\eqref{e:me} is one of the key points of our analysis. Its proof is rather long and involved and it is postponed to \S\ref{ss:proofme}. 
\begin{remark}
Note that~\eqref{e:nc} is basically ``a first order estimate", because the right hand side is of the same order of the strength $s_j$.  Conversely,~\eqref{e:me} can be regarded as ``a second order estimate" because the right hand side is much smaller than the strength $s_k$.  The reason why in the case of the waves of the $j$-th family, $j<k$, a ``first order estimate" like~\eqref{e:nc} suffices is because these waves do not appear after the interaction and hence can be differently weighted, in the same spirit as in~\cite{Amadori}, see~\eqref{e:s}. 
\end{remark}
\begin{remark}
Note that the right-hand side of~\eqref{e:me} does not contain any term of the form $|s_k| | \sum_{i=\ell+1}^{k-1} |\xi_i|$. If it did, then our analysis in the next sections would fail. Very loosing speaking, the reason is because the potential $\Upsilon$ defined by~\eqref{e:upsilon} should then contain a term like  $|s_k| | \sum_{i=1}^{k-1} |\xi_i|$, which in general does not disappear when a $k$-wave hits the domain boundary $x=0$, and hence our proof of the monotonicity of $\Upsilon$ would break down. 
\end{remark}
\subsubsection{Linearly degenerate case}
We now focus on the linearly degenerate case~\eqref{e:lindeg}. To fix the notation, we also assume case B) in Hypothesis~\ref{h:eulerlag}, which is satisfied by the Navier-Stokes and viscous MHD equations in Eulerian coordinates when the fluid velocity vanishes. Note that case C) corresponds to the same equations written in lagrangian coordinates, but in this case no $k$-th wave front can hit the boundary since $\lambda_k (\mf u)\equiv 0$. 

We consider the values $\xi_{\ell(\mf u_b) +1}, \dots, \xi_k, s_k$ and $\xi'_{\ell(\mf u_b) +1}, \dots, \xi'_{k-1}, s'_k, \dots, s'_N$ satisfying \eqref{e:lax},~\eqref{e:brs0lindeg} and~\eqref{e:brslindeg}, respectively, and state our main interaction estimate.
\begin{lemma}\label{l:melindeg}
\label{l:me2} Assume~\eqref{e:lindeg},~\eqref{e:lax2}, \eqref{e:brs0lindeg} and~\eqref{e:brslindeg}. Then 
\begin{equation} \label{e:melindeg}
 \sum_{\ell(\mf u_b)+1}^{k-1} |\xi_i - \xi_i'| + |s'_k - (\xi_k + s_k)| + \sum_{i=k+1}^N |s'_i|  \leq C_{10} |s_k|\big(  |\xi_k| + [\lambda_k (\mf u^+)]^- \big)
\end{equation}
for a suitable constant $C_{10}>0$ only depending on $\mf A$, $\mf E$, $\mf B$ and $\mf G$ and on the value $\mf u^\ast$.  
\end{lemma}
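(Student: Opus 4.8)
The plan is to argue exactly as in the proof of Lemma~\ref{l:nc}, comparing the data of the boundary Riemann problem before and after the interaction by means of a suitable \emph{candidate configuration} together with the Lipschitz invertibility of the map on the left-hand side of~\eqref{e:brslindeg}, but this time exploiting the additional rigidity of the linearly degenerate case to produce the sharper, ``second order'' right-hand side of~\eqref{e:melindeg}. First I would reduce to the case $\lambda_k(\mf u^+)<0$: by the speed prescription~\eqref{incidono}--\eqref{e:speedrare} a wave front of the $k$-th family reaches the boundary only if its speed $\lambda_k(\mf u^+)$ is strictly negative, so the other cases are vacuous (and anyway harmless, since if $\lambda_k(\mf u^+)>0$ then $\xi_k=0$ by~\eqref{e:protra2} and nothing is reflected). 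Since the $k$-th field is linearly degenerate, $\lambda_k$ is constant along the contact curve, so $\lambda_k(\mf t_k(\mf u^+,s))=\lambda_k(\mf u^+)<0$ for every $s$; hence, by~\eqref{e:mcld}, both $\boldsymbol{\zeta}_k(\mf{\bar u},\cdot)$ and $\boldsymbol{\zeta}_k(\mf u^+,\cdot)$ reduce to the center manifold boundary layer curve $\boldsymbol{\pi}_{\mf u}\circ\mf b_k$, and, by the analysis in \S\ref{sss:casild}, neither the incoming nor the outgoing boundary Riemann problem contains a wave of the $k$-th family entering the domain. As candidate configuration I then take $\xi'_i:=\xi_i$ for $i=\ell(\mf u_b)+1,\dots,k-1$, $s'_k:=s_k+\xi_k$, and $s'_i:=0$ for $i>k$; for this choice the intermediate state in~\eqref{e:hatu} equals $\mf u^+$. (Here and below I abbreviate the tuple $\xi_{\ell(\mf u_b)+1},\dots,\xi_{k-1}$ by $\xi_\bullet$.)

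The key reduction is the following. In the regime $\lambda_k(\mf{\widetilde u})<0$ the map ${\boldsymbol \phi}(\mf{\widetilde u},\xi_\bullet,s)$ depends on the pair $(\mf{\widetilde u},s)$ \emph{only through} $\mf b_k(\mf{\widetilde u},s)$: indeed, by~\eqref{e:bl10} together with the construction of $\boldsymbol{\psi}_s$ and $\boldsymbol{\psi}_p$ in \S\ref{ss:case2} and \cite[\S4.2]{BianchiniSpinolo}, the equilibrium point $\boldsymbol{\check\omega}$ (as in~\eqref{e:chicosa}) and the orbit datum $\boldsymbol{\omega}_0(0)=\boldsymbol{\gamma}_k(\mf{\widetilde u},s)=\mf b_k(\mf{\widetilde u},s)$ (see~\eqref{e:servepure}) entering those formulas are functions of $\mf b_k(\mf{\widetilde u},s)$ alone; moreover $\boldsymbol{\psi}_p$ vanishes when $\sum_i|\xi_i|=0$, hence its gradient with respect to $\mf b_k(\mf{\widetilde u},s)$ is $\unpo\sum_i|\xi_i|\leq\unpo$. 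Writing $\mf{\bar u}=\mf t_k(\mf u^+,s_k)$ (that is,~\eqref{e:lax}), it follows that
\[
 \bigl\|{\boldsymbol \phi}(\mf u^+,\xi_\bullet,s_k+\xi_k)-{\boldsymbol \phi}(\mf{\bar u},\xi_\bullet,\xi_k)\bigr\|\leq\unpo\,\bigl\|\mf b_k(\mf u^+,s_k+\xi_k)-\mf b_k(\mf t_k(\mf u^+,s_k),\xi_k)\bigr\|,
\]
so that everything boils down to a ``quasi-group'' estimate for the curve $\mf b_k$. This is the linearly degenerate counterpart of estimate~\eqref{e:me1}, but it is considerably simpler, because $\mf t_k=\mf i_k$ is here the integral curve of $\mf r_k$ and all the curves in play are smooth.

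To prove the quasi-group estimate I would first observe that $\mf b_k(\mf u^+,s_k+\xi_k)-\mf b_k(\mf t_k(\mf u^+,s_k),\xi_k)$ vanishes for $s_k=0$ and, more importantly, vanishes \emph{identically} in the degenerate situation $\lambda_k\equiv0$: indeed, in that case the identities~\eqref{e:errecl} force the $z_{00}$-component of $\mf b_k(\mf{\widetilde u},\cdot)$ to stay zero, so that $\mf b_k(\mf{\widetilde u},s)=(\mf i_k(\mf{\widetilde u},s),0)$, and the integral curve $\mf i_k=\mf t_k$ enjoys the exact group property $\mf i_k(\mf i_k(\mf u^+,s_k),\xi_k)=\mf i_k(\mf u^+,s_k+\xi_k)$. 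For general $\lambda_k$, a Gronwall argument on system~\eqref{e:cplindeg}, using~\eqref{e:errecl} and the constancy of $\lambda_k$ along contact curves, gives $\|\mf b_k(\mf{\widetilde u},s)-(\mf i_k(\mf{\widetilde u},s),0)\|\leq\unpo\,|s|\,[\lambda_k(\mf{\widetilde u})]^-$; differentiating the above difference once in $s_k$ at $s_k=0$, together with the fact that the resulting expression again vanishes when $\lambda_k\equiv0$, then yields
\[
 \bigl\|\mf b_k(\mf u^+,s_k+\xi_k)-\mf b_k(\mf t_k(\mf u^+,s_k),\xi_k)\bigr\|\leq\unpo\,|s_k|\bigl(|\xi_k|+[\lambda_k(\mf u^+)]^-\bigr),
\]
the term $|\xi_k|$ coming precisely from the $\mf{\widetilde u}$-dependence of $\mf b_k(\cdot,\xi_k)-(\mf i_k(\cdot,\xi_k),0)$.

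Finally I would conclude as in Lemma~\ref{l:nc}. Plugging the last two displays into the incoming boundary condition~\eqref{e:brs0lindeg} (which reads $\boldsymbol{\beta}({\boldsymbol \phi}(\mf{\bar u},\xi_\bullet,\xi_k),\mf u_b)=\mf 0_N$) and using the Lipschitz continuity of $\boldsymbol{\beta}$ shows that the candidate configuration makes the left-hand side of~\eqref{e:brslindeg} at most $\unpo|s_k|(|\xi_k|+[\lambda_k(\mf u^+)]^-)$ in norm; since that map satisfies the hypotheses of~\cite[Theorem 1]{Clarke2} (as already used in the proof of Lemma~\ref{l:nc}), its inverse is Lipschitz, and comparing the true configuration $(\xi'_\bullet,s'_k,s'_{k+1},\dots,s'_N)$ (for which the left-hand side of~\eqref{e:brslindeg} vanishes) with the candidate gives~\eqref{e:melindeg} with $C_{10}$ depending only on $\mf A,\mf E,\mf B,\mf G$ and on $\mf u^\ast$. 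The main obstacle is the quasi-group estimate of the previous paragraph, i.e.\ upgrading the \emph{qualitative} fact that the defect of $\mf b_k$ from a true group law disappears when $\lambda_k\equiv0$ to the \emph{quantitative} bound with the correct factor $|\xi_k|+[\lambda_k(\mf u^+)]^-$; this requires inspecting the explicit construction of $\mf b_k$, $\mf r_c$ and $\theta_{00}$ in~\cite[\S4.2]{BianchiniSpinolo} and of how $\lambda_k(\mf u^+)$ enters them through~\eqref{e:errecl}. Everything else is a routine adaptation of the argument for Lemma~\ref{l:nc}; in particular no delicate comparison between Lax's shock curve and $\boldsymbol{\zeta}_k$ is needed here, which is exactly why the linearly degenerate case is much shorter than the genuinely nonlinear one treated in \S\ref{ss:proofme}.
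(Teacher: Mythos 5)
Your proposal is correct and takes essentially the same route as the paper: the Lipschitz-inverse reduction used for Lemma~\ref{l:nc} and in \S\ref{sss:pme}, followed by the quasi-group estimate for $\mf b_k$, which is precisely the paper's Lemma~\ref{l:novembre} (there proved by Taylor-expanding in $\xi$ and bounding $|(\mf t_k(\mf{\widetilde u},s),\mf 0)-\mf b_k(\mf{\widetilde u},s)|\leq \unpo\,|s|\,|\lambda_k(\mf{\widetilde u})|$ via a Gronwall comparison on~\eqref{e:cplindeg}, i.e.\ your two ingredients in a slightly different order). The only loose point is dismissing $\lambda_k(\mf u^+)\ge 0$ as vacuous: the lemma as stated does not assume the front actually hits the boundary and your parenthetical does not cover $\lambda_k(\mf u^+)=0$, but that case follows in one line from the group property of the integral curve $\mf i_k$, exactly as in Step~1 of the proof of Lemma~\ref{l:novembre}.
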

The proof of Lemma~\ref{l:melindeg} relies on the proof of Lemma~\ref{l:me} and is given in \S\ref{ss:prooflindeg}. 
\subsection{Proof of Lemma~\ref{l:me} (interaction estimates in the genuinely nonlinear case)} \label{ss:proofme}
We first establish three preliminary results. 
\begin{lemma}
\label{l:gammak}
Let $\boldsymbol{\gamma}_k$ be the same function as in~\eqref{e:chicosa2}, then $\boldsymbol{\gamma}_k$ is Lipschitz continuous with respect to both the variables $\bf{\widetilde u}$ and $\xi_k$. If $\lambda_k (\bf{\widetilde u}) \ge 0$ it is also of class $C^{1,1}$ with respect to $\xi$, if
  $\lambda_k (\bf{\widetilde u}) < 0$ it is of class $C^{1,1}$ on each side of the value $\xi_k = 
\underline s  (\bf{\widetilde u}).$
\end{lemma}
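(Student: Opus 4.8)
The plan is to establish the regularity of $\boldsymbol{\gamma}_k$ by tracing through its piecewise definition \eqref{e:chicosa2} and invoking the regularity of the building blocks already established earlier in \S\ref{s:bl} and \S\ref{s:rie}. Recall that $\boldsymbol{\gamma}_k(\mf{\widetilde u}, s_k)$ is assembled from three ingredients: the Lax wave fan curve $\mf t_k$, the boundary layer function $\mf b_k$, and the composition of these with the Lipschitz maps $\mf{\widetilde u} \mapsto \bar s(\mf{\widetilde u})$ and $\mf{\widetilde u} \mapsto \underline s(\mf{\widetilde u})$. The key facts I would quote are: (i) $\mf b_k$ is of class $C^2$ with respect to both variables, by the lemma following \eqref{e:siattacca}; (ii) $\mf t_k(\mf{\widetilde u}, \cdot)$ is $C^{1,1}$ (being $C^2$ on each of the rarefaction and shock branches, with the matching at $s=0$ giving $C^{1,1}$ globally — standard from \cite{Lax,Bressan}), and is Lipschitz jointly; (iii) the maps $\bar s$ and $\underline s$ are Lipschitz continuous by \eqref{e:lipunders} and the analogous estimate for $\bar s$ via the Implicit Function Theorem.

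First I would treat the case $\lambda_k(\mf{\widetilde u}) \ge 0$. Here only the third and fourth branches of \eqref{e:chicosa2} are active, with the switch occurring at $s_k = \bar s(\mf{\widetilde u}) \le 0$. On the branch $s_k \ge \bar s(\mf{\widetilde u})$ we have $\boldsymbol{\gamma}_k = (\mf t_k(\mf{\widetilde u}, s_k), \mf 0_{N-h})$, which is $C^{1,1}$ in $s_k$ by (ii); on the branch $s_k < \bar s(\mf{\widetilde u})$ we have $\boldsymbol{\gamma}_k = \mf b_k(\mf t_k(\mf{\widetilde u}, \bar s(\mf{\widetilde u})), s_k - \bar s(\mf{\widetilde u}))$, which is $C^{1,1}$ (indeed $C^2$) in $s_k$ by (i). To get $C^{1,1}$ across the junction $s_k = \bar s(\mf{\widetilde u})$ I would invoke the matching of first derivatives already recorded in \eqref{e:siattacca}: at $s_k = \bar s(\mf{\widetilde u})$ the point $\mf t_k(\mf{\widetilde u}, \bar s(\mf{\widetilde u}))$ has $\lambda_k = 0$ by \eqref{e:baresse}, so \eqref{e:siattacca} gives $\partial_{s_k} \mf b_k|_{s_k = \bar s} = (\mf r_k, \mf 0_{N-h})^t$, which equals $(\partial_{s_k}\mf t_k|_{s_k = \bar s}, \mf 0_{N-h})$ since $\partial_s \mf t_k$ at a point with $\lambda_k = 0$ is $\mf r_k$. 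This is exactly the content of \eqref{e:uguale2}. Since both one-sided derivatives are Lipschitz and they agree at the junction, $\boldsymbol{\gamma}_k(\mf{\widetilde u}, \cdot)$ is $C^{1,1}$; joint Lipschitz continuity follows from the Lipschitz dependence of all pieces on $\mf{\widetilde u}$ together with the Lipschitz continuity of $\bar s$.

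Next I would handle $\lambda_k(\mf{\widetilde u}) < 0$, where the switch is at $s_k = \underline s(\mf{\widetilde u}) > 0$. On $s_k < \underline s(\mf{\widetilde u})$ we have $\boldsymbol{\gamma}_k = \mf b_k(\mf{\widetilde u}, s_k)$, which is $C^2$ in $s_k$ by (i); on $s_k \ge \underline s(\mf{\widetilde u})$ we have $\boldsymbol{\gamma}_k = (\mf t_k(\mf{\widetilde u}, s_k), \mf 0_{N-h})$, which is $C^{1,1}$ in $s_k$ by (ii) (note $\underline s(\mf{\widetilde u}) > 0$ so this branch stays in the shock region where $\mf t_k = \mf h_k$ is smooth). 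Hence $\boldsymbol{\gamma}_k$ is $C^{1,1}$ on each side of $\xi_k = \underline s(\mf{\widetilde u})$; I do not claim $C^{1,1}$ across this point because, as already noted after \eqref{e:regcwfc}, the first derivative may jump there (the asymptotic relation \eqref{e:pera} only guarantees that the \emph{values} match at the junction via a $0$-speed shock, not the derivatives). Joint Lipschitz continuity again follows from Lipschitz dependence of the pieces on $\mf{\widetilde u}$ plus \eqref{e:lipunders}; near the junction one combines the one-sided Lipschitz bounds with the Lipschitz continuity of $s_k \mapsto \boldsymbol{\gamma}_k$ on each side and the Lipschitz continuity of $\underline s$. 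The main (and only mildly delicate) point is the $C^{1,1}$ matching at $\bar s(\mf{\widetilde u})$ in the first case, but that is essentially already done in \eqref{e:uguale2}; everything else is bookkeeping with the regularity statements for $\mf b_k$, $\mf t_k$, $\bar s$ and $\underline s$ recalled above.
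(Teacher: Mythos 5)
Your argument is correct and follows essentially the same route as the paper: the paper's proof simply observes that all properties follow directly from the definition~\eqref{e:chicosa2} (i.e.\ the $C^2$ regularity of $\mf b_k$, the $C^{1,1}$ regularity of $\mf t_k$, and the Lipschitz continuity of $\bar s$ and $\underline s$), except for the $C^{1,1}$ matching in the case $\lambda_k(\mf{\widetilde u})\ge 0$, which is obtained by combining~\eqref{e:uguale1} with~\eqref{e:uguale2} — exactly the derivative-matching step you isolate via~\eqref{e:siattacca}. Your treatment of the case $\lambda_k(\mf{\widetilde u})<0$, including the use of~\eqref{e:pera} for continuity of the values at $\xi_k=\underline s(\mf{\widetilde u})$ without claiming matching of derivatives, is likewise consistent with the paper.
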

\begin{proof}
All the properties directly follow from the definition~\eqref{e:chicosa2}, except for the  $C^{1,1}$ regularity in the case $\lambda_k (\bf{\widetilde u}) \ge 0$: this follows by combining~\eqref{e:uguale1} with~\eqref{e:uguale2}. 
\end{proof}

\begin{lemma}
\label{l:speed}
Fix $\mf{\widetilde u} \in \R^N$ and let $ \mf t_k (\mf{\widetilde u}, \cdot)$ and $ \boldsymbol{\gamma}_k (\mf{\widetilde u}, \cdot)$ the curves defined by~\eqref{e:laxc} and by~\eqref{e:chicosa2}, respectively.  Then 
\be \label{e:me1}
    |  (\mf t_k (\mf{\widetilde u}, s_k), \mf 0_{N-h}) -
         \boldsymbol{\gamma}_k (\mf{\widetilde u}, s_k) | \leq \unpo |s_k| [\varsigma_k (\mf{\widetilde u}, s_k)]^-
\eq
provided $\varsigma_k (\mf{\widetilde u}, s_k)$ is the same as in~\eqref{e:speedwft}.
\end{lemma}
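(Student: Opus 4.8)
\textbf{Proof proposal for Lemma~\ref{l:speed}.}

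The plan is to compare the two curves $s_k \mapsto (\mf t_k(\mf{\widetilde u}, s_k), \mf 0_{N-h})$ and $s_k \mapsto \boldsymbol{\gamma}_k(\mf{\widetilde u}, s_k)$ by splitting the analysis according to the sign of the parameter $s_k$ and according to the position of $\lambda_k(\mf{\widetilde u})$ relative to zero, following the case distinction in the definition~\eqref{e:chicosa2} of $\boldsymbol{\gamma}_k$. In each case we exploit that the two curves agree on a suitable interval of parameters (where $\boldsymbol{\gamma}_k$ is literally $(\mf t_k, \mf 0_{N-h})$) and share the same derivative at the matching point, so that the deviation is second order in the distance to that matching point; the key is then to show that this distance is itself controlled by $[\varsigma_k(\mf{\widetilde u}, s_k)]^-$. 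First I would treat the region where $\varsigma_k(\mf{\widetilde u}, s_k) \ge 0$: here, by~\eqref{e:chicosa2} together with the definitions~\eqref{e:uguale1} and~\eqref{e:regcwfc} of $\boldsymbol{\zeta}_k$ and the choice of parametrization ensuring $\underline s = \underline s(\mf{\widetilde u})$ and $\bar s = \bar s(\mf{\widetilde u})$, one checks that $\boldsymbol{\gamma}_k(\mf{\widetilde u}, s_k) = (\mf t_k(\mf{\widetilde u}, s_k), \mf 0_{N-h})$ exactly, so the left-hand side of~\eqref{e:me1} vanishes and there is nothing to prove.

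The substantive case is $\varsigma_k(\mf{\widetilde u}, s_k) < 0$. Here I would use genuine nonlinearity~\eqref{e:gnl}, which makes $s_k \mapsto \varsigma_k(\mf{\widetilde u}, s_k)$ strictly monotone (it equals $\sigma_k$ for $s_k \ge 0$ and $\lambda_k \circ \mf i_k$ for $s_k < 0$, and both are strictly increasing in $s_k$ by~\eqref{e:underlines},~\eqref{e:baresse} and the monotonicity statements in \S\ref{ss:lax}), with $\varsigma_k(\mf{\widetilde u}, \cdot)$ vanishing precisely at $\underline s(\mf{\widetilde u})$ when $\lambda_k(\mf{\widetilde u}) < 0$ and at $\bar s(\mf{\widetilde u})$ when $\lambda_k(\mf{\widetilde u}) \ge 0$. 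Consequently $\varsigma_k(\mf{\widetilde u}, s_k) < 0$ forces $s_k < \underline s(\mf{\widetilde u})$ (resp. $s_k < \bar s(\mf{\widetilde u})$), and by the Lipschitz-continuous, strictly monotone dependence we get a two-sided bound $|s_k - \underline s(\mf{\widetilde u})| \le \unpo |[\varsigma_k(\mf{\widetilde u}, s_k)]^-|$ (resp. with $\bar s(\mf{\widetilde u})$). On this parameter range $\boldsymbol{\gamma}_k(\mf{\widetilde u}, s_k)$ switches to the boundary-layer branch $\mf b_k$ (via~\eqref{e:bkappa}), while the matching relations~\eqref{e:pera} and~\eqref{e:siattacca} (resp.~\eqref{e:uguale2}) guarantee that $(\mf t_k(\mf{\widetilde u}, \cdot), \mf 0_{N-h})$ and $\boldsymbol{\gamma}_k(\mf{\widetilde u}, \cdot)$ agree, together with their first $s_k$-derivatives, at the matching point $s_k = \underline s(\mf{\widetilde u})$ (resp. $s_k = \bar s(\mf{\widetilde u})$). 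A first-order Taylor expansion of each branch about that matching point, using the $C^{1,1}$ regularity from Lemma~\ref{l:gammak} and the $C^2$ regularity of $\mf b_k$ and $\mf t_k$, then yields
\[
   |(\mf t_k(\mf{\widetilde u}, s_k), \mf 0_{N-h}) - \boldsymbol{\gamma}_k(\mf{\widetilde u}, s_k)| \le \unpo \, |s_k - \underline s(\mf{\widetilde u})|^2 \le \unpo \, |s_k - \underline s(\mf{\widetilde u})| \cdot \unpo
\]
since $|s_k - \underline s(\mf{\widetilde u})|$ is bounded by $2\nu$; combining with the bound $|s_k - \underline s(\mf{\widetilde u})| \le \unpo [\varsigma_k(\mf{\widetilde u}, s_k)]^-$ and with the trivial estimate $|s_k| \ge |s_k - \underline s(\mf{\widetilde u})| - \underline s(\mf{\widetilde u})$ — or, more cleanly, by noting that on the relevant range $|s_k - \underline s(\mf{\widetilde u})| \le \unpo |s_k|$ is \emph{not} automatic and one should instead keep one factor of $|s_k - \underline s(\mf{\widetilde u})|$ and one factor of $[\varsigma_k]^-$ — we arrive at the desired bound $\unpo |s_k| [\varsigma_k(\mf{\widetilde u}, s_k)]^-$ after absorbing $|s_k - \underline s(\mf{\widetilde u})|$ into $|s_k|$ plus a lower-order term.

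The main obstacle I anticipate is precisely this last bookkeeping: the naive Taylor estimate gives $|s_k - \underline s(\mf{\widetilde u})|^2$, and one needs to convert this into the asymmetric product $|s_k| \cdot [\varsigma_k(\mf{\widetilde u}, s_k)]^-$. The point is that $|s_k - \underline s(\mf{\widetilde u})| \lesssim [\varsigma_k(\mf{\widetilde u}, s_k)]^-$ handles one factor, and for the other factor one observes that either $|s_k|$ is comparable to $|s_k - \underline s(\mf{\widetilde u})|$ (when $\underline s(\mf{\widetilde u})$ is small relative to $|s_k|$, e.g. near $\lambda_k(\mf{\widetilde u}) = 0$ where $\underline s(\mf{\widetilde u}) \to 0$), or, when $\underline s(\mf{\widetilde u})$ is not small, one uses that the curves are $C^{1,1}$ uniformly and $[\varsigma_k]^-$ is itself bounded, so the quadratic term is dominated by $\unpo |s_k| [\varsigma_k]^-$ directly. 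Carefully distinguishing the sub-cases $\lambda_k(\mf{\widetilde u}) \ge 0$ versus $\lambda_k(\mf{\widetilde u}) < 0$ (which change the matching point and the sign conventions for rarefaction/shock parameters as in footnote~\ref{foot}) and verifying that the implied constants depend only on $\mf A$, $\mf E$, $\mf B$, $\mf G$ and $\mf u^\ast$ — via the Lipschitz bounds~\eqref{e:lipunders} and the regularity of $\mf b_k$ and $\theta_c$ — is where the real work lies, though none of it is conceptually deep.
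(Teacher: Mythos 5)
Your treatment of the easy regions is fine (when $\varsigma_k(\mf{\widetilde u},s_k)\ge 0$ the two curves coincide by~\eqref{e:chicosa2}, and when $\lambda_k(\mf{\widetilde u})\ge 0$, $s_k<\bar s(\mf{\widetilde u})$ the Taylor expansion at the matching point $\bar s(\mf{\widetilde u})$ works, exactly as in the paper's Case~1, because~\eqref{e:siattacca}--\eqref{e:uguale2} give first-order contact there). But the core of your argument breaks down precisely in the hard regime $\lambda_k(\mf{\widetilde u})<0$. First, the claimed derivative matching at the switch point $s_k=\underline s(\mf{\widetilde u})$ is false: \eqref{e:pera} gives equality of \emph{values} only, and the paper states explicitly (after~\eqref{e:regcwfc}, and again in Lemma~\ref{l:gammak}) that $\boldsymbol{\gamma}_k(\mf{\widetilde u},\cdot)$ is merely Lipschitz across $\underline s(\mf{\widetilde u})$, with possibly discontinuous derivative; the tangency relations~\eqref{e:siattacca},\eqref{e:uguale2} hold only at base points where $\lambda_k$ vanishes, which is not the case at $\mf t_k(\mf{\widetilde u},\underline s(\mf{\widetilde u}))$ (there $\lambda_k>0$ by the Lax condition). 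So in this regime you do not get a quadratic bound in $|s_k-\underline s(\mf{\widetilde u})|$, only a first-order one. Second, even granting the quadratic bound, your bookkeeping fix fails: for $0<s_k<\underline s(\mf{\widetilde u})$ with $\mf{\widetilde u}$ fixed and $s_k\to 0^+$, the quantity $(\underline s(\mf{\widetilde u})-s_k)^2$ tends to $\underline s(\mf{\widetilde u})^2>0$ while the target $|s_k|[\varsigma_k]^-$ tends to $0$, so the quadratic term is \emph{not} ``dominated by $\unpo|s_k|[\varsigma_k]^-$ directly''. The smallness as $s_k\to 0$ can only come from the second anchor point $s_k=0$, where both curves pass through $(\mf{\widetilde u},\mf 0_{N-h})$, and your scheme never uses it.

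This is exactly why the paper's proof is two-dimensional in the hard cases: it moves the base point along the rarefaction curve to $\mf a=\mf i_k(\mf{\widetilde u},\bar s(\mf{\widetilde u}))$, where $\lambda_k(\mf a)=0$; for $s_k\le 0$ (Case~2A) it Taylor-expands in the base-point parameter $\tau$, exploiting that the difference vanishes identically on $\{s=0\}$ (hence so does its $\tau$-derivative there); and for $0<s_k<\underline s(\mf{\widetilde u})$ (Case~2B) it exploits that, by~\eqref{e:pera}, the difference $(\mf t_k,\mf 0_{N-h})-\mf b_k$ vanishes on the entire zero-shock-speed locus, straightens that locus via the change of variables $\boldsymbol{\ell}(\tau,s)=(\sigma_k(\mf i_k(\mf a,\tau),s),s)$ (invertible by genuine nonlinearity,~\eqref{e:mela}), and then applies the standard product estimate for $C^2$ functions vanishing on two transversal lines (\`a la \cite[Lemma 2.5]{Bressan}), which yields $\unpo|\sigma_k(\mf{\widetilde u},s_k)|\,|s_k|=\unpo|s_k|[\varsigma_k]^-$ directly. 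If you want a one-variable repair for Case~2B, you could instead use that $s\mapsto(\mf t_k(\mf{\widetilde u},s),\mf 0_{N-h})-\mf b_k(\mf{\widetilde u},s)$ is $C^2$ on $[0,\underline s(\mf{\widetilde u})]$ and vanishes at \emph{both} endpoints, so the interpolation bound $\unpo\,s_k(\underline s(\mf{\widetilde u})-s_k)$ applies and $\underline s(\mf{\widetilde u})-s_k\le\unpo[\sigma_k(\mf{\widetilde u},s_k)]^-$ by~\eqref{e:gnl}; but that is a different argument from the one you wrote, and you would still need a separate treatment of $s_k\le 0$ in the spirit of the paper's Case~2A.
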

The proof of Lemma~\ref{l:speed} is a key step in the proof of~\eqref{e:me} and it is rather technical, but it is based on two basic ingredients: first, the $k$-th vector field is genuinely nonlinear and hence the value $\varsigma_k$ defined by~\eqref{e:speedwft} can be used to parametrize both  
$\mf t_k$ and $\boldsymbol{\zeta}_k$. Second, the proof of~\eqref{e:me1} would be easier if $\boldsymbol{\gamma}_k$ were of class $C^{1, 1}$. The function $ \boldsymbol{\gamma}_k $ is unfortunately only Lipschitz continuous, but it is ``piecewise of class $C^{1,1}$" and hence one can circumvent the lack of regularity by separately considering different cases (see 
{\sc Case 1, 2A} and {\sc 2B} below). Note however that the lack of regularity accounts for much of the technicality of the proof.   
\begin{proof}[Proof of Lemma~\ref{l:speed}] To simplify the exposition, in the proof of the lemma we use the shorthand notation $\varsigma_k$ for $\varsigma_k (\mf{\widetilde u}, s_k)$. We also recall the definitions~\eqref{e:underlines} and~\eqref{e:baresse} of $\underline s (\mf{\widetilde u})$ and $\bar s (\mf{\widetilde u})$, respectively.
We now separately consider the following cases. \\
{\sc Case 1: $\lambda_k  (\mf{\widetilde u}) \ge 0$}. We recall the definition~\eqref{e:baresse} of $\bar s (\mf{\widetilde u})$ and conclude that $\bar s (\mf{\widetilde u}) \leq 0$. 
We also recall~\eqref{e:chicosa2} and point out that, if $s_k \ge \bar s (\mf{\widetilde u})$, then the left-hand side of~\eqref{e:me1} vanishes and~\eqref{e:me1} is trivially satisfied.  By definition of $\bar s$ we have $\lambda_k (\mf t_k (\mf{\widetilde u}, \bar s (\mf{\widetilde u})  )=0$ and by combining~\eqref{e:siattacca} with the Taylor expansion formula we arrive at 
 \begin{equation} \label{e:taylor}
     | ( \mf t_k (\mf{\widetilde u}, s_k), \mf 0_{N-h}) -
        \boldsymbol{\gamma}_k (\mf{\widetilde u}, s_k) | \leq \unpo ( \bar s  (\mf{\widetilde u}) - s_k)^2
      \stackrel{s_k \leq \bar s( \mf{\widetilde u}) \leq 0}{\leq} \unpo ( \bar s  (\mf{\widetilde u}) - s_k) |s_k| 
         \quad \text{if $s_k \leq \bar s  (\mf{\widetilde u})$}.
\eq
We recall~\eqref{e:rarefdef},~\eqref{e:baresse} and \eqref{e:speedwft} and point out that for $s_k\leq 0$
$$
    [\varsigma_k]^- \stackrel{\eqref{e:baresse},\eqref{e:speedwft} }{=} \int_{s_k}^{\bar s (\mf{\widetilde u})} \frac{\partial \lambda_k (\mf i_k (\mf{\widetilde u}, \tau))}{\partial \tau} d \tau
    \stackrel{\eqref{e:gnl} }{\ge} d  ( \bar s  (\mf{\widetilde u}) - s_k).
$$
By plugging the inequality $( \bar s  (\mf{\widetilde u}) - s_k) \leq [\varsigma_k]^-/d$ into~\eqref{e:taylor} we arrive at~\eqref{e:me1}. \\
{\sc Case 2: $\lambda_k  (\mf{\widetilde u}) < 0$}, which implies $\bar s (\mf{\widetilde u})  >0$. Owing to~\eqref{e:chicosa2}, if $s_k \ge \underline s (\mf{\widetilde u})$ then the left-hand side of~\eqref{e:me1} vanishes and~\eqref{e:me1} is trivially satisfied. We now distinguish between the cases $s_k \leq 0$ and $0 < s_k <  \underline s (\mf{\widetilde u})$. \\
{\sc Case 2A: $s_k \leq 0$}. We have 
$$
    \varsigma_k      \stackrel{\eqref{e:baresse}}{=} - \int_{s_k}^{\bar s (\mf{\widetilde u})    } \frac{\partial \lambda_k (\mf i_k (\mf{\widetilde u}, \tau)   )}{\partial \tau} d \tau
      \stackrel{\eqref{e:gnl}}{\leq} - d   (\bar s (\mf{\widetilde u}) - s_k)
$$
which yields 
\be \label{e:pranzo} 
    \max \{|s_k|, \bar s (\mf{\widetilde u}) \} 
    \stackrel{\bar s (\mf{\widetilde u}) >0, s_k \leq 0}{\leq}
    |\bar s (\mf{\widetilde u}) - s_k | \leq \frac{1}{d} [\varsigma_k]^-. 
\eq
We set   
\be \label{e:a}
     \mf a: = \mf i_k(\mf{\widetilde u}, \bar s(\mf{\widetilde u}))  \; \text{which implies} \;  \lambda_k (\mf a) =0, \; \bar s(\mf a) =0, \;
     \mf i_k(\mf a, - \bar s(\mf{\widetilde u})) = \mf{\widetilde u}
\eq
and we refer to Figure~\ref{f2} for a representation. 
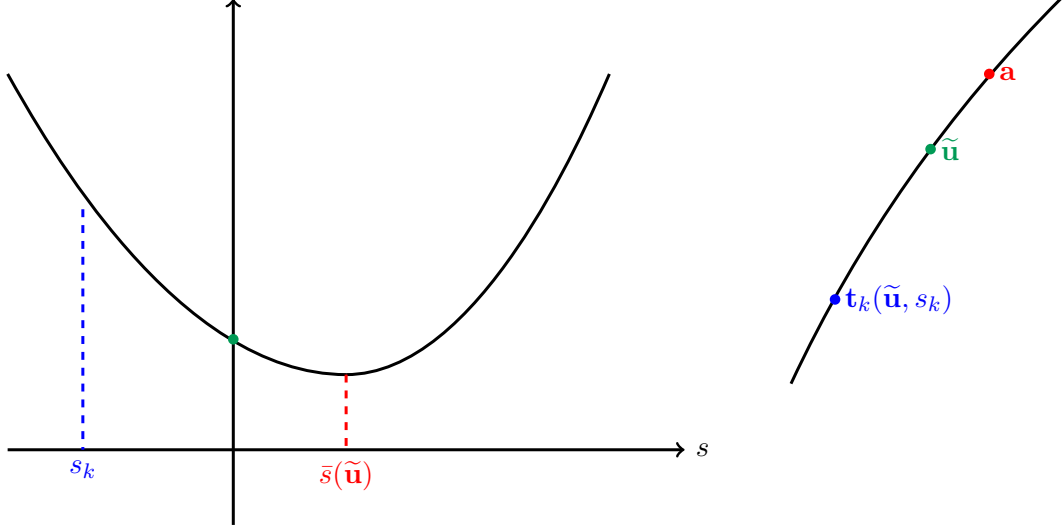
\begin{figure}
\begin{center}
\caption{The function $s \mapsto \lambda_k (\mf i_k (\mf{\widetilde u}, s))$ (left) and a representation of the curve $\mf i_k (\mf{\widetilde u}, s)$ in the phase space (right) in {\sc Case 2A}}
\label{f2}
\begin{tikzpicture}
\draw[line width=0.4mm,->] (0,-1) -- (0,6);
\draw[line width=0.4mm,->] (-3,0) -- (6,0) node[anchor=west] {$s$};
\draw[line width=0.4mm] (1.5,1) parabola (5,5);
\draw[line width=0.4mm] (1.5,1) parabola (-3,5);
\draw[line width=0.4mm, red, dashed] (1.5,1) --(1.5,0) node[anchor=north] {$\bar s(\mf{\widetilde u})$};
\draw[line width=0.4mm, blue, dashed] (-2,3.2) --(-2,0) node[anchor=north] {$s_k$};
\draw[line width=0.4mm]  (11,6) arc (135:155:18cm);
\fill[red] (10.05,5) circle (2pt) node[anchor=west] {$\mf{a}$};
\fill[ForestGreen] (9.27,4) circle (2pt) node[anchor=west] {$\mf{\widetilde u}$};
\fill[ForestGreen] (0,1.47) circle (2pt);
\fill[blue] (8,2) circle (2pt) node[anchor=west] {$\mf t_k(\mf{\widetilde u}, s_k)$};
\end{tikzpicture}
\end{center}
\end{figure}
We also set 
\be \label{e:gpiccolo}
   \boldsymbol{\varphi}(\tau, s) : =
    \big(   \mf t_k ( \mf i_k(\mf a, \tau), s), \mf 0_{N-h}    \big)   
    -
  \boldsymbol{\gamma}_k\big(
    \mf i_k(\mf a, \tau), s\big)
    \implies 
     \boldsymbol{\varphi} (- \bar s(\mf{\widetilde u}), s_k) =  
    (\mf t_k (\mf{\widetilde u}, s_k), \mf 0_{N-h}) -
         \boldsymbol{\gamma}_k (\mf{\widetilde u}, s_k)
\eq
We now recall that $\lambda_k (\mf a )=0$, $\bar s(\mf a)=0$ and by using~\eqref{e:taylor} we get 
\be \label{e:taylor2}
   |  \boldsymbol{\varphi}(0, s) |= 
    |\big(   \mf t_k ( \mf a, s), \mf 0_{N-h}    \big)   
    -
  \boldsymbol{\gamma}_k\big(
   \mf a, s\big)|
   \leq \unpo |s|^2,
\eq 
Also,
\be \label{e:vanish}
   \boldsymbol{\varphi}(\tau, 0) = \mf 0_{2N-h} \; \text{for every $\tau$} \implies 
   \frac{\partial  \boldsymbol{\varphi}}{\partial \tau} (\tau, 0) =  \mf 0_{2N-h} \; \text{for every $\tau$}. 
\eq
Since the function $ \boldsymbol{\varphi}$ is of class $C^{1,1}$ on the set $\{(\tau, s):  s \leq 0 \}$, we have  
\[ 
    \begin{split}  
   | &  \boldsymbol{\varphi} (- \bar s(\mf{\widetilde u}), s_k)|  =
   \left|   \boldsymbol{\varphi} (0, s_k) - \bar s(\mf{\widetilde u}) 
    \int_0^1  \frac{\partial  \boldsymbol{\varphi}}{\partial \tau} (- \theta  \bar s(\mf{\widetilde u}) , s_k) d \theta
   \right|  \\ & \stackrel{\eqref{e:taylor2},\eqref{e:vanish}}{\leq}
    \unpo |s_k|^2 + 
     \bar s(\mf{\widetilde u}) 
    \int_0^1   \left| \frac{\partial  \boldsymbol{\varphi}}{\partial \tau} (- \theta  \bar s(\mf{\widetilde u}) , s_k) -
    \frac{\partial  \boldsymbol{\varphi}}{\partial \tau} (- \theta  \bar s(\mf{\widetilde u}) , 0)
     \right| d \theta \leq  \unpo |s_k|^2  +  \unpo \bar s(\mf{\widetilde u})  |s_k| \\ &
    \stackrel{\eqref{e:pranzo}}{\leq} \frac{\unpo}{d}|s_k| [\varsigma_k]^-+
     \frac{\unpo}{d}|s_k| [\varsigma_k]^-
   \end{split}
\]
and owing to~\eqref{e:gpiccolo} this yields~\eqref{e:me1}. \\
{\sc Case 2B:} $0< s_k <  \underline s ( \mf{\widetilde u})$.  Owing to~\eqref{e:chicosa2} we have
\be \label{e:semplifica}
     (\mf t_k (\mf{\widetilde u}, s_k), \mf 0_{N-h}) -
         \boldsymbol{\gamma}_k (\mf{\widetilde u}, s_k)=
      (\mf t_k (\mf{\widetilde u}, s_k), \mf 0_{N-h}) -
        \mf b_k (\mf{\widetilde u}, s_k)
\eq
We recall that $\sigma_k (\mf{\widetilde u}, s)$ denotes the speed of the shock joining $ (\mf{\widetilde u}$ (on the right) with $\mf h_k (\mf{\widetilde u}, s)$ (on the left) and that the function $\mf h_k$ is defined by~\eqref{e:rh}. Next, we consider the function 
$$
    \boldsymbol{\ell} (\tau, s) : =
    \left(
    \begin{array}{cc}
                 \sigma_k \big( \mf i_k (\mf a,  \tau ), s  \big) \\
                 s \\
    \end{array}
    \right),
$$
where $\mf a$ is the same as in~\eqref{e:a}. 
Note that 
\be \label{e:li}
    \boldsymbol{\ell} (- \bar s (\mf{\widetilde u}), s) : =
    \left(
    \begin{array}{cc}
                 \sigma_k (\mf{\widetilde u}, s ) \\
                 s \\
    \end{array}
    \right), \qquad \qquad 
    \mf D  \boldsymbol{\ell} (0, 0) = 
      \left(
    \begin{array}{cc}
                \displaystyle{\frac{\partial \sigma_k}{\partial \tau}   } &   \displaystyle{\frac{\partial \sigma}{\partial s}   }\\
                0 &  1 \\
    \end{array}
    \right).
\eq
Since $ \sigma_k \big( \mf i_k (\mf a,  \tau ), 0  \big)=  \lambda_k (  \mf i_k (\mf a,  \tau ))$, then 
\be \label{e:mela}
    \left. \frac{\partial \sigma_k}{\partial \tau} (\mf i_k (\mf a,  \tau), 0) \right|_{\tau =0}= \nabla \lambda_k ( \mf a) \cdot \mf r_k ( \mf a) \stackrel{\eqref{e:gnl}}{\ge} d >0 
\eq
and hence the function $\boldsymbol{\ell}$ is locally invertible in a neighborhood of the origin $ (0, 0)$. Next, we set 
\be \label{e:621}
    \widetilde{\boldsymbol{\varphi}}(\tau, s) : = \big(\mf t_k ( \mf i_k (\mf a, \tau ), s), \mf 0_{N-h}\big) -
        \mf b_k \big( \mf i_k (\mf a, \tau ), s\big)
\eq
and point out that
\be \label{e:li2}
     \widetilde{\boldsymbol{\varphi}}(- \bar s (\mf{\widetilde u}), s)  \stackrel{\eqref{e:a}}{=} \big(\mf t_k (\mf{\widetilde u}, s), \mf 0_{N-h} \big) - \mf b_k (\mf{\widetilde u}, s),
\eq
which is the right-hand side of~\eqref{e:semplifica}. Also, we set
\be \label{e:q}
       \underline{\boldsymbol{\varphi}}(\sigma, s) : =   \widetilde{\boldsymbol{\varphi}}\circ \boldsymbol{\ell}^{-1} ( \sigma, s)
\eq 
and point out that
\be \label{e:qli}
       \boldsymbol{\ell}^{-1} ( \sigma_k (\mf{\widetilde u}, s ), s) \stackrel{\eqref{e:li}}{=} 
       (- \bar s (\mf{\widetilde u}), s)
        \stackrel{\eqref{e:li2}}{\implies}  \underline{\boldsymbol{\varphi}}( \sigma_k (\mf{\widetilde u}, s), s) =  \big(\mf t_k (\mf{\widetilde u}, s), \mf 0_{N-h} \big) - \mf b_k (\mf{\widetilde u}, s). 
\eq 
Assume for the time being that we have established the equalities 
\be \label{e:derzero}
    \underline{\boldsymbol{\varphi}} (0, s) =\mf 0_{2N-h} \; \text{for every $s \ge 0$}, \quad  \mf{\underline g}(\sigma, 0) = \mf 0_{2N-h}  \; \text{for every $\sigma$}.
\eq
The function $ \underline{\boldsymbol{\varphi}}$ is of class $C^2$ since both $ \widetilde{\boldsymbol{\varphi}}$ and $ \boldsymbol{\ell}$ are of class $C^2$. We can then argue as in the proof of~\cite[Lemma 2.5]{Bressan} and conclude that 
$$
   | \underline{\boldsymbol{\varphi}}(\sigma, s) | \leq \unpo |\sigma||s| , \; \text{for every $s \ge 0$}. 
$$
Owing to~\eqref{e:qli} and to the inequality $s_k \ge0$, this implies that the modulus of the right-hand side of~\eqref{e:semplifica} is bounded by $\unpo |\sigma_k (\mf{\widetilde u}, s_k)||s_k|$. By recalling that $\sigma_k (\mf{\widetilde u}, s_k)<0$ because $s_k < \underline s(\mf{\widetilde u})$ and by using~\eqref{e:speedwft}  this concludes the proof of~\eqref{e:me1}. We are thus left to establish~\eqref{e:derzero}. To establish the first equality in~\eqref{e:derzero}, we recall that $\lambda_k( \mf a)=0$, which in turn yields 
\be \label{e:uva}
   \sigma_k (\mf i_k (\mf a, 0), s)= \sigma_k ( \mf a, s)  \ge 0 \; \text{for every $s \ge 0$}.
\eq
Owing to~\eqref{e:mela}, the partial derivative $\partial \sigma_k (\mf i_k (\mf a, \tau), s) / \partial \tau$ is strictly positive in a neighborhood of the origin.  Owing to~\eqref{e:uva}, this  implies that if $\sigma_k (\mf i_k (\mf a, \tau), s) =0$ for some $s \ge 0$, then $\tau \leq 0$, which in turn implies $\lambda_k (\mf i_k (\mf a, \tau)) \leq 0$ and $s = \underline s (\mf i_k (\mf a, \tau))$. We now recall~\eqref{e:621} and~\eqref{e:q} and infer that 
\begin{equation*} \begin{split}
    \underline{\boldsymbol{\varphi}}(0, s)  =
    \widetilde{\boldsymbol{\varphi}}(\tau, \underline s (\mf i_k (\mf a, \tau))) & =  \big( \mf t_k (\mf i_k ( \mf a, \tau), \underline s (\mf i_k (\mf a, \tau)), \mf 0_{N-h}\big) - \mf b_k (\mf i_k ( \mf a, \tau), \underline s (\mf i_k (\mf a, \tau)) \stackrel{\eqref{e:pera}}{=} \mf 0_{2N-h} \\
     & \quad    \text{for every $s \ge 0$},
     \end{split}
\end{equation*}
which establishes the first equality in~\eqref{e:derzero}. To establish the second one we point out that 
\begin{equation*}
\begin{split}
     \underline{  \boldsymbol{\varphi}}(\sigma, 0)& = \widetilde{\boldsymbol{\varphi}}(\tau, 0)   
    =  \big( \mf t_k (\mf i_k ( \mf a, \tau), 0), \mf 0_{N-h} \big) - \mf b_k (\mf i_k ( \mf a, \tau), 0)=
    \big( \mf i_k ( \mf a, \tau), \mf 0_{N-h} \big) - \big( \mf i_k ( \mf a, \tau), \mf 0_{N-h} \big) 
    \\ & = \mf 0_{2N-h }\quad  \text{for $\tau$ such that
    $\lambda_k (\mf i_k ( \mf a, \tau))= \sigma$.}
\end{split}
\end{equation*}
This concludes the proof of~\eqref{e:derzero} and hence of~\eqref{e:me1}. 
\end{proof}
\begin{lemma} \label{l:proseguo}
Under the same assumptions as in the statement of Lemma~\ref{l:me} we have  
\be \label{e:anguria2}
   | \boldsymbol{\gamma}_k   (\mf t_k (\mf u^+, s_k), \xi_k) -    \boldsymbol{\gamma}_k (\mf u^+, s_k + \xi_k)  |
   \leq \unpo |s_k| ( [\varsigma_k(\mf u^+, s_k)]^- + |\xi_k| )
\eq 
provided the curve $\boldsymbol{\gamma}_k$ is the same as in~\eqref{e:chicosa2}. 
\end{lemma}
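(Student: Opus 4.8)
\emph{Proof proposal for Lemma~\ref{l:proseguo}.} The plan is to prove~\eqref{e:anguria2} by a case analysis on the signs of $\lambda_k(\mf u^+)$ and $\lambda_k(\mf{\bar u})$, where $\mf{\bar u}:=\mf t_k(\mf u^+,s_k)$, reducing each case to one of three elementary comparisons: (a) a comparison of two arcs of Lax wave-fan curves, $\mf t_k(\mf t_k(\mf u^+,s_k),\xi_k)$ versus $\mf t_k(\mf u^+,s_k+\xi_k)$; (b) a comparison of two arcs of the boundary-layer curve $\mf b_k$ with nearby base points; (c) the passage between $(\mf t_k,\mf 0_{N-h})$ and $\boldsymbol{\gamma}_k$, which is governed by~\eqref{e:me1}. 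First I would dispose of the case $\lambda_k(\mf{\bar u})>0$: by~\eqref{e:ceblayer} this forces $\xi_k=0$, so $\boldsymbol{\gamma}_k(\mf{\bar u},\xi_k)=(\mf{\bar u},\mf 0_{N-h})=(\mf t_k(\mf u^+,s_k),\mf 0_{N-h})$ and~\eqref{e:anguria2} is precisely~\eqref{e:me1} applied at $(\mf u^+,s_k)$.

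In the remaining case $\lambda_k(\mf{\bar u})\le 0$ I would first use~\eqref{e:chicosa2}, \eqref{e:pera} and~\eqref{e:ceblayer} to identify $\boldsymbol{\gamma}_k(\mf{\bar u},\xi_k)=\mf b_k(\mf{\bar u},\xi_k)$, and then determine which branch of $\boldsymbol{\gamma}_k(\mf u^+,\cdot)$ the value $s_k+\xi_k$ falls into, distinguishing by the sign of $\lambda_k(\mf u^+)$. When $\lambda_k(\mf u^+)<0$, genuine nonlinearity (monotonicity of $s\mapsto\sigma_k(\mf u^+,s)$ together with $\xi_k\le\underline s(\mf{\bar u})$) shows that $s_k+\xi_k<\underline s(\mf u^+)$ unless $s_k<0$ and $\xi_k$ lies within $\unpo|s_k|$ of $\underline s(\mf{\bar u})$. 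In the first situation $\boldsymbol{\gamma}_k(\mf u^+,s_k+\xi_k)=\mf b_k(\mf u^+,s_k+\xi_k)$, and one compares $\mf b_k(\mf{\bar u},\xi_k)$ with $\mf b_k(\mf u^+,s_k+\xi_k)$ by noting that, after the change of variable~\eqref{e:caux}, $\mf b_k$ is the trajectory of the \emph{autonomous} system~\eqref{e:rescala}, so a Gronwall estimate reduces the difference to the distance between $(\mf{\bar u},\mf 0_{N-h})$ and the point of the $\mf u^+$-orbit at time $s_k$; by Lemma~\ref{l:lambdac} (the equalities $\mf r_c(\cdot,0)=\mf r_k$ and $\theta_c(\cdot,0)=a^{-1}\lambda_k$ on $\{\lambda_k=0\}$) and the monotonicity $0>\lambda_k(\mf u^+)>\lambda_k(\mf{\bar u})$ this distance is $\unpo|s_k|\,|\lambda_k(\mf u^+)|\le\unpo|s_k|[\varsigma_k(\mf u^+,s_k)]^-$. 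In the second situation $[\varsigma_k(\mf{\bar u},\xi_k)]^-\le\unpo|s_k|$, so converting $\mf b_k(\mf{\bar u},\xi_k)$ to $(\mf t_k(\mf{\bar u},\xi_k),\mf 0_{N-h})$ via~\eqref{e:me1} costs only $\unpo|s_k|\,|\xi_k|$, the $\mf u^+$-side is exactly $(\mf t_k(\mf u^+,s_k+\xi_k),\mf 0_{N-h})$ with no error, and the two Lax arcs are compared as in (a). When instead $\lambda_k(\mf u^+)\ge0$ (hence necessarily $s_k<0$), the group property of the integral curve $\mf i_k$ gives $\mf t_k(\mf u^+,\bar s(\mf u^+))=\mf i_k(\mf{\bar u},\bar s(\mf{\bar u}))$ with $0\le\bar s(\mf{\bar u})\le|s_k|$, so that $\boldsymbol{\gamma}_k(\mf u^+,s_k+\xi_k)=\mf b_k(\mf i_k(\mf{\bar u},\bar s(\mf{\bar u})),\xi_k-\bar s(\mf{\bar u}))$ (when $s_k+\xi_k<\bar s(\mf u^+)$) or $(\mf t_k(\mf u^+,s_k+\xi_k),\mf 0_{N-h})$ (otherwise, in which range $\xi_k\le\underline s(\mf{\bar u})\le\unpo|s_k|$); in the former case Gronwall again gives the bound, the error being $\unpo\,\bar s(\mf{\bar u})\,|\lambda_k(\mf{\bar u})|\le\unpo|s_k|[\varsigma_k(\mf u^+,s_k)]^-$, and in the latter one argues as in the previous ``$\mf t_k$-branch'' case. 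The subcase $\lambda_k(\mf{\bar u})=0$ is degenerate and both sides of~\eqref{e:anguria2} are literally equal.

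For the elementary comparison (a) I would observe that the map $s\mapsto\mf t_k(\mf t_k(\mf u^+,s),\xi_k)-\mf t_k(\mf u^+,s+\xi_k)$ vanishes identically when $\xi_k=0$, so by the $C^{1,1}$ regularity of $\mf t_k$ its derivative is $\unpo|\xi_k|$, hence, integrating from $s=0$, the map is $\unpo|s_k|\,|\xi_k|$ at $s=s_k$; this is absorbed by the $|\xi_k|$ term in the right-hand side of~\eqref{e:anguria2}. Collecting the estimates from all cases yields~\eqref{e:anguria2}.

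The main obstacle I anticipate is the bookkeeping of the error terms: none of the three quantities $|s_k|$, $|\xi_k|$, $[\varsigma_k(\mf u^+,s_k)]^-$ dominates the others uniformly — for a small $k$-shock hitting the boundary $[\varsigma_k(\mf u^+,s_k)]^-$ can be much smaller than $|s_k|$, while for a $k$-rarefaction one has instead $[\varsigma_k(\mf u^+,s_k)]^-=|\lambda_k(\mf{\bar u})|\ge(d/2)|s_k|$ — so one must invoke, in each subcase separately, the particular genuine-nonlinearity relation valid there ($\bar s(\mf{\bar u})\le|s_k|$ and $|\lambda_k(\mf{\bar u})|\le[\varsigma_k(\mf u^+,s_k)]^-$ when $\lambda_k(\mf u^+)\ge0$; the pinning of $\xi_k$ to within $\unpo|s_k|$ of $\underline s(\mf{\bar u})$ when $\boldsymbol{\gamma}_k(\mf u^+,s_k+\xi_k)$ lies on the $\mf t_k$-branch; the monotonicity chain $0>\lambda_k(\mf u^+)>\lambda_k(\mf{\bar u})$ when both are negative) to make the specific error term controllable by $\unpo|s_k|([\varsigma_k(\mf u^+,s_k)]^-+|\xi_k|)$; in particular one must \emph{avoid} applying~\eqref{e:me1} at $(\mf{\bar u},\xi_k)$ or at $(\mf u^+,s_k+\xi_k)$ whenever the corresponding $[\varsigma_k]^-$ is not already $\unpo|s_k|$, and instead carry out the direct $\mf b_k$-to-$\mf b_k$ comparison there.
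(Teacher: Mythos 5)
Your overall architecture is essentially the paper's: split according to the signs of $\lambda_k(\mf u^+)$ and $\lambda_k(\mf t_k(\mf u^+,s_k))$, identify on which branch of~\eqref{e:chicosa2} each side of~\eqref{e:anguria2} sits, dispose of the case $\lambda_k(\mf t_k(\mf u^+,s_k))>0$ via $\xi_k=0$ and~\eqref{e:me1}, and reduce the remaining cases to~\eqref{e:me1} plus quadratic curve comparisons of Bressan--Lemma-2.5 type; your Gronwall/flow-continuity reduction for the $\mf b_k$-versus-$\mf b_k$ comparison is a legitimate (and arguably cleaner) substitute for the paper's Taylor-in-$\xi$ arguments with the auxiliary maps $\mf q$ and $\mf{\widetilde q}$.

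There is, however, a step that fails as written, in the case $\lambda_k(\mf u^+)<0$, $\lambda_k(\mf t_k(\mf u^+,s_k))\le 0$ with both sides on the $\mf b_k$-branch. You bound the distance to which Gronwall reduces the problem, namely $|(\mf t_k(\mf u^+,s_k),\mf 0_{N-h})-\mf b_k(\mf u^+,s_k)|$, by $\unpo|s_k|\,|\lambda_k(\mf u^+)|$ and then invoke the monotonicity $0>\lambda_k(\mf u^+)>\lambda_k(\mf t_k(\mf u^+,s_k))$ to conclude $|\lambda_k(\mf u^+)|\le[\varsigma_k(\mf u^+,s_k)]^-$. Both assertions are valid only when the hitting wave is a rarefaction: for a Lax-admissible shock ($s_k>0$) the chain is reversed, $\lambda_k(\mf t_k(\mf u^+,s_k))\ge\sigma_k(\mf u^+,s_k)\ge\lambda_k(\mf u^+)$, so in fact $[\varsigma_k]^-=[\sigma_k]^-\le|\lambda_k(\mf u^+)|$; and for a large rarefaction the intermediate bound $\unpo|s_k||\lambda_k(\mf u^+)|$ is itself false, since the deviation between the rarefaction curve and the center-manifold curve accumulates at rate $|\lambda_k|$ along the curve and is only $\unpo|s_k||\lambda_k(\mf t_k(\mf u^+,s_k))|$, which may be much larger than $\unpo|s_k||\lambda_k(\mf u^+)|$. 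The repair is immediate and consistent with the rest of your plan: on this branch $\mf b_k(\mf u^+,s_k)=\boldsymbol{\gamma}_k(\mf u^+,s_k)$, so the reduced distance is exactly the left-hand side of~\eqref{e:me1} at $(\mf u^+,s_k)$ and is bounded by $\unpo|s_k|[\varsigma_k(\mf u^+,s_k)]^-$, with no detour through $|\lambda_k(\mf u^+)|$ (this is also how the paper's own proof proceeds at the corresponding point); alternatively, in the shock subcase one can recover $|\lambda_k(\mf u^+)|\le\unpo[\sigma_k]^-$ from $\sigma_k\le\lambda_k(\mf t_k(\mf u^+,s_k))\le0$ together with genuine nonlinearity, but that is a different argument from the one you state. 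A smaller slip: your claim that the ``$\mf t_k$-branch'' situation $s_k+\xi_k>\underline s(\mf u^+)$ can only occur for $s_k<0$ is unjustified, since $\underline s(\mf t_k(\mf u^+,s_k))$ and $\underline s(\mf u^+)-s_k$ differ by $\unpo|s_k|$ terms of either sign; this is harmless, because your pinning argument (that $\xi_k$ lies within $\unpo|s_k|$ of $\underline s(\mf t_k(\mf u^+,s_k))$, hence $[\sigma_k(\mf t_k(\mf u^+,s_k),\xi_k)]^-\le\unpo|s_k|$) works for both signs of $s_k$ once you simply split on whether $s_k+\xi_k$ exceeds $\underline s(\mf u^+)$, exactly as the paper does.
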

Note that by combining~\eqref{e:anguria2} with~\eqref{e:stessacosa} we get in particular
\be \label{e:anguria}
   | \boldsymbol{\zeta}_k   (\mf t_k (\mf u^+, s_k), \xi_k) -    \boldsymbol{\zeta}_k (\mf u^+, s_k + \xi_k)  |
   \leq \unpo |s_k| ( [\varsigma_k(\mf u^+, s_k)]^- + |\xi_k| )
\eq 
\begin{proof}[Proof of Lemma~\ref{l:proseguo}]
To simplify the exposition, in the proof of the lemma we use the shorthand notation $\varsigma_k$ for $\varsigma_k (\mf u^+, s_k)$. 
We separately consider the following cases.\\
{\sc Case 1:} $\lambda_k (\mf{u}^+) \ge 0$. By combining the definition~\eqref{e:speedwft} of $\varsigma_k$ with the inequality $\varsigma_k<0$ we conclude that $\lambda_k (\mf t_k (\mf u^+, s_k))<0$ and hence that $s_k < \bar s (\mf u^+ ) \leq 0$.   
 Owing to~\eqref{e:ceblayer} this implies $\xi_k \leq \underline s (\mf t_k (\mf u^+, s_k)).$   We set 
\begin{equation} \label{e:pi}
    \mf q(s, \xi) : = \boldsymbol{\gamma}_k  ( \mf t_k(\mf u^+, s), \xi) -    \boldsymbol{\gamma}_k (\mf u^+, s + \xi)
\end{equation}
and we consider  the function $\mf q$ defined on the set 
\be \label{e:EEEE}
    E : = \{ (s, \xi) : \; s \leq \bar s (\mf u^+ ), \; \xi \leq \underline s (\mf t_k (\mf u^+, s)) \},
\eq
see Figure~\ref{f3}.
\begin{figure}
\begin{center}
\caption{The set $E$ in~\eqref{e:EEEE}} 
\label{f3}
\begin{tikzpicture}
\draw[line width=0.4mm,->] (-10,0) -- (1,0) node[anchor=west] {$s$};
\draw[line width=0.4mm,red,dashed] (-2,-2) -- (-2,4) node[anchor=south] {$\bar s(\mf{u}^+)$};
\draw[line width=0.4mm,->] (0,-2) -- (0,4) node[anchor=south] {$\xi$};
\draw[line width=0.4mm,blue] (-2,0) .. controls (-4,4) and (-6,1) .. (-10,4);
\draw[line width=0.1mm] (-10,3) -- (-9.4,3.6); 
\draw[line width=0.1mm] (-10,2) -- (-8.8,3.2); 
\draw[line width=0.1mm] (-10,1) -- (-8.1,2.9); 
\draw[line width=0.1mm] (-10,0) -- (-7.3,2.7); 
\draw[line width=0.1mm] (-10,-1) -- (-6.5,2.5); 
\draw[line width=0.1mm] (-10,-2) -- (-5.6,2.4); 
\draw[line width=0.1mm] (-9,-2) -- (-4.7,2.3); 
\draw[line width=0.1mm] (-8,-2) -- (-3.9,2.1); 
\draw[line width=0.1mm] (-7,-2) -- (-3.3,1.7); 
\draw[line width=0.1mm] (-6,-2) -- (-2.8,1.2); 
\draw[line width=0.1mm] (-5,-2) -- (-2.4,0.6); 
\draw[line width=0.1mm] (-4,-2) -- (-2,0); 
\draw[line width=0.1mm] (-3,-2) -- (-2, -1); 
\draw (-7.5, 1) node {$E$};
\draw[blue]  (-6.8,3.1) node {$\underline s(\mf t_k(\mf{u}^+, s))$};
\end{tikzpicture}
\end{center}
\end{figure}
Note that  $ \boldsymbol{\gamma}_k (\mf u^+, \cdot)$ is of class  $C^{1,1}$ since $\lambda_k (\mf u^+) \ge 0$ by assumption, and hence $\mf q$ is of class $C^{1,1}$ with respect to the variable $\xi$ on $E$ owing to Lemma~\ref{l:gammak}.  
We now evaluate $ \mf q(\bar s (\mf u^+), \xi) $. 
By definition~\eqref{e:baresse} of $\bar s$, $\lambda_k (\mf t_k(\mf u^+, \bar s (\mf u^+)) )=0$ and this yields 
\be \label{e:autunno}
    \underline s (\mf t_k (\mf u^+,  \bar s (\mf u^+)))=0=\bar s (\mf t_k (\mf u^+,  \bar s (\mf u^+)))
\eq   
If  $\xi \leq \underline s (\mf t_k (\mf u^+,  \bar s (\mf u^+)))=0$ then by the third definition at the right hand-side of~\eqref{e:chicosa2} and because of~\eqref{e:autunno} we have $\boldsymbol{\gamma}_k  ( \mf t_k(\mf u^+, \bar s (\mf u^+)), \xi)= \mf b_k  ( \mf t_k(\mf u^+, \bar s (\mf u^+)), \xi)$. Also, if $\xi \leq 0$ then $\bar s (\mf u^+) + \xi \leq \bar s (\mf u^+) $ and hence~\eqref{e:chicosa2} yields $\boldsymbol{\gamma}_k (\mf u^+, \bar s (\mf u^+) + \xi) = \mf b_k  ( \mf t_k(\mf u^+, \bar s (\mf u^+)), \xi)$. 
We conclude that 
\be \label{e:liguria}
    \mf q(\bar s (\mf u^+), \xi) = \mf 0_{2N-h} \quad \text{for every 
   $\xi \leq  \underline s (\mf t_k (\mf u^+, \bar s (\mf u^+))=0$}, 
\eq
and this yields 
\be \label{e:kiwi}
    \frac{\partial  \mf q  }{\partial \xi} (\bar s (\mf u^+), \xi ) = \mf 0_{2N-h}
    \quad \text{for every $\xi \leq \underline s (\mf t_k (\mf u^+, \bar s (\mf u^+))=0$.}
\eq
If $(s_k, \xi_k) \in E$ and $\xi_k <  0$ we then have
\begin{equation} \label{e:ananas}
\begin{split}
           |\mf q (s_k, \xi_k) |  & = \left| \mf q (s_k, 0) + \xi_k
            \int_0^1  \frac{\partial  \mf q }{\partial \xi} 
          (s_k, \theta \xi_k) d \theta \right| \\
          & \stackrel{\eqref{e:pi},\eqref{e:kiwi}}{\leq}
        | \underbrace{ \boldsymbol{\gamma}_k( \mf t_k(\mf u^+, s_k), 0)}_{\displaystyle{=  ( \mf t_k(\mf u^+, s_k), \mf 0_{N-h})} } - \boldsymbol{\gamma}_k(\mf u^+, s_k) |
           +  \left|  \xi_k 
            \int_0^1 \left(  \frac{\partial   \mf q  }{\partial \xi} 
          (s_k, \theta \xi_k) -  \frac{\partial   \mf q }{\partial \xi} 
          (\bar s(\mf u^+), \theta \xi_k)  \right) d \theta  
          \right| \\ & \stackrel{\eqref{e:me1}}{\leq}
         \unpo |s_k| [\varsigma_k]^- + \unpo |\xi_k| |s_k -\bar s(\mf u^+) |
         \stackrel{s_k \leq  \bar s(\mf u^+) \leq 0}{\leq} \unpo |s_k| ([\varsigma_k]^- + |\xi_k|),
\end{split}
\end{equation}
that is~\eqref{e:anguria}. For $(s_k, \xi_k) \in E$, $\xi_k \ge 0$ we have by $C^{1, 1}$ regularity 
\begin{equation} \label{e:ananas2}
\begin{split}
           |\mf q (s_k, \xi_k) | & = \left| \mf q (s_k, 0) + \xi_k 
            \int_0^1  \frac{\partial \mf q }{\partial \xi} 
          (s_k, \theta \xi_k) d \theta \right| \\ & \stackrel{\eqref{e:pi}}{\leq}
        |( \mf t_k(\mf u^+, s_k), \mf 0_{N-h}) - \boldsymbol{\gamma}_k(\mf u^+, s_k) |
           +  \left|  \xi_k
            \int_0^1 \! \! \! \left(  \frac{\partial \mf q  }{\partial \xi} 
          (s_k, \theta \xi_k) -  \frac{\partial \mf q  }{\partial \xi} 
          (s_k, 0)  \right) d \theta + \xi_k\frac{\partial  \mf q }{\partial \xi} 
          (s_k, 0) \right|\\& \stackrel{\eqref{e:me1}}{\leq}
          \unpo |s_k| [\varsigma_k]^- + \unpo |\xi_k |^2 +
         |\xi_k | \left| 
         \frac{\partial \mf q  }{\partial \xi} 
          (s_k, 0) \right|  \\ &
        \stackrel{\eqref{e:kiwi}}{\leq}
          \unpo |s_k| [\varsigma_k]^- + \unpo |\xi_k|^2 +
         |\xi_k | \left| 
         \frac{\partial \mf q  }{\partial \xi} 
          (s_k, 0)   - \frac{\partial \mf q }{\partial \xi}  (\bar s (\mf u^+), 0)  \right|
        \\ &  \leq \unpo ( |s_k| [\varsigma_k]^- + |\xi_k|^2 + |\xi_k| |s_k -\bar s (\mf u^+) | ) 
       \stackrel{s_k \leq  \bar s(\mf u^+) \leq 0}{\leq} 
      \unpo ( |s_k| [\varsigma_k]^- + |\xi_k|^2 + |\xi_k| |s_k | ) 
\end{split}
\end{equation}
To conclude, we point out that we are considering the case
\begin{equation*}
\begin{split}
    0 \leq  \xi_k \leq \underline s (\mf t_k (\mf u^+, s_k)) \implies 
     |\xi_k| & \leq | \underline s (\mf t_k (\mf u^+, s_k))| 
     \stackrel{\eqref{e:autunno}}{=} | \underline s (\mf t_k (\mf u^+, s_k)) - 
    \underline s (\mf t_k (\mf u^+, \bar s (\mf u^+ ))  |  \\ & 
    \stackrel{\eqref{e:lipunders}}{\leq} \unpo |s_k - \bar s (\mf u^+ )| \stackrel{s_k < \bar s (\mf u^+ ) \leq 0}{\leq} \unpo |s_k|
\end{split}
\end{equation*}
and by plugging the above inequality into~\eqref{e:ananas2} we arrive at~\eqref{e:anguria2}. \\
{\sc Case 2:} $\lambda_k (\mf u^+) < 0$ and $\lambda_k (\mf t_k (\mf u^+, s_k)) >0$. Owing to~\eqref{e:ceblayer} we have $\xi_k=0$, which yields
$$
    | \boldsymbol{\gamma}_k   (\mf t_k (\mf u^+, s_k), \xi_k) -    \boldsymbol{\gamma}_k (\mf u^+, s_k + \xi_k)  |
    \stackrel{\xi_k=0}{=}| (\mf t_k (\mf u^+, s_k), \mf 0_{N-h}) -    \boldsymbol{\gamma}_k (\mf u^+, s_k )  |
     \stackrel{\eqref{e:me1}}{\leq} \unpo |s_k| [\varsigma_k]^-,
$$
which in turn implies~\eqref{e:anguria2}. \\
{\sc Case 3:} $\lambda_k (\mf u^+) < 0$ and $\lambda_k (\mf t_k (\mf u^+, s_k)) \leq 0$, which yields $s_k \leq \underline{s} (\mf u^+)$ and hence, owing to~\eqref{e:chicosa2},  
\be \label{e:fabioandrea}
     \boldsymbol{\gamma}_k  (\mf u^+, s_k) = \mf b_k (\mf u^+, s_k).
\eq
Owing to~\eqref{e:ceblayer} we have 
$\xi_k \leq \underline s (\mf t_k (\mf u^+, s_k))$ and hence, owing to~\eqref{e:chicosa2}, 
\be \label{e:sonobl}
     \boldsymbol{\gamma}_k   (\mf t_k (\mf u^+, s_k), \xi_k)  = \mf b_k    (\mf t_k (\mf u^+, s_k), \xi_k). 
\eq
We now separately consider the following two sub-cases.\\
{\sc Case 3A:} $s_k + \xi_k \leq \underline s (\mf u^+).$ Owing to~\eqref{e:chicosa2} and recalling~\eqref{e:sonobl} this implies $\mf q(s_k, \xi_k) = 
\mf{\widetilde q}(s_k, \xi_k)$, where $\mf q$ is the same as in~\eqref{e:pi} and 
\be \label{e:tildeq}
   \mf{\widetilde q}(s, \xi): = \mf b_k (\mf t_k (\mf u^+, s), \xi)- \mf b_k  (\mf u^+, s + \xi)
\eq
Note that $\mf{\widetilde q}$ is a function of class $C^2$ satisfying $\mf{\widetilde q} (0, \xi)=0$ for every $\xi$, and hence $\frac{\partial \mf{\widetilde q} }{\partial \xi} (0, \xi) =0$ for every $\xi$. This yields  
\begin{equation} \label{e:betulla}
\begin{split}
           |\mf{\widetilde q}(s_k, \xi_k) |  & = \left| \mf{\widetilde q} (s_k, 0) + \xi_k
            \int_0^1  \frac{\partial \mf{\widetilde q}  }{\partial \xi} 
          (s_k, \theta \xi_k) d \theta \right| \\ & \stackrel{\eqref{e:tildeq}}{=}
          |( \mf t_k(\mf u^+, s_k), \mf 0_{N-h}) - \mf b_k (\mf u^+, s_k) |         
            +  \left|  \xi_k
            \int_0^1 \left(  \frac{\partial  \mf{\widetilde q}  }{\partial \xi} 
          (s_k, \theta \xi_k) -  \frac{\partial \mf{\widetilde q} }{\partial \xi} 
          (0, \theta \xi_k)  \right) d \theta  
          \right| \\ & \stackrel{\eqref{e:me1},\eqref{e:fabioandrea}}{\leq}
         \unpo |s| [\varsigma_k]^- + \unpo |\xi_k| |s_k|,
         \end{split}
\end{equation}
and this implies~\eqref{e:anguria2}.\\
{\sc Case 3B:}  $s_k + \xi_k > \underline s (\mf u^+).$ Owing to~\eqref{e:chicosa2}, this implies $ \boldsymbol{\gamma}_k   (\mf u^+, s_k+ \xi_k)= (\mf t_k  (\mf u^+, s_k+ \xi_k), \mf 0_{N-h})$ and yields 
\be \label{e:dopous}
\begin{split}
          | \boldsymbol{\gamma}_k & ( \mf t_k(\mf u^+, s_k), \xi_k) -    \boldsymbol{\gamma}_k (\mf u^+, s_k + \xi_k)| =
        |  \boldsymbol{\gamma}_k  ( \mf t_k(\mf u^+, s_k), \xi_k) -    (\mf t_k  (\mf u^+, s_k+ \xi_k), \mf 0_{N-h})| \\
   & \leq |  \boldsymbol{\gamma}_k  ( \mf t_k(\mf u^+, s_k), \xi_k) -    \big(\mf t_k (\mf t_k  (\mf u^+, s_k), \xi_k), \mf 0_{N-h}  \big)| + |\mf t_k (\mf t_k  (\mf u^+, s_k), \xi_k)- \mf t_k  (\mf u^+, s_k+ \xi_k) | \\
& \stackrel{\eqref{e:me1}}{\leq}
    \unpo [\varsigma_k (\mf t_k(\mf u^+, s_k), \xi_k)]^- |\xi_k|  +  |\mf t_k (\mf t_k  (\mf u^+, s_k), \xi_k)- \mf t_k  (\mf u^+, s_k+ \xi_k) | \\
    & \stackrel{\eqref{e:speedwft}}{=}
    \unpo [\sigma_k (\mf t_k(\mf u^+, s_k), \xi_k)]^- |\xi_k|  +  |\mf t_k (\mf t_k  (\mf u^+, s_k), \xi_k)- \mf t_k  (\mf u^+, s_k+ \xi_k) |. 
\end{split}
\eq
To establish the last equality, we have used the fact that $\xi_k \ge 0$ since  $\xi_k \ge \underline s(\mf u^+)- s_k$ (because we are dealing with {\sc Step 3B}) and $ \underline s(\mf u^+)- s_k \ge 0$ (because $\varsigma_k \leq 0$).

To control the right-hand side of~\eqref{e:dopous}, we first point out that, owing to genuine nonlinearity~\eqref{e:gnl}, the map $\xi  \mapsto [  \sigma_k (\mf t_k(\mf u^+, s_k), \xi)  ]^- $ is monotone decreasing. Since $\xi_k \ge \underline s(\mf u^+)- s_k$ then 
\be \label{e:leccio}
      [  \sigma_k (\mf t_k(\mf u^+, s_k), \xi_k)  ]^-  \leq [  \sigma_k (\mf t_k(\mf u^+, s_k), 
      \underline s(\mf u^+)- s_k )  ]^-. 
\eq
We now set $q(s) : = [  \sigma_k (\mf t_k(\mf u^+, s), 
      \underline s(\mf u^+)- s )  ]^- $ and we point out that $q$ is a Lipschitz continuous function which, owing 
to the definition~\eqref{e:underlines} of $\underline s$, satisfies  
$q(0)=0$. This implies that $|q(s)| \leq \unpo |s|$ and by plugging this inequality into~\eqref{e:leccio} we arrive at
\be \label{e:cactus}
       [  \sigma_k (\mf t_k(\mf u^+, s_k), \xi_k)  ]^- \leq \unpo |s_k|.
\eq
To control the second term at the right-hand side of~\eqref{e:dopous} we set $\mf{\underline q} (s, \xi) : = \mf t_k (\mf t_k  (\mf u^+, s), \xi)- \mf t_k  (\mf u^+, s+ \xi)$ and point out that $\mf{\underline q} $ is a $C^2$ function satisfying $\mf{\underline q} (s, 0)= \mf 0_N$ for every $s$ and  $\mf{\underline q} (0, \xi)= \mf 0_N$ for every $\xi$. Owing to~\cite[Lemma 2.5]{Bressan} this implies $|\mf{\underline q} (s, \xi)| \leq \unpo |s| |\xi|$ and by plugging this inequality into~\eqref{e:dopous} and recalling~\eqref{e:cactus} we arrive at~\eqref{e:anguria2}. 
\end{proof}
\subsubsection{Proof of Lemma~\ref{l:me}} \label{sss:pme}
By arguing as in the proof of Lemma~\ref{l:nc} we arrive at 
\begin{equation*}
\begin{split}
     \sum_{i=\ell + 1}^{k-1}& |\xi_i - \xi_i'| + |s'_k - (\xi_k + s_k)| + \sum_{i=k+1}^N |s'_i|   \\ &\stackrel{\eqref{e:brs0},\eqref{e:lax}}{\leq}
    | {\boldsymbol \phi} ( \cdot,  \xi_{\ell +1}, \dots, \xi_{k-1}, \xi_k) \circ \mf t_k (\mf u^+, s_k) -  {\boldsymbol \phi} ( \mf u^+,  \xi_{\ell +1}, \dots, \xi_{k-1}, \xi_k+s_k)|\\
    &  \stackrel{\eqref{e:bl10}}{\leq}
     | {\boldsymbol \psi}_s ( \cdot,  \xi_{\ell +1}, \dots, \xi_{k-1}) \circ \boldsymbol{\zeta}_k (\cdot, \xi_k)  \circ \mf t_k (\mf u^+, s_k) -  {\boldsymbol \psi}_s 
      ( \cdot,  \xi_{\ell +1}, \dots, \xi_{k-1}) \circ \boldsymbol{\zeta}_k (\mf u^+, \xi_k+s_k) | \\
      &  \quad + | {\boldsymbol \psi}_p (\mf t_k (\mf u^+, s_k),  \xi_{\ell +1}, \dots, \xi_{k-1}, \xi_k)  -  {\boldsymbol \psi}_p 
      ( \mf u^+,  \xi_{\ell +1}, \dots, \xi_{k-1}, \xi_k+s_k) |  : = \mathcal E. \phantom{\int}
\end{split}
\end{equation*}
By using the Lipschitz continuity of both $ {\boldsymbol \psi}_s$ and $ {\boldsymbol \psi}_p $ we then get 
\[
\begin{split}
       \mathcal  E & \leq \unpo | \boldsymbol{\zeta}_k (\cdot, \xi_k)  \circ \mf t_k (\mf u^+, s_k)-
       \boldsymbol{\zeta}_k (\mf u^+, \xi_k + s_k)  | +
        |\boldsymbol{\gamma}_k (\cdot, \xi_k) \circ \mf t_k (\mf u^+, s_k) - \boldsymbol{\gamma}_k  (\mf u^+, \xi_k+s_k)| \\
       & \stackrel{\eqref{e:anguria2},\eqref{e:anguria}}{\leq} \unpo |s_k| ( [\varsigma_k (\mf u^+, s_k)]^- + |\xi_k| ),
\end{split} 
\]
that is~\eqref{e:me}. 
\subsection{Further boundary interaction estimates in the genuinely nonlinear case} \label{ss:fie}
In this paragraph we collect some further results (Lemmas~\ref{l:shockaftershock},~\ref{l:enterare} and~\ref{l:nebbia2})concerning the interaction between  a wave front of the $k$-th family and the boundary. We will use them in the following sections.  
\begin{lemma}\label{l:shockaftershock}
Under the same assumptions as in Lemma~\ref{l:me}, there is a sufficiently small constant $\nu>0$ such that, if $[\sigma_k (\mf u^+, s_k)]^- + |\xi_k| \leq \nu$, then the following holds. Assume $s_k > 0$
and let $\mf{\hat u}$ be the same as in~\eqref{e:hatu}; 
 then $\lambda_k (\mf{\hat u}) < 0$. 
\end{lemma}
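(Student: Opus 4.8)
\textbf{Proof plan for Lemma~\ref{l:shockaftershock}.}

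The plan is to deduce the conclusion $\lambda_k(\mf{\hat u}) < 0$ from the fact that the hitting wave front is a $k$-shock with $s_k > 0$ whose speed is negative, namely $\varsigma_k(\mf u^+, s_k) = \sigma_k(\mf u^+, s_k) < 0$ (this is the hypothesis $\varsigma_k(\mf u^+, s_k) < 0$ carried over from Lemma~\ref{l:me}, specialized to the shock case $s_k > 0$ via~\eqref{e:speedwft}), together with the smallness assumption $[\sigma_k(\mf u^+, s_k)]^- + |\xi_k| \leq \nu$. The key observation is that $\mf{\hat u}$ is obtained from $\mf u^+$ in two moves: first solving the Riemann problem between $\mf u^+$ and $\mf{\hat u}$ using only the families $k+1, \dots, N$ (see~\eqref{e:hatu}), and these are the states $s'_{k+1}, \dots, s'_N$ controlled by the interaction estimate~\eqref{e:me}. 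Since $|s'_i| \leq C_2 |s_k|([\varsigma_k(\mf u^+,s_k)]^- + |\xi_k|)$ for $i = k+1, \dots, N$, the state $\mf{\hat u}$ is close to $\mf t_{k+1}(\cdot, 0) \circ \dots \circ \mf t_N(\mf u^+, 0) = \mf u^+$; more precisely $|\mf{\hat u} - \mf u^+| \leq \unpo |s_k|([\sigma_k(\mf u^+,s_k)]^- + |\xi_k|) \leq \unpo |s_k| \nu$.

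The core of the argument is then a comparison between $\lambda_k(\mf{\hat u})$ and the shock speed $\sigma_k(\mf u^+, s_k)$. First I would use the Lax admissibility of the hitting $k$-shock: by the genuine nonlinearity~\eqref{e:gnl} and the standard Lax inequalities for a $k$-shock connecting $\mf u^+$ (right) to $\mf{\bar u} = \mf t_k(\mf u^+, s_k)$ (left), one has $\lambda_k(\mf{\bar u}) < \sigma_k(\mf u^+, s_k) < \lambda_k(\mf u^+)$, and in particular $\lambda_k(\mf u^+) > \sigma_k(\mf u^+, s_k)$. Combined with $\sigma_k(\mf u^+, s_k) < 0$ this does \emph{not} immediately give $\lambda_k(\mf u^+) < 0$, so a finer estimate is needed. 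The standard mean-value-type expansion for genuinely nonlinear shocks gives $\sigma_k(\mf u^+, s_k) = \lambda_k(\mf u^+) + \tfrac{1}{2}(\nabla\lambda_k \cdot \mf r_k)(\mf u^+) s_k + O(s_k^2)$, hence $\lambda_k(\mf u^+) = \sigma_k(\mf u^+, s_k) - \tfrac12(\nabla\lambda_k\cdot\mf r_k)(\mf u^+)s_k + O(s_k^2)$. Since $s_k > 0$ and $\nabla\lambda_k\cdot\mf r_k \geq d > 0$, and since $\sigma_k(\mf u^+, s_k) < 0$ means (again by genuine nonlinearity) $s_k \geq -\sigma_k(\mf u^+,s_k)/\unpo = [\sigma_k(\mf u^+,s_k)]^-/\unpo$, the correction term $-\tfrac12(\nabla\lambda_k\cdot\mf r_k)(\mf u^+)s_k$ is negative and of size at least $\unpo [\sigma_k(\mf u^+,s_k)]^-$. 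Therefore $\lambda_k(\mf u^+) \leq \sigma_k(\mf u^+, s_k) - c_0 [\sigma_k(\mf u^+,s_k)]^- \leq -c_0 [\sigma_k(\mf u^+,s_k)]^-$ for a suitable positive constant $c_0$; in particular $\lambda_k(\mf u^+) \leq 0$, and more usefully $\lambda_k(\mf u^+) \leq -c_0[\sigma_k(\mf u^+,s_k)]^-$.

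Finally I would transfer this from $\mf u^+$ to $\mf{\hat u}$. Using $|\mf{\hat u} - \mf u^+| \leq \unpo |s_k|([\sigma_k(\mf u^+,s_k)]^- + |\xi_k|)$ and Lipschitz continuity of $\lambda_k$,
\[
  \lambda_k(\mf{\hat u}) \leq \lambda_k(\mf u^+) + \unpo |s_k|\big([\sigma_k(\mf u^+,s_k)]^- + |\xi_k|\big)
  \leq -c_0 [\sigma_k(\mf u^+,s_k)]^- + \unpo |s_k| \big([\sigma_k(\mf u^+,s_k)]^- + |\xi_k|\big).
\]
Here the main obstacle — and the reason the smallness constant $\nu$ enters — is that the positive error term involves $|s_k||\xi_k|$, which is not obviously dominated by $[\sigma_k(\mf u^+,s_k)]^-$. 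To handle it I would invoke the constraint~\eqref{e:ceblayer}: since we are in the case $\lambda_k(\mf t_k(\mf u^+,s_k)) = \lambda_k(\mf{\bar u}) < 0$ (which holds because $\varsigma_k < 0$ and $s_k > 0$ force $\sigma_k < 0 \Rightarrow$ Lax gives $\lambda_k(\mf{\bar u}) < \sigma_k < 0$), we have $\xi_k \leq \underline s(\mf{\bar u})$; but one must still bound $|\xi_k|$. The cleanest route is to argue that when $[\sigma_k(\mf u^+,s_k)]^- + |\xi_k| \leq \nu$ is small, \emph{either} $[\sigma_k(\mf u^+,s_k)]^-$ dominates $|\xi_k|$ up to a fixed factor (in which case the bound above closes by taking $\nu$ small enough that $\unpo|s_k|\nu \leq \tfrac{c_0}{2}$, using $[\sigma_k(\mf u^+,s_k)]^- \geq \unpo^{-1} s_k$ to bound $|s_k|$ by the negative part), \emph{or} $|\xi_k|$ is comparable to $[\sigma_k(\mf u^+,s_k)]^-$ anyway by the geometry of the boundary Riemann solver (cases i)--vi) in \S\ref{sss:casignl}), so that in all subcases $\lambda_k(\mf{\hat u}) \leq -\tfrac{c_0}{2}[\sigma_k(\mf u^+,s_k)]^- \leq 0$; and the inequality is strict because $[\sigma_k(\mf u^+,s_k)]^- > 0$ (as $\sigma_k < 0$). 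The delicate bookkeeping is precisely this case analysis showing $|\xi_k| \lesssim [\sigma_k(\mf u^+, s_k)]^-$ or otherwise absorbing the cross term, and that is where I expect the real work to lie; the rest is routine Taylor expansion and Lipschitz estimates.
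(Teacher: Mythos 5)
Your skeleton is the right one — control $|\mf{\hat u}-\mf u^+|$ through the interaction estimate~\eqref{e:me} and show that genuine nonlinearity forces $\lambda_k(\mf u^+)$ to be quantitatively negative — but the quantitative bookkeeping has genuine errors that prevent the argument from closing. First, your Lax inequalities are reversed: with the paper's convention (see \S\ref{ss:arie}) a $k$-shock with right state $\mf u^+$ and left state $\mf t_k(\mf u^+,s_k)$ satisfies $\lambda_k(\mf u^+)<\sigma_k(\mf u^+,s_k)<\lambda_k(\mf t_k(\mf u^+,s_k))$, so $\sigma_k<0$ gives $\lambda_k(\mf u^+)<0$ immediately. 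More seriously, both inequalities you assert between $s_k$ and $[\sigma_k(\mf u^+,s_k)]^-$ are false: ``$\sigma_k<0$ implies $s_k\ge [\sigma_k]^-/\unpo$'' fails when $\lambda_k(\mf u^+)$ is of order $\delta$ and $s_k$ is tiny (then $[\sigma_k]^-\approx|\lambda_k(\mf u^+)|\gg s_k$), while the reverse bound $[\sigma_k]^-\ge \unpo^{-1}s_k$, which you invoke later, fails when $s_k$ is close to $\underline s(\mf u^+)$ so that $\sigma_k$ is close to $0$. Consequently the intermediate bound $\lambda_k(\mf u^+)\le -c_0[\sigma_k]^-$ is only available in the trivial form $\lambda_k(\mf u^+)\le -[\sigma_k]^-$ (from the correctly oriented Lax condition), which is too weak. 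Finally, your closing dichotomy — that either $[\sigma_k]^-$ dominates $|\xi_k|$ or ``$|\xi_k|$ is comparable to $[\sigma_k]^-$ by the geometry of the boundary Riemann solver'' — is unsubstantiated and false in general: the only constraint on the center component is~\eqref{e:ceblayer}, and $|\xi_k|$ can be of order $\delta$ while the hitting shock has speed arbitrarily close to $0$. So the cross term $\unpo|s_k||\xi_k|$ cannot be absorbed along the route you propose.

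The fix, which is the paper's proof, is to keep the lower bound on $|\lambda_k(\mf u^+)|$ proportional to $s_k$ rather than to $[\sigma_k]^-$. Since $\sigma_k(\mf u^+,s_k)<0$ and $\sigma_k(\mf u^+,\underline s(\mf u^+))=0$ with $\partial_s\sigma_k\ge d/2$ by~\eqref{e:gnl}, one gets $s_k<\underline s(\mf u^+)$ and $\lambda_k(\mf u^+)=\sigma_k(\mf u^+,0)\le -\tfrac d2\,\underline s(\mf u^+)\le -\tfrac d2\,s_k$. Then
\[
\lambda_k(\mf{\hat u})\le \lambda_k(\mf u^+)+\unpo|\mf{\hat u}-\mf u^+|
\le -\tfrac d2\,s_k+\unpo\,|s_k|\bigl([\sigma_k(\mf u^+,s_k)]^-+|\xi_k|\bigr)
\le s_k\bigl(-\tfrac d2+\unpo\,\nu\bigr)<0
\]
for $\nu$ small. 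The point you missed is that the error in~\eqref{e:me} already carries the factor $|s_k|$, so once the good term is also proportional to $s_k$ the common factor cancels and the mere smallness of $[\sigma_k]^-+|\xi_k|$ suffices; no comparison between $|\xi_k|$ and $[\sigma_k]^-$ is needed at all.
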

\begin{remark}\label{r:entrashock}
The meaning of Lemma~\ref{l:shockaftershock} is the following. Assume that a $k$-th family wave front hits the boundary, and that the wave front is a shock, which implies $s_k>0$.  Then, if there is a $k$-th family wave front entering the domain after the interaction, this wave front must be a shock. Indeed, $\mf{\hat u}$ is the right state of the wave front and the condition $\lambda_k (\mf{\hat u}) < 0$ dictates that $\mf{\hat u}$ cannot be the right state of a $k$-th family rarefaction wave with positive speed. 
\end{remark}
The reason why we need the above Lemma~\ref{l:shockaftershock} (or, more precisely, its consequence discussed in Remark~\ref{r:entrashock})  is because we want to have that every wave front can be uniquely continued forward in time, unless it gets canceled by the interaction with a front of the same family with opposite side or it is absorbed by the boundary. If a shock could hit the boundary and be reflected as a rarefaction, then by the construction in~\S\ref{ss:abrie} this rarefaction could be partitioned and hence the original wave front could not be uniquely continued forward in time. 
\begin{proof}[Proof of Lemma~\ref{l:shockaftershock}]
We proceed according to the following steps. \\
{\sc Step 1:} we show that 
\be \label{e:bdsulk}
     [\lambda_k (\mf u^+)]^- \ge \frac{d}{2} s_k,
\eq
provided $d$ is the same constant as in~\eqref{e:gnl}. To this end, we first of all point out that the Lax entropy condition combined with the inequality $\sigma_k (\mf u^+, s_k)<0$ yields $\lambda_k (\mf u^+) <0$ and also $s_k < \underline s(\mf u^+)$. Recalling that $s_k >0$ by assumption, this implies that to establish~\eqref{e:bdsulk} it suffices to show that 
\be \label{e:bdsulk2}
     [\lambda_k (\mf u^+)]^- \ge \frac{d}{2} \underline s(\mf u^+). 
\eq
Owing to~\eqref{e:gnl} we have $\partial \sigma_k (\mf u^+, s)/ds \ge d/2$ provided $s$ is small enough. This implies 
 $$
   0= \sigma_k(\mf u^+, \underline s(\mf u^+)) = \sigma_k(\mf u^+, 0)+  \int_0^{\underline s(\mf u^+)} \frac{\partial \sigma_k}{\partial s} ds = 
   \lambda_k (\mf u^+)+  \int_0^{\underline s(\mf u^+)} \frac{\partial \sigma_k}{\partial s} ds \ge  \lambda_k (\mf u^+)+  \frac{d}{2}\underline s(\mf u^+)
$$
and this yields~\eqref {e:bdsulk2} and henceforth~\eqref{e:bdsulk}. \\
{\sc Step 2:} we conclude the proof. We point out that 
\[
   \begin{split}
   \lambda_k (\mf{\hat u}) & =  \lambda_k (\mf u^+) +  \lambda_k (\mf{\hat u}) -  \lambda_k (\mf u^+) \stackrel{\eqref{e:bdsulk}}{\leq} 
   - \frac{d}{2} s_k + \unpo |\mf{\hat u} - \mf u^+| \stackrel{\eqref{e:hatu}}{\leq} - \frac{d}{2} s_k + \unpo \sum_{i=k+1}^N |s'_i|  \\ &
   \stackrel{\eqref{e:me}}{\leq} s_k \left(- \frac{d}{2} + \unpo ([\varsigma_k(\mf u^+, s_k)]^- + |\xi_k|   ) \right) <0 
   \end{split}
\]
provided $[\varsigma_k(\mf u^+, s_k)]^-+ |\xi_k| $ is sufficiently small. By recalling~\eqref{e:speedwft} and the inequality $s_k>0$ we get $\varsigma_k(\mf u^+, s_k)= \sigma_k(\mf u^+, s_k)$ and this concludes the proof of the lemma. 
\end{proof}
\begin{lemma}\label{l:enterare}Under the same assumptions as in the statement of Lemma~\ref{l:me}, there is a sufficiently small constant $\nu>0$ such that, if $[\varsigma_k(\mf u^+, s_k)]^- + |\xi_k| \leq \nu$, then the following holds. Assume that $\lambda_k (\mf u^+) <0$, $\lambda_k (\mf{\hat u})>0$, where $\mf{\hat u}$ is the same as in~\eqref{e:hatu},  and $s'_k < 0$; then 
\be \label{e:rarepiccola}
      \min \{ |s'_k|, |\bar s(\hat{\mf u}) |      \}  \leq C_8 |s_k|(|\xi_k| + [\varsigma_k(\mf u^+, s_k)]^-)
\eq
for a suitable constant $C_8 >0$ only depending on the functions $\mf A$, $\mf E$, $\mf B$ and $\mf G$ and on the value $\mf u^\ast$.  
\end{lemma}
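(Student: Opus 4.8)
The plan is to deduce the estimate directly from the main interaction estimate~\eqref{e:me}, combined with the elementary geometric observation that, in the situation at hand, the state $\mf{\hat u}$ defined by~\eqref{e:hatu} is an $\unpo|s_k|\big([\varsigma_k(\mf u^+,s_k)]^-+|\xi_k|\big)$-perturbation of $\mf u^+$: since $\lambda_k(\mf u^+)<0$ while $\lambda_k(\mf{\hat u})>0$, this forces $\lambda_k(\mf{\hat u})$ — and hence the zero-speed point $\bar s(\mf{\hat u})$ on the rarefaction curve through $\mf{\hat u}$ — to be that small. Note that by cases ii) and iii) in~\S\ref{ss:abriegnl} (the only ones compatible with the hypotheses $s'_k<0$ and $\lambda_k(\mf{\hat u})>0$) the strength of the rarefaction front of the $k$-th family actually entering the domain after the interaction equals $\min\{|s'_k|,|\bar s(\mf{\hat u})|\}$, and in case iii) $|s'_k|$ need not be small; this is why the left-hand side of~\eqref{e:rarepiccola} is written as a minimum.

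Concretely, I would first invoke Lemma~\ref{l:me} — all of its hypotheses hold under the present assumptions — to obtain $\sum_{i=k+1}^N|s'_i|\le C_2|s_k|\big([\varsigma_k(\mf u^+,s_k)]^-+|\xi_k|\big)$. Since each wave-fan curve $\mf t_i(\cdot,s)$ is Lipschitz continuous with $\mf t_i(\mf u,0)=\mf u$, the definition~\eqref{e:hatu} of $\mf{\hat u}$ then yields $|\mf{\hat u}-\mf u^+|\le\unpo|s_k|\big([\varsigma_k(\mf u^+,s_k)]^-+|\xi_k|\big)$, the smallness assumption $[\varsigma_k(\mf u^+,s_k)]^-+|\xi_k|\le\nu$ merely keeping $\mf{\hat u}$ in the neighborhood of $\mf u^\ast$ where the constants are uniform. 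Next, by the Lipschitz continuity of $\mf u\mapsto\lambda_k(\mf u)$ and the sign conditions $\lambda_k(\mf u^+)<0$, $\lambda_k(\mf{\hat u})>0$, the two nonnegative numbers $\lambda_k(\mf{\hat u})$ and $[\lambda_k(\mf u^+)]^-$ add up to $\lambda_k(\mf{\hat u})-\lambda_k(\mf u^+)\le\unpo|\mf{\hat u}-\mf u^+|$; in particular $0\le\lambda_k(\mf{\hat u})\le\unpo|s_k|\big([\varsigma_k(\mf u^+,s_k)]^-+|\xi_k|\big)$. Finally, by genuine nonlinearity~\eqref{e:gnl} the map $s\mapsto\lambda_k(\mf i_k(\mf{\hat u},s))$ is increasing with derivative bounded below by $d/2$ near $\mf u^\ast$, equals $\lambda_k(\mf{\hat u})$ at $s=0$ and vanishes at $s=\bar s(\mf{\hat u})$ by~\eqref{e:baresse}; hence $|\bar s(\mf{\hat u})|\le 2\lambda_k(\mf{\hat u})/d$, and combining with the previous bound gives $|\bar s(\mf{\hat u})|\le C_8|s_k|\big([\varsigma_k(\mf u^+,s_k)]^-+|\xi_k|\big)$ for a suitable $C_8$, which a fortiori implies~\eqref{e:rarepiccola}.

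The step that requires a little care is not a deep one: it is the bookkeeping that rules out cases i) and iv), \dots, vi) in~\S\ref{ss:abriegnl} under the hypotheses $s'_k<0$, $\lambda_k(\mf{\hat u})>0$, so that the only $k$-th family datum produced at the interaction is the rarefaction of strength $\min\{|s'_k|,|\bar s(\mf{\hat u})|\}$, together with checking that the Lipschitz constants of $\mf t_i$, $\lambda_k$ and the genuine-nonlinearity lower bound can all be taken uniform in the relevant neighborhood. The substantive content is simply that a change of sign of $\lambda_k$ between two states that are of order $\unpo$ times the right-hand side of~\eqref{e:rarepiccola} apart pins $\bar s(\mf{\hat u})$ to within the same order of $0$; the chain of inequalities above is otherwise routine and I do not anticipate any genuine obstacle.
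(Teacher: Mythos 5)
Your proposal is correct and follows essentially the same route as the paper: it suffices to bound $|\bar s(\mf{\hat u})|$, which is done by combining genuine nonlinearity ($|\bar s(\mf{\hat u})|\leq \tfrac{2}{d}\lambda_k(\mf{\hat u})$), the sign condition $\lambda_k(\mf u^+)<0$ to pass to $\lambda_k(\mf{\hat u})-\lambda_k(\mf u^+)\leq \unpo|\mf{\hat u}-\mf u^+|$, the definition~\eqref{e:hatu} of $\mf{\hat u}$, and the interaction estimate~\eqref{e:me}. The extra case bookkeeping you flag is not needed for the estimate itself, since the minimum in~\eqref{e:rarepiccola} is automatically controlled by $|\bar s(\mf{\hat u})|$.
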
 
The meaning of the above lemma is the following. Assume that there is a $k$-th family wave front hitting the boundary and that after the interaction there is a $k$-th family rarefaction wave front enterning the domain. Owing to Lemma~\ref{l:shockaftershock}, this dictates that the hitting wave is a rarefaction front and owing to~\eqref{e:speedrare} yields  $\lambda_k (\mf u^+) <0$. Lemma~\ref{l:enterare} dictates that the left-hand side of~\eqref{e:rarepiccola}, which is the size of the rarefaction wave front entering the domain after the interaction, is actually fairly small as it is controlled by the right hand side of~\eqref{e:rarepiccola}.
\begin{proof} 
It suffices to show that $|\bar s(\hat{\mf u}) | \leq C_8 |s_k|(|\xi_k| + [\varsigma_k(\mf u^+, s_k)]^-)$. We have 
\begin{equation}\label{e:neve}
\begin{split}
        |\bar s(\hat{\mf u}) | & \stackrel{\eqref{e:gnl}}{\leq} \frac{2}{d} \big( \lambda_k (\hat{\mf u}) - \underbrace{\lambda_k (\mf t_k (\hat{\mf u }, \bar s(\hat{\mf u})  ))}_{=0 \;\text{by~\eqref{e:baresse}}} \big)
         \stackrel{\lambda_k (\mf u^+) <0 }{\leq}  \frac{2}{d} \big( \lambda_k (\hat{\mf  u}) - \lambda_k (\mf u^+) \big) \leq \unpo |\hat{\mf  u} - \mf u^+|
         \\ & \stackrel{\eqref{e:hatu}}{\leq} \unpo \sum_{i=k+1}^N |s'_i| \stackrel{\eqref{e:me}}{\leq} 
          C_8 |s_k|(|\xi_k| +[\varsigma_k(\mf u^+, s_k)]^-) , 
\end{split}
\end{equation}
and this yields~\eqref{e:rarepiccola}. 
\end{proof}
We now consider an interaction where the wave hitting the boundary belongs to the $j$-th family, $j<k$, and combine~\eqref{e:lax2} with~\eqref{e:protra} to get 
\be \label{e:pioggia2}
       \xi_k  \leq \underline s (\mf t_j (\mf u^+, s_j)) \; \text{if} \; \lambda_k  (\mf t_j (\mf u^+, s_j))  \leq 0, 
       \qquad 
       \xi_k =0  \; \text{if} \; \lambda_k  (\mf t_j (\mf u^+, s_j))  > 0 .
\eq
\begin{lemma}
\label{l:nebbia2}
Under the same assumptions as in the statement of Lemma~\ref{l:nc}, assume~\eqref{e:pioggia2}, let $\mf{\hat u}$ be as in~\eqref{e:hatu} and assume that 
\be \label{e:amaro2}
       \lambda_k (\mf{\hat u})\ge 0, \quad s'_k \leq \bar s (\mf{\hat u}). 
\eq
Then 
\be \label{e:pallido2}
    | \bar s (\mf{\hat u})| \leq C_{12}   |s_j|. 
\eq
for a suitable constant $C_{12}>0$ only depending on the functions $\mf A$, $\mf E$, $\mf B$ and $\mf G$ and on the value $\mf u^\ast$. 
\end{lemma}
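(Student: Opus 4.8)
The plan is to mimic the proof of Lemma~\ref{l:nebbia2} on the same template as the genuinely nonlinear interaction estimates, exploiting the fact that $j<k$ and hence the hitting wave front does not reappear after the interaction, so a ``first order'' bound in $|s_j|$ suffices (exactly as in Lemma~\ref{l:nc}). First I would use genuine nonlinearity~\eqref{e:gnl} of the $k$-th field, together with the definition~\eqref{e:baresse} of $\bar s$, to convert $|\bar s(\mf{\hat u})|$ into a quantity controlled by $\lambda_k(\mf{\hat u})$: since $\lambda_k(\mf t_k(\mf{\hat u}, \bar s(\mf{\hat u})))=0$ and $\partial_s \lambda_k(\mf i_k(\cdot,s))\ge d$, one has
\[
    |\bar s (\mf{\hat u})| \leq \frac{2}{d}\,\big|\lambda_k(\mf{\hat u}) - \lambda_k(\mf t_k(\mf{\hat u}, \bar s(\mf{\hat u})))\big| = \frac{2}{d}\,\lambda_k(\mf{\hat u}),
\]
using $\lambda_k(\mf{\hat u})\ge 0$ from~\eqref{e:amaro2}.

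Next I would compare $\lambda_k(\mf{\hat u})$ with $\lambda_k$ evaluated at the trace $\mf{\bar u}$ \emph{before} the interaction, i.e.\ with $\lambda_k(\mf t_j(\mf u^+, s_j))$ (recall $\mf{\bar u}=\mf t_j(\mf u^+, s_j)$ by~\eqref{e:lax2}). By~\eqref{e:hatu}, $\mf{\hat u} = \mf t_{k+1}(\cdot, s'_{k+1})\circ\cdots\circ\mf t_N(\mf u^+, s'_N)$, so $|\mf{\hat u} - \mf u^+|\le \unpo\sum_{i=k+1}^N|s'_i|$, and by Lemma~\ref{l:nc} (which applies under the present hypotheses, namely~\eqref{e:brs0},~\eqref{e:lax2} and~\eqref{e:brs}) this is $\le \unpo\, C_1 |s_j|$. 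Also $|\mf t_j(\mf u^+, s_j) - \mf u^+|\le \unpo |s_j|$. Hence $|\lambda_k(\mf{\hat u}) - \lambda_k(\mf t_j(\mf u^+, s_j))|\le \unpo |s_j|$ by Lipschitz continuity of $\lambda_k$. It therefore remains to bound $\lambda_k(\mf t_j(\mf u^+, s_j))$ from above in terms of $|s_j|$. Here I invoke hypothesis~\eqref{e:pioggia2}: if $\lambda_k(\mf t_j(\mf u^+, s_j))>0$ we are in the second alternative of~\eqref{e:pioggia2}, and one uses that this positivity, once $|s_j|$ is small, forces a sign relation; more robustly, I would argue that $\lambda_k(\mf{\hat u})\ge 0$ combined with $s'_k\le\bar s(\mf{\hat u})$ means that in the solution of the boundary Riemann problem the $k$-th wave (if present) is a rarefaction entering the domain, so by the structure of~\eqref{e:brs0} and~\eqref{e:brs} the quantity $\lambda_k(\mf{\hat u})$ is itself controlled — one compares with the pre-interaction situation, in which the analogous quantity is $[\lambda_k(\mf{\bar u})]^+$ that by~\eqref{e:protra} (the $\xi_k=0$ branch when $\lambda_k(\mf{\bar u})>0$, the $\xi_k\le\underline s$ branch otherwise) is tied to $\underline s$ or $\bar s$ at $\mf{\bar u}$, each Lipschitz and vanishing when $\lambda_k$ does.

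Concretely, the cleanest route is: write $\lambda_k(\mf{\hat u}) = \lambda_k(\mf t_j(\mf u^+,s_j)) + O(|s_j|)$ as above; then observe that if $\lambda_k(\mf t_j(\mf u^+,s_j))\le 0$ we immediately get $\lambda_k(\mf{\hat u})\le \unpo|s_j|$ and we are done, while if $\lambda_k(\mf t_j(\mf u^+,s_j))> 0$ then $\underline s(\mf t_j(\mf u^+,s_j))<0$, so the first alternative in~\eqref{e:pioggia2} cannot hold (it would give $\xi_k\le\underline s<0$, compatible, but then) — actually in this case $\xi_k=0$ and $\mf t_j(\mf u^+,s_j)=\mf{\bar u}$ has $\lambda_k>0$, meaning the pre-interaction trace already had the boundary non-characteristic with positive $k$-speed; but the post-interaction $\mf{\hat u}$ in case~\eqref{e:amaro2} satisfies $s'_k\le\bar s(\mf{\hat u})$, and since the shift from $\mf u^+$ to $\mf{\hat u}$ is $O(|s_j|)$ we get $\bar s(\mf{\hat u}) = \bar s(\mf u^+) + O(|s_j|)$, and one relates $\bar s(\mf u^+)$ to $\lambda_k(\mf u^+)$ via genuine nonlinearity. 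Combining the two regimes yields $|\bar s(\mf{\hat u})|\le \tfrac{2}{d}\lambda_k(\mf{\hat u})\le C_{12}|s_j|$ for a suitable $C_{12}$ depending only on $\mf A,\mf E,\mf B,\mf G$ and $\mf u^\ast$.

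The main obstacle I anticipate is the bookkeeping in the case $\lambda_k(\mf t_j(\mf u^+,s_j))>0$: one has to be careful that the hypothesis~\eqref{e:amaro2}, namely $\lambda_k(\mf{\hat u})\ge 0$ \emph{and} $s'_k\le\bar s(\mf{\hat u})$, is genuinely used — without $s'_k\le\bar s(\mf{\hat u})$ one cannot conclude that $\bar s(\mf{\hat u})$ is the relevant small quantity (if $s'_k>\bar s(\mf{\hat u})$ the outgoing rarefaction is not truncated and the estimate would be vacuous or false). The delicate point is thus to check that in all sub-cases consistent with~\eqref{e:amaro2} the identity $\lambda_k(\mf{\hat u}) = O(|s_j|)$ really holds, which ultimately reduces to the fact that the boundary Riemann solver is Lipschitz in $\mf u^+$ (Lemma~\ref{l:nc}) and that $\bar s$, $\underline s$ are Lipschitz functions vanishing where $\lambda_k$ vanishes (see~\eqref{e:lipunders} and~\eqref{e:baresse}). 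I expect the actual estimates to be short once these ingredients are assembled; the proof should run in parallel to, and be shorter than, the proof of~\eqref{e:me}.
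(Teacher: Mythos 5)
Your first regime, $\lambda_k(\mf t_j(\mf u^+,s_j))\le 0$, is handled exactly as in the paper: by genuine nonlinearity and~\eqref{e:baresse} one has $|\bar s(\mf{\hat u})|\le \tfrac{2}{d}\big(\lambda_k(\mf{\hat u})-\lambda_k(\mf t_j(\mf u^+,s_j))\big)\le \unpo\,|\mf{\hat u}-\mf t_j(\mf u^+,s_j)|\le \unpo\,|s_j|$, the last step using~\eqref{e:hatu} and Lemma~\ref{l:nc}. No objection there.

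The gap is in the regime $\lambda_k(\mf t_j(\mf u^+,s_j))>0$. Your plan there is still to prove $\lambda_k(\mf{\hat u})\le \unpo\,|s_j|$, by writing $\bar s(\mf{\hat u})=\bar s(\mf u^+)+O(|s_j|)$ and relating $\bar s(\mf u^+)$ to $\lambda_k(\mf u^+)$ through genuine nonlinearity. This cannot close: in this regime $\lambda_k(\mf u^+)$, and hence $|\bar s(\mf u^+)|$, may be of size comparable to $\delta$, which is in general much larger than $|s_j|$, so neither $\bar s(\mf u^+)$ nor $\lambda_k(\mf{\hat u})$ is $O(|s_j|)$ on the basis of pre-interaction data alone. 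The smallness in this case comes \emph{only} from the second inequality in~\eqref{e:amaro2} combined with $\xi_k=0$, through a short chain that your sketch never writes down: by the second alternative in~\eqref{e:pioggia2} one has $\xi_k=0$, so~\eqref{e:nc} gives $|s'_k|\le C_1|s_j|$; since $\lambda_k(\mf{\hat u})\ge 0$, the definition~\eqref{e:baresse} and genuine nonlinearity give $\bar s(\mf{\hat u})\le 0$; hence $s'_k\le\bar s(\mf{\hat u})\le 0$ yields $|\bar s(\mf{\hat u})|\le |s'_k|\le C_1|s_j|$. This is exactly the paper's second case, and it uses the hypothesis $s'_k\le\bar s(\mf{\hat u})$ in the way you yourself flag as essential in your closing paragraph, but which your argument never actually exploits: a scenario with $\lambda_k(\mf u^+)\sim\delta$ and $|s_j|\ll\delta$ shows that the route through $\bar s(\mf u^+)$ produces no bound in $|s_j|$; the lemma still holds in that scenario, but only because $s'_k\le\bar s(\mf{\hat u})$ together with $|s'_k|\le C_1|s_j|$ rules it out, i.e.\ the estimate must be extracted from the constraint itself, not from a Lipschitz comparison of $\lambda_k$ across the interaction.
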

The heuristic meaning of the above lemma is the following. Owing to the analysis of case iii) in \S\ref{sss:casignl}, assumption~\eqref{e:amaro2} implies that there is a rarefaction wave front of the $k$-th family entering the domain with size $ \bar s (\mf{\hat u})$. Estimate~\eqref{e:pallido2} dictates that, no matter how large $\xi_k$ is, the size of this rarefaction wave front is controlled by the strength of the hitting wave. 
\begin{proof}[Proof of Lemma~\ref{l:nebbia2}]
We separately consider the following cases. \\
{\sc Case 1:} if $\lambda_k (\mf t_j (\mf u^+, s_j)) \leq 0$ then we can follow the same computations as in~\eqref{e:neve} and conclude that 
\begin{equation*} \begin{split}
    | \bar s (\mf{\hat u})| & \leq \unpo | \mf t_j (\mf u^+, s_j) - \mf{\hat u} | \leq \unpo | \mf t_j (\mf u^+, s_j) - \mf u^+ | + 
    \unpo | \mf u^+ - \mf{\hat u} |   \\ & \leq \unpo |s_j| +
    \unpo |\sum_{j=k+1}^N |s'_j|
    \stackrel{\eqref{e:nc}}{\leq} \unpo  |s_j|.
    \end{split}
\end{equation*}
{\sc Case 2:}  if $\lambda_k (\mf t_j (\mf u^+, s_j)) > 0$ then $\xi_k=0$ by~\eqref{e:pioggia2}, which owing to~\eqref{e:nc} yields 
\be \label{e:galaverna2}
     |s'_k| \leq C_1 |s_j|.
\eq
We recall that $ \lambda_k (\mf{\hat u})\ge 0$ by~\eqref{e:amaro2}, 
and by~\eqref{e:baresse} this implies $\bar s (\mf{\hat u}) \leq 0$, which combined with the second inequality in~\eqref{e:amaro2} yields
$$
    |\bar s (\mf{\hat u})| \leq  |s'_k| \stackrel{\eqref{e:galaverna2}}{\leq} C_1 |s_j|, 
$$
and this eventually concludes the proof of~\eqref{e:pallido2}.
\end{proof}
\subsection{Proof of Lemma~\ref{l:melindeg} (interaction estimate in the linearly degenerate case)}\label{ss:prooflindeg}
Lemma~\ref{l:melindeg} can be established by combining the proof of Lemma~\ref{l:me} in \S\ref{sss:pme} with Lemma~\ref{l:novembre} below, which can be viewed as the linearly degenerate equivalent of Lemma~\ref{l:proseguo}. We recall~\eqref{e:mcld}, the construction in \S\ref{ss:case2} and in particular that $\mf b_k (\mf{\widetilde u}, s)$ is the solution of the Cauchy problem~\eqref{e:cplindeg} evaluated at $\tau=s$. Next, we set   
\begin{equation} \label{e:gammak2}
   \boldsymbol{\gamma}_k (\mf{\widetilde u}, s): =
   \left\{
        \begin{array}{ll}
                   (\mf t_k  (\mf{\widetilde u}, s), 0)  & \lambda_k (\mf{\widetilde u}) \ge 0 \\
                    \mf b_k  (\mf{\widetilde u}, s)  & \lambda_k (\mf{\widetilde u}) < 0, \\
        \end{array}
        \right.
\end{equation}
\begin{lemma}
\label{l:novembre}
Assume that the $k$-th characteristic field is linearly degenerate~\eqref{e:lindeg} and let $\boldsymbol{\gamma}_k$ be the same as in~\eqref{e:gammak2};
then 
\begin{equation}
\label{e:novembre}
      |\boldsymbol{\gamma}_k \big( \mf t_k (\mf{\widetilde u}, s), \xi \big) - \boldsymbol{\gamma}_k (\mf{\widetilde u}, s + \xi) | \leq \unpo 
      |s| \big( |\xi| + [\lambda_k (\mf{\widetilde u}) ]^-). 
\end{equation}
\end{lemma}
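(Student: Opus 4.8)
The plan is to follow closely the proof of Lemma~\ref{l:proseguo}, exploiting the fact that the linear degeneracy assumption~\eqref{e:lindeg} eliminates the branch-switching that forced that argument to be split into several sub-cases. The one structural remark I would use repeatedly is that, since the $k$-th field is linearly degenerate, the eigenvalue $\lambda_k$ is constant along the integral curve $\mf t_k(\mf{\widetilde u}, \cdot) = \mf i_k(\mf{\widetilde u}, \cdot)$ (recall~\eqref{e:lax:lindeg} and~\eqref{e:rarefdef}), so that $\lambda_k(\mf t_k(\mf{\widetilde u}, s)) = \lambda_k(\mf{\widetilde u})$ for every $s$; in particular the two occurrences of $\boldsymbol{\gamma}_k$ in~\eqref{e:novembre} always fall on the \emph{same} branch of the definition~\eqref{e:gammak2}. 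I would then split the proof according to the sign of $\lambda_k(\mf{\widetilde u})$.

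If $\lambda_k(\mf{\widetilde u}) \ge 0$, then $[\lambda_k(\mf{\widetilde u})]^- = 0$ and both occurrences of $\boldsymbol{\gamma}_k$ use the first line of~\eqref{e:gammak2}; since $\mf i_k(\mf{\widetilde u}, \cdot)$ is the flow of the autonomous vector field $\mf r_k$ it satisfies the semigroup identity $\mf i_k(\mf i_k(\mf{\widetilde u}, s), \xi) = \mf i_k(\mf{\widetilde u}, s + \xi)$, and hence the left-hand side of~\eqref{e:novembre} vanishes identically. This case is immediate.

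The substantive case is $\lambda_k(\mf{\widetilde u}) < 0$, where I would set $\mf q(s, \xi) := \mf b_k(\mf t_k(\mf{\widetilde u}, s), \xi) - \mf b_k(\mf{\widetilde u}, s + \xi)$, so that the left-hand side of~\eqref{e:novembre} equals $|\mf q(s, \xi)|$, and observe that $\mf q$ is of class $C^{1,1}$ (indeed $C^2$) jointly in its two variables because $\mf b_k$ and $\mf i_k$ are built from solutions of smooth ODEs, see~\eqref{e:cplindeg} and~\eqref{e:rarefdef}. Since $\mf t_k(\mf{\widetilde u}, 0) = \mf{\widetilde u}$ we have $\mf q(0, \xi) = \mf 0$ for every $\xi$, hence also $\partial_\xi \mf q(0, \xi) = \mf 0$. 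The proof would then be concluded exactly as in Case 3A of Lemma~\ref{l:proseguo}: expanding $\mf q(s, \xi) = \mf q(s, 0) + \xi \int_0^1 \partial_\xi \mf q(s, \theta\xi)\, d\theta$, subtracting the identically zero term $\partial_\xi \mf q(0, \theta \xi)$, and invoking the Lipschitz continuity of $\partial_\xi \mf q$ in the first variable, one gets $|\mf q(s, \xi)| \leq |\mf q(s,0)| + \unpo\, |s|\, |\xi|$. So everything reduces to controlling $\mf q(s, 0) = (\mf t_k(\mf{\widetilde u}, s), \mf 0) - \mf b_k(\mf{\widetilde u}, s)$.

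The main obstacle, and the only genuinely new ingredient, is therefore the linearly degenerate counterpart of estimate~\eqref{e:me1} of Lemma~\ref{l:speed}, namely $|(\mf t_k(\mf{\widetilde u}, s), \mf 0) - \mf b_k(\mf{\widetilde u}, s)| \leq \unpo\, |s|\, [\lambda_k(\mf{\widetilde u})]^-$. To prove it I would argue by continuous dependence on a parameter: owing to~\eqref{e:errecl} and to linear degeneracy, the curve $\tau \mapsto (\mf i_k(\mf{\widetilde u}, \tau), 0)$ is the orbit through $(\mf{\widetilde u}, 0)$ of the system obtained from~\eqref{e:cplindeg} by freezing the last argument of $\mf r_c$ and $\theta_{00}$ to $\sigma = \lambda_k(\mf{\widetilde u})$ --- indeed $\theta_{00}(\mf i_k(\mf{\widetilde u}, \tau), 0, \lambda_k(\mf{\widetilde u})) = 0$ and $\mf r_c(\mf i_k(\mf{\widetilde u}, \tau), 0, \lambda_k(\mf{\widetilde u})) = \mf r_k(\mf i_k(\mf{\widetilde u}, \tau))$ because $\lambda_k(\mf i_k(\mf{\widetilde u}, \tau)) = \lambda_k(\mf{\widetilde u})$ --- whereas $\mf b_k(\mf{\widetilde u}, \cdot)$ is, by definition, the orbit through the same point of the same system frozen instead to $\sigma = 0$. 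The two frozen right-hand sides differ by $\unpo\, |\lambda_k(\mf{\widetilde u})|$ on a fixed small neighborhood, so a standard Gronwall argument gives the desired bound, with $|\lambda_k(\mf{\widetilde u})| = [\lambda_k(\mf{\widetilde u})]^-$ since $\lambda_k(\mf{\widetilde u}) < 0$. Finally, as recorded in the text just above the statement, combining Lemma~\ref{l:novembre} with the scheme of \S\ref{sss:pme} yields Lemma~\ref{l:melindeg}.
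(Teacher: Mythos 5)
Your proposal is correct and follows essentially the same route as the paper: the same split on the sign of $\lambda_k(\mf{\widetilde u})$, the same reduction via the $C^2$ function $\mf q(s,\xi)$ (the paper's $\mf h$) vanishing at $s=0$, and the same key estimate $|(\mf t_k (\mf{\widetilde u}, s), \mf 0)-\mf b_k(\mf{\widetilde u}, s)|\leq \unpo |s|\,|\lambda_k(\mf{\widetilde u})|$. Your final step, phrased as continuous dependence on the frozen parameter $\sigma$ using~\eqref{e:errecl} and the constancy of $\lambda_k$ along $\mf i_k(\mf{\widetilde u},\cdot)$, is just a repackaging of the paper's Gronwall/comparison argument on the system~\eqref{e:cplindeg}, so no genuine difference.
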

\begin{proof}
We proceed according to the following steps. \\
{\sc Step 1:} we assume $\lambda_k  (\mf{\widetilde u}) \ge 0$.  This implies that  $\lambda_k ( \mf t_k (\mf{\widetilde u}, s)) \ge 0$ because the $k$-th characteristic field is linearly degenerate 
and by~\eqref{e:gammak2} the left-hand side of~\eqref{e:novembre} boils down to 
$$
    |(\mf t_k  \big( \mf t_k (\mf{\widetilde u}, s), \xi \big), 0) - (\mf t_k (\mf{\widetilde u}, s + \xi), 0) | \stackrel{\eqref{e:lax:lindeg}}{=}0,
$$
which implies that~\eqref{e:novembre} is trivially satisfied. \\
{\sc Step 2:} we assume $\lambda_k  (\mf{\widetilde u}) < 0$. Owing to~\eqref{e:mcld} in this case the left-hand side of~\eqref{e:novembre} boils down to 
$| \mf b_k  \big( \mf t_k (\mf{\widetilde u}, s), \xi \big) -  \mf b_k (\mf{\widetilde u}, s + \xi) |$. 
Note that by classical properties of the Cauchy problem the function $\mf b_k$ is of class $C^2$ with respect to both its arguments. We conclude that the function 
$$
   \mf h (s, \xi) : =  \mf b_k  \big( \mf t_k (\mf{\widetilde u}, s), \xi \big) -  \mf b_k (\mf{\widetilde u}, s + \xi)
$$
is of class $C^2$. Also, it satisfies the equalities 
\be \label{e:mfacca}
\mf h (0, \xi)= \mf 0_{N+1} \;  \text{for every $\xi$} \implies \frac{\partial \mf h}{\partial \xi} ( 0, \xi) = \mf 0_{N+1}
  \; \text{for every $\xi$},
\eq
which in turn yield
\begin{equation}\label{e:sei56}
\begin{split}
        |\boldsymbol{\gamma}_k & \big( \mf t_k (\mf{\widetilde u}, s), \xi \big) - \boldsymbol{\gamma}_k (\mf{\widetilde u}, s + \xi) | =
        |\mf h (s, \xi) |  = \left| \mf h(s, 0) + \xi \int_0^1  \frac{\partial \mf h}{\partial \xi} (s, \theta \xi) d \theta \right| \\ &
        \stackrel{\eqref{e:mfacca}}{\leq}
        |\mf h(s, 0) | + \left| \xi \int_0^1  \frac{\partial \mf h}{\partial \xi} ( s, \theta \xi) - \frac{\partial \mf h}{\partial \xi} (0, \theta \xi) d \theta \right|
        \leq  |\mf h(s, 0) |  + \unpo |s| |\xi|. 
\end{split}
\end{equation}
Assume that we have established the inequality 
\be \label{e:sei57}
      |\mf h(s, 0) | = | (\mf t_k (\mf{\widetilde u}, s), 0)-  \mf b_k (\mf{\widetilde u}, s) | \leq \unpo |s| |\lambda_k(\mf{\widetilde u}) |
\eq
then by plugging~\eqref{e:sei57} into~\eqref{e:sei56} and recalling that we are handling the case $\lambda_k(\mf{\widetilde u})<0$ we establish~\eqref{e:novembre}. \\
{\sc Step 3:} we establish~\eqref{e:sei57}. First, we recall~\eqref{e:cplindeg} and point out that 
\be \label{e:seitanto}
\begin{split}
   \frac{d |\mf u (\tau) - \mf t_k (\mf{\widetilde u}, \tau)|}{d \tau} &
   \stackrel{\eqref{e:lax:lindeg},\eqref{e:cplindeg}}{\leq} 
  | \mf r_c (\mf u, z_{00}, 0) - \mf r_k (\mf t_k (\mf{\widetilde u}, \cdot)) |\\&
   \stackrel{\eqref{e:errecl}}{\leq} \unpo |\mf u - \mf t_k (\mf{\widetilde u}, \cdot) | + \unpo |z_{00}| + \unpo |\lambda_k (\mf t_k (\mf{\widetilde u}, \cdot))|
   \\
   &
   \stackrel{\eqref{e:lindeg}}{=} \unpo |\mf u - \mf t_k (\mf{\widetilde u}, \cdot) | + \unpo |z_{00}| + \unpo |\lambda_k (\mf{\widetilde u})|
   \end{split}
   \eq
   Next, 
\be \label{e:seitanto2}
\begin{split}
   \frac{d |z_{00}|(\tau)|}{d \tau} &
   \stackrel{\eqref{e:cplindeg}}{\leq} 
  | \theta_{00} (\mf u, z_{00}, 0) |
   \stackrel{\eqref{e:errecl}}{\leq}  \unpo |z_{00}| + \unpo |\lambda_k (\mf u)| \\&
   \stackrel{\eqref{e:lindeg}}{\leq } 
    \unpo |z_{00}| + \unpo |\mf u - \mf t_k (\mf{\widetilde u}, \cdot) |+ 
   \unpo |\lambda_k (\mf{\widetilde u})|
   \end{split}
   \eq   
   and by combining~\eqref{e:seitanto} and~\eqref{e:seitanto2} with the Comparison Theorem for solutions of ODEs we arrive at 
  $$
       |\mf u (s) - \mf t_k (\mf{\widetilde u}, s)|+ |z_{00}(s)| \leq \unpo |\lambda_k (\mf{\widetilde u})| [\exp (\unpo |s|)-1] \leq \unpo |\lambda_k (\mf{\widetilde u})||s|,
  $$ 
   that is~\eqref{e:sei57}. 
\end{proof}

\section{Total variation bounds}\label{s:functional}
In this section we show a uniform in time and $\ee$ bound on the space total variation of the $\ee$-wave front-tracking approximation constructed in \S\ref{s:cinque}.
As it is by now fairly customary in the wave front-tracking business, the proof of the total variation bounds relies on the introduction of a Glimm-type functional. Note however that while our functional $\Upsilon$ contains some terms that have been already introduced in~\cite[p.136-7]{Bressan}, we also introduce completely new terms accounting for the interaction of wave fronts of the $k$-th (boundary characteristic) family with the boundary.
As a matter of fact, the explicit form of $\Upsilon$ is a consequence of the interaction estimates established in the previous section and is one of the main novelties of the present paper.

Our functional is well-defined and piecewise constant in $t$ as long as the total number of wave fronts is finite. Lemma~\ref{l:numberwf} in the next section states that this is actually the case for every $t$. In this section we show that, as long as it well-defined, $\Upsilon$ is a monotone non-increasing function with respect to time. More precisely, we proceed as follows:  we fix a time $t$ at which either two wave fronts cross each other or a wave front hits the boundary, or the boundary datum $\mf u_b^\ee$ has a jump discontinuity\footnote{By changing of an arbitrary small amount the speed of the wave fronts, we can assume (i) that at a given time at most two wave fronts can cross each other (no triple or multiple interactions); (ii) that at a given time at most one of the following events can occur: two wave fronts cross each other, a wave front hits the boundary and the boundary datum has a discontinuity. 
See also Remark 7.1 at page 132 and property (FT1) at page 142 in \cite{Bressan}}.  We assume that 
\be \label{e:assumption}
      \Upsilon (t^-): =\lim_{\tau \to t^-} \Upsilon(\tau) \leq \delta
\eq
and we show that if the constant $\delta>0$ is sufficiently small then $\Upsilon (t^+): =\lim_{\tau \to t^+} \Upsilon(\tau) \leq \Upsilon(t^-)$. We actually establish a more precise estimate and we exactly quantify the difference $\Upsilon(t^+)-\Upsilon (t^-)$, as we need this more precise information in the following sections. Note that showing that $\Upsilon $ is monotone non-increasing immediately yields a uniform control on the total variation owing to the estimate
\be \label{e:settedue}  
    \mathrm{Tot Var} \ \mf u_\ee (t, \cdot) \leq \unpo \Upsilon (t) \stackrel{\text{$\Upsilon$ monotone}}{\leq} \unpo \Upsilon (0) \leq C_{14} \delta^\ast
      \stackrel{\eqref{e:delta1}}{\leq}  \delta, 
\eq
where $\delta^\ast$  is the same as in~\eqref{e:hp} and $C_{14}>0$ is a suitable constant only depending on $\mf g$, $\mf f$, $\mf D$ and on the change of variables $\mf u$, and on the value $\mf u^\ast$ in~\eqref{e:uast}. The first and last inequalities in the above estimate directly follow from the explicit form of $\Upsilon$ given below, see~\eqref{e:upsilon}. 

The exposition in this section is organized as follows. In \S\ref{ss:deff} we define the functional $\Upsilon$. In \ref{ss:accuratem} we consider the case of a wave front of the family $j$, $j<k$, that hits the boundary, in \S\ref{ss:accuratek} we deal with a wave front of the $k$-th family hitting the boundary and in \S\ref{ss:jump} with a jump discontinuity of $\mf u_b^\ee$. In \S\ref{ss:inside} we discuss the interaction of two wave fronts inside the domain. 
In this section we use the notation $\Delta \Upsilon (t) : =\Upsilon(t^+) - \Upsilon(t^-)$. 
\begin{remark}
\label{r:lindeginter}
To ease the exposition, in the following  we carry on the explicit computations in the case of a genuinely nonlinear boundary characteristic field~\eqref{e:gnl} only. The case of a linearly degenerate boundary characteristic field~\eqref{e:lindeg} is analogous (if not simpler), the main difference is that in the definition~\eqref{e:S} of the function $S$ (involved in the definition of $\Upsilon$) one has to use the second line rather than the first, and also use estimate~\eqref{e:melindeg} instead of~\eqref{e:me}. 
\end{remark}
\subsection{Definition of the interaction functionals} \label{ss:deff}
 First, we introduce the following notation: consider a wave $\alpha$ connecting the right state $\mf u_\alpha^+$ with the left state $\mf t_j (\mf u^+_\alpha, s_\alpha)$, $j=1, \dots, N$. We term $\vartheta_\alpha$ the weighted signed strength of $\alpha$, namely 
\be \label{e:s}
    \vartheta_\alpha \footnote{Note that we are using a different weight than in~\cite{Amadori}} = 
    \left\{
    \begin{array}{ll}
   s_\alpha  & j_\alpha  \ge  k \\
    A \ s_\alpha  & j_\alpha <k, \\
    \end{array}
    \right.
\eq
where $A>1$ is a suitable constant, to be determined in what follows. If $\alpha$ is a non-physical front between the states $\mf u^-$ and $\mf u^+$, then we set $\vartheta_\alpha : = |\mf u^+ - \mf u^-|$. We define the interaction functional $\Upsilon$ by setting 
\be
\label{e:upsilon}
     \Upsilon (t) : = V(t) + K_1 Q(t) + K_1 R(t) + K_2 S(t) + K_3 Z(t), 
\eq
where $K_1$, $K_2$ and $K_3$ are suitable constants, to be determined in the following, and the quantities $Q(t), R(t)$, $S(t)$ and $Z(t)$ are defined as \be 
\label{e:V}
      V(t) : = |\xi_k| +\sum_{\text{waves}} |\vartheta_\alpha|,
\eq 
where $\xi_k$ is loosely speaking the center component of the boundary layer and is rigorously defined as in cases i),$\dots$,v)  in~\S\ref{ss:abrie}. Also, 
\be 
\label{e:Q}
     Q(t) : = \sum_{ \text{waves} \; \alpha, \beta} w_{\alpha \beta} |\vartheta_\alpha \, \vartheta_\beta|,
\eq
where the the weight $w_{\alpha \beta}=1$ if the wave fronts $\alpha$ and $\beta$ are approaching, and  $w_{\alpha \beta}=0$ otherwise.  
The definition of ``approaching waves" is the same as in the Cauchy problem, see~\cite[p.137]{Bressan}, but for the fact that two wave fronts of the $k$-th family are always considered approaching, even if they are both rarefaction wave fronts. We comment on this choice in Remark~\ref{r:approach} below. We also set 
\be \label{e:R}
    R(t) : = |\xi_k| \sum_{ \text{waves} } |\vartheta_\beta| w_{\beta}
\eq
where $w_\beta =1$ if $\beta$ belongs to the family $i_\beta \leq k$ and $w_\beta=0$ otherwise. 
 Also, 
\be 
\label{e:S}
    S(t) : = \left\{
    \begin{array}{ll}
     \displaystyle{\sum_{k-\text{waves} } |s_\alpha| \big[ \varsigma_\alpha (\mf u_\alpha, s_\alpha) \big]^- } & 
     \text{if~\eqref{e:gnl}} \\
      \displaystyle{\sum_{k-\text{waves} } |s_\alpha| \big[ \lambda_k (\mf u_\alpha) \big]^- }
      & 
     \text{if~\eqref{e:lindeg}} \\
      \end{array}
    \right.
\eq
In the previous expression, $ \varsigma_\alpha$ is defined as in~\eqref{e:speedwft}, $\mf u_\alpha$ is the right state of the wave front  and $\big[ \cdot \big]^- $ denotes the negative part. Finally, 
\be 
\label{e:Z} 
     Z(t) = \mathrm{TotVar}_{[t, + \infty[} \mf u_b^\nu. 
\eq
\subsection{A wave front of the $j$-th family hits the boundary, $j<k$}\label{ss:accuratem}
We assume that a wave front $\gamma$ with right state $\mf u^+$ and left state $\mf t_j(\mf u^+, s)$, $j<k$, hits the boundary at time $t$. In \S\ref{sss:accuratem1} we consider the case where we use the accurate boundary Riemann solver,  see~\S\ref{ss:abrie}, in \S\ref{sss:accuratem2} the case where we use the simplified boundary Riemann solver, see~\ref{ss:sbrie}. 
\begin{remark}\label{r:approach}
Very loosely speaking, the reason why we have to modify the definition of approaching wave fronts in~\cite[p.137]{Bressan} and say that two wave fronts of the $k$-th family are \emph{always} approaching (even if they are both rarefaction wave fronts) is the following. Owing to the particular form of our interaction estimate~\eqref{e:me} the center part of the boundary layer, which is parameterized by $\xi_k$, must be basically treated as a wave front of the $k$-th family, and this is accounted for in the term $R$ defined by~\eqref{e:R}. Note that the explicit form of $R$ takes into account that the center part of the boundary layer can interact with both rarefaction and shock wave fronts of the $k$-th family. This implies that rarefaction wave fronts of the $k$-th family can interact through the following mechanism: a rarefaction wave front can hit the boundary, get absorbed by the center part of the boundary layer and then interact with any other wave front of the $k$-th family. To take into account this mechanism, we have to add to the functional $Q$ the terms accounting for the interaction between rarefaction wave fronts of the $k$-th family.  From the technical viewpoint, the fact that wave fronts of the $k$-th family are always approaching yields the term $|\vartheta_\beta||s|$ at the first line of~\eqref{e:servequi}, which is necessary to establish~\eqref{e:c:dupsilon}. 
\end{remark}
\subsubsection{Accurate Riemann solver} \label{sss:accuratem1}
To evaluate $\Delta \Upsilon (t) = \Upsilon (t^+) - \Upsilon(t^-)$ we separately consider the behavior of $V$, $Q+R$, $S$ and $Z$. To this end, we first make a remark about notation.
\begin{remark}\label{r:notation}
As in the previous section, in the following, when writing estimates involving the accurate or simplified boundary Riemann solver, we always use the same notation as in~\eqref{e:brs}. This implies, in particular, that $s'_k$ does \emph{not} necessarily represent the size of the wave front of the $k$-th family (if any) entering the domain after the interaction. Instead, by separately considering cases i),\dots,v) in~\ref{ss:abrie} we realize that $s'_k = \xi'_k + \tilde s'_k$, where $\xi'_k$ is the same as in  cases i),\dots,v) and $\tilde s'_k$ is the  size of the wave front of the $k$-th family (if any) entering the domain. Note that either $\xi'_k$ or $\tilde s'_k$ identically vanish in some of cases i),\dots,v) in~\ref{ss:abrie}. Note furthermore that we always have 
\be \label{e:comprendetutto}
|s'_k| = |\xi'_k| +  |\tilde s'_k|.
\eq  
\end{remark}
To control $\Delta \Upsilon (t)$, first, note that 
\be \label{e:deltav}
   \Delta V = V(t^+) - V(t-)\stackrel{\eqref{e:s},\eqref{e:V},\eqref{e:comprendetutto}}{=} |s'_k | + \sum_{i=k+1}^{N} |s'_i| -  
    |\xi_k | - A|s| \stackrel{\eqref{e:nc}}{\leq}|s| (C_1-A) . 
\eq 
To discuss the behavior of $Q+R$  at the interaction we proceed as follows: first, we fix a wave front $\beta$, located at $x_\beta>0$, we consider the interaction potentials $Q_\beta$ and $R_\beta$ defined by setting 
\begin{equation} \label{e:qbrb}
    Q_\beta (t) : =  | \vartheta_\beta |\sum_{\alpha} w_{\alpha \beta } |\vartheta_\alpha|, 
    \qquad R_\beta (t) : = |\xi_k| |\vartheta_\beta| w_\beta.
\end{equation}
We now discuss the behavior of $Q_\beta + R_\beta$ at the interaction time $t$. 
Note that
\be \label{e:qbrbm}
     Q_\beta (t^-) + R_\beta (t^-) =w_{\gamma \beta} | \vartheta_\beta    |    |s|+| \vartheta_\beta |\sum_{\alpha \neq \gamma} w_{\alpha \beta } 
      |\vartheta_\alpha | + |\xi_k| |\vartheta_\beta| w_\beta \ge  |\vartheta_\beta |\sum_{\alpha \neq \gamma} w_{\alpha \beta } 
      |\vartheta_\alpha | + |\xi_k| |\vartheta_\beta| w_\beta
\eq
and that 
\be \label{e:qbrbp}
     Q_\beta (t^+) + R_\beta (t^+) \leq 
      | \vartheta_\beta |\sum_{\alpha \neq \gamma} w_{\alpha \beta } |\vartheta_\alpha| + 
    |\vartheta_\beta| \left[ \sum_{i=k+1}^N |s'_i| + |s'_k| w_\beta \right]. 
\eq
Note that the term $|\vartheta_\beta| |s'_k| w_\beta$ in the above expression can contribute to either $Q_\beta (t^+)$ (cases i), ii) and iv) in \S\ref{ss:abrie})
or $R_\beta (t^+)$ (case v) in \S\ref{ss:abrie}), or also to both in case iii) in \S\ref{ss:abrie}. In any case we have 

$$
    Q_\beta (t^+) + R_\beta (t^+) - Q_\beta (t^-) + R_\beta (t^-) \stackrel{\eqref{e:qbrbm},\eqref{e:qbrbp}}{\leq} 
     |\vartheta_\beta| \left[ \sum_{i=k+1}^N |s'_i| + ( |s'_k|- |\xi_k|) 
 w_\beta \right] \stackrel{\eqref{e:nc}}{\leq} 
   C_1 |\vartheta_\beta| |s|.
$$
Next, we sum up over $\beta$ and we point out that $R(t^+)$ may contain a term due to the interaction between the new boundary layer and the new $k$-th wave entering the domain (if any). This can happen only if we are in case iii) discussed in \S\ref{ss:abrie}. Note that in this case the strength of the new wave entering the domain is precisely $\bar s (\mf {\hat u})$, where $\mf{\hat u}$ is the same as in~\eqref{e:hatu}. Also, we have
\be \label{e:stimaxip}
    |\xi'_k| \leq |s'_k| \stackrel{\eqref{e:nc}}{\leq} |\xi_k| + C_1 |s | 
    \stackrel{\eqref{e:assumption}}{\leq} \delta + C_1 \delta 
  =[C_1 +1 ] \delta,   
\eq 
which owing to Lemma~\ref{l:nebbia2} yields  
\be \label{e:deltarq} \begin{split}
    \Delta Q (t) + \Delta R(t)& = Q(t^+) - Q(t^-) + R(t^+) - R(t^-) \stackrel{\eqref{e:pallido2},\eqref{e:stimaxip}}{\leq} C_1  |s| \sum_\beta |\vartheta_\beta| +C_{12} [1 + C_1] \delta |s| 
    \\ &  \stackrel{\eqref{e:V}}{\leq}  C_1  |s| V(t^-)  + C_{12} [1 + C_1] \delta |s|   \stackrel{\eqref{e:assumption},\eqref{e:upsilon}}{\leq}  
     [C_1+  C_{12}+ C_{12} C_1]  |s|\delta  . 
\end{split}
\eq
Since the quantities $S$ and $Z$ are constant at the interaction, we conclude that 
\be \label{e:upsgiu1}
\begin{split}
    \Delta \Upsilon (t)&\stackrel{\eqref{e:upsilon}}{=} \Delta V (t) + K_1 \Delta Q (t) + K_1 \Delta R (t) + K_2 \Delta S (t) \\
    &
    \stackrel{\eqref{e:deltav},\eqref{e:deltarq}}{\leq} 
    |s| \left[ C_1 + K_1 [C_1+  C_{12}+ C_{12} C_1] \delta  - A\right] \stackrel{\eqref{e:uno}}{\leq}-  |s|. 
    \end{split}
\eq
\subsubsection{Simplified Riemann solver} \label{sss:accuratem2}
We consider the case where the interaction is solved with the simplified boundary Riemann solver defined in \S\ref{ss:sbrie} and we show that $\Delta \Upsilon (t) \leq 0$. As in~\S\ref{ss:sbrie}, we term $\mf u^-$ and $\mf u^+$ the left and right state of the hitting wave front, respectively.  Note that 
\be \label{e:cisette}
     |\mf u^+ - \mf u^-| \leq C_7 |s|
\eq
for a suitable constant $C_7 >0$ only depending on the data  $\mf E$, $\mf A$, $\mf B$, $\mf G$, and on the value $\mf u^\ast$ in~\eqref{e:uast}. 
We recall the construction in \S\ref{ss:sbrie} and conclude in particular that after the interaction the only front entering the domain is a non-physical front of strength $ |\mf u^+ - \mf u^-| $. We have 
$$
     \Delta V(t) = - A|s| + |\mf u^+ - \mf u^-| \stackrel{\eqref{e:cisette}}{\leq}
     (-A + C_7) |s| 
$$
and 
$$
      \Delta Q(t) \leq |\mf u^+ - \mf u^-| \sum_{\text{waves}}|\vartheta_\alpha|
      \stackrel{\eqref{e:assumption},\eqref{e:cisette}}{\leq}
      C_7 \delta |s|, \qquad \Delta R(t) \leq - A|\xi_k||s|, \qquad \Delta S(t) = \Delta Z(t)=0
$$
and by recalling~\eqref{e:upsilon} this yields 
\be \label{e:np:upsgiu}
    \Delta \Upsilon (t) \leq   \big[-A + C_7 (1 + K_1  \delta) \big]|s | \stackrel{\eqref{e:uno}}{\leq}
   - |s| \leq 0. 
\eq

\subsection{A wave front of the $k$-th family hits the boundary} \label{ss:accuratek} 
We assume that a wave front $\gamma$ with right state $\mf u^+$ and left state $\mf t_k(\mf u^+, s)$ hits the boundary at time $t$. In \S\ref{sss:iabrs} we consider the case where we use the accurate boundary Riemann solver,  see~\S\ref{ss:abrie}, in \S\ref{ss:simplified} the case where we use the simplified boundary Riemann solver, see~\ref{ss:sbrie}. In this paragraph we use the shorthand notation $\varsigma_k$ for the quantity defined by~\eqref{e:speedwft}.
\subsubsection{Accurate boundary Riemann solver} \label{sss:iabrs}
We have 
\be \label{e:c:tv}
\begin{split}
    \Delta V (t)&= V(t^+) - V(t-)\stackrel{\eqref{e:V}}{=}  |s'_k | + \sum_{i=k+1}^{N} |s'_i|   
    - |\xi_k | - |s_k|  \stackrel{\eqref{e:me}}{\leq}
     C_2 |s| \Big( [\varsigma_k]^- + |\xi_k|  \Big). 
\end{split}
\eq     
Next, we proceed as in the previous paragraph: we fix a wave front $\beta$ of the $i_\beta$-th family located at $x_\beta>0$ and we consider the same functionals $Q_\beta$ and $R_\beta$ as in~\eqref{e:qbrb}. We first focus on the case $i_\beta > k$: since $w_\beta=0$ then $R_\beta(t^-) = R_\beta (t^+) =0$. Also, 
$$
   Q_\beta (t^-)  =   | \vartheta_\beta |\sum_{\alpha } w_{\alpha \beta } |\vartheta_\alpha|,\qquad
   Q_\beta (t^+)   \leq | \vartheta_\beta |\sum_{\alpha } w_{\alpha \beta } |\vartheta_\alpha|+    |\vartheta_\beta| \sum_{i=k+1}^{N} |s'_i|
    \quad \text{if} \; i_\beta > k, 
$$
which yields 
\be \label{e:dqdr1}
    \Delta Q_\beta (t) + \Delta R_\beta (t) \leq |\vartheta_\beta|  \sum_{i=k}^{N} |s'_i|  
    \stackrel{\eqref{e:me}}{\leq}  C_2 |\vartheta_\beta|  |s| \Big( [\varsigma_k]^- + |\xi_k| \Big). 
\eq
Next, we assume $i_\beta \leq k$, recall that according to our definition two wave fronts of the  $k$-th family are always approaching and arrive at 
 \be \label{e:servequi}
 \begin{split}
   & Q_\beta (t^-) + R_\beta (t^-) =  | \vartheta_\beta |\sum_{\alpha \neq \gamma} w_{\alpha \beta } |\vartheta_\alpha| +|\vartheta_\beta| |s| +  |\vartheta_\beta| |\xi_k|, \\  
   & Q_\beta (t^+) + R_\beta (t^+)  = | \vartheta_\beta |\sum_{\alpha \neq \gamma} w_{\alpha \beta } |\vartheta_\alpha|  +
    |\vartheta_\beta| \left( \sum_{i=k+1}^{N} |s'_i|  + |s'_k| \right) \qquad \text{if $i_\beta \leq k$}, 
\end{split}
   \end{equation}
and hence 
\be \label{e:dqdr2}
    \Delta Q_\beta (t) + \Delta R_\beta (t)  = |\vartheta_\beta| \left(  \sum_{i=k}^{N} |s'_i| + |s'_k |
    - |s| - |\xi_k| \right)  \stackrel{\eqref{e:me}}{\leq}   C_2 |\vartheta_\beta | |s|\Big( [\varsigma_k]^- +
     |\xi_k| \Big). 
\eq
We now point out that 
$$
   R(t^-) =  |\xi_k| |s| + \sum_\beta  R_\beta (t)
$$   
and we observe that the only case where after the interaction we have both 
$\xi'_k \neq 0$ and there is wave front entering the domain is if we are in case iii) in \S\ref{ss:abrie}. We can then apply Lemma~\ref{l:enterare} and conclude that the contribution of this specific term to the functional $R(t^+)$ is controlled by $C_8 |\xi'_k| |s| (|\xi_k| + [\varsigma_k]^-)$. By using estimate~\eqref{e:stimaxip} we then 
arrive at
\be \label{e:cqr} \begin{split}
    \Delta Q (t) & + \Delta R(t)  \stackrel{\eqref{e:rarepiccola},\eqref{e:stimaxip}}{=}  - |\xi_k| |s| + 2 C_8 \delta |s| (|\xi_k| + [\varsigma_k]^-) +   \sum_\beta   \Delta Q_\beta (t)+ \Delta R_\beta(t) \\ &
     \stackrel{\eqref{e:dqdr1},\eqref{e:dqdr2}}{\leq} - |\xi_k| |s| +
        |s|\Big( [\varsigma_k]^- +
     |\xi_k| \Big) \left( 2 C_8 \delta + C_2 \sum_\beta |\vartheta_\beta | \right)\\
& \stackrel{\eqref{e:upsilon},\eqref{e:assumption}}{\leq}
    - |\xi_k| |s|  + [2 C_8+ C_2]  |s|\Big( [\varsigma_k]^- +
     |\xi_k| \Big) \delta =[2 C_8+ C_2]  \delta |s| [\varsigma_k]^- + |s| |\xi_k| ([2 C_8+ C_2]  \delta -1). 
\end{split}
\eq 
Next, we point out that 
\be \label{e:cds}
        S(t^-) =   \sum_{k-\text{wave} \neq \gamma} |s_\alpha| \big[ \varsigma_\alpha \big]^- + 
        |s| [\varsigma_k]^- ,\quad 
       S(t^+) =   \sum_{k-\text{wave} \neq \gamma } |s_\alpha| \big[ \varsigma_\alpha \big]^- 
       \implies \Delta S(t) = -  |s| \big[ \varsigma_k \big]^-
\eq
which yields (since the quantity $Z$ defined by~\eqref{e:Z} is  constant at the interaction)
\be \label{e:c:dupsilon}
\begin{split}
    \Delta \Upsilon(t)  & \stackrel{\eqref{e:upsilon}}{=} \Delta V(t) + K_1 \Delta Q(t) +
   K_1 \Delta R(t) + K_2 \Delta S(t) \\ & 
   \stackrel{\eqref{e:c:tv},\eqref{e:cqr},\eqref{e:cds}}{\leq} 
    |s| [\varsigma_k]^-  [C_2 + K_1 [2 C_8+ C_2]  \delta - K_2]   + |s||\xi_k|   [C_2 + K_1 ( [2 C_8+ C_2]  \delta -1) ] \\
    &\stackrel{\eqref{e:due}}{\leq}  - |s|([\varsigma_k]^- + |\xi_k|)      . 
\end{split}
\eq
\subsubsection{Simplified boundary Riemann solver} \label{ss:simplified}
We recall the construction in \S\ref{ss:sbrie} and the definition~\eqref{e:hatu} of $\mf{\hat u}$ and by increasing, if needed, the value of $C_7$ we obtain 
\be \label{e:cisette2}
     |\mf u^+ - \mf{\hat u}| \leq C_7 \sum_{i=k+1}^N |s'_i|
\eq
We can then repeat the same computations as in \S\ref{ss:accuratek} replacing $\sum_{i=k+1}^N |s'_i|$ with $ C_7 \sum_{i=k+1}^N |s_i|$. By~\eqref{e:cisette3}, we obtain $\Delta \Upsilon (t) \leq 0.$
\subsection{The boundary datum $\mf u_b^\ee$ has a jump discontinuity} \label{ss:jump}
Assume that the piecewise constant function $\mf u^\ee_b$ experiences a jump discontinuity at $t$. We set 
$$
    \mf u_b^- : = \lim_{s \to t^-} \mf u_b^\ee (s), \quad  \mf u_b^+ : = \lim_{s \to t^+} \mf u_b^\ee (s),
$$
which yields 
\be \label{e:jdz}
      \Delta Z(t) = Z(t^+) - Z(t^-) \stackrel{\eqref{e:Z}}{=} - |\mf u_b^+ - \mf u_b^-|. 
\eq
We also set $\mf{\bar u_\ast}: = \lim_{x\to 0^+} \mf u^\ee(t, x)$, where $\mf u^\ee$ is the $\ee$-wave front-tracking approximation. 
We recall the analysis in\S\ref{ss:abrie} and for simplicity assume~\eqref{e:a11nz} (the analysis in {\sc Cases B)} and {\sc C)} in Hypothesis~\ref{h:eulerlag} is analogous). We conclude that 
\be \label{e:freddo}
     \boldsymbol{\beta} (\boldsymbol{\phi} (\mf{\bar u_\ast}, \xi_{\ell+1}, \dots, \xi_k), \mf u^-_b) = \mf 0_{N- \ell}, \quad 
    \boldsymbol{\beta} ( \boldsymbol{\phi} (\cdot, \xi'_{\ell+1}, \dots, \xi'_{k-1}, s'_k)  \circ \mf t_{k+1} (\cdot, s'_{k+1})
      \dots \circ \mf t_{N} (\mf{\bar u}_\ast, s'_{N}),  \mf u^+_b) = \mf 0_{N- \ell}.
\eq
By the Lipschitz continuity of the implicit  function we conclude that 
\be \label{e:jd1}
      \sum_{i=\ell + 1}^{k-1} |\xi'_i - \xi_i| + |s'_k - \xi_k|+\sum_{j=k+1}^N |s'_j| \leq C_6 |\mf u_b^+ - \mf u_b^-|
\eq
for a suitable constant $C_6>0$. 
This yields 
\be \label{e:jdv}
    \Delta V(t) = V(t^+) - V(t^-) \stackrel{\eqref{e:V},\eqref{e:jd1}}{\leq}  C_6 |\mf u_b^+ - \mf u_b^-|. 
\eq
In the sequel we need the following lemma. 
\begin{lemma}
\label{l:nebbia}
Under~\eqref{e:freddo}, assume that
\be \label{e:pioggia}
       \xi_k  \leq \underline s (\mf{\bar u}_\ast) \; \text{if} \; \lambda_k  (\mf{\bar u}_\ast )  \leq 0, 
       \qquad 
       \xi_k =0  \; \text{if} \; \lambda_k  (\mf{\bar u}_\ast)  > 0 .
\eq
Let $\mf{\hat u}$ be as in~\eqref{e:hatu} with $\mf{\bar u_\ast}$ in place of $\mf u^+$ and assume that 
\be \label{e:amaro}
       \lambda_k (\mf{\hat u})\ge 0, \quad s'_k < \bar s (\mf{\hat u}). 
\eq
Then 
\be \label{e:pallido}
    | \bar s (\mf{\hat u})| \leq C_{11}   |\mf u_b^+ - \mf u_b^-|. 
\eq
for a suitable constant $C_{11}>0$. 
\end{lemma}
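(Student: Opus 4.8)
\textbf{Proof plan for Lemma~\ref{l:nebbia}.}

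The plan is to mimic the structure of the proof of Lemma~\ref{l:nebbia2}, replacing the hitting wave front of the $j$-th family by the interplay between the two boundary data $\mf u_b^-$ and $\mf u_b^+$. The key observation is that, under~\eqref{e:amaro}, the analysis of case iii) in~\S\ref{sss:casignl} tells us that the $k$-th wave front entering the domain after the jump discontinuity of $\mf u_b^\ee$ is a rarefaction of size $\bar s(\mf{\hat u})$, and since $\lambda_k(\mf{\hat u}) \ge 0$, genuine nonlinearity~\eqref{e:gnl} forces $\bar s(\mf{\hat u}) \le 0$. So the whole point is to bound $|\bar s(\mf{\hat u})|$ by $|\mf u_b^+ - \mf u_b^-|$, and I would split into two cases according to the sign of $\lambda_k(\mf t_k(\dots) )$ evaluated at the relevant intermediate state, exactly as in Lemma~\ref{l:nebbia2}.

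First I would treat the case $\lambda_k (\mf{\bar u}_\ast) \le 0$ (or more precisely the case in which the states $\mf{\bar u}_\ast$ and $\mf{\hat u}$ both lie in the region where $\lambda_k \le 0$). Following the computation in~\eqref{e:neve}, genuine nonlinearity~\eqref{e:gnl} gives $|\bar s(\mf{\hat u})| \le \unpo\, |\lambda_k(\mf{\hat u}) - \lambda_k(\mf t_k(\mf{\hat u}, \bar s(\mf{\hat u})))| = \unpo\, |\lambda_k(\mf{\hat u})|$, since $\lambda_k(\mf t_k(\mf{\hat u},\bar s(\mf{\hat u}))) = 0$ by~\eqref{e:baresse}. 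Then $|\lambda_k(\mf{\hat u})| \le \unpo |\mf{\hat u} - \mf{\bar u}_\ast| + \unpo |\lambda_k(\mf{\bar u}_\ast)|$, and by definition~\eqref{e:hatu} of $\mf{\hat u}$ together with the non-characteristic interaction estimate~\eqref{e:jd1} the first term is bounded by $\unpo \sum_{j=k+1}^N |s'_j| \le \unpo |\mf u_b^+ - \mf u_b^-|$. For the term $|\lambda_k(\mf{\bar u}_\ast)|$, I would use that $\mf{\bar u}_\ast$ is the trace before the discontinuity, hence $\xi_k \le \underline s(\mf{\bar u}_\ast)$ by~\eqref{e:pioggia}; combining with $\lambda_k(\mf{\bar u}_\ast) \le 0$ and the argument of Step~1 in the proof of Lemma~\ref{l:shockaftershock} (which shows $[\lambda_k(\mf u)]^- \ge \tfrac{d}{2} \underline s(\mf u)$ and, symmetrically, controls things the other way), one gets the needed control. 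Actually the cleanest route is: since in case iii) the relevant intermediate state satisfies $\lambda_k(\mf{\bar u}_\ast) \le 0$ only if the $k$-wave generated before the discontinuity also had nonpositive speed; otherwise $\xi_k = 0$ and we pass to the second case.

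In the second case $\lambda_k(\mf{\bar u}_\ast) > 0$, condition~\eqref{e:pioggia} gives $\xi_k = 0$, so~\eqref{e:jd1} yields $|s'_k| \le C_6 |\mf u_b^+ - \mf u_b^-|$; combined with $\lambda_k(\mf{\hat u}) \ge 0$, hence $\bar s(\mf{\hat u}) \le 0$, and the second inequality in~\eqref{e:amaro} (namely $s'_k < \bar s(\mf{\hat u}) \le 0$), we get $|\bar s(\mf{\hat u})| \le |s'_k| \le C_6 |\mf u_b^+ - \mf u_b^-|$, exactly as in Case~2 of the proof of Lemma~\ref{l:nebbia2}. Taking $C_{11}$ to be the larger of the two constants produced in the two cases concludes the argument.

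The main obstacle I anticipate is the bookkeeping in the first case: one must be careful about which intermediate state plays the role of $\mf u^+$ in the earlier lemmas, because here there is no single hitting wave front but rather the old trace $\mf{\bar u}_\ast$ (satisfying the boundary condition with $\mf u_b^-$) and the reconstruction through~\eqref{e:hatu},~\eqref{e:freddo} with the new datum $\mf u_b^+$. In particular one needs the Lipschitz estimate~\eqref{e:jd1} to control $|\mf{\hat u} - \mf{\bar u}_\ast|$ and simultaneously a control of $|\lambda_k(\mf{\bar u}_\ast)|$ in terms of quantities that do not blow up; the hypothesis~\eqref{e:pioggia} on $\xi_k$ is precisely what makes the latter possible, playing here the role that~\eqref{e:protra} played in Lemma~\ref{l:nebbia2}. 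Once the correct states are identified, each individual estimate is a routine application of genuine nonlinearity and Lipschitz continuity of the implicit function, as already carried out in~\eqref{e:neve} and in the proof of Lemma~\ref{l:nebbia2}.
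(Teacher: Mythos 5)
Your overall strategy is exactly the paper's: the paper proves this lemma by rerunning the proof of Lemma~\ref{l:nebbia2} with $\mf{\bar u}_\ast$ in place of $\mf t_j(\mf u^+,s_j)$ and $|\mf u_b^+-\mf u_b^-|$ in place of $|s_j|$, with the estimate~\eqref{e:jd1} playing the role of~\eqref{e:nc}. Your second case ($\lambda_k(\mf{\bar u}_\ast)>0$, hence $\xi_k=0$ by~\eqref{e:pioggia}, hence $|s'_k|\leq C_6|\mf u_b^+-\mf u_b^-|$ by~\eqref{e:jd1}, combined with $s'_k<\bar s(\mf{\hat u})\leq 0$) is correct and is precisely Case~2 of that proof. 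The gap is in your first case. After reducing to $|\bar s(\mf{\hat u})|\leq \unpo\,\lambda_k(\mf{\hat u})$ you write $|\lambda_k(\mf{\hat u})|\leq \unpo|\mf{\hat u}-\mf{\bar u}_\ast|+\unpo|\lambda_k(\mf{\bar u}_\ast)|$; the extra term $|\lambda_k(\mf{\bar u}_\ast)|$ is determined by the solution before the jump of the boundary datum, is in general only of size $\unpo\,\delta$, and has no relation to $|\mf u_b^+-\mf u_b^-|$, so it cannot be absorbed into $C_{11}|\mf u_b^+-\mf u_b^-|$. Your proposed remedy — \eqref{e:pioggia} together with Step~1 of Lemma~\ref{l:shockaftershock} — goes in the wrong direction: that step gives the \emph{lower} bound $[\lambda_k]^-\geq \tfrac d2\,\underline s$, which can never produce an upper bound on $|\lambda_k(\mf{\bar u}_\ast)|$ in terms of the boundary jump, and the subsequent sentence about ``case iii)'' does not repair this. (Also, your parenthetical ``both states lie where $\lambda_k\leq 0$'' is incompatible with~\eqref{e:amaro}, which imposes $\lambda_k(\mf{\hat u})\geq 0$; the case split should be only on the sign of $\lambda_k(\mf{\bar u}_\ast)$, as in Lemma~\ref{l:nebbia2}.)

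The fix is the one-line sign argument already used in~\eqref{e:neve} and in Case~1 of the proof of Lemma~\ref{l:nebbia2}: the hypothesis $\lambda_k(\mf{\bar u}_\ast)\leq 0$ is used to \emph{drop} the term, not to bound it. Indeed, by genuine nonlinearity~\eqref{e:gnl} and $\lambda_k(\mf t_k(\mf{\hat u},\bar s(\mf{\hat u})))=0$ (see~\eqref{e:baresse}) one has $|\bar s(\mf{\hat u})|\leq \tfrac 2d\,\lambda_k(\mf{\hat u})\leq \tfrac 2d\big(\lambda_k(\mf{\hat u})-\lambda_k(\mf{\bar u}_\ast)\big)\leq \unpo\,|\mf{\hat u}-\mf{\bar u}_\ast|\leq \unpo\sum_{i=k+1}^N|s'_i|\leq \unpo\,|\mf u_b^+-\mf u_b^-|$, the last two inequalities by the definition of $\mf{\hat u}$ and by~\eqref{e:jd1}; no term $|\lambda_k(\mf{\bar u}_\ast)|$ ever appears. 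With this correction your argument coincides with the paper's proof.
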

\begin{proof}
We can follow the proof of Lemma~\ref{l:nebbia2} and replace $\mf t_j (\mf u^+, s_j)$ with $\mf{\bar u}$, and $s_j$ with $|\mf u_b^+ - \mf u_b^-|$. The details are omitted. 
\end{proof}

The reason why we need the above lemma is the following: we want to evaluate $\Delta Q(t) + \Delta R(t)$. To this end, we recall the analysis in \S\ref{ss:abrie} and we point out that after the interaction the term $R$ may contain a contribution in the form $|\xi'_k||\bar s (\mf{\hat u})|$, where $\xi'_k$ is defined as in \S\ref{ss:abrie} and $\mf{\hat u}$ is as in~\eqref{e:hatu}, but only if we are in case iii) in \S\ref{ss:abrie}. We recall that in this case $\xi'_k = s'_k - \bar s (\mf{\hat u})$. By applying Lemma~\ref{l:nebbia}  we conclude that 
\begin{equation}\label{e:brina}
\begin{split}
   |\xi'_k||\bar s (\mf{\hat u})| & = | s'_k - \bar s (\mf{\hat u})| |\bar s (\mf{\hat u})| \stackrel{\bar s (\mf{\hat u}) <0 \; \text{in case iii)}}{\leq} |s'_k | |\bar s (\mf{\hat u})|
   \stackrel{\eqref{e:jd1},\eqref{e:pallido}}{\leq} [ |\xi_k| + C_6 | |\mf u_b^+ - \mf u_b^-|]  C_{11}  |\mf u_b^+ - \mf u_b^-|\\
   & \stackrel{\eqref{e:assumption}}{\leq}
   [1+ C_6] C_{11} \delta  |\mf u_b^+ - \mf u_b^-|. 
   \end{split}
\end{equation}
Wrapping up, we have 
\be \label{e:jdqr}\begin{split}
     \Delta Q(t) + \Delta R(t) & \stackrel{\eqref{e:Q},\eqref{e:R}}{\leq} 
     \left( \sum_{j=k+1}^N |s'_j|  \right)
     \sum_{\beta} |\vartheta_\beta| + |s'_k|  \sum_{\beta}w_\beta |\vartheta_\beta|
     + |\xi'_k||\bar s (\mf{\hat u})| - |\xi_k|  \sum_{\beta}w_\beta |\vartheta_\beta| \\& 
     \stackrel{\eqref{e:jd1},\eqref{e:brina}}{\leq}
     2 C_6 |\mf u_b^+ - \mf u_b^-|  \sum_{\beta} |\vartheta_\beta| +  [1+ C_6] C_{11} \delta  |\mf u_b^+ - \mf u_b^-|\\ &
     \stackrel{\eqref{e:assumption}}{\leq}
   \big( 2 C_6 + [1+ C_6] C_{11} \big) \delta  |\mf u_b^+ - \mf u_b^-|  
\end{split}
\eq
Since $\Delta S (t) =0$ (because new wave fronts of the $k$-th family  that enter the domain, if any, have positive speed) 
we conclude that 
\be \label{e:jdupsilon}\begin{split}
    \Delta \Upsilon (t) & \stackrel{\eqref{e:upsilon}}{=} \Delta V (t) + K_1\Delta Q(t) + K_1 \Delta R(t)+ K_3 \Delta Z(t)
      \\ & \stackrel{\eqref{e:jdz},\eqref{e:jdv},\eqref{e:jdqr}}{=}
    |\mf u_b^+ - \mf u_b^-| \Big( C_6 +  K_1 \delta  \big( 2 C_6 + [1+ C_6] C_{11} \big) 
   - K_3  \Big)   \stackrel{\eqref{e:4}}{\leq}   -  |\mf u_b^+ - \mf u_b^-|. 
\end{split}
\eq
\subsection{Two wave fronts interacts inside the domain}
\label{ss:inside}
In this paragraph we assume that at time $t$ two wave fronts $\alpha$ and a $\beta$ intersect. We assume that $\alpha$ and $\beta$ belong to the family 
$i_\alpha$ and $i_\beta$, respectively, and with a slight abuse of notation we say that if $\alpha$ is a non-physical wave front then $i_\alpha=N+1$. We also term $\mf u^+_\alpha$ and $\mf u^+_\beta$ the right state of $\alpha$ and $\beta$, respectively, and by $\mf t_{i_\alpha} (\mf u_\alpha^+, s_\alpha)$ and $\mf t_{i_\beta} (\mf u_\beta^+, s_\beta)$ the left state. If $\alpha$ is a non-physical wave front we set $s_\alpha: = |\mf u^+_\alpha - \mf u^-_\alpha|$, where $\mf u^-_\alpha$ is the left state. We also term $s'_\alpha$ and $s'_\beta$ the sizes of the $i_\alpha$ and $i_\beta$ wave front (if any) outgoing the interaction. 

The exposition is organized as follows: in \S\ref{sss:751} we give some preliminary results, in \S\ref{sss:deltaupsid} we assume that the interaction is solved by the accurate Riemann solver, see~\S\ref{ss:arie}, in \S\ref{sss:753} we assume that the interaction is solved by the simplified Riemann solver, see~\S\ref{ss:srie}, and finally in \S\ref{sss:754} we discuss the interaction between a physical and a non-physical wave front. 
\subsubsection{Preliminary results}\label{sss:751}
We first recall~\cite[eq (7.31),(732) p. 133]{Bressan} and~\cite[eq (7.33),(7.34) p. 134]{Bressan} and conclude\footnote{Note that, strictly speaking, estimates (7.31)-(7.34) in~\cite{Bressan} involve the wave strengths along the admissible wave fan curves of \emph{right} states $\boldsymbol{m}_i$, rather than the admissible wave fan curves of \emph{left} states $\mf t_i$. See also footnote~\footref{foot} at page~\pageref{foot} and Remark~\ref{r:destrasinistra} at page~\pageref{r:destrasinistra}. However, owing to the relation~\eqref{e:avantindietro}, these estimates  straightforwardly extend to the strengths of the waves along the curves $\mf t_i$, $i=1, \dots, N$, that is the strengths involved in the definition of the functionals in \S\ref{ss:deff}. 
} that there is a constant $C_5>0$ only depending on $\mf E$, $\mf A$ and $\mf u^\ast$ such that
\be \label{e:iebressan}
     |s'_\alpha - s_\alpha| +   |s'_\beta - s_\beta | + \sum_{j \neq i_\alpha, i_\beta} |s'_j| \leq C_5 |s_\alpha s_\beta| \quad \text{for $i_\alpha, i_\beta \in \{ 1, \dots, N+1 \}$, $i_\alpha \neq i_\beta$} 
\eq
and 
\be \label{e:iebressan2}
     |s'_\alpha - (s_\alpha + s_\beta)| +   \sum_{j \neq i_\alpha} |s'_j| \leq C_5 |s_\alpha s_\beta| \quad \text{for $i_\alpha, i_\beta \in \{ 1, \dots, N+1 \}$, $i_\alpha= i_\beta$}. 
\eq
Next, we control the change in the quantity defined by~\eqref{e:speedwft} at the interaction. 
\begin{lemma}
\label{l:sigmaba} 
Assume $i_\alpha =k$ and let $\mf u_\alpha$ and $\mf u'_\alpha$ denote the right state of the wave front of family $i_\alpha$ before and after the interaction, respectively. Then there are constants $\nu>0$ and $C_3>0$  only depending on $\mf E$, $\mf A$ and $\mf u^\ast$ such that, if $|s_\alpha| \leq \nu$ and $|s_\beta| \leq \nu$ then  
\be 
\label{e:dadim3}
      | - \varsigma_k (\mf u_\alpha, s_\alpha) + \varsigma_k (\mf u'_\alpha, s'_\alpha)  | \leq C_3 |s_\beta|,
\eq
provided $\varsigma_k $ is the same as in~\eqref{e:speedwft}. 
\end{lemma}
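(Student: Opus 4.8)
The statement to prove is Lemma~\ref{l:sigmaba}: when a wave front $\beta$ of family $i_\beta \neq k$ interacts with a $k$-wave front $\alpha$, the quantity $\varsigma_k$ of the outgoing $k$-front differs from that of the incoming $k$-front by at most $\unpo |s_\beta|$. The plan is to first reduce everything to a bound on the variation of two pieces of data: the right state $\mf u_\alpha$ of the $k$-front and its strength $s_\alpha$. From the standard interaction estimate~\eqref{e:iebressan} (here $i_\alpha = k \neq i_\beta$, so we are in the case of two distinct families) we have $|s'_\alpha - s_\alpha| \leq C_5 |s_\alpha s_\beta| \leq \unpo |s_\beta|$ (using smallness of the strengths), and since the right state of the outgoing $k$-front is obtained from $\mf u^+_\beta$ or from an intermediate state of the fan by composing wave curves of size $\unpo |s_\alpha s_\beta| + \unpo(\text{other incoming data})$, the Lipschitz dependence of the Riemann solver yields $|\mf u'_\alpha - \mf u_\alpha| \leq \unpo |s_\beta|$. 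The key observation enabling the reduction is simply that $\varsigma_k$ is a (locally Lipschitz) function of the pair $(\mf u_\alpha, s_\alpha)$ — in both branches of~\eqref{e:speedwft}, namely $\sigma_k(\mf u, s)$ for $s \geq 0$ and $\lambda_k(\mf t_k(\mf u, s))$ for $s < 0$.

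\textbf{Key steps.} First I would recall~\eqref{e:speedwft} and note that both $\sigma_k$ (defined through the Rankine–Hugoniot relation~\eqref{e:rh}) and the map $\mf u \mapsto \lambda_k(\mf t_k(\mf u, s))$ are of class $C^{1,1}$ (or at least Lipschitz) in a neighborhood of the relevant states, because the $k$-th field is genuinely nonlinear~\eqref{e:gnl} and the wave fan curve $\mf t_k$ of~\eqref{e:laxc} is $C^{1,1}$. The only subtlety is that the formula for $\varsigma_k$ switches branch at $s = 0$; however, $\sigma_k(\mf u, 0) = \lambda_k(\mf u) = \lambda_k(\mf t_k(\mf u, 0))$, so the two branches agree to first order and $\varsigma_k$ is globally Lipschitz across $s = 0$ with a uniform constant (this is exactly the kind of gluing already used, e.g., in the proof of Lemma~\ref{l:speed}). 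Second, I would invoke~\eqref{e:iebressan}: since $i_\alpha = k$ and $i_\beta \neq k$, we get $|s'_\alpha - s_\alpha| \leq C_5 |s_\alpha s_\beta|$, and the change of the right state is similarly controlled — the right state of the outgoing $k$-front is a point on a composition of wave curves whose total parameter displacement from $\mf u_\alpha$ is $\unpo(|s'_\alpha - s_\alpha| + \sum_{j > k}|s'_j|) \leq \unpo |s_\alpha s_\beta|$ — whence $|\mf u'_\alpha - \mf u_\alpha| \leq \unpo |s_\alpha s_\beta| \leq \unpo |s_\beta|$ once $|s_\alpha| \leq \nu$. Third, combining the Lipschitz continuity of $\varsigma_k$ with these two bounds yields
$$
  |\varsigma_k(\mf u'_\alpha, s'_\alpha) - \varsigma_k(\mf u_\alpha, s_\alpha)| \leq \unpo\bigl(|\mf u'_\alpha - \mf u_\alpha| + |s'_\alpha - s_\alpha|\bigr) \leq \unpo |s_\alpha s_\beta| \leq C_3 |s_\beta|,
$$
for a constant $C_3$ depending only on $\mf E$, $\mf A$ and $\mf u^\ast$, which is~\eqref{e:dadim3}.

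\textbf{Main obstacle.} The only genuinely delicate point is the behaviour of $\varsigma_k$ across the branch switch at $s = 0$, together with the fact that both $s_\alpha$ and $s'_\alpha$ could lie on opposite sides of $0$ (a rarefaction absorbing part of a shock, or vice versa). One must check that $\varsigma_k$ is Lipschitz \emph{uniformly} in a full neighborhood of $(\mf u^\ast, 0)$ in the $(\mf u, s)$-plane, not merely on each half-plane $\{s \geq 0\}$ and $\{s \leq 0\}$ separately. This follows from the $C^{1,1}$ regularity of $\mf t_k$ and of $\sigma_k$ and the first-order matching $\partial_s \sigma_k|_{s=0} = \tfrac12 \nabla\lambda_k \cdot \mf r_k = \partial_s[\lambda_k \circ \mf t_k]|_{s=0}$ (classical, cf.\ the computations around~\eqref{e:mela}), so the piecewise-defined $\varsigma_k$ is in fact $C^{1,1}$ as a function of $(\mf u, s)$ near the base point; in particular it is Lipschitz there. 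Everything else is a routine application of~\eqref{e:iebressan} and the Lipschitz continuity of the accurate Riemann solver, exactly as in the proof of Lemma~\ref{l:nc}.
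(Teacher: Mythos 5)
Your overall strategy is exactly the paper's: $\varsigma_k$ in~\eqref{e:speedwft} is Lipschitz in both arguments, and the interaction estimates~\eqref{e:iebressan}--\eqref{e:iebressan2} control how much the pair $(\mf u_\alpha, s_\alpha)$ moves, so~\eqref{e:dadim3} follows. Two of your intermediate claims, however, are wrong as stated, even though neither destroys the final bound. First, the displayed estimate $|\mf u'_\alpha - \mf u_\alpha| + |s'_\alpha - s_\alpha| \leq \unpo |s_\alpha s_\beta|$ is false: the right state of the $k$-front changes at \emph{first} order in $s_\beta$, not second. If $i_\beta > k$, the outgoing $i_\beta$-wave (of strength $\approx s_\beta$) sits between the outgoing $k$-front and the unchanged far-right state, so $\mf u'_\alpha$ is obtained from $\mf u_\alpha$ by crossing a wave of size $\unpo|s_\beta|$; if $i_\beta < k$, the far-right state itself lies across the incoming $\beta$-wave from $\mf u_\alpha$. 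The correct bound, and the one the paper uses, is $|\mf u_\alpha - \mf u'_\alpha| \leq \unpo|s_\beta| + \unpo|s_\alpha s_\beta| \leq \unpo|s_\beta|(1+\delta)$; since the lemma only asks for $C_3|s_\beta|$, your conclusion survives once you replace the spurious $\unpo|s_\alpha s_\beta|$ step by this first-order bound (which, to be fair, is what you asserted in your opening plan).

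Second, the claimed first-order matching $\partial_s \sigma_k|_{s=0} = \tfrac12 \nabla\lambda_k \cdot \mf r_k = \partial_s[\lambda_k \circ \mf t_k]|_{s=0}$ is false under genuine nonlinearity: the rarefaction branch has one-sided slope $\nabla\lambda_k \cdot \mf r_k$ at $s=0$ while the shock branch has slope $\tfrac12 \nabla\lambda_k \cdot \mf r_k$, so $\varsigma_k$ has a genuine corner at $s=0$ and is \emph{not} $C^{1,1}$ there. What is true, and all that is needed, is that the two branches agree at $s=0$ (both equal $\lambda_k(\mf u)$) and are each Lipschitz, so the glued function is Lipschitz in $(\mf u, s)$ with a uniform constant near $(\mf u^\ast,0)$; this is precisely the property the paper invokes. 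With these two corrections your argument coincides with the paper's proof.
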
 
\begin{proof}[Proof of Lemma~\ref{l:sigmaba}]
It suffices to point out that the function $\varsigma_k $ defined by~\eqref{e:speedwft} is Lipschitz continuous with respect to both $\mf{\widetilde u}$ and $s_k$, and that owing to~\eqref{e:iebressan} and~\eqref{e:iebressan2}
$$
    |\mf u_\alpha - \mf u'_\alpha| \leq \unpo |s_\beta| + \unpo |s_\alpha s_\beta| \stackrel{\eqref{e:assumption}}{\leq}
    \unpo |s_\beta| (1 + \delta). 
$$
\end{proof}
Note that an elementary computation shows that~\eqref{e:dadim3} implies 
\be \label{e:dadim3bis}
     \Big|[ \varsigma_k (\mf u_\alpha, s_\alpha) ]^- -[ \varsigma_k (\mf u'_\alpha, s'_\alpha)]^-   \Big| \leq C_3 |s_\beta|.
\eq  
We now recall~\eqref{e:hp},\eqref{e:uast} and~\eqref{e:assumption}, and point out that by our construction of the $\ee$-wave front-tracking approximation $\mf u_\ee$ we have $\lim_{x \to + \infty} \mf u_\ee (t, x) =\lim_{x \to + \infty} \mf u_0 (x)$  for every $t \ge 0$. We conclude that if $\delta^\ast \leq \delta$ then 
there is a constant $C_4$ only depending on $\mf E$, $\mf A$ and $\mf u^\ast$ such that
\be \label{e:msigma}
      \sup_{\text{$k$-wave}} {[\varsigma_k (\mf u_\alpha, s_\alpha)]^-} \leq 
       \sup_{\text{$k$-wave}} {|\varsigma_k (\mf u_\alpha, s_\alpha)|} \leq  C_4 \delta. 
\eq 
\subsubsection{Accurate Riemann solver} \label{sss:deltaupsid}
We consider the case that the interaction between the wave fronts $\alpha$ and $\beta$ is solved by the accurate Riemann solver. 
By applying either~\eqref{e:iebressan} or~\eqref{e:iebressan2}  we conclude that 
\be \label{e:idv}
    \Delta V(t) \stackrel{\eqref{e:V}}{\leq} A C_5 |s_\alpha s_\beta|
\eq
To control $\Delta Q(t)$ we basically proceed as in~\cite[p.137]{Bressan}, the only difference is that our definition of approaching waves is slightly different. More precisely, according to our definition two wave fronts of the $k$-th family are always approaching, even if they are both rarefaction wave fronts. 
We recall that rarefaction fronts of the same family of incoming wave-fronts are never partitioned (see property~\cite[(P)]{Bressan}) and we realize that the difference with the analysis in~\cite[p.137]{Bressan} occurs when $i_\alpha \neq k \neq i_\beta$ and some rarefaction wave fronts of the $k$-th family are generated at the interaction. Let us denote by $m$ the number of the 
outgoing rarefaction wave fronts of the $k$-th family, then the increment of $Q$ due to the interaction among these waves can be controlled by
$$
    \underbrace{\left( \frac{A C_5  |s_\alpha s_\beta|}{m} \right)}_{\text{maximal strength of the single wave front}} \! \! \! \! \! \! \! \! \! \! \! \! \! \! \! \! \! \! \! \! \! \! \! \! \! 
\times \left( \frac{AC_5  |s_\alpha s_\beta|}{m} \right) 
\! \! \! \! \! \! \! \! \! \! \! \! \! \! \! \! \! \! \! \! \! 
\underbrace{ \sum_{h=1}^{m-1} h}_{\text{number of possible interactions}}
\! \! \! \! \! \! \! \! \! \! \! \! \! \! \! \! \! \! \! \! \! \! \! \! \! 
 \leq \frac{A^2 C_5^2 |s_\alpha s_\beta|^2}{m^2} \frac{m (m-1)}{2}  \! \! 
   \stackrel{\eqref{e:s},\eqref{e:V},\eqref{e:upsilon},\eqref{e:assumption}}{\leq} \! \!
    A^2 C^2_5 \delta^2  |s_\alpha s_\beta|. 
$$
By recalling~\eqref{e:iebressan} and~\eqref{e:iebressan2}  this yields 
\begin{equation}
\label{e:idq}
     \Delta Q (t)\leq - |\vartheta_\alpha  \vartheta_\beta| + A C_5 |s_\alpha s_\beta| \sum_{\omega \neq \alpha, \beta} |\vartheta_\omega| + A^2
      C^2_5 \delta^2  |s_\alpha s_\beta| \! \! 
     \stackrel{\eqref{e:assumption},\eqref{e:upsilon},\eqref{e:V}}{\leq}  \! \! - |\vartheta_\alpha  \vartheta_\beta| + A C_5 \delta ( 1 + \delta AC_5)  |s_\alpha s_\beta| 
\end{equation}
and that
\be \label{e:deltar}
      \Delta R(t) \leq A |\xi_k| C_5 |s_\alpha s_\beta|  \stackrel{\eqref{e:assumption},\eqref{e:upsilon},\eqref{e:V}}{\leq} A  C_5 \delta |s_\alpha s_\beta|
       \eq
We now control $\Delta S(t):$ if $i_\alpha \neq k$ and $i_\beta \neq k$, then we combine~\eqref{e:msigma} with either~\eqref{e:iebressan} or~\eqref{e:iebressan2} and conclude that 
\be \label{e:deltas1}
      \Delta S (t)\leq C_4 C_5 \delta |s_\alpha s_\beta| \qquad \text{if} \; i_\alpha \neq k, \; i_\beta \neq k. 
\eq
If $i_\alpha =k$ and $i_\beta \neq k$, then 
\be \label{e:deltas2}
\begin{split}
    &\Delta S (t) \! \! = \! \! |s'_\alpha| [\varsigma_k (\mf u'_\alpha, s'_\alpha)]^-\! \! - \! \! |s_\alpha| [\varsigma_k (\mf u_\alpha, s_\alpha)]^-\! \!\! =\! \!
    [\varsigma_k (\mf u'_\alpha, s'_\alpha)]^-\big(|s'_\alpha| \! \! - \! \!|s_\alpha|  \big) \! + \! |s_\alpha| \big(   [\varsigma_k (\mf u'_\alpha, s'_\alpha)]^- -  [\varsigma_k (\mf u_\alpha, s_\alpha)]^-\big) \\
    & \! \!\! \!\stackrel{\eqref{e:msigma},\eqref{e:iebressan}}{\leq} \! \!\! \!C_4 C_5 \delta |s_\alpha| |s_\beta| 
    + |s_\alpha| \big(   [\varsigma_k (\mf u'_\alpha, s'_\alpha)]^- -  [\varsigma_k (\mf u_\alpha, s_\alpha)]^-\big)\! \! \stackrel{\eqref{e:dadim3bis}}{\leq}\! \!
    C_4 C_5 \delta |s_\alpha s_\beta| 
 + C_3 |s_\alpha s_\beta| 
\end{split}
\eq
If $i_\alpha =k$ and $i_\beta = k$, then
\be \label{e:deltas3}
\begin{split}
    \Delta S (t)& = |s'_\alpha|    [\varsigma_k (\mf u'_\alpha, s'_\alpha)]^-  - |s_\alpha|  [\varsigma_k (\mf u_\alpha, s_\alpha)]^-- |s_\beta| [\varsigma_k (\mf u_\beta, s_\beta)]^-\\ &=
    [\varsigma_k (\mf u'_\alpha, s'_\alpha)]^- \big(|s'_\alpha| -|s_\alpha| - |s_\beta| \big) +
      \big( |s_\alpha| + |s_\beta| \big)   [\varsigma_k (\mf u'_\alpha, s'_\alpha)]^- 
    - |s_\alpha|  [\varsigma_k (\mf u_\alpha, s_\alpha)]^-- |s_\beta| [\varsigma_k (\mf u_\beta, s_\beta)]^- \\
    & \stackrel{\eqref{e:msigma},\eqref{e:iebressan2}}{\leq}C_4 C_5 \delta |s_\alpha| |s_\beta| 
    + |s_\alpha|  \big(   [\varsigma_k (\mf u'_\alpha, s'_\alpha)]^- -  [\varsigma_k (\mf u_\alpha, s_\alpha)]^-\big)+  
    |s_\beta|  \big(   [\varsigma_k (\mf u'_\alpha, s'_\alpha)]^- -  [\varsigma_k (\mf u_\beta, s_\beta)]^-\big)\\
    & \stackrel{\eqref{e:dadim3bis}}{\leq}
    C_4 C_5 \delta |s_\alpha s_\beta| 
 + 2 C_3 |s_\alpha s_\beta|
\end{split}
\eq
Since $\Delta Z(t)=0$ we conclude that 
 \begin{equation} \label{e:upsgiu2} \begin{split}
    \Delta \Upsilon (t)& \stackrel{\eqref{e:upsilon}}{=}
    \Delta V(t) + K_1 \Delta Q (t) + K_1 \Delta R(t) + K_2 \Delta S(t) \\ &
   \stackrel{\eqref{e:idv},\eqref{e:idq},\eqref{e:deltar}}{\leq}
   |s_\alpha s_\beta| \Big[ A C_5 + \delta K_1 AC_5 (2  + \delta  A C_5)\Big] - 
   K_1 |\vartheta_\alpha \vartheta_\beta |+ K_2 \Delta S(t) \\ &
    \stackrel{\eqref{e:deltas1},\eqref{e:deltas2},\eqref{e:deltas3}}{\leq}
   |s_\alpha s_\beta| \Big[ A C_5 + \delta K_1 A C_5 (2  + \delta A C_5)  + K_2 C_4 C_5 \delta 
 + 2 K_2 C_3  \Big] - 
   K_1 |\vartheta_\alpha \vartheta_\beta |\\& 
 \stackrel{\eqref{e:s}, \ A>1}{\leq} 
   |s_\alpha s_\beta| \Big( A C_5   - K_1  + 2 K_1 A \delta C_5 + K_1 A^2C_5^2 \delta^2+ 
    K_2   C_4 C_5 \delta  + 2 K_2 C_3 \Big) \stackrel{\eqref{e:tre}}{\leq} - |s_\alpha s_\beta|.
    \end{split}
\end{equation}
\subsubsection{Simplified Riemann solver}\label{sss:753}
The analysis is analogous to the one in \S\ref{sss:deltaupsid}, but simpler. We have $\Delta V (t) \leq C_5 |s_\alpha s_\beta|$, $\Delta Q(t) \leq - |s_\alpha s_\beta| +
AC_5 \delta |s_\alpha s_\beta|$, $\Delta R (t)=0$. We also have~\eqref{e:deltas1},\eqref{e:deltas2} and~\eqref{e:deltas3} and by following~\eqref{e:upsgiu2} we arrive at $\Delta \Upsilon(t) \leq  - |s_\alpha s_\beta|$.  
\subsubsection{Interaction between a physical and a non-physical front}\label{sss:754}
In this case we have $\Delta V (t) \leq C_5 |s_\alpha s_\beta|$, $\Delta Q (t)= - |s_\alpha s_\beta|$, $\Delta R(t) =0$. We also have either $\Delta S(t)=0$ (if the physical front is not a $k$-wave) or \eqref{e:deltas2} (if the physical front is a $k$-wave) and by following~\eqref{e:upsgiu2} we arrive at $\Delta \Upsilon (t)\leq  - |s_\alpha s_\beta|$.   
\section{Convergence proof}\label{s:otto}
In this section we establish~\eqref{e:16bis} for some function $\mf v$ such that $\mf v \in BV(]0, T[ \times \R_+)$ for every $T>0$. What we actually show is that, for every vanishing 
sequence $\{ \ee_m \}_{m \in \mathbb N }$ there is a subsequence (which to ease the notation we do not relabel) such that 
\be \label{e:81}
    \mf u_{\ee_m} (t, \cdot) \to \mf u (t, \cdot)  \quad \text{in $L^1_{\loc} (\R_+)$ as $m \to + \infty$ for every $t>0$},
\eq
where $\mf u_{\ee_m}$ is the wave front tracking approximation constructed as in~\S\ref{s:cinque} and $\mf u$ is a suitable limit function. To obtain~\eqref{e:16bis} we then simply apply the change of variables $\mf u \leftrightarrow \mf v$. The main steps in the proof of~\eqref{e:81} are the same as in the proof of~\cite[Theorem 7.1]{Bressan}, so we do not repeat every detail, but only focus on the parts where the proof is different. 
More precisely, we establish Lemmas~\ref{l:numberwf},~\ref{l:raref},~\ref{l:np}, which heuristically speaking state that the total number of wave fronts is finite, that the strength of each rarefaction front is small and that the total strength of non-physical wave fronts is small, respectively. 

Note that the analysis in~\cite[Chapters 7,10]{Bressan} yields that $\mf v$ being the limit of the wave front-tracking approximation enjoys several further properties: in particular it has the regularity properties dictated by~\cite[Theorem 10.4]{Bressan} and if the system~\eqref{e:claw} has a convex entropy then $\mf v$ is entropy admissible. Conversely, showing that $\mf v$ satisfies the boundary condition $\mf v_b$ in the sense of Definition~\ref{d:equiv} requires quite some new analysis, which is provided in section~\S\ref{s:bc}.  
\begin{lemma}\label{l:numberwf} Let $\mf u_\ee$ be the wave front-tracking approximation constructed in \S\ref{s:cinque}, then the total number of wave fronts and interactions is finite and bounded by a constant that only depends on $N$, on the number of discontinuities in the initial and boundary conditions $\mf u^\ee_0$ and $\mf u^\ee_b$, and on the threshold parameter $\omega_\ee$ in \S\ref{ss:whenas}. 
\end{lemma}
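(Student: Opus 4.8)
\textbf{Proof proposal for Lemma~\ref{l:numberwf}.}

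The plan is to follow closely the classical argument in~\cite[Chapter 7]{Bressan}, adapting it to account for the boundary and for the center component of the boundary layers. The key observation is that, because the functional $\Upsilon$ defined in~\eqref{e:upsilon} is monotone non-increasing in time (established in \S\ref{s:functional}) and bounded by $\delta$, the total strength $V(t)$ of the wave fronts plus $|\xi_k|$ stays uniformly bounded, and every interaction produces a strict decrease of a suitable quantity. First I would recall that the wave front-tracking construction generates new wave fronts only at three kinds of events: interactions between two wave fronts inside the domain, interactions of a wave front with the boundary, and discontinuities of the approximate boundary datum $\mf u_b^\ee$. The number of discontinuities of $\mf u_b^\ee$ is finite by construction (see \S\ref{ss:id}), so only finitely many boundary-datum events occur, and each produces at most $N$ new fronts by the accurate boundary Riemann solver. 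It therefore suffices to bound the number of the remaining interactions.

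Next I would split the wave fronts, exactly as in~\cite[p.133, p.142]{Bressan}, into those generated by accurate solvers and those generated by simplified solvers; by the discrimination rules in \S\ref{ss:whenas} a front of the $j$-th family with $j\neq k$ created by a simplified solver is non-physical, and non-physical fronts are never split. The argument that only finitely many non-physical fronts are created is the standard one: each non-physical front is generated at an interaction where the product of the strengths (or, at the boundary, the quantity $|s|([\varsigma_k]^-+|\xi_k|)$ for a $k$-wave, or $|s|$ for a $j$-wave with $j<k$, cf.~\S\ref{ss:whenas}) is below the threshold $\omega_\ee$; since $\Upsilon$ decreases by a definite amount at each accurate interaction (see~\eqref{e:upsgiu2},\eqref{e:c:dupsilon},\eqref{e:upsgiu1}) and $\Upsilon(0)\le C_{14}\delta^\ast$, the total number of accurate interactions is at most $\unpo \delta^\ast/\omega_\ee^2$ (for interior interactions) plus a similar bound for boundary interactions, whence only finitely many fronts are ever created. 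For the physical fronts one then runs the by-now classical bookkeeping: each physical front is either continued from an earlier one, or created at an accurate interaction, or created at a boundary-datum discontinuity; Lemma~\ref{l:shockaftershock} guarantees that a $k$-shock hitting the boundary is reflected as a $k$-shock and not a rarefaction, so that (together with the splitting exception for rarefactions of the same family) each physical front can be uniquely traced backward in time until it is created, cancelled by a front of the same family with opposite sign, or absorbed by the boundary.

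The main obstacle, and the point requiring care beyond~\cite{Bressan}, is the interaction of $k$-rarefaction fronts with the boundary and with the center component $\xi_k$ of the boundary layer: because of Remark~\ref{r:approach}, a $k$-rarefaction can hit the boundary, be ``absorbed'' into $\xi_k$, and later re-emerge and interact with another $k$-wave, so one must verify that this does not create an infinite cascade. Here the key is that such events still cause a definite decrease of $\Upsilon$: by the interaction estimate~\eqref{e:me} and the bound~\eqref{e:rarepiccola} of Lemma~\ref{l:enterare}, the size of the $k$-front re-emerging from the boundary is controlled by $C_8|s|(|\xi_k|+[\varsigma_k]^-)$, and the corresponding decrease $-|s|([\varsigma_k]^-+|\xi_k|)$ in $\Upsilon$ recorded in~\eqref{e:c:dupsilon} is bounded below by $\unpo\omega_\ee$ whenever the accurate boundary solver is used, which is precisely when a new front is created. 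Hence each boundary interaction that creates a front decreases $\Upsilon$ by at least $\unpo \omega_\ee$, and interactions using the simplified solver create only a single non-physical front whose total strength is controlled as above. Combining these bounds, the total number of fronts and of interactions is finite and is bounded by a constant depending only on $N$, on the number of discontinuities of $\mf u_0^\ee$ and $\mf u_b^\ee$, and on $\omega_\ee$, which is the assertion of the lemma.
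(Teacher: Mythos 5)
There is a genuine gap in your treatment of the non-physical fronts, which is precisely the point the paper flags as the most delicate one. Your $\Upsilon$-based count ($\Delta\Upsilon\le-\unpo\,\omega_\ee$ at every \emph{accurate} interaction, interior or at the boundary) correctly reproduces the paper's Step~1 and bounds the number of events at which new \emph{physical} fronts appear. But non-physical fronts are created exactly when the \emph{simplified} solvers are used, i.e. when the relevant quantity is \emph{below} the threshold $\omega_\ee$; at such events $\Upsilon$ decreases only by $|s_\alpha s_\beta|$, $|s|$, or $|s|([\varsigma_k]^-+|\xi_k|)$, which can be arbitrarily small, so your claim that ``each boundary interaction that creates a front decreases $\Upsilon$ by at least $\unpo\,\omega_\ee$'' is false for simplified boundary interactions (which do create a non-physical front, and possibly a reflected $k$-front, see \S\ref{ss:sbrie}). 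For interior interactions the standard remedy works (two given physical fronts meet at most once, and physical fronts are finitely many), but at the boundary the danger is that a single $k$-family front bounces back and forth between the boundary and other waves infinitely many times, each hit being below threshold, producing a new non-physical front at every bounce. Controlling the \emph{total strength} of non-physical fronts (your last remark, which is Lemma~\ref{l:np}) does not bound their \emph{number}, so your argument does not close this loop.

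The paper rules out the infinite bouncing by a geometric argument, not by the functional: if a $k$-front $\alpha$ leaves the boundary at time $\tau_1$ with positive speed and crosses no physical front on $]\tau_1,\tau_2[$, it cannot cross a non-physical front either, because a non-physical front overtaking $\alpha$ (speed $\hat\lambda$, larger than any characteristic speed) must have been generated at the boundary at some $\tau_\ast>\tau_1$ by a physical front hitting the boundary, and that physical front would have had to cross $\alpha$ on $]\tau_1,\tau_\ast[$ --- a contradiction. Hence $\alpha$ can acquire negative speed and return to the boundary only after crossing a physical front; since physical fronts are finitely many (Step~1) and each pair of physical fronts interacts at most once, each $k$-front hits the boundary only finitely many times, and with it the number of non-physical fronts and of interactions is finite. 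Your proposal needs this (or an equivalent) argument; as written, the finiteness of the number of simplified boundary interactions, and hence of non-physical fronts, is not established.
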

\begin{proof}
We recall the rules stated in \S\ref{ss:whenas} and we proceed according to the following steps.\\
{\sc Step 1:} we show that the total number of physical fronts is finite. The number of physical fronts created at $t=0$ is finite because so is the number of discontinuities of the approximate initial datum. By also recalling the analysis in \S\ref{ss:arie}, \S\ref{ss:abrie}, \S~\ref{ss:srie}, \S~\ref{ss:sbrie} we conclude that the number of physical fronts can only increase
\begin{itemize}
\item at times where the approximate boundary datum has a discontinuity. These times are finitely many by construction of the approximate boundary datum;
\item at times where the accurate Riemann solver is used to solve interactions inside the domain. By using the analysis in \S\ref{ss:inside} 
and in particular~\eqref{e:upsgiu2} we conclude that any time this happens, we have $\Delta \Upsilon \leq - \omega_\ee$. Since $0 \leq \Upsilon (t)\leq \Upsilon (0)$, this can happen at most finitely many times;
\item at times where the accurate boundary Riemann solver is used to solve boundary Riemann problems at the domain boundary. Owing to~\eqref{e:upsgiu1} and~\eqref{e:c:dupsilon}, any times this happens we have $\Delta \Upsilon \leq - \omega_\ee$ and hence this can happen at most finitely many times;
\item the number of physical fronts does not increase at times where the simplified Riemann or boundary Riemann solver is used. 
\end{itemize}
{\sc Step 2:} we show that the total number of non-physical fronts is finite, which is the most delicate point. 
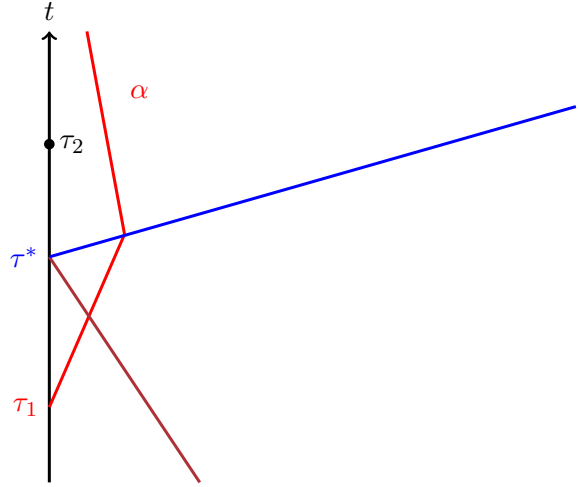
\begin{figure}
\begin{center}
\caption{The argument in {\sc Step 2} of the proof of Lemma~\ref{l:numberwf}} 
\label{f:finite}
\begin{tikzpicture}
\draw[line width=0.4mm,->] (0,1) -- (0, 7) node[anchor=south] {$t$};
\draw[line width=0.4mm,red] (1,4.3) -- (0, 2)  node[anchor=east] {$\tau_1$};
\draw[line width=0.4mm,red] (1,4.3) -- (0.5, 7) ;
\draw[red]  (1.2,6.2) node {$\alpha$};
\draw[line width=0.4mm,Maroon] (2, 1) -- (0, 4);
\draw[line width=0.4mm,blue] (0, 4) node[anchor=east] {$\tau^\ast$} -- (7, 6);
\fill (0,5.5) circle (2pt) node[anchor=west] {$\tau_2$};
\end{tikzpicture}\end{center}
\end{figure} Non-physical fronts are not generated at time $t=0$. By the analysis in  \S\ref{ss:srie}, \S\ref{ss:sbrie} we conclude that  the number of non-physical fronts can only increase 
\begin{itemize}
\item at times where an interaction between two physical fronts occurs inside the domain. Since two given physical fronts can interact only once, then by the analysis at {\sc Step 1} this can happen at most finitely many times.  
\item at times where a physical front hits the boundary. Wave fronts of the family $j<k$ can only hit the boundary once and hence this kind of interactions can occur at most finitely many times. However, it may in principle happen that a single wave front of the $k$-th family  is bounced back and forth between the boundary and other waves inside the domain infinitely many times.  To rule out this possibility, fix a given wave front $\alpha$ of the $k$-th family which leaves the boundary (with positive speed) at $t =\tau_1$. This wave front is marked in red in Figure~\ref{f:finite}. Assume that $\alpha$ does not cross any physical front on the time interval $]\tau_1, \tau_2[$; we now show that it cannot cross non-physical fronts either. Indeed, assume by contradiction that is does. Since the speed of a non-physical front (marked in blue in Figure~\ref{f:finite}) is larger than the speed of $\alpha$, the non-physical front must have been created at a time $\tau_\ast > \tau_1$ when a physical wave front (marked in maroon in Figure~\ref{f:finite}) had hit the boundary, and this physical front must have crossed $\alpha$ on the interval $]\tau_1, \tau_\ast[\subseteq ]\tau_1, \tau_2[$, which yields a contradiction. Summing up, if at time $\tau_2>\tau_1$ the wave front $\alpha$ crosses another wave front and as a result the speed of $\alpha$ becomes negative, then $\alpha$ must have crossed a physical front in the interval $]\tau_1, \tau_2]$. Since  two given physical fronts can interact only once, then by the analysis at {\sc Step 1} this can happen at most finitely many times and hence a given wave front can be bounced back and forth between the boundary and other waves at most finitely many times. 
\end{itemize}
This implies that the total number of non-physical fronts is also finite. \\
{\sc Step 3:} the above analysis also implies that the total number of interactions is finite. 
\end{proof}
We recall form \S\ref{ss:arie} that $r_\ee$ represents the threshold for the strength of rarefaction waves 
\begin{lemma}
\label{l:raref}
Let $\alpha$ be a rarefaction wave front in the $\ee$-wave front-tracking approximation $\mf u_\ee$; then the strength of $\alpha$ satisfies  
\be \label{e:raref}
     |s_\alpha | \leq C_9 r_\ee 
\eq
for some suitable constant $C_9>0$ only depending on the functions $\mf A$, $\mf E$, $\mf B$ and $\mf G$ and on the value $\mf u^\ast$.
\end{lemma}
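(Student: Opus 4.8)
The plan is to bound the strength of a rarefaction front by controlling how much it can grow from its creation to its current position, and then invoke the fact that each rarefaction front is created small. First I would recall the two mechanisms that generate or enlarge a rarefaction front in the algorithm of \S\ref{s:cinque}: (i) at a Riemann or boundary Riemann problem involving the initial datum, a discontinuity of $\mf u_b^\ee$, or a wave-front interaction, a rarefaction front of strength at most $r_\ee$ is created (by the partitioning rule recalled in \S\ref{ss:arie}, borrowed from \cite[p.129]{Bressan}), and (ii) when a rarefaction front of the $i$-th family interacts with another wave (or, for the $k$-th family, hits the boundary in cases ii) or iii) of \S\ref{ss:abrie}) and a rarefaction front of the same family emerges, the outgoing front is \emph{not} re-partitioned even if its strength exceeds $r_\ee$. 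Thus the only way a rarefaction front can have strength larger than $O(r_\ee)$ is to accumulate growth through a chain of such interactions.

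Next I would quantify this growth. For interactions inside the domain, the interaction estimates \eqref{e:iebressan} and \eqref{e:iebressan2} give $|s'_\alpha - s_\alpha| \leq C_5 |s_\alpha s_\beta|$ (or $|s'_\alpha - (s_\alpha+s_\beta)| \leq C_5|s_\alpha s_\beta|$ in the same-family case), so the strength of a given rarefaction front is modified by a multiplicative factor $1 + C_5 |s_\beta|$ plus possibly the absorption of an incoming front of strength $|s_\beta|$; for the $k$-th family hitting the boundary, Lemma~\ref{l:enterare} shows the outgoing rarefaction has strength $\leq C_8 |s_k|([\varsigma_k]^- + |\xi_k|)$, which is \emph{smaller} than the incoming $|s_k|$, and for $j$-waves hitting the boundary Lemma~\ref{l:nebbia2} and Lemma~\ref{l:nebbia} give the outgoing $k$-rarefaction strength $\leq C_{12}|s_j|$ (respectively $\leq C_{11}|\mf u_b^+-\mf u_b^-|$). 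Track a fixed rarefaction front $\alpha$ backward in time along its world line: each interaction it undergoes either (a) leaves its family, in which case it is a new small front of strength $\leq r_\ee$, or (b) keeps its family, in which case its strength is controlled, via the estimates above, by the strength just after the previous interaction times $(1+O(\delta))$, plus the strength of whatever same-family front it merged with. Summing the contributions along the chain and using that the total incoming strength of \emph{all} fronts that can ever be absorbed is controlled by the total variation bound $\mathrm{Tot Var}\,\mf u_\ee \leq \delta$ from \eqref{e:settedue}, together with the fact that the product of factors $\prod (1 + C_5|s_{\beta_j}|) \leq \exp(C_5 \sum |s_{\beta_j}|) \leq \exp(C_5 \cdot \unpo\, \delta) = \unpo$ (since $\sum_j |s_{\beta_j}|$ over the interactions along a single world line is again bounded using the Glimm functional $Q$, exactly as in \cite[Chapter 7]{Bressan}), we conclude $|s_\alpha| \leq \unpo\, r_\ee =: C_9 r_\ee$.

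More precisely, I would organize this as a single inductive estimate on interaction times, mirroring the proof of \cite[Lemma 7.3 / property (7.38)]{Bressan}: define, for each rarefaction front present at time $t$, the quantity $\sup_\alpha |s_\alpha|$ over all rarefaction fronts, and show it is bounded by $C_9 r_\ee$ for all $t$ by checking that it cannot jump above this threshold at any of the finitely many interaction times classified in \S\ref{s:functional} (wave-wave interactions inside the domain, $j$-wave hitting the boundary, $k$-wave hitting the boundary, discontinuity of $\mf u_b^\ee$). At each such time the new rarefaction fronts of family $i$ either are freshly partitioned (strength $\leq r_\ee$), or inherit controlled strength from the estimates of \S\ref{s:ie} as above; the decrease of the functional $\Upsilon$ established in \S\ref{s:functional} guarantees that only finitely many ``amplifying'' interactions occur and that their cumulative effect is a bounded multiplicative constant. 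The main obstacle I anticipate is bookkeeping the case where a rarefaction front of the $k$-th family is repeatedly bounced between the boundary and interior waves (the scenario ruled out for \emph{finiteness} in Step~2 of the proof of Lemma~\ref{l:numberwf}): here one must check that each bounce \emph{strictly decreases} its strength, which follows from Lemmas~\ref{l:enterare} and~\ref{l:shockaftershock} (a $k$-rarefaction hitting the boundary can only re-enter as a rarefaction of strength $\leq C_8|s_k|([\varsigma_k]^-+|\xi_k|) \ll |s_k|$ once $\delta$ is small), so no amplification can come from the boundary and the only growth is the benign interior factor $\exp(C_5 \sum|s_\beta|)$. Assembling these observations yields \eqref{e:raref} with $C_9$ depending only on $\mf A, \mf E, \mf B, \mf G$ and $\mf u^\ast$.
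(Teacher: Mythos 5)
Your proposal is correct and takes essentially the same route as the paper: the paper likewise adapts Bressan's world-line argument, combining the interior estimates \eqref{e:iebressan}--\eqref{e:iebressan2} with the boundary Lemmas~\ref{l:enterare}, \ref{l:nebbia2} and~\ref{l:nebbia}, and it makes your ``bounded cumulative amplification'' claim precise by proving monotonicity of the weighted quantity $L_\alpha(t)=|s_\alpha(t)|\exp\big(K_5[Y_\alpha(t)+I(t)]\big)$ along the front. The only bookkeeping detail your sketch leaves implicit is that $Y_\alpha$ must include, besides the waves approaching $\alpha$ and $|\xi_k|$, the total strength of \emph{all} fronts of the families $1,\dots,k-1$, which is what compensates the new waves approaching $\alpha$ generated when a non-characteristic front hits the boundary away from $\alpha$.
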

\begin{proof}
We fix a rarefaction wave front $\alpha$, which is created at time $\tau_\alpha$ and exists till time $T_\alpha$ (with $T_\alpha$ possibly equal to $+ \infty$). We term $s_\alpha(t)$ its size at time $t$ and $i_\alpha$ the family it belongs to. 
Note that $s_\alpha(\cdot)$ is a piecewise constant function defined for $t \ge \tau_\alpha$ and that $|s_\alpha(\tau_\alpha)|\leq r_\ee$ by construction. To control the growth of $s_\alpha$ we adapt an argument given in~\cite[p.139]{Bressan} and we set 
\be \label{e:yj=k}
    Y_\alpha (t) : = \sum_{\beta \in \mathcal A(\alpha)} |\vartheta_\beta| + \sum_{\beta \in \mathcal I }  |\vartheta_\beta|  + |\xi_k|,
\eq
where $ \mathcal A(\alpha)$ is the set of waves approaching $\alpha$ at the time $t$ and $\mathcal I$ is the set of waves of the families $1, \dots, k-1$. The reason why we need to include the term with $\mathcal I$ is to handle  {\sc Case 3} below when the hitting wave is not characteristic. 
Finally, we set 
\be \label{elle}
    I (t) : =   K_1 Q(t) +  K_1 R(t) + K_2 S(t) +  K_3 Z(t), \qquad
    L_\alpha (t) : = |s_\alpha (t)| \exp \big( K_5 [ Y_\alpha(t)+ I (t)] \big),
\eq
where $K_5$ is a suitable constant satisfying 
\be \label{k5}
     K_5 \ge \max \{ 1, 4 C_5\}, \qquad K_5 \delta \leq \frac{1}{2}. 
\eq
Assume for a moment that we have shown that the function $L$ is monotone non-increasing, then 
\begin{equation*}\begin{split}
    |s_\alpha (t)| & \leq |s_\alpha (t)| \exp \big( K_5 [ Y_\alpha(t)+ I (t)] \big) \leq 
      |s_\alpha (\tau_\alpha)| \exp \big( K_5 [ Y_\alpha(\tau_\alpha)+ I (\tau_\alpha)] \big)\\
     & \stackrel{\eqref{e:assumption}}{\leq} r_\ee  \underbrace{\exp \big( K_5 \delta \big)}_{: = C_9} 
     \quad \text{for every $t \in ]\tau_\alpha, T_\alpha[$}
\end{split}
\end{equation*}
that is~\eqref{e:raref}. To show that the function $L$ is monotone non-increasing we assume that a given time $\tau$ an interaction or a discontinuity in the boundary datum occurs and by using an elementary computation based on the convexity of the exponential function we get 
\be \label{expcon}\begin{split}
    \Delta L (\tau) & \leq  \exp \big( K_5  [ Y_\alpha (\tau^+)  + I(\tau^+)] \big)
     \Big[ \Delta |s_\alpha| (\tau) +  K_5 |s_\alpha (\tau^-)| [\Delta Y_\alpha (\tau)  + \Delta I(\tau)]  \Big]
\end{split}     
\eq 
We now prove $\Delta L (\tau) \leq 0$ by  separately considering the following cases. \\
{\sc Case 1:} the interaction occurs inside the domain and does not involve $\alpha$, but rather two wave fronts with sizes $s_\beta$ and $s_\gamma$. We have 
$$
      \Delta |s_\alpha| (\tau) = 0. 
$$
Since $\xi_k (\tau^+) = \xi_k (\tau^-)$ then by \eqref{e:iebressan} and \eqref{e:iebressan2} we get 
$ \Delta Y_\alpha (\tau) \leq AC_5 |s_\beta| |s_\gamma|$ and by following the analysis in \S\ref{ss:inside} and in particular~\eqref{e:upsgiu2} we conclude 
$$
      \Delta Y_\alpha (\tau) + \Delta I(t) \leq 0
$$
and by combining the above inequalities with~\eqref{expcon} we conclude that $\Delta L(\tau) \leq 0$. \\ 
{\sc Case 2:} the interaction occurs inside the domain and involves $\alpha$, which interacts with a  physical front $\beta$ with strength $s_\beta$. 
If $\alpha$ and $\beta$ belong to the same family, $i_\alpha= i_\beta$, then $\beta$ must be a shock front (because rarefaction wave fronts of the same family do not cross each other) and then $\Delta |s_\alpha| (\tau) \leq 0$ owing to cancellation. If $i_\beta \neq i_\alpha$ then owing to~\eqref{e:iebressan} we have $
      \Delta |s_\alpha| (\tau) \leq      C_5 |s_\alpha s_\beta|$. 
Also, 
$$
     \Delta Y_\alpha (\tau) \stackrel{\eqref{e:iebressan}}{\leq} - |s_\beta| + A C_5 |s_\alpha| |s_\beta| + 
      \stackrel{A C_5 \delta \leq 8^{-1}}{\leq} -\frac{1}{4} |s_\beta|
$$
By following again the analysis in \S\ref{sss:deltaupsid} and in particular~\eqref{e:upsgiu2} we get $\Delta I(\tau) \leq 0$. 
Note that the same estimates hold if the front that interacts with $\alpha$ is a non-physical one (in this case we actually have the stronger estimate $\Delta s_\alpha (\tau) =0$). Either case, by combining the above inequalities with~\eqref{k5} and~\eqref{expcon} we conclude that $\Delta L (\tau) \leq 0.$\\
{\sc Case 3:} the interaction occurs at the domain boundary and does not involve $\alpha$;  in this case we have $\Delta |s_\alpha| (\tau) =0$ and, owing to the analysis in \S\ref{ss:accuratem},\S\ref{ss:accuratek} and in particular to~\eqref{e:upsgiu1},\eqref{e:c:dupsilon},\eqref{e:np:upsgiu}, we have $\Delta Y_\alpha (\tau) + \Delta I(\tau) \leq 0$ and owing to~\eqref{expcon} this yields $\Delta L(\tau) \leq 0$. \\
{\sc Case 4:}  the interaction occurs at the domain boundary and involves $\alpha$; we separately consider the following cases.\\
{\sc Case 4A:} $\alpha$ belongs to the family $j_\alpha =k$.  If there is no wave front of the $k$-th family entering the domain after the interaction, or if the wave front of the $k$-th family is a a shock, then the rarefaction wave front ceases to exist at time $\tau$ and there is nothing to prove. If there is a rarefaction wave front entering the domain after the interaction then we are in case ii) or iii) in \S\ref{ss:abrie}. Also, we can apply Lemma~\ref{l:enterare} and arrive at 
\be \label{e:alphabordo}
 \Delta |s_\alpha| (\tau) \stackrel{\eqref{e:rarepiccola}}{\leq} 
       |s_\alpha| (\tau^-) \big[ -1 + C_8  ([\varsigma_k]^- + |\xi_k|) \big] \stackrel{\eqref{e:sole}}{\leq} -\frac{1}{2} |s_\alpha| (\tau^-), \quad 
     \quad \Delta I(\tau) 
     \stackrel{\eqref{e:c:dupsilon}}{\leq}  0. 
\eq
Also, $\Delta Y_\alpha (t) \leq |\xi'_k| - |\xi_k| \leq  | \xi'_k | \leq \delta$ and by using~\eqref{k5} and~\eqref{expcon} this yields $\Delta L(\tau) \leq 0.$ Note that the previous considerations apply in both the case of the accurate Riemann solver and the simplified Riemann solver. \\
{\sc Case 4B:} $\alpha$ belongs to the family $i_\alpha <k$. In this case the wave front $\alpha$ ceases to exist at time $\tau$ and there is nothing to prove. \\
{\sc Case 5:} the boundary datum $\mf u_b^{\ee}$ is discontinuous at $\tau$. We have $\Delta |s_\alpha| (\tau) =0$ and by recalling the analysis in \S\ref{ss:jump} and in particular~\eqref{e:jdupsilon} we infer $\Delta Y_\alpha (\tau) + \Delta I(\tau) \leq 0$ and owing to~\eqref{expcon} this yields $\Delta L(\tau) \leq 0$.\end{proof}
We recall that $\omega_\ee$ represents the threshold to discriminate between the accurate and the simplified Riemann and boundary Riemann problem, see~\ref{ss:whenas}. 
\begin{lemma} \label{l:np}
For every $\nu>0$, there is $\chi_\nu> 0$ such that if $\omega_\ee \leq \chi_\nu$ then 
\begin{equation}
     \sum_{\alpha \in \mathcal{NP}} |s_\alpha| \leq \nu,
\end{equation}
where $\mathcal{NP}$ denotes the set of non-physical fronts. 
\end{lemma}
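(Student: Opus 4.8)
The plan is to follow the standard argument for bounding the total strength of non-physical fronts in a wave front-tracking scheme, adapted to our setting with a boundary. The key point is that non-physical fronts are \emph{only} created when the simplified Riemann solver or the simplified boundary Riemann solver is used, and in each such event the strength of the newly created non-physical front is bounded by the interaction amount (the product $|s_\alpha s_\beta|$ for an interior interaction, or $|s|([\varsigma_k(\mf u^+, s)]^- + |\xi_k|)$, resp.\ $|s|$, for a wave of the $j$-th family $j<k$ hitting the boundary) which, by the discrimination rule in \S\ref{ss:whenas}, is below the threshold $\omega_\ee$. Since the total number of such events is finite by Lemma~\ref{l:numberwf}, a first crude bound is $\sum_{\alpha \in \mathcal{NP}} |s_\alpha| \leq (\text{number of simplified solver uses}) \times \unpo\, \omega_\ee$; but the number of events itself depends on $\omega_\ee$, so this is not yet enough.

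First I would set up a Lyapunov-type tracking functional for the strength of each individual non-physical front, in the spirit of the argument for Lemma~\ref{l:raref}: for a non-physical front $\alpha$ created at time $\tau_\alpha$, define $L_\alpha(t) := |s_\alpha(t)| \exp(K_5[Y_\alpha(t) + I(t)])$ with $Y_\alpha$ the total strength of waves approaching $\alpha$ (noting that $\alpha$ travels with the maximal speed $\hat\lambda$, so everything in front of it is approaching) and $I$ the part of $\Upsilon$ given by~\eqref{elle}. Then one checks, exactly as in Cases 1--5 of the proof of Lemma~\ref{l:raref}, that $L_\alpha$ is monotone non-increasing in $t$: at any interaction or boundary event not creating $\alpha$, the change in $|s_\alpha|$ is controlled by $C_5$ times the interaction amount while $\Delta Y_\alpha + \Delta I \leq -\,(\text{interaction amount})/4$ by the monotonicity estimates~\eqref{e:upsgiu1},~\eqref{e:c:dupsilon},~\eqref{e:np:upsgiu},~\eqref{e:jdupsilon},~\eqref{e:upsgiu2}, and the convexity-of-exponential computation~\eqref{expcon} combined with~\eqref{k5} gives $\Delta L_\alpha \leq 0$. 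In particular $|s_\alpha(t)| \leq \unpo |s_\alpha(\tau_\alpha)| \leq \unpo\, \omega_\ee$ for all $t$, but more importantly the monotonicity of $L_\alpha$ lets us bound the \emph{sum} of strengths by relating it to a fixed quantity.

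The decisive step is to bound $\sum_{\alpha \in \mathcal{NP}} |s_\alpha|$ uniformly in terms of $\omega_\ee$ and the data, not via the (unbounded) count of events. Here I would argue as follows: to each non-physical front $\alpha$, created say by a simplified interior interaction of physical fronts $\beta, \gamma$ (resp.\ by a simplified boundary interaction of a physical front $\beta$), associate the ``interaction charge'' $\omega_\alpha := |s_\beta s_\gamma|$ (resp.\ $\omega_\alpha := |s_\beta|([\varsigma_k]^- + |\xi_\beta^k|)$ or $|s_\beta|$). By the creation estimate $|s_\alpha(\tau_\alpha)| \leq \unpo\, \omega_\alpha$, and since $\omega_\alpha \leq \omega_\ee$, we have $|s_\alpha(\tau_\alpha)| \leq \unpo\sqrt{\omega_\ee}\cdot\sqrt{\omega_\alpha}$ when the charge is a product, so it suffices to show $\sum_\alpha \omega_\alpha \leq \unpo$ uniformly, i.e.\ that the total amount of interaction (weighted appropriately) over all simplified events is bounded by a constant depending only on the data. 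This total interaction amount is controlled by the total decrease of the Glimm functional $\Upsilon$ plus the terms $S$ and $Z$ over the whole time interval: each simplified interior interaction contributes a decrease $\Delta\Upsilon \leq -|s_\beta s_\gamma|$ to $\Upsilon$ by~\eqref{e:upsgiu2} (simplified case, \S\ref{sss:753}), each simplified $j$-wave boundary interaction contributes $\Delta\Upsilon \leq -|s|$ by~\eqref{e:np:upsgiu}, and since $\Upsilon$ is monotone and bounded by $\unpo\,\delta^\ast$ initially, the sum of all these decrements is $\leq \unpo\,\delta^\ast$. Hence $\sum_\alpha \omega_\alpha \leq \unpo$, and combining with the creation bound and monotonicity of $L_\alpha$ (to propagate smallness forward in time) yields $\sum_{\alpha \in \mathcal{NP}}|s_\alpha(t)| \leq \unpo\,\sqrt{\omega_\ee}$ for all $t$. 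Choosing $\chi_\nu := (\nu/\unpo)^2$ completes the proof.

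\textbf{Main obstacle.} The delicate point is the bookkeeping for a single non-physical front that survives for a long time and grows along the way: one must ensure that the exponential weight $\exp(K_5[Y_\alpha + I])$ stays bounded (this is where $K_5\delta \leq 1/2$ in~\eqref{k5} is used) and that, when a non-physical front interacts with another front, its strength does not grow faster than the decrease of $I$ can absorb. A subtlety specific to the boundary is that a non-physical front travelling with speed $\hat\lambda$ can itself hit the boundary; one needs to check that at such an event it simply gets absorbed (or reflected as another non-physical front of no larger strength) so that no new charge is created and $L_\alpha$ still does not increase — this follows from the simplified boundary Riemann solver construction in \S\ref{ss:sbrie} (Case $j < k$, since a non-physical front is treated with $i_\alpha = N+1 \geq k$, but its interaction with the boundary is handled by juxtaposing a non-physical front of the same strength) together with estimate~\eqref{e:cisette}.
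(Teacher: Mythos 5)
Your Step-1-style bound ($|s_\alpha|\leq \unpo\,\omega_\ee$ for each individual non-physical front, propagated by an $L_\alpha$-type functional) matches the paper's Step 1, but your ``decisive step'' contains a genuine gap, and it is exactly the point the lemma is about. From the creation bound $|s_\alpha(\tau_\alpha)|\leq \unpo\,\omega_\alpha$ and $\omega_\alpha\leq\omega_\ee$ you write $|s_\alpha(\tau_\alpha)|\leq\unpo\sqrt{\omega_\ee}\sqrt{\omega_\alpha}$ and claim it suffices to show $\sum_\alpha\omega_\alpha\leq\unpo$. Summing, however, produces $\unpo\sqrt{\omega_\ee}\sum_\alpha\sqrt{\omega_\alpha}$, and $\sum_\alpha\sqrt{\omega_\alpha}$ is \emph{not} controlled by $\sum_\alpha\omega_\alpha$: by Cauchy--Schwarz you would need a bound on the \emph{number} of simplified events, which is finite for each fixed $\ee$ (Lemma~\ref{l:numberwf}) but not uniformly bounded, and in fact grows as the approximation is refined. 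Without the square-root trick your accounting gives only $\sum_\alpha|s_\alpha(\tau_\alpha)|\leq\unpo\sum_\alpha\omega_\alpha\leq\unpo\,\delta$, a bound that does not tend to $0$ as $\omega_\ee\to 0$, so it cannot produce ``$\leq\nu$ for every $\nu$''. The problem is most visible for non-physical fronts created at the boundary by a $j$-wave, $j<k$: there the charge is $\omega_\alpha=|s|<\omega_\ee$ and the created strength is $\leq C_7|s|$, i.e.\ \emph{first order} in the charge, so the Glimm-decrease bookkeeping (via~\eqref{e:np:upsgiu}) only says that the total strength of such fronts is $\leq\unpo\,\delta$, independently of $\omega_\ee$. (Incidentally, the ``subtlety'' you flag about a non-physical front hitting the boundary cannot occur: non-physical fronts travel with speed $\hat\lambda>0$ and move away from $x=0$.)

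The missing idea is the generation-order decomposition, which is how the paper (following Bressan, with new boundary rules) resolves precisely this counting problem. One assigns to each front a generation order, with the boundary-specific conventions that fronts created at $t=0$ or at jumps of $\mf u_b^\ee$ have order $1$, that a $k$-front reflected at the boundary keeps its order while the other outgoing fronts increase it by one, and that all fronts produced when a $j$-front, $j<k$, hits the boundary increase it by one. One then introduces the generation-restricted functionals $V_n^{\pm},Q_n,R_n,S_n,W_n,I_n$ and, using the interaction estimates of \S\ref{s:ie} and \S\ref{s:functional}, derives the recursion $\tilde I_n\leq\unpo\,\delta\,\tilde I_{n-1}+\unpo\,\delta\,\tilde I_{n-2}$, whence geometric decay in $n$ uniformly in $\ee$. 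The conclusion then follows by splitting: fronts of generation order larger than some $n$ have total strength smaller than $\nu/2$ by the geometric decay; fronts of generation order at most $n$ are finitely many for the given approximation, and each non-physical one among them has strength $\leq\unpo\,\omega_\ee$ by Step 1, so choosing $\omega_\ee\leq\chi_\nu$ small (the choice may, and does, depend on the approximation) makes their contribution smaller than $\nu/2$. Your Lyapunov-functional machinery is compatible with Step 1 but does not substitute for this decomposition.
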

\begin{proof}
The proof is based on the same argument as in~\cite[pp.139-142]{Bressan}, so instead of providing all the details we only focus on the points where the argument is different. We proceed according to the following steps. \\
{\sc Step 1:} we show that 
\be \label{npmax}
       |s_\alpha| \leq \unpo  \omega_\ee, \quad \text{for every $\alpha \in \mathcal{NP}$}. 
\eq
We recall that, by construction, when a non-physical front is created its strength is bounded by $\unpo \omega_\ee$. To conclude with~\eqref{npmax} we can then use exactly the same argument as in~\cite[p.140]{Bressan}. \\
{\sc Step 2:} we assign to each wave-front $\alpha$ a \emph{generation order} $n_\alpha$. To this end, we proceed as follows 
\begin{itemize}
\item all the waves generated at $t=0$ and at discontinuity times of $\mf u^\ee_b$ have generation order $1$;
\item when two waves with generation orders $n_\alpha$ and $n_\beta$ interact inside the domain, the generation order of the outgoing waves is assigned as 
in~\cite[pp.140]{Bressan}; 
\item if a wave-front $\alpha$ with generation order $n_\alpha$ hits the boundary and $i_\alpha =k$, then wave front of the $k$-th family entering the domain after the interaction (if any) has generation order $n_\alpha$, the other wave fronts entering the domain have generation order $n_\alpha+1$;
\item if a wave-front $\alpha$ with generation order $n_\alpha$ hits the boundary and $i_\alpha <k$, then all wave fronts entering the domain after the interaction have generation order $n_\alpha +1$.
\end{itemize}
Note that we are heuristically speaking regarding the boundary as a wave front with generation order~$1$. \\
{\sc Step 3:} we fix $n \ge 2$ and we set
\begin{equation*} \begin{split}
   & V^+_n (t) : = \sum_{\substack{n_\alpha \ge n \\ i_\alpha \ge k}} |s_\alpha|, \quad V^-_n (t) : = \sum_{\substack{n_\alpha \ge n \\ i_\alpha < k}} |\vartheta_\alpha|, 
     \quad 
   Q_n(t) : = \!  \!  \!  \!  \!  \!  \!  \!  \! \!  \!  \! \sum_{ \max\{n_\alpha,  n_\beta \} \ge n}\!  \!  \!  \!\!  \!  \!  \! \!   \!  \! w_{\alpha \beta}  |\vartheta_\alpha \vartheta_\beta|, \quad R_n(t): =
    |\xi_k |   \!  \! \sum_{\substack{ i_\alpha \leq k, \\ n_\alpha \ge n}} |\vartheta_\alpha|, \\
   &
   W_n (t) : =  V^+_{n+1} (t) + V^-_n(t), 
   \end{split}
\end{equation*}
where we formally count non-physical fronts as belonging to the family $N+1$. We also introduce the notation 
$$
     S_n (t) : = \left\{
    \begin{array}{ll}
     \displaystyle{ \sum_{\substack{ i_\alpha = k, \\ n_\alpha \ge n}} |s_\alpha| [\varsigma_k (\mf u_\alpha, s_\alpha)]^- } & 
     \text{if~\eqref{e:gnl}} \\ \phantom{ciao} \\
      \displaystyle{ \sum_{\substack{ i_\alpha = k, \\ n_\alpha \ge n}} |s_\alpha| \big[ \lambda_k (\mf u_\alpha) \big]^- }
      & 
     \text{if~\eqref{e:lindeg}} \\
      \end{array}
    \right.
$$
and 
$$ 
    I_n(t) : =   K_1 Q_n(t) + K_1 R_n(t) + K_2 S_n(t), \qquad \text{for $n \ge 2$}
$$
and we set 
$$
    V_1^+(t) : = |\xi_k | +  \sum_{ i_\alpha \ge k} |s_\alpha|, \qquad 
    V^-_1 (t) : = \sum_{ i_\alpha < k} |\vartheta_\alpha|, \qquad W_1 (t) : = V_2^+(t) + V_1^-(t), \qquad I_1(t): = I(t), 
$$
where $I$ is as in~\eqref{elle}.  
We also term 
\begin{itemize}
\item $J^0_n$ the set of times at which two wave fronts $\alpha$ and $\beta$ with $\max \{ n_\alpha, n_\beta \}=n$ interact;
\item  $J_n^k$ the set of times at which  a wave-front $\alpha$ with $i_\alpha=k$ and $n_\alpha =n$ hits the boundary;
\item $J^-_n$ the set of times where a wave front $\alpha$ with $i_\alpha <k$ and $n_\alpha =n$ hits the boundary.  
\end{itemize}
In the remaining part of the proof we will always assume $n \ge 3$. 
We have 
\begin{equation} \label{stimew}
\begin{array}{rl}
\Delta W_n ( t) =0 & 
t \in J^0_1 \cup\dots \cup J^0_{n-2} \cup J^k_1 \cup \dots \cup J^k_{n-1}  \cup J^-_1 \cup \dots \cup J^-_{n-1} ;\\
\Delta W_n (t)+ \Delta I_{n-1} (t) \stackrel{\S\ref{ss:inside}}{\leq} 0 & t \in \displaystyle{\bigcup_{m\ge n-1} J^0_m} ;\\
\Delta W_n (t)+ \Delta I_{n-1} (t)  \stackrel{\S\ref{ss:accuratek}}{\leq} 0 &   t \in \displaystyle{\bigcup_{m\ge n} J^k_m} ;\\ 
\Delta W_n (t) +  \Delta I_{n-1} (t) \stackrel{\S\ref{ss:accuratem}}{\leq}  0 &  t \in \displaystyle{\bigcup_{m\ge n} J^-_m.}
\end{array}
\end{equation} 
If $t \in  J^0_1 \cup\dots \cup J^0_{n-2}$ then $\Delta R_n (t)= \Delta S_n (t) =0$, so $\Delta I_n (t) = K_1  \Delta Q_n (t)$, which owing to~\eqref{e:idv},~\eqref{e:upsgiu2} and to the inequality $W_n \leq W_{n-1}$ yields 
$$
     \Delta I_n (t) + K_1 C_5 A \Delta \Upsilon W_{n-1} (t^-) \leq  \Delta I_n (t) + K_1 C_5 A \Delta \Upsilon W_{n} (t^-)\leq 0.
$$
If $t \in  J^k_1 \cup\dots \cup J^k_{n-1} $ we have  $\Delta S_n (t) =0$, whereas $R_n$ can change because the strength $\xi_k$ can change at $t$. This yields 
 $\Delta I_n (t) = K_1  \Delta Q_n (t) + K_1 \Delta R_n (t)$, which owing to~\eqref{e:c:tv} and~\eqref{e:c:dupsilon} yields 
$$
    \Delta I_n (t) +  K_1 C_2 A \Delta \Upsilon W_{n-1}(t^-) \leq 0.
$$
If $t \in  J^-_1 \cup\dots \cup J^-_{n-1} $ we again have $\Delta I_n (t) = K_1  \Delta Q_n (t) + K_1 \Delta R_n(t)$, which owing to~\eqref{e:deltav} and~\eqref{e:upsgiu1} yields 
$$
    \Delta I_n (t) +  K_1 C_1 A \Delta \Upsilon W_{n-1}(t^-) \leq 0.
$$  
If $ t \in J^0_{n-1}$ then owing to~\eqref{e:iebressan},~\eqref{e:iebressan2} and~\eqref{e:upsgiu2} we have  
\begin{eqnarray*}
    \Delta Q_n (t) + A C_5 V(t^-) \Delta I_{n-1} (t) \leq 0, \qquad 
    \Delta R_n (t) + A C_5 |\xi_k(t^-)| \Delta I_{n-1} (t) \leq 0 
\end{eqnarray*} 
We now point out that there are two possible contributions to $\Delta S_n (t)$: i) new $k$-family waves may be created at the interaction; ii) $k$-family waves with generation order greater or equal than $n$ change their strength and speed. Owing to~\eqref{e:iebressan},\eqref{e:iebressan2},\eqref{e:dadim3},~\eqref{e:msigma} and~\eqref{e:upsgiu2} yields 
$$
    \Delta S_n (t) + C_4C_5 \delta \Delta I_{n-1} (t) +C_3 C_5 \delta \Delta I_{n-1} (t) \leq 0. 
$$
If $ t \in  \displaystyle{\bigcup_{m\ge n} J^0_m}$ then owing to~\eqref{e:upsgiu2} we have $\Delta I_{n} \leq 0$ owing to~\eqref{e:c:dupsilon}. 
If $ t \in  \displaystyle{\bigcup_{m\ge n} J^k_m}$ then $\Delta I_n \leq 0$. If  $ t \in  \displaystyle{\bigcup_{m\ge n} J^-_m}$ then $\Delta S_n =0$ (because the new $k$-family waves created at the interaction, if any, have positive speed) 
and, owing to~\eqref{e:deltav} and~\eqref{e:deltarq} 
$$
     \Delta Q_n (t) +   \Delta R_n (t) + \frac{C_1+  C_{12}+ C_{12} C_1}{A-C_1}  \Delta W_{n-1} (t) \delta \leq 0. 
$$
Summing up, we have
\begin{equation}
        \label{stimei}
        \begin{array}{rl}
         \Delta I_n (t) + \unpo \Delta \Upsilon (t) W_{n-1} (t^-)\leq 0 & t \in J^0_1 \cup\dots \cup J^0_{n-2} \cup  J^k_1 \cup\dots \cup J^k_{n-1} \cup  J^-_1 \cup\dots 
              \cup J^-_{n-1} \phantom{\displaystyle{\bigcup}}\\
                          \Delta I_n (t)+ \unpo \delta  \Delta I_{n-1} (t) \leq 0 & t \in   J^0_{n-1} \phantom{\displaystyle{\bigcup}}\\
                           \Delta I_{n}(t) \leq 0 &  t \in  \displaystyle{\bigcup_{m\ge n} J^k_m} \\
                            \Delta I_{n} (t) + \unpo \Delta W_{n-1} (t) V(t^-)\leq 0 &  t \in  \displaystyle{\bigcup_{m\ge n} J^-_m} \\
        \end{array}
\end{equation}
Note that~\eqref{stimew} yields 
\be \label{stimew2}
    W_n (t^+ ) \leq \sum_{0\leq \tau \leq t} [\Delta I_{n-1} (\tau) ]_-,
\eq
where in the previous expression the sum is over the times $\tau$ of interaction, and that owing to~\eqref{stimei} we also have 
\begin{equation}\label{stimai2}
\begin{split}
    I_n (t) & \leq   \sum_{0\leq \tau \leq t} [\Delta I_n (\tau) ]_+ \leq \unpo 
    \sup_{\tau} W_{n-1} (\tau) \underbrace{\sum_{0\leq \tau \leq t} [\Delta \Upsilon (\tau)]_- }_{\leq \Upsilon (0) \leq \delta}
     + \unpo \delta \sum_{0\leq \tau \leq t} [\Delta I_{n-1} (\tau)]_- 
     \\ & + \unpo \delta  \sum_{0\leq \tau \leq t} [\Delta W_{n-1} (\tau)]_-
    \end{split}
\end{equation}
Note that  $W_{n-1}(0)=0$ since $n \ge 3$ and this implies 
$$
     \sum_{0\leq \tau \leq t} [\Delta W_{n-1} (\tau) ]_- = \sum_{0\leq \tau \leq t} [\Delta W_{n-1} (\tau) ]_+ - W_{n-1} (t) 
     \stackrel{\eqref{stimew}}{\leq} 
      \sum_{0\leq \tau \leq t} [\Delta I_{n-2} (\tau) ]_-- W_{n-1} (t) 
     \leq   \sum_{0\leq \tau \leq t} [\Delta I_{n-2} (\tau) ]_-
$$
and by plugging the above inequality into~\eqref{stimai2} and recalling~\eqref{stimew2} we arrive at 
\begin{equation*}
\begin{split}
      \sum_{0\leq \tau \leq t} [\Delta I_n (\tau) ]_+ \leq
     \unpo \delta \sum_{0\leq \tau \leq t} [\Delta I_{n-1} (\tau)]_- 
      + \unpo \delta  \sum_{0\leq \tau \leq t} [\Delta I_{n-2} (\tau)]_-
    \end{split}
\end{equation*}
and since 
$$
    \sum_{0\leq \tau \leq t} [\Delta I_{n-1} (\tau)]_- \stackrel{I_{n-1}(0)=0}{=} \sum_{0\leq \tau \leq t} [\Delta I_{n-1} (\tau)]_+ - I_{n-1}(t) \leq \sum_{0\leq \tau \leq t} [\Delta I_{n-1} (\tau)]_+
$$
by setting 
$$
    \tilde I_n : = \sum_{0\leq \tau \leq t} [\Delta I_n (\tau) ]_+ 
$$
we arrive at 
\be \label{stimaIind}
    \tilde I_n \leq \unpo \delta \tilde I_{n-1} + \unpo \delta \tilde I_{n-2}
\eq
From~\eqref{stimaIind} a fairly standard induction argument implies that, if $\delta$ is sufficiently small then $\tilde I_n \leq (2 \unpo \delta)^{e_n}$, with $e_n = [(n-3)/2]+1$ and $[\cdot]$ denoting the integer part. From this we can then argue as in~\cite[p. 141-142]{Bressan}, control $I_n$ and $W_n$ and  whence (by arguing as in~\cite[(7.77)]{Bressan}) the total strength of non-physical waves. This concludes the proof of Lemma~\ref{l:np}.
\end{proof}

\section{Boundary condition}\label{s:bc}
The goal of this section is to conclude the proof of Theorem~\ref{t:wft} (and hence of Theorem~\ref{t:main}) by establishing the following result.
\begin{theorem}
\label{t:maintr} Under the same assumption as in the statement of Theorem~\ref{t:main}, let $\mf v: \R_+ \times \R_+ \to \R$ be the same as in~\eqref{e:16bis}. Then for a.e. $t \in \R_+$ the trace $\mf{\bar v} (t) : = \lim_{x \to + 0^+} \mf v(t, x)$ satisfies $\mf{\bar v}(t) \sim_{\mf D} \mf v_b(t)$ in the sense of Definition~\ref{d:equiv}.  
\end{theorem}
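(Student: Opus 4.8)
The plan is to show that the boundary condition is inherited from the $\ee$-wave front-tracking approximations in the limit. The key structural fact is that, by Definition~\ref{d:appsol}, for each $\ee$ we have $\mf u_\ee(t,0) \sim_{\mf D} \mf{\widetilde v}_{\ee b}(t)$ for a.e.\ $t$, with $\|\mf{\widetilde v}_{\ee b} - \mf v_b\|_{L^1} \leq \ee$, so the only thing to control is the behavior of the traces $\mf u_\ee(\cdot,0)$ as $\ee \to 0^+$ and to check that the relation $\sim_{\mf D}$ — which decomposes into an admissible $0$-speed discontinuity plus a boundary layer — passes to the limit. The obstruction flagged in the paper outline (\S\ref{ss:roadmap}) is that the traces $\mf u_\ee(\cdot,0)$ and $\mf u(\cdot,0)$ need \emph{not} have uniformly bounded total variation, so one cannot simply extract a pointwise-a.e.\ convergent subsequence of traces and pass to the limit naively. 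The remedy is the decomposition of the time axis into the set $\widetilde E$ (where mechanism (iii) — shocks with vanishing speed approaching the boundary from inside — occurs) and its complement, treating the two separately.

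\textbf{Main steps.} First I would set up the notation: for the wave front-tracking approximation write the trace $\mf{\bar u}_\ee(t) := \lim_{x\to0^+}\mf u_\ee(t,x)$, together with the associated decomposition into $\mf{\underline u}_\ee(t)$ (asymptotic state of the boundary layer) and the center strength $\xi_{k,\ee}(t)$, all of which are well-defined piecewise-constant-in-$t$ quantities for fixed $\ee$ by the construction in \S\ref{s:cinque}. Second, I would invoke Lemma~\ref{t:tvf}: the flux $\mf f(\mf u_\ee)$ evaluated at $x=0$ has total variation in $t$ uniformly bounded in $\ee$; combined with Helly's theorem this gives a subsequence along which $\mf f(\mf{\bar u}_\ee(\cdot))$ converges pointwise a.e.\ to some BV function, and by lower semicontinuity of the $L^1_{\loc}$-to-trace operator this limit coincides a.e.\ with $\mf f(\mf{\bar v}(\cdot))$ (after passing to $\mf v$ coordinates). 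Third, I would analyze the set $\widetilde E$ defined in~\eqref{e:tildeE}: using Lemma~\ref{l:tildeE} (which controls the wave front-tracking approximation near the boundary using the fine-structure techniques of \cite[Ch.~10]{Bressan}) one shows that on $\widetilde E$ the limit trace $\mf{\bar v}(t)$ is connected to a state $\mf{\underline v}(t)$ by an admissible $0$-speed shock (or contact discontinuity), the latter being the limit of the asymptotic boundary-layer states, and that $\mf{\underline v}(t)$ together with $\mf v_b(t)$ satisfies the boundary-layer equation~\eqref{e:bl} — this is exactly Lemmas~\ref{l:tildeE},~\ref{l:bl}. Fourth, on $\R\setminus\widetilde E$ one argues that mechanisms (i) and (ii) are the only remaining ones, handled by Lemma~\ref{l:bl2}: here the center component $\xi_{k,\ee}$ and any $0$-speed shock sitting at the boundary converge and the relation $\sim_{\mf D}$ is preserved. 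Finally, I would reconcile the boundary-layer passage to the limit: since $\mf{\widetilde v}_{\ee b} \to \mf v_b$ in $L^1$, up to a further subsequence the convergence is pointwise a.e., and the boundary-layer equation~\eqref{e:bl} — being an ODE with the explicit right-hand side constructed in \S\ref{s:bl}, continuously depending on the endpoint data — is stable under this convergence, so the limiting $\mf{\underline v}(t)$, $\mf v_b(t)$ are connected by a genuine boundary layer. Stitching the four cases together (plus a null set) establishes $\mf{\bar v}(t) \sim_{\mf D} \mf v_b(t)$ for a.e.\ $t$.

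\textbf{Main obstacle.} The hard part will be step three: controlling the limiting behavior on $\widetilde E$. Because the traces are not BV in time, one cannot argue pointwise; instead one must quantify how the $0$-speed shocks inside the domain accumulate at $x=0$, and this requires the delicate machinery of generation orders and the fine structure of the wave front-tracking limit from \cite[Ch.~10]{Bressan}, adapted to the presence of the boundary. In particular one needs that the total strength of shocks with speed in a shrinking neighborhood of $0$ that are located within distance $\eta$ of the boundary is controlled uniformly, so that in the limit $\eta\to0$ these coalesce into a single $0$-speed Lax-admissible discontinuity at the boundary rather than producing spurious oscillation or mass loss. The uniform bound on $\mathrm{TotVar}_t\,\mf f(\mf u_\ee(\cdot,0))$ from Lemma~\ref{t:tvf} is the essential input that makes this tractable, since it says the \emph{flux} — unlike the state — is well-behaved at the boundary, which is precisely what one needs because the Rankine--Hugoniot relation for a $0$-speed discontinuity is $\mf f(\mf{\bar v}) = \mf f(\mf{\underline v})$. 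Everything else (steps two, four, and the ODE-stability part of step five) is comparatively routine once the framework of \S\ref{s:otto} is in place.
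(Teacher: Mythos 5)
Your plan follows the paper's own route: the uniform BV bound on the boundary flux (Lemma~\ref{t:tvf}) together with the identification of its limit with $\mf f\circ\mf v(\cdot,0)$, the decomposition of the time axis through the set $\widetilde E$ in~\eqref{e:tildeE}, and a separate verification of Definition~\ref{d:equiv} on $\widetilde E$ and on its complement; this is exactly how \S\ref{s:bc} proceeds, so the approach is essentially the same.

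One correction, because as written your assignment of the lemmas to the two regimes is inverted and would mislead you if followed literally: Lemma~\ref{l:tildeE} and Lemmas~\ref{l:bl},~\ref{l:bl2} are the tools for $t\notin\widetilde E$ (there the approximate traces do converge to the trace of the limit, and one passes to the limit in the boundary Riemann relation~\eqref{e:brs0}), whereas on $\widetilde E$ --- essentially the set $G$ of zero-speed maximal shock curves landing on $x=0$ --- the approximate traces do \emph{not} converge to $\mf{\bar v}(t)$: by Lemmas~\ref{l:limitedasx} and~\ref{l:tracciainterna} they converge to the left state $\mf{\underline v}(t)$ of the limiting zero-speed shock, and it is the flux convergence~\eqref{e:flupointwise} that supplies $\mf f(\mf{\underline v})=\mf f(\mf{\bar v})$, hence condition i) of Definition~\ref{d:equiv} (Lemma~\ref{l:bc}); trying to prove trace convergence on $\widetilde E$ itself would fail. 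Two further points where your plan is looser than the proof: the Helly limit of $\mf f\circ\mf v_{\ee_m}(\cdot,0)$ is identified with $\mf f\circ\mf v(\cdot,0)$ through the weak formulation of the equation, not by continuity of the trace operator under $L^1_{\loc}$ convergence; and the passage to the limit in the boundary-layer relation is not an abstract ODE-stability argument, since the layer may carry a slowly decaying center component --- the paper instead passes to the limit in the finite-dimensional relation involving $\boldsymbol{\phi}=\boldsymbol{\psi}_s+\boldsymbol{\psi}_p$ and the curve $\boldsymbol{\zeta}_k$, with a case analysis on $\lambda_k(\mf{\bar u})$ and on $\xi_k$ versus $\underline s(\mf{\bar u})$, plus, in case B) of Hypothesis~\ref{h:eulerlag}, the possible drop of the number $\ell(\mf u_b)$ of boundary conditions in the limit, handled separately in \S\ref{sss:952}.
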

The proof of Theorem~\ref{t:maintr} is rather technical and involved, and represents one of the most innovative parts of the present paper. 
 Before giving some technical details, a word of warning: in the previous sections, we have always worked with the variable $\mf u$ only because in the boundary layers analysis we have used the fact that the viscous system~\eqref{e:symmetric} is in the \emph{normal} form, in the Kawashima Shizuta sense~\cite{KawashimaShizuta1}. In this section, instead, we investigate the boundary condition and hence we have to use the variable $\mf v$ as well, because Definition~\ref{d:equiv} involves the flux function $\mf f$. For the reader's convenience we now provide a sketchy overview of the proof of Theorem~\ref{t:maintr}. 
 \subsection{Roadmap of the proof of Theorem~\ref{t:maintr}}
 For the most part, in what follows we assume that the $k$-th characteristic field is genuinely nonlinear, namely we assume~\eqref{e:gnl}, and only in \S\ref{ss:bclindeg} we discuss how to adapt the proof to the case of a linearly degenerate~\eqref{e:lindeg} $k$-th characteristic field.

In \S\ref{ss:tvf} we show that 
\begin{equation}
\label{e:flupointwise}
      \lim_{m \to + \infty}  \mf f \left(  \lim_{x \to 0^+}   \mf v_{\ee_m}(t, x) \right) =  \lim_{x \to 0^+} \mf f \circ \mf v(t, x) , \qquad \text{for a.e. $t \in \R_+$,}
\eq  
where $\mf v_{\ee_m} = \mf v (\mf u_{\ee_m})$ and $\mf u_{\ee_m}$ is the same as in~\eqref{e:81}\footnote{Note that in~\eqref{e:81} $\ee_m$ is a specific subsequence and, in principle, the limit $\mf u$ depends on the particular choice of the subsequence (as we mentioned in the introduction, we are confident that one could establish uniqueness results, but this goes beyond the scopes of the present paper). However, in the remaining of this section we will always work with the same sequence $\ee_m$ and this uniquely identifies the limit $\mf u$.\label{foot2}}.
To this end, the main issue is establishing Lemma~\ref{t:tvf}, which provides a uniform in $\ee$ total variation bound on $\mf f \left(  \lim_{x \to 0^+}   \mf v_{\ee_m}(\cdot, x) \right)$. In the proof of Lemma~\ref{t:tvf} we introduce the functional $\Lambda$, defined as in~\eqref{e:Lambda}, and prove that it is monotone non-increasing by carefully tracking how it varies any time either a wave front hits the boundary, or the approximate boundary datum has a jump discontinuity. 

In \S\ref{ss:tildeE} we construct the set $\widetilde E \subseteq \R_+$, which plays a main role in our analysis and is defined by setting
 \be \label{e:tildeE}
    \widetilde E : = F  \cup G \cup H \cup D.
\eq
In the previous expression, the sets $F$ and $H$  are rigorously defined in~\eqref{e:F} and~\eqref{e:H} below, but, very loosely speaking, a time $\tau$ belongs to $F$ if there are a lot of wave fronts of the wave front-tracking approximation $\mf u_{\ee_m}$ hitting the domain at times that accumulate around $\tau$ as $m \to + \infty$. Conversely,  an instant $\tau$ belongs to $H$ if there are a lot of waves of the wave front-tracking approximation $\mf u_{\ee_m}$ that interact in proximity of the domain boundary at times that accumulate around $\tau$ as $m \to + \infty$. By relying on the rigorous definitions~\eqref{e:F} and~\eqref{e:H} it is fairly easy to show that $F$ and $H$ are both $\mathcal L^1$-negligible.  The set $D$ is the set where the boundary datum $\mf v_b$ is discontinuous, and it is also $\mathcal L^1$-negligible owing to the assumption $\mathrm{TotVar} \, \mf v_{b} < + \infty$. Finally, the (not necessarily negligible) set $G$ in formula~\eqref{e:tildeE}  is rigorously defined by~\eqref{e:Gi} and heuristically speaking represents the set of times where there is a sequence of  Lax admissible shocks for $\mf v_{\ee_m}$ with vanishing speed that as $m \to + \infty$ accumulate at the domain boundary. This heuristic interpretation is consistent with  Lemma~\ref{l:tildeE}, which dictates that if $t \notin \widetilde E$, the limit of the traces of $\mf v_{\ee_m}$ coincides with the trace of the limit $\mf v$. 

In~\S\ref{ss:notinE} we show that, for a.e. $t \notin \widetilde E$, the trace $\mf{\bar v}(t)= \lim_{x \to 0^+} \mf v(t, x)$ satisfies $\mf{\bar v}(t) \sim_{\mf D} \mf v_b(t)$ in the sense of Definition~\ref{d:equiv}, whereas in~\S\ref{ss:tinE} by using~\eqref{e:flupointwise} we establish the same property for a.e. $t \in \widetilde E$. Finally, in  \S\ref{ss:bclindeg} we establish the proof of Theorem~\ref{t:maintr} in the case where the $k$-th vector field is linearly degenerate.  
\subsection{Proof of~\eqref{e:flupointwise}}\label{ss:tvf}
The proof of~\eqref{e:flupointwise} relies on the following total variation bound.
\begin{lemma}
\label{t:tvf}
Under the same assumptions as in Theorem~\ref{t:main} we have  
\begin{equation}\label{e:tvf}
    \mathrm{TotVar} \, \mf f \circ \mf v_{\ee_m}  (\cdot, 0) \leq C_{13} \delta, \quad \text{for every $m \in \mathbb N$}
\eq
and for a suitable constant $C_{13}>0$  only depending on $\mf g$, $\mf f$, $\mf D$ and on the change of variables $\mf u$, and on the value $\mf u^\ast$. In the above expression, the constant $\delta$ is the same as in~\eqref{e:assumption}. 
\end{lemma}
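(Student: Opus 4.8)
The plan is to introduce a Glimm-type functional $\Lambda(t)$ controlling the total variation of $\mf f\circ \mf v_{\ee_m}(\cdot,0)$ and to show it is monotone non-increasing in time. The key conceptual point is that, although the traces $\mf v_{\ee_m}(\cdot,0)$ (equivalently $\mf u_{\ee_m}(\cdot,0)$) are \emph{not} expected to have uniformly bounded variation --- see the counterexample quoted from~\cite[\S4.3]{DMS} --- the \emph{flux} evaluated at the boundary does, because the Rankine--Hugoniot relation~\eqref{e:rh} forces $\mf f\circ\mf v$ to vary only by an amount comparable to $|s_k|\,[\varsigma_k]^-$ when a $k$-wave of strength $s_k$ hits the boundary and is reflected into a slow shock or is partly absorbed by the center component of the boundary layer. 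This is precisely the quantity controlled by the refined interaction estimate~\eqref{e:me} (respectively~\eqref{e:melindeg} in the linearly degenerate case), and it is also the quantity tracked by the term $S(t)$ in the functional $\Upsilon$. Thus the functional $\Lambda$ should be built from the variation of $\mf f\circ\mf v_{\ee_m}$ across the already-present waves at the boundary, plus a controlled multiple of $\Upsilon$ (and in particular of $S$ and $Z$), chosen so that every event that could increase the raw boundary variation of $\mf f\circ\mf v_{\ee_m}$ is compensated by a strictly larger decrease in the $\Upsilon$-part.

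\textbf{Steps.} First I would define $\Lambda(t):=\mathrm{TotVar}_{[t,+\infty[}\,\mf f\circ\mf v_{\ee_m}(\cdot,0)$ restricted to the boundary, and rewrite it concretely: at each time the boundary trace jumps either because a wave front hits $x=0$, or because $\mf u_b^\ee$ is discontinuous; so $\Lambda$ is a sum over such events of the flux jump they produce. Second, at an instant where a $j$-wave with $j<k$ hits the boundary, I would use Lemma~\ref{l:nc}: the outgoing data (the new $\xi_i'$, $s_k'$, $s_i'$) differ from the incoming data by $O(|s_j|)$, hence the flux jump at the boundary is $O(|s_j|)$, which is absorbed by the decrease $\Delta\Upsilon\leq -|s_j|$ established in~\eqref{e:upsgiu1} (and~\eqref{e:np:upsgiu} for the simplified solver) after multiplying $\Upsilon$ by a large enough constant $K_4$. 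Third, at an instant where a $k$-wave of strength $s_k$ hits the boundary, the crucial point: the trace $\mf{\bar u}$ changes, but by~\eqref{e:me} the change in the solved data is $O(|s_k|([\varsigma_k]^-+|\xi_k|))$; since $\mf f$ is Lipschitz this bounds the flux jump, and this is exactly dominated by $-\Delta S(t)=|s_k|[\varsigma_k]^-$ plus the $-|s_k||\xi_k|$ coming from $\Delta R$ in~\eqref{e:cqr}, so $\Delta\Lambda(t)+K_4\,|\Delta\Upsilon(t)|\leq 0$ provided $K_4$ is large compared to $C_2$, $C_{10}$, the Lipschitz constant of $\mf f$, and $\delta$; here I would also invoke~\eqref{e:sigmatraccia}/\eqref{e:protra} to know that the outgoing $k$-wave, if present, has nonnegative speed, so the relevant boundary flux is that of $\mf{\bar u}$. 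Fourth, at a jump of $\mf u_b^\ee$ of size $|\mf u_b^+-\mf u_b^-|$, I would use~\eqref{e:jd1} to get a flux jump $O(|\mf u_b^+-\mf u_b^-|)$ and balance it against $\Delta Z(t)=-|\mf u_b^+-\mf u_b^-|$ via the $K_3$-term in $\Upsilon$, which already gives $\Delta\Upsilon\leq -|\mf u_b^+-\mf u_b^-|$ in~\eqref{e:jdupsilon}. Having shown $\Lambda(t)+K_4\Upsilon(t)$ is non-increasing, and bounding its initial value by $\unpo\,\mathrm{TotVar}\,\mf v_0+\unpo\,\mathrm{TotVar}\,\mf v_b+\unpo|\mf v_0(0^+)-\mf v_b(0^+)|\leq \unpo\,\delta^\ast\leq\unpo\,\delta$ using~\eqref{e:hp} and~\eqref{e:delta1}, I would conclude $\Lambda(t)\leq C_{13}\delta$ for all $t$, which is~\eqref{e:tvf}.

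\textbf{Main obstacle.} The hard part will be the bookkeeping at a $k$-wave/boundary interaction when the wave is partitioned or reflected: one must make sure that the change in $\mf f\circ\mf v$ at the boundary is genuinely the ``second order'' quantity $|s_k|([\varsigma_k]^-+|\xi_k|)$ and not the ``first order'' $|s_k|$. This is exactly where estimate~\eqref{e:me} (and its linearly degenerate counterpart~\eqref{e:melindeg}) is indispensable, together with the structural facts~\eqref{e:protra}--\eqref{e:sigmatraccia} that pin down $\xi_k$ in terms of $\lambda_k(\mf{\bar u})$; a naive estimate using only Rankine--Hugoniot would lose a factor and the functional would fail to be monotone. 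A secondary technical nuisance is that the simplified boundary Riemann solver introduces a non-physical front whose flux contribution must also be accounted for, but since by construction (see~\S\ref{ss:sbrie}) it carries the jump $\mf u^+-\mf u^-=O(|s|)$ and is absorbed in $\Delta Q$, the same $K_4\Upsilon$-compensation works, as in~\eqref{e:np:upsgiu}. The remaining steps (deriving~\eqref{e:flupointwise} from~\eqref{e:tvf}) are then routine: uniform $BV$ in time of $\mf f\circ\mf v_{\ee_m}(\cdot,0)$ gives, via Helly and the $L^1_\loc$ convergence~\eqref{e:81} together with the trace theory for $BV$ functions as in~\cite[Ch.10]{Bressan}, pointwise a.e. convergence of the boundary traces of the fluxes to the boundary trace of $\mf f\circ\mf v$.
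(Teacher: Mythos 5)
Your overall plan coincides with the paper's: the paper also sets $\Lambda(t):=F(t)+K_4\Upsilon(t)$ with $F(t)=\mathrm{TotVar}_{[0,t[}\,\mf f\circ\mf v_{\ee_m}(\cdot,0)$ (note: the \emph{past} variation, not $\mathrm{TotVar}_{[t,+\infty[}$ as you wrote — with the forward variation your final step "bound the initial value by $\unpo\,\delta$" would be circular, since $\Lambda(0)$ is then exactly the quantity to be proved small), and proves monotonicity by the same three-case analysis (a $j<k$ front hits the boundary, a $k$-front hits the boundary, $\mf u_b^\ee$ jumps), compensating $\Delta F$ with the decreases~\eqref{e:upsgiu1}, \eqref{e:c:dupsilon}, \eqref{e:np:upsgiu}, \eqref{e:jdupsilon}.

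The genuine gap is in your justification of the size of $\Delta F$ in Steps 2--4. You argue that the flux jump is controlled by the Lipschitz continuity of $\mf f$ applied to the interaction estimates \eqref{e:nc}, \eqref{e:me}, \eqref{e:jd1}; this fails, because the new trace differs from the old one by the full incoming wave plus the outgoing $k$-wave, whose \emph{state} jumps are of order $|s_k|$ (resp.\ of order $|\xi_k|\le\delta$ in Cases I and III, since only $|s'_k-\xi_k|$, not $|s'_k|$, is small), so a Lipschitz bound gives a first-order quantity and the functional would not be monotone. The paper's proof instead bounds every flux jump across a $k$-wave by (strength)$\times$(speed) — this is Lemma~\ref{l:dflux}, i.e.\ Rankine--Hugoniot for shocks and an integration along the rarefaction curve using $\lambda_k$ — and then must show that the relevant speeds are themselves second order: $[\sigma_k(\mf u^-,\xi_k)]^+=0$ and $[\sigma_k(\mf u^+,\xi_k)]^+=0$ from \eqref{e:protra}--\eqref{e:sigmatraccia}, the incoming speed contributing exactly $[\varsigma_k]^-$, and, in the hardest sub-case of Case II, the bound $|s+\xi_k|\,[\varsigma_k(\mf u^+,s+\xi_k)]^+\le\unpo|s|\,|\xi_k|$ obtained via the auxiliary $C^2$ functions $g$ and $h$ in \eqref{e:gi}--\eqref{e:gi2} together with \eqref{e:ceblayer} (and an analogous argument with $\widetilde s_k$ in Case III, see \eqref{e:luce}--\eqref{e:widetildestime}). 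You correctly identify this as the main obstacle and cite the right structural facts, but you do not carry out any of these estimates, and since they constitute the substantive content of the lemma — the rest being standard Glimm-functional bookkeeping — the proposal is a correct roadmap rather than a proof.
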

The proof of the above theorem uses the following result. 
\begin{lemma}
\label{l:dflux}
Under the same assumptions as in Theorem~\ref{t:main}, if the $k$-the vector field is genuinely nonlinear~\eqref{e:gnl} we have
\begin{equation}
\label{e:dflux}
      | \mf f \circ \mf v (\mf{\widetilde u}) -  \mf f \circ \mf v (\mf t_k (\mf{\widetilde u}, s))|
      \leq
      \left\{ 
     \begin{array}{ll}
          \unpo  |s| | \sigma_k (\mf{\widetilde u}, s)| & s >0 \\
          \unpo |s |   \max \{ |\lambda_k (\mf t_k  (\mf{\widetilde u}, s))|  , |\lambda_k (\mf{\widetilde u})|    \} & s < 0. \\
      \end{array}
      \right.
\eq
\end{lemma}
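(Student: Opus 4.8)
The statement to establish is the estimate~\eqref{e:dflux}, which controls the jump of the flux $\mf f \circ \mf v$ across a $k$-wave by the product of the wave size and the relevant wave speed. The key point is that $\mf f \circ \mf v$ is a \emph{conserved quantity}, so its jump across a discontinuity with nonzero speed is controlled by the Rankine--Hugoniot relation, while across a rarefaction the jump is controlled by the characteristic speed. I would treat the two cases separately, mirroring the case split in~\eqref{e:laxc}.

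\textbf{Case $s>0$ (shock fronts).} Here $\mf t_k (\mf{\widetilde u}, s) = \mf h_k (\mf{\widetilde u}, s)$ and the Rankine--Hugoniot condition~\eqref{e:rh} reads
$$
    \mf f \circ \mf v (\mf h_k(\mf{\widetilde u}, s)) - \mf f \circ \mf v(\mf{\widetilde u}) = \sigma_k (\mf{\widetilde u}, s)\,\big[\mf g \circ \mf v(\mf h_k(\mf{\widetilde u}, s)) - \mf g \circ \mf v(\mf{\widetilde u})\big].
$$
Taking norms, the left-hand side is bounded by $|\sigma_k(\mf{\widetilde u}, s)|$ times $|\mf g \circ \mf v(\mf h_k(\mf{\widetilde u}, s)) - \mf g \circ \mf v(\mf{\widetilde u})|$, and the latter is $\unpo |s|$ by the Lipschitz continuity of $\mf g \circ \mf v$ and of the Hugoniot curve $s \mapsto \mf h_k(\mf{\widetilde u}, s)$ (which is a $C^1$ curve through $\mf{\widetilde u}$, so the increment is controlled by $|s|$). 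This gives exactly the first line of~\eqref{e:dflux}.

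\textbf{Case $s<0$ (rarefaction fronts).} Here $\mf t_k (\mf{\widetilde u}, s) = \mf i_k(\mf{\widetilde u}, s)$, the integral curve of $\mf r_k$. I would write
$$
    \mf f \circ \mf v (\mf i_k(\mf{\widetilde u}, s)) - \mf f \circ \mf v(\mf{\widetilde u}) = \int_0^s \frac{d}{d\tau}\Big[ \mf f \circ \mf v (\mf i_k(\mf{\widetilde u}, \tau)) \Big] d\tau = \int_0^s D(\mf f \circ \mf v)(\mf i_k(\mf{\widetilde u}, \tau))\, \mf r_k(\mf i_k(\mf{\widetilde u}, \tau))\, d\tau.
$$
The crucial algebraic fact is that $\mf r_k$ is an eigenvector of $\mf E^{-1}\mf A$, and that $\mf A$ and $\mf E$ are, up to the change of variables, the differentials $D(\mf f \circ \mf v)$ and $D(\mf g \circ \mf v)$; hence $D(\mf f \circ \mf v)\,\mf r_k = \lambda_k\, D(\mf g \circ \mf v)\,\mf r_k$ (this is the content of the eigenvalue relation $\mf A \mf r_k = \lambda_k \mf E \mf r_k$ transported back to the $\mf v$ variables). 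Therefore the integrand has norm $\leq |\lambda_k(\mf i_k(\mf{\widetilde u}, \tau))|\cdot \unpo$, and since $\tau \mapsto \lambda_k(\mf i_k(\mf{\widetilde u}, \tau))$ is monotone by genuine nonlinearity~\eqref{e:gnl}, it stays between $\lambda_k(\mf{\widetilde u})$ and $\lambda_k(\mf t_k(\mf{\widetilde u}, s))$, so $|\lambda_k(\mf i_k(\mf{\widetilde u}, \tau))| \leq \max\{|\lambda_k(\mf t_k(\mf{\widetilde u}, s))|, |\lambda_k(\mf{\widetilde u})|\}$ for all $\tau$ between $0$ and $s$. Integrating over an interval of length $|s|$ yields the second line of~\eqref{e:dflux}.

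\textbf{Main obstacle.} The only subtle point is making the identification $D(\mf f \circ \mf v)\,\mf r_k = \lambda_k D(\mf g \circ \mf v)\,\mf r_k$ precise, since the paper works with $\mf A$, $\mf E$, $\mf r_k$ in the $\mf u$ coordinates while~\eqref{e:dflux} is stated in the $\mf v$ coordinates. I would keep this bookkeeping clean by recalling that Hypothesis~\ref{h:normal} says~\eqref{e:symmetric} is obtained from~\eqref{e:vclaw} by the change of variables $\mf v \leftrightarrow \mf u$ and the rescaling, so that $\mf A(\mf u) = D_{\mf u}(\mf f \circ \mf v)$ and $\mf E(\mf u) = D_{\mf u}(\mf g \circ \mf v)$ as matrices; then the chain rule applied to $s \mapsto \mf f \circ \mf v(\mf i_k(\mf{\widetilde u}, s))$ (with $\partial_s \mf i_k = \mf r_k$) produces $\mf A \mf r_k = \lambda_k \mf E \mf r_k$ under the integral, and $|\mf E \mf r_k|$ is bounded by a constant, giving the factor $\unpo$. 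Everything else is routine Lipschitz/Taylor estimation, and the constant $\unpo$ absorbs the dependence on $\mf g$, $\mf f$, $\mf D$, the change of variables and $\mf u^\ast$ as usual.
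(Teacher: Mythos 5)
Your proposal is correct and follows essentially the same route as the paper's proof: for $s>0$ you take norms in the Rankine--Hugoniot relation~\eqref{e:rh} and bound the jump of $\mf g\circ\mf v$ by $\unpo|s|$, and for $s<0$ you integrate $\frac{d}{d\tau}\big[\mf f\circ\mf v(\mf i_k(\mf{\widetilde u},\tau))\big]$ along the rarefaction curve, use the relation $\mf D\mf f\,\mf D\mf v\,\mf r_k=\lambda_k\,\mf D\mf g\,\mf D\mf v\,\mf r_k$, and then exploit the monotonicity of $\tau\mapsto\lambda_k(\mf i_k(\mf{\widetilde u},\tau))$ coming from genuine nonlinearity~\eqref{e:gnl} to bound the integrand by the maximum of $|\lambda_k|$ at the two endpoints. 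The coordinate bookkeeping you flag as the ``main obstacle'' is exactly the identification the paper uses implicitly in that same step, so nothing is missing.
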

\begin{proof}[Proof of Lemma~\ref{l:dflux}]
We separately consider the following cases. \\
{\sc Case 1:} $s\ge 0$. We have  
 $$
    | \mf f \circ \mf v (\mf{\widetilde u}) -  \mf f \circ \mf v (\mf t_k (\mf{\widetilde u}, s))|
   \stackrel{\eqref{e:rh}}{=} |\sigma_k (\mf{\widetilde u}, s)|
    | \mf g \circ \mf v (\mf{\widetilde u}) -  \mf g \circ \mf v (\mf t_k (\mf{\widetilde u}, s))|
\leq \unpo |s| |\sigma_k (\mf{\widetilde u}, s)| 
$$
and this establishes~\eqref{e:dflux}. \\
{\sc Case 2:} $s<0$. We have 
\begin{equation*} \begin{split}
    | \mf f \circ \mf v (\mf{\widetilde u}) & -  \mf f \circ \mf v (\mf t_k (\mf{\widetilde u}, s))|
     \stackrel{\eqref{e:laxc}}{=}  | \mf f \circ \mf v (\mf{\widetilde u}) -  \mf f \circ \mf v (\mf i_k (\mf{\widetilde u}, s))| 
     \stackrel{\eqref{e:raref}}{=}
    \left| \int_s^0 \mf D \mf f \, \mf D\mf v \ \mf r_k (\mf i_k (\mf{\widetilde u}, z)) dz  \right| \\
    & 
     =  \left| \int_s^0 \lambda_k (\mf i_k (\mf{\widetilde u}, z)) \mf D \mf g \, \mf D\mf v \ \mf r_k (\mf i_k (\mf{\widetilde u}, z)) dz  \right| \leq \unpo   \int_s^0  |\lambda_k  (\mf i_k (\mf{\widetilde u}, z)) | dz  \\ & \leq 
     \unpo |s| \max\{|\lambda_k (\mf i_k (\mf{\widetilde u}, s))|, | \lambda_k (\mf{\widetilde u}) | \}
 \stackrel{\eqref{e:laxc}}{=} \unpo |s| 
     \max\{|\lambda_k (\mf t_k (\mf{\widetilde u}, s))|, | \lambda_k (\mf{\widetilde u}) | \},
\end{split}
\end{equation*}
i.e.~\eqref{e:dflux}. To establish the last inequality at the second line of the above expression we have used the fact that, owing to~\eqref{e:gnl}, the function $z \mapsto \lambda_k (\mf i_k (\mf{\widetilde u}, z))$ is monotone non-decreasing and hence the maximum of its modulus on an interval is attained at one the two extrema. 
\end{proof}
We are now ready to give the 
\begin{proof}[Proof of Lemma~\ref{t:tvf} (genuinely nonlinear case)]We assume that the $k$-th vector field is genuinely nonlinear~\eqref{e:gnl} and set
\be \label{e:Lambda}
    \Lambda (t) : = F(t) +K_4 \Upsilon(t) 
\eq
where 
\be 
\label{e:Vt}
      F(t) = \mathrm{Tot Var}_{[0, t[}  \mf f \circ \mf v_{\ee_m}(\cdot, 0),
      \eq 
$\Upsilon$ is the same as in~\eqref{e:upsilon} and $K_4$ is a suitable constant, to be determined in the following. 
Assume for a moment that we have shown that the functional $\Lambda$ is monotone non-increasing; then 
$$
    \mathrm{Tot Var}  \; \mf f \circ \mf v_{\ee_m} (\cdot, 0) \leq \lim_{t \to + \infty} \Lambda(t) \leq \Lambda (0) \leq \unpo \delta 
$$
and this establishes~\eqref{e:tvf}. To show that $\Lambda$ is monotone non-increasing, we first of all recall that owing to the analysis in \S\ref{s:functional} the term $\Upsilon$ is monotone non-increasing. We now separately analyze the three instances where the term $F$ can vary. \\
{\sc Case I:} at time $\tau$ a wave front $\alpha$ of the family $j_\alpha < k$ with size $s_j$ hits the boundary. 
Assume for a moment that we have established the inequality
\be \label{e:deltaF2}
    \Delta F(\tau)  \leq \unpo |s_j|. 
\eq
Then owing to~\eqref{e:upsgiu1} we can choose the constant $K_4$ in~\eqref{e:Lambda} in such a way that $\Delta \Lambda(\tau) \leq 0$. To establish~\eqref{e:deltaF2} we first have to introduce some notation. We term $\mf u^-$ and 
$\mf u^+$ the left and the right state of $\alpha$, and denote by $\mf{\hat u}$ the same state as in~\eqref{e:hatu}. We recall the construction in \S\ref{ss:srie} and conclude that if we solve the boundary Riemann problem~\eqref{e:claw},\eqref{e:briedata} by using the simplified boundary Riemann problem then $\Delta F (\tau) =0$ and~\eqref{e:deltaF2} is trivially satisfied. We are left to consider the case where the boundary Riemann problem~\eqref{e:claw},\eqref{e:briedata} is solved by using the accurate boundary Riemann solver: we recall the construction in \S\ref{sss:casignl} and \S\ref{ss:abriegnl} and we separately consider cases i),$\dots$,\ vi) in there.
\begin{itemize}
\item[i)] we have  
   \begin{equation}     \label{e:deltaF} 
\begin{split}
    \Delta F(\tau) & = | \mf f (\mf v (\mf u^-))-  \mf f (\mf v (\boldsymbol{\zeta}_k (\mf{\hat u}, s'_k) ))   | \\ &  \leq 
    | \mf f (\mf v (\mf u^-))-  \mf f (\mf v (\mf u^+))   | + | \mf f (\mf v (\mf u^+))-  \mf f (\mf v (\mf{\hat u}))| +
    |\mf f (\mf v (\mf{\hat u})) - \mf f (\mf v (\boldsymbol{\zeta}_k (\mf{\hat u}, s'_k) ))| \\
   & \leq  \unpo |\mf u^- - \mf u^+| + \unpo |\mf u^+ - \mf{\hat u}| +  
   |\mf f (\mf v (\mf{\hat u})) - \mf f (\mf v (\boldsymbol{\zeta}_k (\mf{\hat u}, s'_k) ))| \\ &
   \stackrel{\eqref{e:nc}}{\leq} \unpo|s_j| +  |\mf f (\mf v (\mf{\hat u})) - \mf f (\mf v (\boldsymbol{\zeta}_k (\mf{\hat u}, s'_k) ))| 
    \end{split}
\end{equation}
To control the second term at the right-hand side of the above formula we point out that (recalling that $\sigma_k (\mf{\hat u}, s'_k)>0$ by definition of case i))
  \begin{equation} \label{e:cvelocita}  
\begin{split}
       |\mf f (\mf v (\mf{\hat u})) & - \mf f (\mf v (\boldsymbol{\zeta}_k (\mf{\hat u}, s'_k) ))| 
        \stackrel{\eqref{e:rh}}{\leq} [\sigma_k (\mf{\hat u}, s'_k)]^+
        |\mf g (\mf v (\mf{\hat u})) - \mf g (\mf v (\boldsymbol{\zeta}_k (\mf{\hat u}, s'_k) ))|
         \\ & \leq \unpo |s'_k|
        [\sigma_k (\mf{\hat u}, s'_k)]^+ \leq \unpo \delta [\sigma_k (\mf{\hat u}, s'_k)]^+ \\ &
        \leq\unpo \delta| [ \sigma_k (\mf{\hat u}, s'_k)]^+- [\sigma_k (\mf u^+, s'_k)]^+| + \unpo  \delta| 
         [\sigma_k (\mf u^+, s'_k)]^+ - [\sigma_k (\mf u^-, s'_k)]^+   | \\ & \quad +\unpo  \delta  | [\sigma_k (\mf u^-, s'_k)]^+ - [\sigma_k (\mf u^-, \xi_k)]^+ |
         \\ & \leq \unpo \delta | \mf{\hat u} - \mf u^+ | + \unpo \delta |\mf u^+ - \mf u^- | + \unpo \delta |s'_k - \xi_k| 
         \stackrel{\eqref{e:nc}}{\leq} \unpo \delta |s_j|
  \end{split}
\end{equation}
To establish the third to last inequality we have used the fact that $ [\sigma_k (\mf u^-, \xi_k)]^+ =0$, obtained by applying~\eqref{e:sigmatraccia} with $\mf u^-$ and $\xi_k$ in place of $\mf{\bar u}$ and $\xi_k$, respectively. By plugging the previous chains of inequalities into~\eqref{e:deltaF} we get~\eqref{e:deltaF2}
\item[ii)] we 
use again~\eqref{e:deltaF} and to control the second term at the right hand side we proceed as follows. If $\xi_k=0$ then owing to~\eqref{e:nc} we have 
$|s'_k| \leq C_1 |s_j|$ and this yields
$$
    |\mf f (\mf v (\mf{\hat u})) - \mf f (\mf v (\boldsymbol{\zeta}_k (\mf{\hat u}, s'_k) ))|  \leq \unpo |s'_k| \leq \unpo |s_j|. 
$$
If $\xi_k >0$ then owing to~\eqref{e:ceblayer} we have $\lambda_k (\mf u^-) \leq 0$. We then have 
\begin{equation*} \begin{split}
     |\mf f (\mf v (\mf{\hat u}))  - \mf f (\mf v (\boldsymbol{\zeta}_k (\mf{\hat u}, s'_k) ))| &=
      |\mf f (\mf v (\mf{\hat u})) - \mf f (\mf v (\mf t_k (\mf{\hat u}, s'_k) ))| \\ & =
      |\mf g \circ \mf g^{-1} \circ \mf f (\mf v (\mf{\hat u})) - \mf g \circ \mf g^{-1} \circ \mf f (\mf v (\mf t_k (\mf{\hat u}, s'_k) ))|\\
      & \leq \unpo  | \mf g^{-1} \circ \mf f (\mf v (\mf{\hat u})) - \mf g^{-1} \circ \mf f (\mf v (\mf t_k (\mf{\hat u}, s'_k) ))|
      \leq \left| \int_{s'_k}^0 \lambda_k \mf r_k (\mf t_k (\mf{\hat u}, s)  ds \right| \\
      & \leq \unpo \int_{s'_k}^0 | \lambda_k  (\mf t_k (\mf{\hat u}, s) | ds \stackrel{\eqref{e:gnl}}{\leq}
       \unpo \int_{s'_k}^0 \lambda_k (\mf{\hat u}) ds \leq \unpo |s'_k| \lambda_k (\mf{\hat u}) 
       \\ & \stackrel{\lambda_k (\mf u^-) \leq 0}{\leq}  
       \unpo |s'_k| [ \lambda_k (\mf{\hat u}) - \lambda_k (\mf u^-)] \leq  \unpo |s'_k| |\mf{\hat u} - \mf u^-|
      \\ &  \stackrel{\eqref{e:nc}}{\leq} \unpo |s'_k| |s_j| \leq \unpo \delta |s_j|
\end{split}     
\end{equation*} 
and by plugging the above inequality into~\eqref{e:deltaF} we arrive at~\eqref{e:deltaF2}. 
\item[iii)] the proof is the same as at the previous point, it suffices to replace $s'_k$ with $\bar s (\mf{\hat u})$ and use~\eqref{e:pallido2}. 
\item[iv)] the proof of~\eqref{e:deltaF2} is the same as in case i).
\item[v), vi)] since there is no wave of the $k$-th family entering the domain after the interaction  we have $ \Delta F(\tau)= | \mf f (\mf v (\mf u^-))-  \mf f (\mf v (\mf{\hat u})   |$ and one can then proceed  
as in~\eqref{e:deltaF}.  
\end{itemize}
{\sc Case II:} at time $\tau$ a wave front $\alpha$ of the $k$-th family with (signed) strength $s$ hits the boundary. 
Assume for a moment that we have established the inequality 
\be \label{e:deltaFc2}
    \Delta F(\tau) \leq \unpo |s| ([\varsigma_k (\mf u^+, s) ]^- + |\xi_k|   ). 
\eq
Then owing to~\eqref{e:c:dupsilon} we can choose the constant $K_4$ in~\eqref{e:Lambda} in such a way that $\Delta \Lambda(\tau) \leq 0$. 
To establish~\eqref{e:deltaFc2} we first assume that the boundary Riemann problem~\eqref{e:claw},~\eqref{e:briedata} is solved by using the accurate boundary Riemann solver, we recall the analysis in \S\ref{ss:abrie} and we separately consider the following cases.  As in {\sc Case I}, we term $\mf u^-$ and 
$\mf u^+$ the left and the right state of $\alpha$, respectively, and denote by $\mf{\hat u}$ the same state as in~\eqref{e:hatu}. Note that, owing to~\eqref{e:speedrare} we have $\lambda_k (\mf u^+) <0$. We now separately consider cases i),$\dots$,vi) in \S\ref{sss:casignl} and \S\ref{ss:abriegnl}.
\begin{itemize}
\item[i)]  We point out that 
\begin{equation}     \label{e:deltaFc} 
\begin{split}
    \Delta F(\tau) & = | \mf f (\mf v (\mf u^-))-  \mf f (\mf v (\boldsymbol{\zeta}_k (\mf{\hat u}, s'_k) ))   | \\ &  \leq 
    | \mf f (\mf v (\mf u^-))-  \mf f (\mf v (\mf u^+))   | + | \mf f (\mf v (\mf u^+))-  \mf f (\mf v (\mf{\hat u}))| +
    |\mf f (\mf v (\mf{\hat u})) - \mf f (\mf v (\boldsymbol{\zeta}_k (\mf{\hat u}, s'_k) ))| 
    \end{split}
\end{equation}
To control first term at the right-hand side of~\eqref{e:deltaFc} we recall~\eqref{e:dflux}. In the case $s<0$ we recall that owing to~\eqref{e:speedrare} the speed of the wave-front is $\lambda_k (\mf u^+)$ and that $\lambda_k (\mf u^+)<0$ (otherwise the wave could not hit the boundary). Since the map $s \mapsto \lambda_k (\mf i_k (\mf u^+, s))$ is monotone non-decreasing owing to~\eqref{e:gnl}, we conclude that 
$$
    \max \{ |\lambda_k (\mf t_k  (\mf u^+, s))|  , |\lambda_k (\mf u^+)|    \}=
    |\lambda_k (\mf t_k  (\mf u^+, s))| \stackrel{\eqref{e:speedwft}}{=}
   | \varsigma_k   (\mf u^+, s)| = [\varsigma_k   (\mf u^+, s)]^- \quad \text{if $s<0$}.
$$ 
Owing to~\eqref{e:dflux} and recalling~\eqref{e:speedwft} we conclude that, regardless the sign of $s$, we have  
\be \label{e:firstt}
     | \mf f (\mf v (\mf u^-))-  \mf f (\mf v (\mf u^+ ))   | \leq 
\unpo |s| [ \varsigma_k   (\mf u^+, s)]^-.  
\eq
To control the second term at the right-hand side of~\eqref{e:deltaFc} we simply point out that 
\be \label{e:secondt}
     | \mf f (\mf v (\mf u^+))-  \mf f (\mf v (\mf{\hat u}))| \leq \unpo |\mf u^+ - \mf{\hat{u}}|
    \stackrel{\eqref{e:me},\eqref{e:hatu}}{\leq} \unpo |s| ([\varsigma_k (\mf u^+, s) ]^- + |\xi_k|   ).
\eq
We now recall~\eqref{e:speedwft} and control the third term at the right-hand side of~\eqref{e:deltaFc}:
\begin{equation} \label{e:thirdt}
\begin{split}
        |\mf f (\mf v (\mf{\hat u})) - & \mf f (\mf v (\boldsymbol{\zeta}_k (\mf{\hat u}, s'_k) ))| 
       \stackrel{\eqref{e:dflux}, s'_k >0}{\leq} \unpo |s'_k| [\varsigma_k (\mf{\hat u}, s'_k)]^+
       \\ & \leq
      \unpo\delta  [\varsigma_k (\mf{\hat u}, s'_k)]^+ -  [\varsigma_k (\mf u^+, s'_k)]^+    |+\unpo \delta 
     | [\varsigma_k (\mf u^+, s'_k)]^+    -  [\varsigma_k (\mf u^+, s +\xi_k)]^+| \\ & \quad +
    \unpo |s'_k|  [\varsigma_k (\mf u^+, s +\xi_k)]^+ 
       \\ & \stackrel{\eqref{e:me}}{\leq} 
     \unpo \delta |  \mf{\hat u}- \mf u^+ | + 
    \unpo \delta |s| ([\varsigma_k (\mf u^+, s) ]^- + |\xi_k|   ) +
     \unpo |s'_k|  [\varsigma_k (\mf u^+, s +\xi_k)]^+  \\ &
    \stackrel{\eqref{e:hatu},\eqref{e:me}, \eqref{e:msigma}}\leq 
     \unpo \delta |s| ([\varsigma_k (\mf u^+, s) ]^- + |\xi_k|   ) +
     \unpo |s + \xi_k|  [\varsigma_k (\mf u^+, s +\xi_k)]^+ 
\end{split}
\end{equation}
When $\xi_k=0$, the above estimate yields~\eqref{e:deltaFc2}, hence to control the last term at the right-hand side of the above expression we assume $\xi_k \neq 0$ and we separately consider the following cases: if $\varsigma_k (\mf u^+, s +\xi_k)\leq 0$ then the term vanishes and the estimate is complete. We are thus left to consider the case  $\varsigma_k (\mf u^+, s +\xi_k) \ge 0$:   recalling that $\lambda_k (\mf u^+) <0$ and since $\varsigma_k(\mf u^+, s) \leq 0$ then $s \leq \underline s (\mf u^+)$ and $\xi_k \ge \underline s(\mf u^+) - s \ge 0$.  We also point out that $\varsigma_k (\mf t_k (\mf u^+, s), \xi_k) \leq 0$ owing to~\eqref{e:ceblayer}.  
We thus have
\be \label{e:gi}
       [\varsigma_k (\mf u^+, s +\xi_k)]^+ = \varsigma_k (\mf u^+, s +\xi_k  \leq
      \varsigma_k (\mf u^+, s +\xi_k)-\varsigma_k (\mf t_k (\mf u^+, s), \xi_k) := g(s, \xi_k)
\eq
We now point out that the function $g$ is of class $C^2$ and satisfies 
\be \label{e:gi2}
    g(0, \xi) =0 \;
    \text{for every $\xi$}, \qquad  
    g(s, 0) =   \varsigma_k (\mf u^+, s)-\lambda_k (\mf t_k(\mf u^+, s)) \stackrel{\text{Lax}}{\leq} 0 \; \text{for $s \ge  0$}. 
\eq
If $s\ge 0$ we can then argue as follows: since $g (s, \xi_k) \ge 0$ and $g(s, 0) \leq 0$ then there must be $\xi(s) \in [0, \xi_k]$ such that 
$g(s, \xi(s))=0$. By using~\eqref{e:gi2} and arguing as in the proof of~\cite[Lemma 2.5]{Bressan} we then get 
\begin{equation*} \begin{split}
   | g (s, \xi_k)| & \leq \int_{ \xi(s)}^{\xi_k} 
   \left| \frac{\partial g}{\partial \xi} (s, z)\right| dz   \leq \unpo  \int_{ \xi(s)}^{\xi_k} |s| dz \leq \unpo |s| |\xi_k -\xi(s)| \leq \unpo |s| |\xi_k| \; \text{for $s\ge 0$}.
\end{split}
\end{equation*}
This allows us to control the last term at the right-hand side of~\eqref{e:thirdt} and concludes the proof of~\eqref{e:deltaFc2} in the case $s \ge 0$.
If $s <0$ we recall that $s'_k>0$ because we are in case i). Owing to~\eqref{e:me}, this implies that, if $s + \xi_k \leq 0$, then 
$$
   |s_k + \xi_k| \leq |s| ([\varsigma_k (\mf u^+, s) ]^- + |\xi_k|   ) 
$$ 
and by plugging this estimate into~\eqref{e:thirdt} we arrive at~\eqref{e:deltaFc2}. We are thus left to control the last term at the right hand-side of~\eqref{e:thirdt} in the case $s <0$, $s+ \xi_k>0$, which yields $\xi_k > -s >0.$ We now consider the function 
$$
    h(s, \xi): = [s + \xi] g(s, \xi),
$$
where $g$ is the same as in~\eqref{e:gi} and point out that $h$ satisfies 
$$
    h(0, \xi)=0, \quad h(s, -s) = 0 \quad \text{for every $\xi$, $s$}
$$
and by arguing as in the proof of~\cite[Lemma 2.5]{Bressan} we then get 
$$
    |h(s, \xi)| \leq \unpo |s| |\xi +s| \leq \unpo |s| |\xi|  \quad \text{for $\xi >0>s$}
$$
and this allows us to control the last term at the right-hand side of~\eqref{e:thirdt} and concludes the proof of~\eqref{e:deltaFc2}.
\item[ii)] we use~\eqref{e:deltaFc} and to control the first and second term at the right-hand side  we use~\eqref{e:firstt} and~\eqref{e:secondt}, respectively. To control the third term at the right hand-side of~\eqref{e:deltaFc} we recall that, since we are in case ii),  $\bar s (\mf{\hat u}) \leq s'_k <0$ and  hence 
$$
   0 \leq \lambda_k (\mf t_k (\mf{\hat u}, s'_k) ) \leq \lambda_k (\mf{\hat u})
$$
and by using~\eqref{e:dflux} we get 
\begin{equation*}
\begin{split}
      |\mf f (\mf v (\mf{\hat u})) & - \mf f (\mf v (\boldsymbol{\zeta}_k (\mf{\hat u}, s'_k) ))| 
      \leq \unpo |s'_k| [\lambda_k (\mf{\hat u})]^+\\ &
     = \unpo |s'_k| \big| [\lambda_k (\mf{\hat u})]^+ -[ \lambda_k (\mf u^+)]^+ \big|
     \leq \unpo  |s'_k| |\mf{\hat u} - \mf u^+|  \\ & \stackrel{\eqref{e:hatu},\eqref{e:me}}{\leq} \unpo \delta
    |s| ([\varsigma_k (\mf u^+, s) ]^- + |\xi_k|   )
\end{split}
\end{equation*}
In the above expression we have used the equality $[ \lambda_k (\mf u^+)]^+=0$: this follows from the fact that the wave-front $\alpha$ has negative speed and the Lax entropy condition (if $s>0$) and the equality~\eqref{e:speedrare} (if $s<0$). 
\item[iii)]  the proof is the same as at the previous point, it suffices to replace $s'_k$ with $\bar s (\mf{\hat u})$ and use~\eqref{e:pallido2}.
\item[iv)] we proceed as in point i)
\item[v), vi)] in these cases there is no wave front of the $k$-th family entering the domain after the interaction and hence we have  $\Delta F(\tau)  = | \mf f (\mf v (\mf u^-))-  \mf f (\mf v (\hat{\mf u})) |$ and we can argue as in~\eqref{e:deltaFc}.  
\end{itemize}
To establish~\eqref{e:deltaFc2} in the case where we use the simplified boundary Riemann solver instead of the accurate Riemann solver we can use the exact same analysis since the trace of the approximate solution after the interaction is the same as in the case where we use the accurate Riemann solver. \\
{\sc Case III:} at time $\tau$ the datum $\mf u_{bm}$ has a discontinuity. We term $\mf u_{bm}^+$ and $\mf u_{bm}^-$ the right and left limits $\mf u_{bm}(\tau^+)$ and $\mf u_{bm}(\tau^-)$, respectively, $\mf u^+$ and $\mf{\bar u}$ the traces of $\mf u_m(t, \cdot)$ for $t= \tau^-$ and $t=\tau^+$, respectively, and $\mf{\hat u}$ 
the same state as in~\eqref{e:hatu}. Assume we have established the inequality 
\be \label{e:deltaFd2}
      \Delta F(\tau) \leq \unpo |\mf u_{bm}^+ - \mf{u}_{bm}^-|;  
\eq
then owing to~\eqref{e:jdupsilon} we can choose the constant $K_4$ in~\eqref{e:Lambda} in such a way that $\Delta \Upsilon (\tau) \leq 0$. To establish~\eqref{e:deltaFd2} we point out that  
\begin{equation}     \label{e:deltaFd} 
\begin{split}
    \Delta F(\tau) & = | \mf f (\mf v (\mf u^+))-  \mf f (\mf v (\mf{\bar u}))   |   \leq 
   | \mf f (\mf v (\mf u^+))-  \mf f (\mf v (\mf{\hat u}))| +
    |\mf f (\mf v (\mf{\hat u})) -  \mf f (\mf v (\mf{\bar u}))  )| \\
    & \stackrel{\eqref{e:jd1}}{\leq} \unpo |\mf u_{bm}^+ - \mf{u}_{bm}^-| +    |\mf f (\mf v (\mf{\hat u})) - \mf f (\mf v (\mf{\bar u}))  |
    \end{split}
\end{equation}
To control the second term at the right-hand side of the above expression we point out that, by separately considering cases i),$\dots$,vi) in \S\ref{sss:casignl} and \S\ref{ss:abriegnl} we conclude that $\mf{\bar u} = \mf t_k (\mf{\hat u}, \widetilde s_k)$, where the value attained by $\widetilde s_k$ in the various cases is as follows:
\begin{itemize}
\item[i)] $\widetilde s_k = s'_k>0$;
\item[ii)] $\widetilde s_k = s'_k \leq 0$;
\item[iii)] $\widetilde s_k = \bar s(\mf{\hat u}) \leq 0$, and hence $s'_k < \widetilde s_k \leq 0$;
\item[iv)] $\widetilde s_k = s'_k>0$;
\item[v), vi)] $\widetilde s_k =0$;
\end{itemize}
If $\widetilde s_k =0$ then the second term at the right-hand side of~\eqref{e:deltaFd} vanishes and this completes the proof of~\eqref{e:deltaFd2}. 
If $\widetilde s_k > 0$, then $\widetilde s_k = s'_k$ and by applying Lemma~\ref{l:dflux}  we get 
\begin{equation}   \label{e:luce}
\begin{split}
       | \mf f (\mf v (\mf{\hat u})) - & \mf f (\mf v (\bar{\mf u}))| \leq 
     \unpo   [\sigma_k (\mf{\hat u}, \widetilde s_k) ]^+ |\widetilde s_k|  \stackrel{\widetilde s_k = s'_k}{=}  \unpo   [\sigma_k (\mf{\hat u}, s'_k) ]^+ |s'_k|  \\ &
     \leq \unpo \delta | [\sigma_k (\mf{\hat u}, s'_k) ]^+ -  [\sigma_k (\mf u^+, s'_k) ]^+ |   +
     \unpo \delta |  [\sigma_k (\mf u^+, s'_k) ]^+-  [\sigma_k (\mf u^+, \xi_k) ]^+ | \\
     & \stackrel{\eqref{e:jd1}}{\leq} \unpo \delta  |\mf u_{bm}^+ - \mf{u}_{bm}^-|. 
 \end{split}
\end{equation}
To establish the second inequality in the previous expression, we have used the equality $ [\sigma_k (\mf u^+, \xi_k) ]^+=0$,  which follows from~\eqref{e:protra} applied with $\mf u^+$ and $\xi_k$ in place of $\mf{\bar u}$ and $\xi'_k$, respectively. By plugging~\eqref{e:luce} into~\eqref{e:deltaFd} we establish~\eqref{e:deltaFd2}.
If $\widetilde s_k <0$, then by applying Lemma~\ref{l:dflux} we get 
\be \label{e:widetildestime}
    | \mf f (\mf v (\mf{\hat u})) -  \mf f (\mf v (\mf {\bar u}))| \leq 
     \unpo |\widetilde s_k| \max \{ |\lambda_k (\mf{\hat u})|, |\lambda_k  (\boldsymbol{\zeta}_k (\mf{\hat u}, \widetilde s_k) )| \} .
\eq
Next, we recall that the inequality  $\widetilde s_k <0$ implies that we are in either case ii) or case iii) in \S\ref{ss:arie}, and this in turn yields 
\be \label{e:sonpiccini}
   |\widetilde s_k| \leq |s'_k|, \quad \max \{ |\lambda_k (\mf{\hat u})|, |\lambda_k  (\boldsymbol{\zeta}_k (\mf{\hat u}, \widetilde s_k) )| \} = |\lambda_k (\mf{\hat u})| = 
   \lambda_k (\mf{\hat u}) >0. 
\eq
If $\lambda_k (\mf u^+)\leq 0$ we can then control the right-hand side of~\eqref{e:widetildestime} by arguing as follows: 
$$
   |s'_k| [\lambda_k (\mf{\hat u})]^+\leq |s'_k| |[\lambda_k (\mf{\hat u})]^+ - [ \lambda_k (\mf u^+)]^+ | \leq 
   \unpo \delta   |\hat{\mf u} -\mf u^+ | \stackrel{\eqref{e:jd1}}{\leq}
   \unpo \delta |\mf u_{bm}^+ - \mf{u}_{bm}^-|. 
$$
If $\lambda_k (\mf u^+)>0$ then $\xi_k=0$ by~\eqref{e:protra} applied with $\mf u^+$ and $\xi_k$ in place of $\mf{\bar u}$ and $\xi'_k$, respectively, and then owing to~\eqref{e:jd1} we get $|s'_k| \leq C_6  |\mf u_{bm}^+ - \mf{u}_{bm}^-|$ and by~\eqref{e:sonpiccini} this allows us to control the right hand-side of~\eqref{e:widetildestime}. This concludes the proof of~\eqref{e:deltaFd2} and hence of Lemma~\ref{t:tvf} in the genuinely nonlinear case. 
\end{proof}
\begin{corol}
Under the same assumptions as in the statement of Theorem~\ref{t:main} we have~\eqref{e:flupointwise}. 
\end{corol}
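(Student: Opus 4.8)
\textbf{Proof proposal for the Corollary (establishing~\eqref{e:flupointwise}).}
The plan is to combine the uniform total variation bound of Lemma~\ref{t:tvf} with Helly's compactness theorem and a standard identification of the limit via the $BV$ trace theory. First I would observe that, by Lemma~\ref{t:tvf}, the sequence of functions $t \mapsto \mf f \circ \mf v_{\ee_m}(t, 0)$ has total variation uniformly bounded by $C_{13}\delta$, and is moreover uniformly bounded in $L^\infty$ (since all the $\mf v_{\ee_m}$ take values in a small neighborhood of $\mf v^\ast$, by the total variation estimate~\eqref{e:settedue} and the bound $|\mf v_0(0^+) - \mf v^\ast| \le \delta^\ast$). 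Hence by Helly's theorem, up to a further subsequence (not relabelled, consistently with footnote~\ref{foot2}), the functions $t \mapsto \mf f \circ \mf v_{\ee_m}(t, 0)$ converge in $L^1_{\loc}(\R_+)$, and for a.e.\ $t$ pointwise, to some $BV$ function $g(t)$. The content of~\eqref{e:flupointwise} is the identification $g(t) = \lim_{x \to 0^+} \mf f \circ \mf v(t, x)$ for a.e.\ $t$.

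To carry out this identification, I would argue as follows. On the one hand, since $\mf u_{\ee_m}(t, \cdot) \to \mf u(t, \cdot)$ in $L^1_{\loc}(\R_+)$ for every $t>0$ by~\eqref{e:81}, and since $\mf v$ and $\mf f$ are smooth, we have $\mf f \circ \mf v_{\ee_m}(t, \cdot) \to \mf f \circ \mf v(t, \cdot)$ in $L^1_{\loc}(\R_+)$ for every $t>0$; combined with the uniform (in $m$ and $t$) total variation bound on $\mf v_{\ee_m}(t, \cdot)$, this yields that $\mf f \circ \mf v \in BV([0,T]\times\R_+)$ for every $T$, so the inner trace $\lim_{x \to 0^+}\mf f \circ \mf v(t, x)$ is well defined for a.e.\ $t$. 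On the other hand, for the approximations, the function $(t, x) \mapsto \mf f \circ \mf v_{\ee_m}(t, x)$ is piecewise constant with discontinuities along finitely many straight lines; its trace at $x = 0$ in the $BV$ sense coincides with the pointwise right limit $\lim_{x \to 0^+}\mf f \circ \mf v_{\ee_m}(t, x)$, which is precisely $\mf f(\lim_{x\to 0^+}\mf v_{\ee_m}(t, x))$ appearing in~\eqref{e:flupointwise}. Thus it remains to show that the traces at $x=0$ pass to the limit: that $\lim_m \operatorname{Tr}[\mf f \circ \mf v_{\ee_m}](\cdot, 0) = \operatorname{Tr}[\mf f \circ \mf v](\cdot, 0)$ in $L^1_{\loc}$.

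For this last point I would use the following standard argument from $BV$ trace theory. For a uniformly $BV$ (in both variables, on each strip $[0,T]\times\R_+$) and uniformly bounded sequence of functions $w_m$ converging in $L^1_{\loc}$ to $w$, the normal traces on $\{x=0\}$ converge in $L^1_{\loc}$. Concretely, one estimates, for $\rho>0$ small,
\[
   \Big\| \operatorname{Tr}[w_m](\cdot, 0) - \frac{1}{\rho}\int_0^\rho w_m(\cdot, x)\, dx \Big\|_{L^1([0,T])}
   \le \int_0^\rho \operatorname{TotVar}_{x \in [0,\rho]} w_m(t, \cdot)\, dt \cdot \frac{1}{\rho} \cdot \rho,
\]
which can be made small uniformly in $m$ because $\int_0^T \operatorname{TotVar}_{[0,\rho]} w_m(t,\cdot)\, dt \to 0$ as $\rho \to 0$ uniformly in $m$ (this in turn follows from the uniform total variation bound in $(t,x)$ together with a standard slicing estimate; alternatively one invokes directly the lower semicontinuity and continuity properties of traces under $L^1$ convergence of uniformly $BV$ functions, see~\cite{AmbFuPal}). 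Applying this with $w_m = \mf f \circ \mf v_{\ee_m}$, $w = \mf f \circ \mf v$, and combining with the Helly limit $g$ obtained above, gives $g = \operatorname{Tr}[\mf f \circ \mf v](\cdot, 0)$ a.e., which is exactly~\eqref{e:flupointwise}.

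The main obstacle, and the reason Lemma~\ref{t:tvf} is the crucial input, is precisely that the traces $\mf v_{\ee_m}(\cdot, 0)$ themselves do \emph{not} have uniformly bounded total variation (cf.\ the counter-example~\cite[\S4.3]{DMS} cited in \S\ref{ss:roadmap}); it is only after composing with the flux $\mf f$ that the uniform $BV$ bound holds, and this is what makes the trace passage to the limit legitimate. Once Lemma~\ref{t:tvf} is granted, the rest of the argument is the routine compactness-plus-trace-identification scheme sketched above.
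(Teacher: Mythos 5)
Your first step (uniform $L^\infty$ bound plus Lemma~\ref{t:tvf} plus Helly, giving $L^1_{\loc}$ and pointwise a.e.\ convergence of $\mf f \circ \mf v_{\ee_m}(\cdot,0)$ to some $BV$ limit $g$) is exactly the paper's first step. The gap is in your identification of $g$ with the trace of $\mf f\circ \mf v$. The principle you invoke — that for a uniformly bounded, uniformly $BV$ sequence $w_m \to w$ in $L^1_{\loc}$ the traces on $\{x=0\}$ converge — is false: take $w_m(t,x)=\ind_{\{x<1/m\}}$, which is uniformly $BV$ on every strip and converges to $0$ in $L^1$, while its trace is identically $1$. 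Correspondingly, the quantitative step in your display fails: the uniform total variation bound in $(t,x)$ does \emph{not} imply that $\int_0^T \mathrm{TotVar}_{x\in[0,\rho]}\, w_m(t,\cdot)\,dt \to 0$ as $\rho\to 0$ \emph{uniformly in $m$}, because the variation measure $|D_x w_m|$ can concentrate on $\{x=0\}$ as $m\to+\infty$. In the present problem this is not a pathological worry but precisely the expected behaviour: wave fronts with vanishing speed can accumulate at the boundary (this is mechanism (iii) in \S\ref{ss:roadmap} and the raison d'\^etre of the set $G$ in \S\ref{ss:tildeE}), so the limit of the traces of $\mf v_{\ee_m}$ genuinely differs from the trace of $\mf v$ on a non-negligible set, and any argument based on strong continuity of $BV$ traces under $L^1$ convergence cannot work as stated. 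One might try to rescue it by showing that the $x$-variation of the \emph{flux} $\mf f\circ\mf v_{\ee_m}$ does not concentrate at $x=0$ (slow $k$-shocks carry a small flux jump, other families have speeds bounded away from zero), but that is an additional nontrivial estimate on the approximations, not the routine trace lemma you cite.

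The paper identifies the limit differently, using the divergence structure of the equation rather than trace continuity: since $\mf v_{\ee_m}$ is an $\ee_m$-approximate solution, for every $\varphi\in C^\infty_c(]0,+\infty[\times\R_+)$ the quantity $\int\!\!\int \big(\partial_t\varphi\, \mf g\circ\mf v_{\ee_m}+\partial_x\varphi\, \mf f\circ\mf v_{\ee_m}\big)\,dx\,dt + \int \varphi(t,0)\,\mf f\circ\mf v_{\ee_m}(t,0)\,dt$ tends to $0$ (the analysis in \cite[pp.~144--145]{Bressan}); passing to the limit in the interior integrals via~\eqref{e:81} and using the Gauss--Green formula for the $BV$ limit $\mf v$ forces the Helly limit $\boldsymbol{\rho}=g$ to coincide with $\mf f\circ\mf v(\cdot,0)$, i.e.\ the weak continuity of the \emph{normal} trace of the divergence-free field $(\mf g\circ\mf v,\mf f\circ\mf v)$ does the job that strong $BV$-trace continuity cannot. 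You should replace your trace-theory paragraph with this weak-formulation argument; the rest of your proposal then goes through.
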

\begin{proof}
 We point out that that $\mf f \circ \mf v_{\ee_m} (\cdot, 0)$ is uniformly bounded because so is $\mf u^\ee$ by~\eqref{e:settedue}. Owing to Helly-Kolmogorov Compactness Theorem, the total variation bound~\eqref{e:tvf} implies that $\mf f \circ \mf v_{\ee_m} (\cdot, 0)$ converges up to subsequences in $L^1_{\mathrm{loc}}(\R_+)$ to some limit function $\boldsymbol{\rho}$. By the analysis in~\cite[p.144-5]{Bressan}, we have 
$$
   \lim_{m \to + \infty} \left( \int_0^{+\infty}  \int_0^{+\infty} \partial_t \varphi \ \mf g \circ  \mf v_{\ee_m} (t, x) + \partial_x \varphi \ \mf f \circ \mf v_{\ee_m} (t, x) dx dt + 
   \int_0^{+\infty}  \varphi (t, 0) \ \mf f \circ \mf v_{\ee_m} (t, 0)  dt \right) =0
$$
for every $\varphi \in C^\infty_c (]0, + \infty[ \times \R_+)$ and this yields $\boldsymbol{\rho} = \mf f \circ \mf v(\cdot, 0)$ using the fact that $\mf v$ is of bounded total variation and hence has a trace at $x=0$, where $\mf v = \mf v \circ \mf u$ and $\mf u$ is the same as in~\eqref{e:81}. Also, up to subsequences we have~\eqref{e:flupointwise}.  
\end{proof}
\subsection{Construction and analysis of the set $\widetilde E$ in the genuinely nonlinear case} \label{ss:tildeE}
The set $\widetilde E \subseteq \R_+$ is defined as in~\eqref{e:tildeE}. We recall that the set $D$ is the set where the boundary datum $\mf v_b$ is discontinuous and we now rigorously define the sets
$F$, $H$ and $G$.  
\subsubsection{Construction of the set $F$}\label{ss:F} 
For every $m \in \mathbb N$ we construct a sequence of measures $\mu_m \in \mathcal M_\mathrm{loc}(\R_+)$ by proceeding as follows: first, 
we term $\tau_1, \dots, \tau_{n_m}$ the times at which a wave front of the $\ee_m$-wave front-tracking approximation $\mf u^{\ee_m}$ hits the boundary;
next,  for every $\tau_\alpha$, $\alpha=1, \dots, n_m$, we term $j_\alpha$ and $s_\alpha$ the family and the size of the hitting wave, and set 
\be \label{e:etalpha}
     \chi_\alpha: = 
     \left\{
     \begin{array}{ll}
     A|s_\alpha| & j_\alpha <k \\
     |s_\alpha| (   [\varsigma_\alpha(\mf u_\alpha, s_\alpha)]^- + |\xi_k|) & j_\alpha =k. \\
     \end{array}
     \right.
\eq
Finally,  we set 
\be \label{e:mum}
   \mu_m : = \sum_{\alpha=1}^{n_m} \chi_\alpha \delta_{t = \tau_\alpha},
\eq
where $ \delta_{t = \tau_\alpha}$ denotes the Dirac delta concentrated at $\tau_\alpha$. 
Note that by using the specific piecewise constant structure of $\mf u^\ee$ and~\eqref{e:msigma} we get  
$$
    |\mu_m| (]0, T[) \leq \unpo \mathrm{D \mf u^\ee} (]0, T[\times \R_+) 
    \stackrel{\eqref{e:settedue}}{\leq}
   \unpo  T \delta, \quad \text{for every $T>0$}.  
$$
This implies that, up to subsequences, $\mu_m$ converges weakly-$^\ast$ in the sense of measures on $\R_+$ to some limit measure $\mu\in \mathcal M_\mathrm{loc}(\R_+)$ as $m \to + \infty$. We term $F$ the set of atoms of $\mu$, that is 
\be \label{e:F}
    F : = \{ t \in \R_+: \mu(\{ t\}) >0 \}.
\eq
Note that $F$ is at most countable (and henceforth $\mathcal{L}^1$-negligible) by the local finiteness of $\mu$. 
\subsubsection{Construction of the set $G$} \label{sss:G} 
We employ the same construction of \emph{maximal $\nu_{n}$-shock fronts} of the $k$-th family as in~\cite[page 219, step 2]{Bressan}. In particular, this yields the construction of a countable family $\{ y_n \}_{n \in \mathbb N}$ of shock curves  for the limit function  $\mf u$, obtained as limits of \emph{maximal $\nu_{n}$-shock fronts} of the approximate solution, which has strength larger of equal than $\nu_n /2$. Note that this shock curves are contained in the \emph{closed} set $[0, + \infty[ \times [0, + \infty[$ and that there may be $0$-speed shocks exactly located at the origin. 
We set 
\be \label{e:Gi}
    G: = \big\{t \in \R_+:  \, y_n (t) =0, \; y'_n(t) =0 \; \text{for some $n \in \mathbb N$} \big\}. 
\eq
\subsubsection{Construction of the set $H$} \label{sss:H}
We consider the same measure $\mu^{IC} \in \mathcal M_{loc} (\R_+ \times [0, + \infty[)$ of interaction and cancellation as in~\cite[formula (10.68)]{Bressan}, constructed by using~\cite[formula (7.96)]{Bressan}. 
We term $H$ the set of atoms of $\mu^{IC}$ on the $x=0$ axis, namely  
\be \label{e:H}
     H : = \{ t \in \R_+:  \; \mu^{IC} (\{ (t, 0) \})>0 \}. 
\eq
Note that $H$ is at most countable (and henceforth $\mathcal{L}^1$-negligible) by the local finiteness of the measure $\mu^{IC}$.
\subsubsection{Analysis of the set $\widetilde E$}
 \begin{lemma} \label{l:tildeE}
Let $\widetilde E$ be the same as in~\eqref{e:tildeE} and set 
\be \label{e:traccialimite}
     \mf{\bar u}(t) := \lim_{x \to 0^+} \mf u(t, x),
\eq
where $\mf u$ is the same as in~\eqref{e:81}, and 
\be \label{e:baruemme}
 \mf{\bar u}_m (t): = \lim_{x \to 0} \mf u_{\ee_m}(t, x). 
\eq
If $t \notin \widetilde E$ then  
\be \label{e:Etilde} 
    \lim_{m \to + \infty} \mf{\bar u}_m (t) =   \mf{\bar u} (t). 
\eq
\end{lemma}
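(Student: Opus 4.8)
\textbf{Proof proposal for Lemma~\ref{l:tildeE}.}
The plan is to fix a time $t \notin \widetilde E$ and show that, along the subsequence $\{\ee_m\}$, the traces $\mf{\bar u}_m(t)$ converge to $\mf{\bar u}(t)$. Since $\mf u \in BV([0,T]\times\R_+)$, for a.e.\ $t$ the trace $\mf{\bar u}(t)$ in~\eqref{e:traccialimite} is well-defined and is approached by $\mf u(t,x)$ as $x \to 0^+$; moreover for a.e.\ $t$ one has $\mf u_{\ee_m}(t,\cdot) \to \mf u(t,\cdot)$ in $L^1_\loc(\R_+)$ by~\eqref{e:81}. The first step is therefore to reduce~\eqref{e:Etilde} to a statement about the \emph{boundary layer of wave fronts near $x=0$}: for each small $\rho>0$ and each $m$, compare $\mf{\bar u}_m(t)$ with $\mf u_{\ee_m}(t,\rho)$, and compare $\mf u(t,\rho)$ with $\mf{\bar u}(t)$. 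The second comparison is controlled by the $BV$ regularity of $\mf u$ for $\rho$ small (away from an $\mathcal L^1$-null set of $t$), and $\mf u_{\ee_m}(t,\rho) \to \mf u(t,\rho)$ for a.e.\ $\rho$. So everything comes down to estimating $|\mf{\bar u}_m(t) - \mf u_{\ee_m}(t,\rho)|$ uniformly in $m$, for $\rho$ small.

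The quantity $|\mf{\bar u}_m(t) - \mf u_{\ee_m}(t,\rho)|$ is bounded by the total strength of the wave fronts of $\mf u_{\ee_m}$ that lie in the strip $(t-\text{(slab)}, t] \times [0,\rho]$ together with the contribution of fronts crossing the segment $\{t\}\times[0,\rho]$ and the contribution of the boundary layer's center component $\xi_k$. The strategy is to use the definition of $\widetilde E$ to kill each of these contributions. First, if $t \notin H$, the measure $\mu^{IC}$ of interactions/cancellations has no atom at $(t,0)$, so by choosing a small space-time neighborhood of $(t,0)$ we make the interaction/cancellation mass there arbitrarily small; this controls, via the standard arguments of~\cite[Ch.~10]{Bressan}, the variation of $\mf u_{\ee_m}$ caused by fronts interacting near the boundary. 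Second, if $t \notin F$, the limit measure $\mu$ (built from $\mu_m$ in~\eqref{e:mum}, whose atoms weight both waves hitting the boundary and the center component $|\xi_k|$ through~\eqref{e:etalpha}) has no atom at $t$, so the total ``boundary-hitting mass'' in a small time-slab around $t$ is small uniformly in $m$; crucially the weights $\chi_\alpha$ in~\eqref{e:etalpha} include $[\varsigma_\alpha]^- + |\xi_k|$ for $k$-fronts, so this also controls how much the center component of the boundary layer can change. Third, if $t \notin D$, the boundary datum $\mf v_b$ is continuous at $t$, so the approximate boundary data $\mf u_{\ee_m b}$ do not jump by a definite amount near $t$, ruling out a discontinuity-driven contribution. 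The remaining danger is a genuine $k$-shock of $\mf u_{\ee_m}$ of non-negligible strength sitting at a small but positive distance from the boundary with nearly zero speed, which could survive in the strip $\{t\}\times[0,\rho]$ for all small $\rho$; this is precisely what $G$, defined in~\eqref{e:Gi} via the maximal $\nu_n$-shock construction of~\cite[p.~219]{Bressan}, is designed to capture. For $t \notin G$, every $k$-shock curve $y_n$ of the limit with $y_n(t)=0$ has $y_n'(t)\neq 0$, i.e.\ it leaves the boundary transversally, so for $m$ large and $\rho$ small no such shock front contributes on $\{t\}\times(0,\rho]$; smaller shock fronts (strength $< \nu_n$) contribute a total amount that $\to 0$ as $n \to \infty$ by the refinement of the $\nu_n$.

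Putting these pieces together: given $\eta>0$, choose $\nu_n$ small (hence a threshold below which $k$-shock fronts contribute at most $\eta$), then a space-time neighborhood of $(t,0)$ small enough that the $F$-, $H$-, $D$-contributions are each $\leq \eta$ uniformly in $m$ (using $t\notin F\cup H\cup D$), then $\rho$ small and $m$ large using the $L^1_\loc$ convergence and the $BV$ trace of $\mf u$; this yields $\limsup_m |\mf{\bar u}_m(t) - \mf{\bar u}(t)| \lesssim \eta$, and letting $\eta \to 0$ gives~\eqref{e:Etilde} for a.e.\ $t\notin\widetilde E$; the exceptional null set is then absorbed since the statement is ``$t\notin\widetilde E$'' up to negligible sets as used in the sequel. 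The main obstacle I expect is the bookkeeping for $G$: one has to make precise, using the maximal-$\nu_n$-shock-front machinery of~\cite[Ch.~10]{Bressan}, that the \emph{only} way a definite-size jump can persist between $\mf{\bar u}_m(t)$ and $\mf u_{\ee_m}(t,\rho)$ for all small $\rho$ is through a limiting $k$-shock curve hitting $x=0$ tangentially at time $t$, and to exclude that when $y_n'(t)\neq 0$ — this requires carefully tracking the position and speed of the approximate shock fronts that converge to $y_n$, and is the technically delicate heart of the argument (everything else is a routine $BV$/measure-theoretic estimate once the correct small neighborhoods are chosen).
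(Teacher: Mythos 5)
There is a genuine gap, and it sits exactly where you flagged the ``technically delicate heart'' but then waved it through. Your plan bounds $|\mf{\bar u}_m(t)-\mf u_{\ee_m}(t,\rho)|$ by the \emph{total strength} of the fronts crossing $\{t\}\times[0,\rho]$ and then tries to make that total strength small using $t\notin F\cup G\cup H\cup D$. This cannot work for the $k$-th family: a large total strength of small-amplitude $k$-fronts with speeds tending to $0$ can accumulate in $[0,y_m]$ at time $t$ without hitting the boundary in a short time slab (so $F$ gives nothing), without interacting or cancelling near $(t,0)$ (so $H$ gives nothing), and without any single front reaching the threshold $\nu_n$ of the maximal-shock construction (so $G$ gives nothing — the set $G$ in~\eqref{e:Gi} only records limit shock curves of definite strength, and your claim that sub-threshold fronts ``contribute a total amount that $\to 0$ as $n\to\infty$ by the refinement of the $\nu_n$'' is unsupported: the construction bounds individual non-maximal fronts, not their sum). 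In other words, the quantity you need to be small — total strength of slow $k$-waves near the boundary — is simply not controlled by $\widetilde E$, and the paper never attempts to control it.

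What the paper does instead is argue by contradiction and, in the critical regime (its Case 5: all the strength carried by $k$-fronts with speeds $|\varsigma_\alpha|<\eta_m\to 0$, all other contributions vanishing), it splits into two sub-cases. If one front keeps a definite strength, the maximal $\nu$-shock machinery produces a limit shock curve with $y(t)=0$, $y'(t)=0$, i.e.\ $t\in G$. If instead all individual strengths vanish, it does \emph{not} conclude that the total strength is small; it uses genuine nonlinearity~\eqref{e:gnl}: since consecutive states are (up to vanishing errors) joined by $k$-waves of near-zero speed, they all lie close to the single point $\mf t_k(\mf u_{\ee_m}(t,x_m),\bar s(\mf u_{\ee_m}(t,x_m)))$ on the $k$-wave curve where the characteristic speed vanishes, so the oscillation between $x_m$ and $y_m$ is small even though the total strength may be large — contradicting the assumed jump of size $\iota/2$. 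This ``oscillation small despite strength possibly large'' step is the missing idea in your proposal; without it (or a substitute), your $\eta$-decomposition cannot close. Your treatment of $F$, $H$, $D$ and of definite-strength shocks is otherwise in the same spirit as the paper's Cases 1--4 and 5A.
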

\begin{proof}
We fix $t \notin \widetilde E$, recall footnote~\footref{foot2} at page~\pageref{foot2}, argue by contradiction and assume that there is a constant $\iota$ such that 
\be \label{e:iota}
    |  \lim_{m \to + \infty} \mf{\bar u}_m (t)  - \lim_{x \to 0} \mf u(t, x)   | \ge \iota >0. 
\eq
Next, we proceed according to the following steps. \\
{\sc Step 1:} we point out that~\eqref{e:81} implies that, up to subsequences, $\mf u_{\ee_m} (t, x)$ converges to $\mf u(t, x)$ for a.e. $x \in \R_+$. Owing to~\eqref{e:iota}, this in turn implies that there are two sequences $\{ x_m \} $ and $\{ y_m\}$ such that $\mf u_{\ee_m}$ is continuous at both $x_m$ and $y_m$ and 
\be \label{e:iota2}
    y_m > x_m \ge 0, \;  |\mf u_{\ee_m} (t, x_m) - \mf u_{\ee_m}(t, y_m) | \ge \frac{\iota}{2} \; \text{for every $m$}, \quad \lim_{m \to + \infty} y_m =  \lim_{m \to + \infty} x_m =0. 
\eq
{\sc Step 2:} by recalling the structure of the function $\mf u_{\ee_m}$ we conclude that the last inequality in~\eqref{e:iota2} implies that the space interval $]x_m, y_m[$
is crossed at time $t$ by an amount of wave fronts such that their combined strength satisfies 
$$
   \sum_{\substack{\text{$\alpha$ wave in}\\ \, ]x_m, y_m[} } |s_\alpha| \ge  \frac{L \iota}{2} 
$$
for a suitable constant $L>0$. 
Up to subsequences, we can then assume that at least one of the following cases is verified. \\
{\sc Case 1:} the wave fronts of the families $1, \dots, k-1$ satisfy
$$
   \sum_{\substack{\text{$\alpha$ wave in}\\ \, ]x_m, y_m[ \\ i_\alpha =1, \dots, k-1} } |s_\alpha| \ge  \frac{L \iota}{10} .
$$
Then there are two possibilities:
\begin{itemize}
\item[i)] the combined strength of the wave fronts hitting the boundary in the interval $]t, t +y_m/ c[$, where $c$ is the same as in~\eqref{e:sh}, is at least 
$L\iota/20$. By recalling~\eqref{e:etalpha} and~\eqref{e:mum} this implies
\be \label{e:abbagrande}
    \mu_m \left( \left[t, t + \frac{y_m}{c}\right[\right) \ge  \frac{L A \iota}{20} \quad \text{for every $m \in \mathbb N$}.
\eq
We claim that~\eqref{e:abbagrande} yields $\mu(\{t\})>0$, that is $t \in F \subseteq \widetilde E$, and thus contradicts the assumption $t \notin \widetilde E$. To show that $\mu(\{t\})>0$ we 
use the outer regularity of $ \mu$ (see~\cite[Proposition 1.43 (ii)]{AmbFuPal}), which in particular implies that to show that $\mu(\{t\})>0$ it suffices to show that 
\be 
\label{e:outreg}
       \mu(\mathcal O) \ge  \frac{L A \iota}{20}, \quad \text{for every open set $\mathcal O$ such that $t \in \mathcal O$}. 
\eq
We now fix $\widetilde m$ such that  $[t, t + y_{\widetilde m}/c] \subseteq \mathcal O$, which implies $\mu ([t, t + y_{\widetilde m}/c]) \leq \mu (\mathcal O)$. 
Owing to the upper semicontinuity of the measure of compact sets with respect to weak$^\ast$ convergence (see~\cite[formula (1.8)]{AmbFuPal}) we also have that 
\be \label{e:limsup}
    \limsup_{m \to + \infty} \mu_m ([t, t + y_{\widetilde m}/c]) \leq \mu ([t, t + y_{\widetilde m}/c]) \leq \mu (\mathcal O). 
\eq
By extracting, if needed, a subsequence from $\{ y_m \}$, we can assume that  $[t, t + y_m /c] \subseteq [t, t + y_{\widetilde m}/c]$ for every $m \ge \widetilde m$, which implies 
$$
     \frac{L A \iota}{20}  \stackrel{\eqref{e:abbagrande}}{\leq} \limsup_{m \to + \infty} \mu_m ([t, t + y_{ m}/c]) \leq   \limsup_{m \to + \infty} \mu_m ([t, t + y_{\widetilde m}/c]) 
     \stackrel{\eqref{e:limsup}}{\leq}
    \mu (\mathcal O),
$$
which eventually yields~\eqref{e:outreg}.
\item[ii)] an amount of wave fronts of total strength at least $L\iota/20$ is cancelled by the interactions with other waves on the set $]t, t+ y_m /c[\times ]0, y_m[$. By recalling~\cite[formula (7.96)]{Bressan} this implies that 
\be \label{e:andrea}
\mu^{IC}_m ([t, t+ y_m /c]\times [0, y_m])\ge L\iota/20.  
\eq
and by arguing as in item i) above this implies $\mu^{IC}(\{ (t, 0) \}) >0$ and hence $t \in H \subseteq \widetilde E$, which contradicts the assumption $t \notin \widetilde E$. 
\end{itemize}
{\sc Case 2:} the wave fronts of the families $k+1, \dots, N$ satisfy
$$
   \sum_{\substack{\text{$\alpha$ wave in}\\ \, ]x_m, y_m[ \\ i_\alpha =k+1, \dots, N} } |s_\alpha| \ge  \frac{L \iota}{10} .
$$
Then there are two possibilities:
\begin{itemize}
\item[i)] an amount of waves of combined strength at least $L \iota/20$ is generated on the interval $]t-y_m/c, t[$, where $c$ is the same as in~\eqref{e:sh}, either at discontinuity points of $\mf u_{bm}$ or when a wave hits the boundary. By arguing as in {\sc Case 1}, item i) we conclude that either $\mu^{IC}(\{ (t, 0) \}) >0$ or $t$ is a discontinuity point of $\mf u_b$. Either way, we find a contradiction with  the definition~\eqref{e:tildeE} of $\widetilde E$ and the assumption $t \notin \widetilde E$. 
\item[ii)] an amount of wave fronts of total strength at least $L\iota/20$ is generated by the interactions with other waves on the set $]t - y_m/c, t[\times ]0, y_m[$. By recalling~\cite[formula (7.96)]{Bressan} this implies that $\mu^{IC}_m ([t - y_m/c, t]\times [0, y_m])\ge L\iota/20$ and by arguing as in item i) of {\sc Step 1} this in turn implies $\mu^{IC}(\{ (t, 0) \}) >0$, which contradicts the assumption $t \notin \widetilde E$. 
\end{itemize}
{\sc Case 3:} there is $\upsilon>0$ such that the combined amount of the waves of the $k$-th family with negative speed $\upsilon$-bounded away from $0$ satisfies  
$$
   \sum_{\substack{\text{$\alpha$ wave in}\\ \, ]x_m, y_m[ \\ i_\alpha =k, \, \varsigma_\alpha <-\upsilon} } |s_\alpha| \ge  \frac{L \iota}{10} .
$$
Then there are three possibilities:
\begin{itemize}
\item[i)] the combined strength of the wave fronts hitting the boundary in the interval $]t, t +2y_m/\upsilon [$ is at least 
$L\iota/30$. This is ruled out by arguing as in {\sc Case 1}, item i). 
\item[ii)] an amount of wave fronts of total strength at least $L\iota/30$ is cancelled by the interactions with other waves on the set $]t, t+ 2 y_m /\upsilon[\times ]0, y_m[$. This is ruled out by arguing as in {\sc Case 1}, item i). 
\item[iii)] in the time interval $]t, t+ 2 y_m/\upsilon[$ an amount of waves of total strength at least $L\iota/30$ increases its speed from at most $-\upsilon$ to at least $-\upsilon/2$ owing to the interaction with other waves. By the Lipschitz continuity of the map $\mf u \mapsto \lambda_k (\mf u)$ this implies that 
$\mu^{IC}_m([t, t+ 2 y_m/\upsilon] ~\times~ [0, y_m ])~\ge~\unpo L\iota \upsilon~>~0$ and by arguing as in item i) of {\sc Step 1} this in turn implies $\mu^{IC}(\{ (t, 0) \}) >0$, which contradicts the assumption $t \notin \widetilde E$. 
\end{itemize}
{\sc Case 4:} there is $\upsilon>0$ such that the combined amount of the waves of the $k$-th family with positive speed $\upsilon$-bounded away from $0$ satisfies  
$$
   \sum_{\substack{\text{$\alpha$ wave in}\\ \, ]x_m, y_m[ \\ i_\alpha =k, \, \varsigma_\alpha>\upsilon} } |s_\alpha| \ge  \frac{L \iota}{10} .
$$
This case is ruled out by combining the arguments in {\sc Case 2} and {\sc Case 3}.\\
{\sc Case 5:} there is a sequence $\eta_m$ such that $\eta_m>0$ for every $m$, $\lim_{m \to + \infty} \eta_m=0$ and 
\be \label{e:case5}
   \sum_{\substack{\text{$\alpha$ wave in}\\ \, ]x_m, y_m[ \\ i_\alpha =k, \, |\varsigma_\alpha|<\eta_m} } |s_\alpha| \ge  \frac{L \iota}{10} .
\eq
Assume for a moment that there are $\eta>0$,  $\nu>0$ such that 
$$
   \sum_{\substack{\text{$\alpha$ wave in}\\ \, ]x_m, y_m[ \\ i_\alpha =k+1, \dots, N} } |s_\alpha|
   + \sum_{\substack{\text{$\alpha$ wave in}\\ \, ]x_m, y_m[ \\ i_\alpha =1, \dots, k-1} } |s_\alpha| 
   + \sum_{\substack{\text{$\alpha$ wave in}\\ \, ]x_m, y_m[ \\ i_\alpha =k, |\varsigma_\alpha| \ge \eta} } |s_\alpha| \ge \nu,
$$
then we can argue as in {\sc Cases $1, \dots, 4$} before. Hence, we can assume with no loss of generality that the 
strength of all the waves different from the one in~\eqref{e:case5} vanishes in the $m \to + \infty$ limit.  
Up to subsequences, we know that one of the following two possibilities is verified. \\
{\sc Case 5A:} there is $ \nu>0$ such that for every $m \in \mathbb N$ there is at least one front comprised between $x_m$ and $y_m$ with strength greater or equal than $ \nu>0$ and speed in modulus smaller or equal than $\eta_m$. 
Since owing to Lemma~\ref{l:raref} the maximal strength $C_9 r_{\ee_m}$ of the rarefaction fronts vanishes in the $m \to + \infty$ limit, this front must be a shock front.
By the construction of maximal $\nu$-shock fronts given in~\cite[p. 219]{Bressan} and the definition~\eqref{e:Gi} of $G$, this implies $t \in G \subseteq \widetilde E$ and hence contradicts the assumption $t \notin \widetilde E$. \\
{\sc Case 5B:} the maximal strength of the wave-fronts confined between $x_m$ and $y_m$ vanishes in the $m \to + \infty$ limit. 
Since the strength of all the waves of the $j$-th family, $j \neq k$, vanishes in the  $m \to + \infty$ limit, the values $\mf u_{\ee_m} (t, x)$ with $x \in ]x_m, y_m[$  get 
closer and closer to the curve $\mf t_k (\mf u_{\ee_m} (t, x_m), \cdot)$ in the  $m \to + \infty$ limit.  Since also the  maximal speed of the wave fronts of $\mf u_{\ee_m}(t, \cdot)$ comprised between  $x_m$ and $y_m$ vanishes in the $m \to + \infty$, we  conclude that 
$$
    \lim_{m \to + \infty} |\mf u_{\ee_m} (t, x) - \mf t_k \big(\mf u_{\ee_m} (t, x_m), \bar s (\mf u_{\ee_m} (t, x_m)\big)   | =0 \quad \text{for every $x \in [x_m, y_m] $}, 
$$
which contradicts~\eqref{e:iota2}. \qedhere 
\end{proof}
\subsection{Definition~\ref{d:equiv} in the case $t \notin \widetilde E$}\label{ss:notinE}
In this paragraph we verify the boundary condition in the statement of Theorem~\ref{t:maintr} on the set $\R_+ \setminus \widetilde E$, where $\widetilde E$ is the same as in~\eqref{e:tildeE}. In other words, we show that for a.e. $t \in \R_+$, $t \notin \widetilde E$, we have  $\mf{\bar v}(t) \sim_{\mf D} \mf v_b(t)$ in the sense of Definition~\ref{d:equiv}, provided $\mf{\bar v}(t)$ is the  trace of the limit function $\mf v$ as in the statement of Theorem~\ref{t:maintr}. This is a consequence of Lemma~\ref{l:bl} and Lemma~\ref{l:bl2} below.

For simplicity in this paragraph and in the following one we assume~\eqref{e:a11nz}. The case of assumption C) in Hypothesis~\ref{h:eulerlag} is analogous. The case of assumption B) is slightly more delicate and is explicitly discussed in~\S\ref{sss:952} since the most prominent example where assumption B) is satisfied are the Navier-Stokes and viscous MHD equations in Eulerian coordinates when the fluid velocity vanishes, and in that case the $k$-th characteristic field is linearly degenerate.

Let $\mf{\bar u}_m$ be the same as in~\eqref{e:baruemme} then owing to~\eqref{e:brs0} we have 
$$
   \boldsymbol{\beta} (\boldsymbol{\phi} (\mf{\bar u}_m(t), \xi^m_{\ell +1}(t), \dots, \xi^m_k(t)), \mf u_{b}^{\ee_m}(t) )= \mf 0_{N} 
   \quad \text{for a.e. $t \in \R_+$}
$$
and suitable functions $\xi^m_{\ell+ 1} (t), \dots, \xi^m_k(t)$ satisfying $|\xi^m_i| \leq \delta$ for every $i=\ell+ 1, \dots, k$ and $m \in \mathbb N$. Note that owing to~\eqref{e:51} we have that, up to subsequences, 
 \be \label{e:converge2}
    \mf u_{b}^{\ee_m}(t) \to \mf u_b (t), \quad \text{for a.e. $t \in \R_+$}. 
\eq
By combining~\eqref{e:converge2} and~\eqref{e:Etilde} with the Implicit Function Theorem we conclude that for a.e. $t \notin \widetilde E$ we have that $\xi^m_{\ell+ 1}(t), \dots, \xi^m_k(t)$ converge as $m \to + \infty$ to suitable functions $\xi_{\ell+1}(t), \dots, \xi_k(t)$ satisfying 
\be \label{e:ubiti}
   \boldsymbol{\beta} (\boldsymbol{\phi} (\mf{\bar u}(t), \xi_{\ell +1} (t), \dots, \xi_k(t)), \mf u_{b}(t) )=  \mf 0_{N}  
   \quad \text{for a.e. $t \in \R_+$}, 
\eq
provided $\mf{\bar u}$ is the same as in~\eqref{e:traccialimite}.
\begin{lemma}
\label{l:bl}
 Let $\mf{\bar u}(t)$ be the same as in~\eqref{e:traccialimite} and $\xi_k(t)$ the same as in~\eqref{e:ubiti}. For a.e. $t \in \R_+$, $t \notin \widetilde E$, satisfying $\xi_k (t)\neq \underline s (\mf{\bar u}(t))$, we have that $\mf{\bar v}(t): = \mf v(\mf{\bar u}(t))$ satisfies $\mf{\bar v} (t)\sim_{\mf D} \mf v_b(t)$ in the sense of Definition~\ref{d:equiv}.In particular, property ii) is satisfied provided $\mf{\underline v} = \mf{\bar v} (t)$.
\end{lemma}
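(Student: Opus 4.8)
\textbf{Proof proposal for Lemma~\ref{l:bl}.}
The plan is to fix a ``good'' time $t\notin\widetilde E$ (good meaning, in addition, that the conclusion of Lemma~\ref{l:tildeE} holds, that $\mf u_{\ee_m}(t,\cdot)$ converges to $\mf u(t,\cdot)$ for a.e.\ $x$, that $\xi^m_i(t)\to\xi_i(t)$, and that $t$ is a Lebesgue point of $\mf u_b$) and to show that when $\xi_k(t)\neq\underline s(\mf{\bar u}(t))$ the trace $\mf{\bar u}(t)$ itself is the intermediate state $\mf{\underline u}$ in Definition~\ref{d:equiv}, so that condition i) is trivially satisfied and condition ii) follows from the existence of a boundary layer attached to $\mf{\bar u}(t)$. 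The key observation is that the discrepancy $\mf{\bar u}\neq\mf{\underline u}$ in Definition~\ref{d:equiv} can only arise from a nontrivial $0$-speed Lax admissible shock sitting at the boundary; by the analysis in \S\ref{sss:casignl}, in the wave front-tracking approximation this corresponds exactly to the case where the center component $\xi_k$ equals the parametrization value $\underline s$ of such a $0$-speed shock (recall~\eqref{e:pera} and case v) of \S\ref{sss:casignl}). Hence the hypothesis $\xi_k(t)\neq\underline s(\mf{\bar u}(t))$ is precisely what rules out mechanism (ii) in the terminology of the introduction, while the hypothesis $t\notin\widetilde E$ rules out mechanism (iii) via the absence of accumulating $0$-speed shocks encoded in the set $G$.

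First I would pass to the $\mf u$-variables and use~\eqref{e:ubiti}: for a.e.\ $t\notin\widetilde E$ there are values $\xi_{\ell+1}(t),\dots,\xi_k(t)$, obtained as limits of the $\xi^m_i(t)$, such that $\boldsymbol{\beta}(\boldsymbol{\phi}(\mf{\bar u}(t),\xi_{\ell+1}(t),\dots,\xi_k(t)),\mf u_b(t))=\mf 0_N$. I then distinguish two cases according to the sign of $\lambda_k(\mf{\bar u}(t))$. If $\lambda_k(\mf{\bar u}(t))>0$, then by~\eqref{e:protra} (which passes to the limit since $\lambda_k$ is continuous and $\mf{\bar u}_m(t)\to\mf{\bar u}(t)$, noting that $\lambda_k(\mf{\bar u}_m(t))>0$ for $m$ large) we get $\xi_k(t)=0$; by the definition~\eqref{e:bl10} of $\boldsymbol{\phi}$ and the vanishing~\eqref{e:p} of $\boldsymbol{\psi}_p$ in this regime, the relation~\eqref{e:ubiti} then says $\mf{\bar u}(t)$ lies on the uniformly stable manifold through $\boldsymbol{\zeta}_k(\mf{\bar u}(t),0)=\mf{\bar u}(t)$, i.e.\ there is a boundary layer $\mf w$ with $\mf w(0)=\boldsymbol{\psi}_s(\mf{\bar u}(t),\xi_{\ell+1}(t),\dots,\xi_{k-1}(t))$, $\boldsymbol{\beta}(\mf w(0),\mf u_b(t))=\mf 0$, and $\mf w(y)\to\mf{\bar u}(t)$; this is exactly condition ii) of Definition~\ref{d:equiv} with $\mf{\underline v}=\mf{\bar v}(t)=\mf v(\mf{\bar u}(t))$, and condition i) holds trivially with the degenerate $0$-speed discontinuity. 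If instead $\lambda_k(\mf{\bar u}(t))\le 0$, then by~\eqref{e:protra} we have $\xi_k(t)\le\underline s(\mf{\bar u}(t))$, and the hypothesis excludes equality, so $\xi_k(t)<\underline s(\mf{\bar u}(t))$; in this strict-inequality regime Lemma~\ref{l:min0} (if $\lambda_k<0$) or Lemma~\ref{l:min02} (if $\lambda_k=0$) produces a boundary layer $\mf w:[0,+\infty[\to\R^N$ solving~\eqref{e:bleq} with $\boldsymbol{\zeta}_k(\mf{\bar u}(t),\xi_k(t))=\boldsymbol{\pi}_{\mf u}\circ\mf b_k(\mf{\bar u}(t),\xi_k(t))$ and asymptotic state $\mf{\bar u}(t)$ itself, and then the Slaving Manifold construction (as in case vi) of \S\ref{sss:casignl}, using Lemma~\ref{l:min0}/\ref{l:min02}) combined with~\eqref{e:ubiti} upgrades this to a full boundary layer satisfying~\eqref{e:bl} with $\mf{\underline v}=\mf{\bar v}(t)$; again condition i) is the degenerate one and ii) holds.

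The step I expect to be the main obstacle is the passage to the limit in the center-component identity, i.e.\ justifying that the $\xi^m_i(t)$ actually converge and that the limiting $\xi_k(t)$ keeps satisfying the structural constraint $\xi_k(t)\le\underline s(\mf{\bar u}(t))$ strictly when $\xi_k(t)\neq\underline s(\mf{\bar u}(t))$; this requires Lemma~\ref{l:tildeE} to guarantee $\mf{\bar u}_m(t)\to\mf{\bar u}(t)$ (hence the applicability of the Implicit Function Theorem to the map on the left of~\eqref{e:brs}, which is Lipschitz with Lipschitz inverse by the analysis in \cite{BianchiniSpinoloARMA,BianchiniSpinolo}), together with the continuity of $\mf{\widetilde u}\mapsto\underline s(\mf{\widetilde u})$ from~\eqref{e:lipunders} and of $\lambda_k$. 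A subtler point inside this step is that $\boldsymbol{\phi}$, and in particular the curve $\boldsymbol{\gamma}_k$ entering its definition, is only Lipschitz (not $C^1$) across $\xi_k=\underline s$, so the Implicit Function Theorem must be invoked away from that value — which is precisely why the statement excludes $\xi_k(t)=\underline s(\mf{\bar u}(t))$, that borderline case being handled separately in Lemma~\ref{l:bl2}. Once convergence is secured, the remainder is a bookkeeping verification that the boundary-layer existence lemmas of \S\ref{ss:zetakgnl} apply in the limit, with no further estimates needed.
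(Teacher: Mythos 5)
Your proposal is correct and follows essentially the same route as the paper's own proof: pass~\eqref{e:protra} to the limit to conclude that either $\xi_k(t)=0$ (handled through the vanishing of $\boldsymbol{\psi}_p$ in~\eqref{e:p},~\eqref{e:bl10} and the uniformly stable manifold $\boldsymbol{\psi}_s$) or $\xi_k(t)<\underline s(\mf{\bar u}(t))$ (handled through Lemmas~\ref{l:min0} and~\ref{l:min02} and the slaving-manifold argument of cases iii) and vi) in \S\ref{sss:casignl}), always with $\mf{\underline v}=\mf{\bar v}(t)$. The only inaccuracy is your side remark that the exclusion $\xi_k(t)\neq\underline s(\mf{\bar u}(t))$ is needed for the Implicit Function Theorem: the limits $\xi_i(t)$ in~\eqref{e:ubiti} are obtained for a.e.\ $t\notin\widetilde E$ regardless of that condition, and the borderline case is split off only because there the asymptotic state $\mf{\underline v}$ generally differs from $\mf{\bar v}(t)$ (Lemma~\ref{l:bl2}); this does not affect the validity of your argument for the case at hand.
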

\begin{proof}
We recall Remark \ref{r:1812} and in particular that the approximation $\mf u_{\ee_m}$ satisfies~\eqref{e:protra} with $\mf u^+= \mf{\bar u}_m(t)$ and $\xi_k= \xi_k^m(t)$ respectively. Up to subsequences we can assume that one of the following condition holds true: 
\begin{itemize}
\item[a)] $\lambda_k (\mf{\bar u}_m(t)) > 0$ and $\xi^m_k(t) =0$ for every $m$;
\item[b)] $\lambda_k (\mf{\bar u}_m(t)) \leq 0$ and $\xi^m_k (t)\leq \underline s (\mf{\bar u}_m)(t)$ or every $m$.
\end{itemize}
Condition a) above implies $\xi_k(t)=0$, which by~\eqref{e:p} yields $\boldsymbol{\psi}_p (\mf{\bar u}(t), \xi_{\ell +1} (t), \dots, \xi_{k-1}(t), 0)= \mf 0_N$ and by plugging this equality into~\eqref{e:bl10} and recalling~\eqref{e:ubiti} we arrive at 
\be \label{e:9392012}
     \boldsymbol{\beta} (\boldsymbol{\psi}_s (\mf{\bar u}(t), \xi_{\ell +1} (t), \dots, \xi_{k-1}(t)), \mf u_{b}(t) )=  \mf 0_{N},
\eq
where $\boldsymbol{\psi}_s$ is defined  in~\eqref{e:phiesse}. By the construction of $\boldsymbol{\psi}_s$,  there is a solution of~\eqref{e:bltw} with initial datum $\boldsymbol{\psi}_s (\mf{\bar u}(t), \xi_{\ell +1} (t), \dots, \xi_{k-1}(t))$  which converges exponentially fast to $\mf{\bar u}(t)$ as $y \to + \infty$. Owing to~\eqref{e:9392012}, this implies that there is a boundary layer $\mf w$  satisfying~\eqref{e:andiamoav2} with $\mf{\underline u} = \mf{\bar u} (t)$ and going back to the variable $\mf v$ we conclude that $\mf{\bar v} (t):= \mf{ v} (\mf{\bar u}(t))$ satisfies property ii) in Definition~\ref{d:equiv} with 
$\mf{\underline v} = \mf{\bar v}(t)$. 

Owing to~\eqref{e:Etilde} condition b) above yields  $\lambda_k (\mf{\bar u}(t)) \leq 0$ and $\xi_k(t) \leq \underline s (\mf{\bar u}(t))$. Since $\xi_k(t) \neq \underline s (\mf{\bar u}(t))$ by assumption,  we conclude that $\xi_k (t)< \underline s (\mf{\bar u}(t))$. If $\lambda_k (\mf{\bar u}(t)) < 0$ then by recalling case vi) in \S\ref{sss:casignl} we conclude that $\mf{\bar v} (t):= \mf{ v} (\mf{\bar u}(t))$ satisfies property ii) in Definition~\ref{d:equiv} with $\mf{\underline v} = \mf{\bar v}(t)$. If $\lambda_k (\mf{\bar u}(t)) = 0$ then  $\underline s (\mf{\bar u}(t))= \bar s (\mf{\bar u}(t))=0$ by~\eqref{e:underlines} and~\eqref{e:baresse}, and by recalling case iii) in \S\ref{sss:casignl}  we conclude the proof of the lemma.
\end{proof}
\begin{lemma}
\label{l:bl2}
Let $\mf{\bar u}(t)$ be the same as in~\eqref{e:traccialimite} and $\xi_k(t)$ the same as in~\eqref{e:ubiti}. For a.e. $t \in \R_+$,  $t \notin \widetilde E$, satisfying $\xi_k (t) = \underline s (\mf{\bar u}(t))$ we have that $\mf{\bar v}(t): = \mf v(\mf{\bar u}(t))$ satisfies $\mf{\bar v} (t)\sim_{\mf D} \mf v_b(t)$ in the sense of Definition~\ref{d:equiv}.In particular, property ii) is satisfied provided $\mf{\underline v} (t)=
\mf v(\boldsymbol{\zeta}_k (\mf{ \bar u}(t), \underline s(\mf{\bar u}(t))))$.  
\end{lemma}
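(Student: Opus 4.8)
The plan is to follow the same structure as in the proof of Lemma~\ref{l:bl}, but to handle carefully the new feature: in the regime $\xi_k(t)=\underline s(\mf{\bar u}(t))$ the boundary layer does \emph{not} have $\mf{\bar u}(t)$ as asymptotic state, but rather the state $\mf{\underline v}(t):=\mf v(\boldsymbol{\zeta}_k(\mf{\bar u}(t),\underline s(\mf{\bar u}(t))))$, which is connected to $\mf{\bar v}(t)$ by a $0$-speed Lax admissible shock. First I would fix $t\notin\widetilde E$ such that: the conclusions of Lemma~\ref{l:tildeE} hold (i.e.~\eqref{e:Etilde}), the convergences~\eqref{e:converge2} and~\eqref{e:ubiti} hold, and $\xi_k(t)=\underline s(\mf{\bar u}(t))$. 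Note that by the Lipschitz continuity~\eqref{e:lipunders} of $\underline s$ and by~\eqref{e:Etilde}, the condition $\xi_k(t)=\underline s(\mf{\bar u}(t))$ together with $|\xi_k^m(t)-\xi_k(t)|\to 0$ and $|\mf{\bar u}_m(t)-\mf{\bar u}(t)|\to 0$ means that, up to subsequences, we may assume one of two cases for the approximation $\mf u_{\ee_m}$: either (a) for all $m$ we are in case v) of \S\ref{sss:casignl}, that is $\lambda_k(\mf{\bar u}_m(t))<0$ and $\xi_k^m(t)=\underline s(\mf{\bar u}_m(t))$ exactly; or (b) $\xi_k^m(t)<\underline s(\mf{\bar u}_m(t))$ with the difference vanishing and $\lambda_k(\mf{\bar u}(t))=0$ (so that $\underline s(\mf{\bar u}(t))=\bar s(\mf{\bar u}(t))=0$ by~\eqref{e:underlines},~\eqref{e:baresse}). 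The mechanism here corresponds to the ``center component of the boundary layer gives rise to a $0$-speed shock at the boundary'' phenomenon, which is precisely what Lemma~\ref{l:bl2} should capture.

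Second, I would pass to the limit in the defining identity. From $\boldsymbol{\beta}(\boldsymbol{\phi}(\mf{\bar u}(t),\xi_{\ell+1}(t),\dots,\xi_k(t)),\mf u_b(t))=\mf 0_N$ in~\eqref{e:ubiti}, and using that $\xi_k(t)=\underline s(\mf{\bar u}(t))$, I would apply the analysis of case v) in \S\ref{sss:casignl}: there $\mf{\underline u}=\mf t_k(\mf{\hat u},\underline s(\mf{\hat u}))=\boldsymbol{\zeta}_k(\mf{\hat u},\underline s(\mf{\hat u}))$ plays the role of $\mf{\underline v}$'s $\mf u$-coordinate, $\mf{\hat u}$ (the trace) is connected to $\mf{\underline u}$ by a $0$-speed Lax admissible shock, and from~\eqref{e:p} one has $\boldsymbol{\psi}_p(\mf{\hat u},\xi_{\ell+1},\dots,\xi_{k-1},\underline s(\mf{\hat u}))=\mf 0_N$, hence $\boldsymbol{\phi}(\mf{\hat u},\xi_{\ell+1},\dots,\xi_{k-1},\underline s(\mf{\hat u}))=\boldsymbol{\psi}_s(\boldsymbol{\zeta}_k(\mf{\hat u},\underline s(\mf{\hat u})),\xi_{\ell+1},\dots,\xi_{k-1})$. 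In the limit this gives, with $\mf{\underline u}(t):=\boldsymbol{\zeta}_k(\mf{\bar u}(t),\underline s(\mf{\bar u}(t)))$,
\be
   \boldsymbol{\beta}\big(\boldsymbol{\psi}_s(\mf{\underline u}(t),\xi_{\ell+1}(t),\dots,\xi_{k-1}(t)),\mf u_b(t)\big)=\mf 0_N.
\eq
By the construction of $\boldsymbol{\psi}_s$ in~\eqref{e:phiesse} there is a solution of~\eqref{e:bltw} with initial datum $\boldsymbol{\psi}_s(\mf{\underline u}(t),\xi_{\ell+1}(t),\dots,\xi_{k-1}(t))$ converging exponentially fast to $\mf{\underline u}(t)$ as $y\to+\infty$, hence a boundary layer $\mf w$ satisfying~\eqref{e:andiamoav2} with asymptotic state $\mf{\underline u}(t)$. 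Going back to the $\mf v$ variables, $\mf w$ satisfies~\eqref{e:bl} with $\boldsymbol{\widetilde\beta}(\mf w(0),\mf v_b(t))=\mf 0$ and $\lim_{y\to+\infty}\mf w(y)=\mf{\underline v}(t)=\mf v(\mf{\underline u}(t))$. This establishes condition ii) of Definition~\ref{d:equiv}. For condition i), by Lemma~\ref{l:min0} (specifically~\eqref{e:etw} with $\mf{\widetilde u}=\mf{\bar u}(t)$ and $\underline s=\underline s(\mf{\bar u}(t))$, noting $(\mf t_k(\mf{\bar u}(t),\underline s(\mf{\bar u}(t))),\mf 0_{N-h})=\mf b_k(\mf{\bar u}(t),\underline s(\mf{\bar u}(t)))$ by~\eqref{e:pera}, so that $\boldsymbol{\pi}_{\mf u}\circ\mf b_k(\mf{\bar u}(t),\underline s(\mf{\bar u}(t)))=\mf{\underline u}(t)$) there is a $0$-speed traveling wave connecting $\mf{\underline u}(t)$ (on the left) to $\mf{\bar u}(t)$ (on the right); in the $\mf v$ variables this is a $0$-speed Lax admissible shock between $\mf{\underline v}(t)$ and $\mf{\bar v}(t)$, with $\mf f(\mf{\bar v}(t))=\mf f(\mf{\underline v}(t))$ from the Rankine--Hugoniot condition at speed $0$. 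Together these verify $\mf{\bar v}(t)\sim_{\mf D}\mf v_b(t)$ with $\mf{\underline v}=\mf v(\boldsymbol{\zeta}_k(\mf{\bar u}(t),\underline s(\mf{\bar u}(t))))$.

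The main obstacle I anticipate is the passage to the limit in the subcase split (a)/(b) above: I must make sure that the center-component parameters $\xi_{\ell+1}^m(t),\dots,\xi_{k-1}^m(t)$ converge (via the Implicit Function Theorem applied to~\eqref{e:brs0}, as already done before~\eqref{e:ubiti}), \emph{and} that the distinguished component $\xi_k^m(t)$, which in the approximation equals $\underline s(\mf{\bar u}_m(t))$ (case v)) or is close to it, passes to the limit consistently with $\underline s$ being only Lipschitz — this is fine by~\eqref{e:lipunders} and~\eqref{e:Etilde}. A second delicate point is that in case (b), where $\lambda_k(\mf{\bar u}(t))=0$, the function $\boldsymbol{\zeta}_k(\mf{\bar u}(t),\cdot)$ has a possible discontinuity of the first derivative exactly at $s=\underline s(\mf{\bar u}(t))=0$ (see the comment after~\eqref{e:regcwfc}), so one must invoke the one-sided regularity of $\boldsymbol{\gamma}_k$ from Lemma~\ref{l:gammak} and the limiting identity $\boldsymbol{\zeta}_k(\mf{\bar u}(t),0)=\mf{\bar u}(t)$ to identify $\mf{\underline u}(t)$ with $\mf{\bar u}(t)$ in that degenerate situation; then the $0$-speed shock in condition i) is trivial and the argument reduces to the one in Lemma~\ref{l:bl}. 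I would phrase the argument so that both subcases funnel into the same conclusion, citing Lemma~\ref{l:min0}, formula~\eqref{e:pera}, and the case analysis v) (and iii) in the degenerate subcase) of \S\ref{sss:casignl}.
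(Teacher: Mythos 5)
Your substantive argument is the same as the paper's: you work with the limit identity~\eqref{e:ubiti} under $\xi_k(t)=\underline s(\mf{\bar u}(t))$, invoke the case~v) analysis of \S\ref{sss:casignl} (so that $\boldsymbol{\psi}_p$ vanishes by~\eqref{e:p}, $\boldsymbol{\phi}$ collapses to $\boldsymbol{\psi}_s$ evaluated at $\boldsymbol{\zeta}_k(\mf{\bar u}(t),\underline s(\mf{\bar u}(t)))=\mf t_k(\mf{\bar u}(t),\underline s(\mf{\bar u}(t)))$ via~\eqref{e:regcwfc}), obtain the boundary layer with asymptotic state $\mf{\underline u}(t)=\boldsymbol{\zeta}_k(\mf{\bar u}(t),\underline s(\mf{\bar u}(t)))$, use Lemma~\ref{l:min0} together with~\eqref{e:pera} for the $0$-speed Lax admissible shock in condition~i), and reduce the degenerate situation $\lambda_k(\mf{\bar u}(t))=0$ (where $\underline s(\mf{\bar u}(t))=0$ and $\mf{\underline u}(t)=\mf{\bar u}(t)$) to the argument of Lemma~\ref{l:bl}. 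This is exactly how the paper proceeds.

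The one concrete flaw is your preliminary subsequence dichotomy (a)/(b). As stated it is neither exhaustive nor correct. First, by~\eqref{e:protra} the approximate traces may satisfy $\lambda_k(\mf{\bar u}_m(t))>0$ and $\xi^m_k(t)=0$ along a subsequence; in that case $\underline s(\mf{\bar u}_m(t))<0<\xi^m_k(t)+|\underline s(\mf{\bar u}_m(t))|$, i.e.\ $\xi^m_k(t)>\underline s(\mf{\bar u}_m(t))$, so this subsequence falls in neither of your cases (it is case~a) in the proof of Lemma~\ref{l:bl}, and under the hypothesis it forces $\underline s(\mf{\bar u}(t))=0$, hence $\lambda_k(\mf{\bar u}(t))=0$). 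Second, the inference in your case~(b) that a vanishing gap $\underline s(\mf{\bar u}_m(t))-\xi^m_k(t)\to 0$ with strict inequality implies $\lambda_k(\mf{\bar u}(t))=0$ is unjustified: nothing prevents $\xi^m_k(t)<\underline s(\mf{\bar u}_m(t))$ for every $m$ while $\lambda_k(\mf{\bar u}(t))<0$ and $\xi_k(t)=\underline s(\mf{\bar u}(t))>0$ in the limit. The paper avoids this by re-using the dichotomy a)/b) of Lemma~\ref{l:bl} (the sign of $\lambda_k(\mf{\bar u}_m(t))$, coming from~\eqref{e:protra}), which is exhaustive and yields the only fact your limit argument actually needs, namely $\lambda_k(\mf{\bar u}(t))\le 0$ (so that~\eqref{e:p} applies with $s_k=\underline s(\mf{\bar u}(t))\ge 0$); the subcases $\lambda_k(\mf{\bar u}(t))=0$ and $\lambda_k(\mf{\bar u}(t))<0$ are then handled exactly as in your second and third paragraphs. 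With your (a)/(b) split replaced by this one, your proof is complete; as written, the claimed dichotomy is false and should be repaired.
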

\begin{proof}
We recall cases a) and b) in the proof of Lemma~\ref{l:bl}. In case a), we obtain $\xi_k(t)=0$, which implies $\underline s (\mf{\bar u}(t))=\xi_k(t)=0 $ and hence $\boldsymbol{\zeta}_k (\mf{ \bar u}(t), \underline s(\mf{\bar u}(t)))) = \mf{ \bar u}(t)$. Owing to~\eqref{e:9392012}, and arguing as in the proof of Lemma~\ref{l:bl}, this yields the desired conclusion. 

We now focus on case b). If $\lambda_k (\mf{\bar u}(t)) = 0$ then $\underline s(\mf{\bar u})=0$, which yields on the one hand the equality $\boldsymbol{\zeta}_k (\mf{ \bar u}(t), \underline s(\mf{\bar u}(t)))= \mf{ \bar u}(t)$, on the other 
the equalities $\xi_k(t) =\underline s(\mf{\bar u}(t))=0$ and hence (by arguing as in the proof of Lemma~\ref{l:bl}) the equality~\eqref{e:9392012} and from this we arrive at the desired conclusion. Conversely, if $\lambda_k (\mf{\bar u}(t)) <0$ we can combine~\eqref{e:ubiti} with the same argument as in case v) in \S\ref{sss:casignl} and conclude the proof of the lemma.
\end{proof}
\subsection{Definition~\ref{d:equiv} in the case $t \in \widetilde E$} \label{ss:tinE}
In this paragraph we verify the boundary condition in the statement of Theorem~\ref{t:maintr} on the set $\widetilde E$. Namely, we show that for a.e. $t \in \widetilde E$, we have  $\mf{\bar v}(t) \sim_{\mf D} \mf v_b(t)$ in the sense of Definition~\ref{d:equiv}, provided $\mf{\bar v}(t)$ is the same as in the statement of Theorem~\ref{t:maintr}. Since the sets $D, F$ and $H$ in the definition~\eqref{e:tildeE} of $\widetilde E$ are negligible, this is a consequence of Lemma~\ref{l:bc} below. We recall that in this paragraph, as in the previous one, to ease the exposition we assume~\eqref{e:a11nz}.

 Owing to the construction in \S\ref{sss:G} we have that if $t \in G$ then there is $\nu>0$ and a sequence of maximal $\nu$-shock fronts $\{ y_m \}$ such that $y_m \to y$ in $C^0$ as $m \to + \infty$ for some Lipschitz continuous curve $y$ such that $y(t)=0$, $y'(t)=0$. We now fix $t \in G$ and term $\mf u^-_m(t)$ and $\mf u^+_m(t)$ the left and right state of the shock curve $y_m$ at time $t$. We recall~\eqref{e:traccialimite} and by arguing as in the proof of~\cite[formula (10.80)]{Bressan} and of~\cite[formula (10.66)]{Bressan} we get  
\be \label{e:vallatraccia} 
      \lim_{m \to + \infty} \mf u^+_m (t) = \mf{\bar u}(t), \quad  \lambda_k ( \mf{\bar u}(t)) \leq 0 \; \text{for a.e. $t \in G$.}
\eq
Owing to the second condition in~\eqref{e:vallatraccia}, for a.e. $t \in G$ by setting
\be \label{e:tracciainternalimite}
     \mf{\underline u}(t) = \mf t_k (\mf{\bar u}(t), \underline s(\mf{\bar u}(t)))
\eq
we get that $\mf{\underline u}(t)$ and $ \mf{\bar u}(t)$ are the left and the right state, respectively, of a $0$-speed Lax admissible shock.
\begin{lemma}
\label{l:limitedasx}
Let $\mf{\underline u}(t)$ be the same as in~\eqref{e:tracciainternalimite}, then
\be \label{e:limitedasx}
       \mf{\underline u}(t) =   \lim_{m \to + \infty} \mf u^-_m (t) \; \text{for a.e. $t \in G$.}
\eq 
\end{lemma}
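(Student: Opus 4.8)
The plan is to show that the limit of the left traces $\mf u^-_m(t)$ coincides with the value $\mf{\underline u}(t)$ prescribed by the $0$-speed Lax admissible shock, relying on the total variation bound for the boundary flux $\mf f \circ \mf v_{\ee_m}(\cdot,0)$ established in Lemma~\ref{t:tvf} and the pointwise convergence~\eqref{e:flupointwise}. First I would recall that by the construction of maximal $\nu$-shock fronts in~\cite[p.~219]{Bressan} the family $\{y_m\}$ satisfies $y_m \to y$ in $C^0$ with $y(t)=0$ and $y'(t)=0$ for a.e.\ $t \in G$, and that along each $y_m$ the Rankine--Hugoniot relation holds: $\mf f \circ \mf v(\mf u^+_m(t)) - \mf f \circ \mf v(\mf u^-_m(t)) = \mathrm{sp}_m(t)\,[\mf g \circ \mf v(\mf u^+_m(t)) - \mf g \circ \mf v(\mf u^-_m(t))]$, where $\mathrm{sp}_m(t)$ is the (small, vanishing in the limit for a.e.\ $t \in G$) shock speed. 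Since $\mf u_{\ee_m}$ is uniformly bounded by~\eqref{e:settedue} and the shock strength is $\ge \nu/2$, up to subsequences $\mf u^-_m(t) \to \mf{\underline v}_\ast(t)$ and $\mf u^+_m(t) \to \mf{\bar u}(t)$ for a.e.\ $t \in G$, with the limit of $\mf u^+_m(t)$ identified as $\mf{\bar u}(t)$ by~\eqref{e:vallatraccia}.

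Next I would pass to the limit in the Rankine--Hugoniot relation. Since $\mathrm{sp}_m(t) \to 0$ for a.e.\ $t \in G$ (this follows because $y_m \to y$ in $C^0$ and $y'(t) = 0$, hence the difference quotients of $y_m$ near $t$ become small, combined with the fact that the speed of a maximal shock front is controlled by the speed of the underlying Lax shock curve, which is Lipschitz), we obtain in the limit $\mf f \circ \mf v(\mf{\bar u}(t)) = \mf f \circ \mf v(\mf{\underline v}_\ast(t))$. Moreover the limit discontinuity has speed $0$ and inherits Lax admissibility from the approximate maximal shock fronts (the Lax inequalities pass to the limit by continuity of the eigenvalues; here one uses that $\lambda_k(\mf{\bar u}(t)) \le 0$ from~\eqref{e:vallatraccia} and the strength lower bound $\nu/2$ rules out degeneration to a trivial jump). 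By strict hyperbolicity (Hypothesis~\ref{h:sh}) and the standard structure of the solution of the Riemann problem, a $0$-speed Lax admissible shock with right state $\mf{\bar u}(t)$ and $\lambda_k(\mf{\bar u}(t)) \le 0$ is unique: its left state must lie on the $k$-th Hugoniot locus of $\mf{\bar u}(t)$ at the parameter value $\underline s(\mf{\bar u}(t))$ for which $\sigma_k(\mf{\bar u}(t), \underline s(\mf{\bar u}(t))) = 0$, see~\eqref{e:underlines}. Hence $\mf{\underline v}_\ast(t) = \mf t_k(\mf{\bar u}(t), \underline s(\mf{\bar u}(t))) = \mf{\underline u}(t)$ by~\eqref{e:tracciainternalimite}, which is~\eqref{e:limitedasx}.

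The main obstacle I anticipate is justifying that $\mathrm{sp}_m(t) \to 0$ for a.e.\ $t \in G$ and, relatedly, that the limiting jump is genuinely a nontrivial $0$-speed shock rather than collapsing or acquiring a spurious speed. The $C^0$ convergence $y_m \to y$ with $y'(t) = 0$ a.e.\ does not by itself give pointwise convergence of the front speeds $\mathrm{sp}_m(t) = y_m'(t)$ (piecewise-constant derivatives need not converge pointwise), so one must instead argue through the relation between $\mathrm{sp}_m(t)$ and the value $\sigma_k$ evaluated at the states $\mf u^-_m(t), \mf u^+_m(t)$: by the speed-selection rule in~\S\ref{ss:arie}, $\mathrm{sp}_m(t)$ lies between $\lambda_k(\mf u^+_m(t))$ and $\lambda_k(\mf t_k(\mf u^+_m(t), s))$ and is within $\mathcal O(1)$ of $\sigma_k(\mf u^+_m(t), s_m(t))$; passing to the limit and invoking~\eqref{e:vallatraccia} together with the fact that $t \in G$ forces (via the definition~\eqref{e:Gi} and the maximal-shock-front construction) the limiting shock speed to be exactly $0$. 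This is precisely the argument alluded to as ``arguing as in the proof of~\cite[formula (10.80)]{Bressan} and of~\cite[formula (10.66)]{Bressan}'', so I would follow that template closely, being careful to track the role of the uniform flux total variation bound~\eqref{e:tvf} which ensures the traces of $\mf f \circ \mf v_{\ee_m}(\cdot,0)$ converge in $L^1_{\loc}$ and hence that the Rankine--Hugoniot relation is stable under the limit.
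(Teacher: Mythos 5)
Your argument is essentially the paper's own: the paper likewise identifies $\mf u^+_m(t)\to\mf{\bar u}(t)$ via~\eqref{e:vallatraccia}, invokes the Bressan maximal-$\nu$-shock-front analysis (p.~227) to get that the shock speeds $\sigma_k(\mf u^+_m(t),s_m(t))$ tend to $0$, and then uses genuine nonlinearity (strict monotonicity of $\sigma_k(\mf{\bar u}(t),\cdot)$, equivalently uniqueness of the zero-speed point on the $k$-th curve) to pin the limiting parameter at $\underline s(\mf{\bar u}(t))$ and pass to the limit in $\mf u^-_m(t)=\mf t_k(\mf u^+_m(t),s_m(t))$; your detour through the Rankine--Hugoniot flux equality instead of this direct parametrization is only a bookkeeping difference. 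One small remark: the boundary-flux bound of Lemma~\ref{t:tvf} and the convergence~\eqref{e:flupointwise} play no role in this lemma, since the fronts $y_m$ lie inside the domain --- they are needed only later, in Lemma~\ref{l:bc}.
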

\begin{proof}
By the definition of $\mf u^+_m(t)$ and $\mf u^-_m(t)$ we have 
\be \label{e:sonoconnessi}
      \mf u^-_m (t) = \mf t_k (\mf u^+_m (t), s_{m}(t)) \; \text{for a.e. $t \in G$}
\eq
and for some $s_{m} (t)\in [-\delta, \delta].$ Note that, owing to the definition of $G$ and of $\nu$-shock front,  $s_m(t) \ge \nu(t)/2$ for every $m \in \mathbb N$ and some $\nu(t)>0$. 
We now fix any such $t \in G$ and point out that, up to subsequences (which to ease the notation we do not relabel), $s_{m}(t)$ converges to to some limit $s(t)\ge \nu(t) /2>0$. By the continuity of the function $\sigma_k$ defined by~\eqref{e:rh} and by recalling~\eqref{e:vallatraccia} we arrive at 
$$
    \lim_{m \to + \infty } \sigma_k (\mf u^+_m (t), s_{m}(t)) = \sigma_k (\mf{\bar u}(t), s(t)). 
$$
On the other hand, by the analysis in~\cite[p.227]{Bressan} we get that $  \lim_{m \to + \infty } \sigma_k (\mf u^+_m (t), s_{m}(t))=0$, which combined with~\eqref{e:vallatraccia} yields  
 $s(t) =\underline s(\mf{\bar u}(t))$. By passing to the limit in~\eqref{e:sonoconnessi} and recalling~\eqref{e:tracciainternalimite} we eventually arrive at~\eqref{e:limitedasx}. 
\end{proof}
\begin{lemma}
\label{l:tracciainterna} 
Let $\mf{\bar u}_m (t)$ and $ \mf{\underline u}(t)$ be the same as in~\eqref{e:baruemme} and in~\eqref{e:tracciainternalimite}, respectively; then 
\be \label{e:tracciainterna}
      \mf{\underline u}(t) = \lim_{m \to + \infty} \mf{\bar u}_m (t)  \quad \text{for a.e. $t \in G$}. 
\eq
\end{lemma}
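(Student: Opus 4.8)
The plan is to compare the two traces $\mf{\bar u}_m(t)$ (the trace of the approximate solution at the boundary, which is the right state of the boundary Riemann problem being solved at the boundary at time $t$) and $\mf u^+_m(t)$ (the right state of the maximal $\nu$-shock front $y_m$), and to transfer the information already obtained in Lemma~\ref{l:tildeE} and Lemma~\ref{l:limitedasx}. The key point is that for $t\in G$ the shock curve $y_m$ converges, with its speed, to the origin, so in the limit $m\to+\infty$ the portion of the strip $]0,y_m(t)[$ between the boundary and $y_m$ must carry essentially no variation \emph{other than} the $k$-th shock itself; this is precisely the kind of statement isolated in the proof of Lemma~\ref{l:tildeE}.

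First I would make the dichotomy precise. Fix $t\in G$ outside the negligible exceptional sets (those where $\lambda_k(\mf{\bar u}(t))>0$, or where $t\in D\cup F\cup H$, or where $\mf u_{\ee_m}(t,\cdot)$ fails to converge pointwise a.e.\ to $\mf u(t,\cdot)$, or where $\eqref{e:vallatraccia}$ or $\eqref{e:limitedasx}$ fails). By $\eqref{e:limitedasx}$ we already know $\mf u^-_m(t)\to\mf{\underline u}(t)=\mf t_k(\mf{\bar u}(t),\underline s(\mf{\bar u}(t)))$, and $\mf u^-_m(t)=\mf t_k(\mf u^+_m(t),s_m(t))$ with $s_m(t)\to\underline s(\mf{\bar u}(t))$ and $\mf u^+_m(t)\to\mf{\bar u}(t)$. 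So it suffices to show
\be
   \lim_{m\to+\infty}\big|\mf{\bar u}_m(t)-\mf u^+_m(t)\big|=0
   \qquad\text{for a.e. }t\in G,
\eq
because then $\mf{\bar u}_m(t)\to\mf{\bar u}(t)$ — wait, that would give $\mf{\underline u}(t)=\mf{\bar u}(t)$, which is false unless $\underline s=0$. Hence the correct target is the opposite: one must show that the trace $\mf{\bar u}_m(t)$ sits (asymptotically) \emph{on the other side} of $y_m$, i.e.\ that between $x=0$ and $x=y_m(t)$ only the single $k$-shock $y_m$ survives in the limit, so that
\be
   \lim_{m\to+\infty}\big|\mf{\bar u}_m(t)-\mf u^-_m(t)\big|=0
   \qquad\text{for a.e. }t\in G.
\eq
Combining this with $\eqref{e:limitedasx}$ gives $\lim_m\mf{\bar u}_m(t)=\mf{\underline u}(t)$, which is $\eqref{e:tracciainterna}$. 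To prove this last display I would run the same contradiction argument as in the proof of Lemma~\ref{l:tildeE}: if it failed along a subsequence, then the strip $]0,y_m(t)[$ at time $t$ would be crossed by a uniformly positive amount of wave strength \emph{in addition to} the $\nu$-shock $y_m$, and one would analyse this extra strength by families exactly as in {\sc Cases $1,\dots,5$} there. The only genuinely new feature is that the window has to be taken as a thin triangle with one side along $y_m$ (whose slope tends to $0$) rather than along the $t$-axis; since $y_m\to y$ in $C^0$ with $y(t)=0,\ y'(t)=0$, for any $\upsilon>0$ the region $\{0<x<y_m(\tau),\ |\tau-t|<\eta_m\}$ is eventually contained in a set of the form $\{(\tau,x): x<\upsilon|\tau-t|,\ x<\rho_m\}$ with $\rho_m\to0$, so the same measure-concentration estimates ($\mu(\{t\})>0$, $\mu^{IC}(\{(t,0)\})>0$, or $t\in D$, or $t\in G$ via a second maximal shock front) apply and contradict $t\notin D\cup F\cup H$ or, in the remaining ``{\sc Case 5B}''-type subcase, contradict the fact that $s_m(t)$ stays bounded away from $0$.

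The main obstacle I anticipate is the bookkeeping in the ``extra strength'' analysis near a curve $y_m$ that is itself one of the interacting fronts: one must be careful that the strength attributed to $y_m$ (at least $\nu/2$) is not double-counted, and that the cancellation/interaction measure estimates still localize at $(t,0)$ despite the window being anchored to a moving, curved boundary front. I would handle this by first passing to a subsequence along which $s_m(t)$ is essentially constant, subtracting off the contribution of $y_m$ itself from every sum over waves in $]0,y_m(t)[$, and only then invoking the Case analysis of Lemma~\ref{l:tildeE} on the \emph{remaining} strength; the quantitative inputs needed ($\eqref{e:msigma}$, Lemma~\ref{l:raref}, the interaction/cancellation measure $\eqref{e:H}$, and the maximal-shock-front construction of \cite[p.219]{Bressan}) are all already available. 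A short final paragraph would then assemble: $\eqref{e:tracciainterna}$ follows from the displayed limit together with $\eqref{e:limitedasx}$, completing the identification of $\mf{\underline u}(t)$ as the limit of the boundary traces of the approximations on $G$.
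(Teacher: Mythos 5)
Your top-level strategy is the paper's: reduce \eqref{e:tracciainterna} to showing $\lim_m|\mf{\bar u}_m(t)-\mf u^-_m(t)|=0$ (you correctly caught that comparing with $\mf u^+_m$ would be wrong), then combine with \eqref{e:limitedasx}, and prove the displayed limit by the contradiction/case analysis of Lemma~\ref{l:tildeE}. The paper does this at fixed time $t$ by picking continuity points $x_m<\widetilde y_m<y_m(t)$ with $\mf u_{\ee_m}(t,x_m)$ close to $\mf{\bar u}_m(t)$ and $\mf u_{\ee_m}(t,\widetilde y_m)$ close to $\mf u^-_m(t)$, so the strip under analysis lies strictly to the left of $y_m$ and your double-counting worry (and the ``thin triangle along $y_m$'' device) simply does not arise.

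The genuine gap is in the one case that cannot be recycled from Lemma~\ref{l:tildeE}, and your proposal papers over it. If the extra strength in the strip is carried by a second strong $k$-shock with speed vanishing as $m\to+\infty$ (the analogue of {\sc Case 5A}), the conclusion of that case in Lemma~\ref{l:tildeE} is $t\in G$ --- which here is an assumption, not a contradiction. You list ``$t\in G$ via a second maximal shock front'' among the outcomes that ``contradict $t\notin D\cup F\cup H$'', but $G$ is not in that union, so nothing is contradicted; and the fallback you offer (``contradict the fact that $s_m(t)$ stays bounded away from $0$'') is not substantiated and does not address this scenario. The paper's proof supplies a genuinely new argument precisely here: if the speed of $y_m$ vanishes, then since $\mf u^-_m$ is the left state of a shock of strength $\ge\nu/2$ with vanishing speed, genuine nonlinearity forces $\lambda_k(\mf u^-_m)\ge\unpo\nu>0$, hence $\lambda_k(\mf u_{\ee_m}(t,\widetilde y_m))\ge\unpo\nu>0$; on the other hand the Lax condition at the interior slow shock produces $z_m\in]x_m,\widetilde y_m[$ with $\lambda_k(\mf u_{\ee_m}(t,z_m))\le-\unpo\widetilde\nu<0$, and bridging $\lambda_k$ from negative at $z_m$ to positive at $\widetilde y_m$ requires a non-vanishing amount of $k$-rarefaction strength whose right states have $\lambda_k$ bounded away from $0$, contradicting the standing {\sc Case 5} assumption that all waves other than the slow $k$-waves vanish (while if the speed of $y_m$ stays bounded away from $0$ one falls back on {\sc Cases 3--4}). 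Without this (or an equivalent) argument ruling out a second slow strong $k$-shock accumulating at the boundary, the proposed proof is incomplete.
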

\begin{proof}
We fix $t \in G \setminus F \cup H \cup D$ satisfying~\eqref{e:limitedasx} and establish~\eqref{e:tracciainterna}. Since the sets $F$, $H$ and $D$ are all negligible, this provides the proof of the lemma. We argue by contradiction and assume that there is $\iota>0$ such that 
$$| \lim_{m \to + \infty} \mf{\bar u}_m (t) -  \lim_{m \to + \infty} \mf u^-_m (t)| \ge \iota.
$$ 
In particular, this implies that there is a sequence of points $\{ \widetilde y_m\} $ (to ease the notation, we do not indicate the dependence on $t$) such that 
$\lim_{m \to + \infty } \widetilde y_m= 0 $, $0 < \widetilde y_m< y_m$, $\mf u_{\ee_m}$ is continuous at $(t, \widetilde y_m)$, $|\mf u_{\ee_m} (t, \widetilde y_m) - \mf u^-_m|$ vanishes in the $m \to + \infty$ limit,  and therefore
$$
   |\mf u_{\ee_m} (t, \widetilde y_m) - \lim_{m \to + \infty} \mf{\bar u}_m (t)| \ge \iota/2 \quad \text{for every $m$}. 
$$   
   We can then construct a sequence of points $\{ x_m \}$  such that $0< x_m < \widetilde y_m$, $\mf u_{\ee_m}(t, \cdot)$ is continuous at $x_m$ and 
$\mf u_{\ee_m} (t,  x_m)$ is arbitrarily close to $\mf{\bar u}_m(t)$, and hence 
$$
    |\mf u_{\ee_m} (t, \widetilde y_m) - \mf u_{\ee_m}(t, x_m)| \ge \iota/4 \quad \text{for every $m$.}
$$    
To show that this implies $t \in F \cup H \cup D$ and hence find a contradiction with the assumptions of the present lemma we can then argue as in the proof of Lemma~\ref{l:tildeE}, the only point that requires a different argument is the handling of {\sc Case 5A}. 
Assume therefore that we are in {\sc Case 5}, that all the waves different from the one in \eqref{e:case5} vanish in the $m \to + \infty$ limit and that there is $\widetilde \nu>0$ such that for every $m \in \mathbb N$  there is at least one shock-front of the $k$-th family comprised between $x_m$ (on the left) and $\widetilde y_m$ (on the right) and with strength at least $\widetilde \nu$ and with speed vanishing in the $m \to + \infty$ limit. Up to subsequences, we can then assume that one of the following conditions is satisfied. 
\begin{itemize}
\item The speed of the shock located at $y_m$ is bounded away from $0$, uniformly in $m$. Since the strength of this shock is at least $\nu /2$ by the construction of maximal $\nu$-shock front in~\cite[p. 219]{Bressan} we can handle this case by arguing as in {\sc Case 3} and {\sc Case 4} in the proof of Lemma~\ref{l:tildeE} to conclude that in this case $t \in F \cup H \cup D$, which yields a contradiction. 
\item The speed of the shock located at $y_m$ vanishes in the $m \to + \infty$ limit.  Since $\mf u^-_m$ is the left state of a shock with strength at least $\nu/2$ and speed vanishing in the $m \to + \infty$ limit, the genuine nonlinearity of the $k$-th characteristic field implies that $\lambda_k (\mf u^-_m) \ge  \unpo \nu >0$ for $m$ sufficiently large and this in turn yields  $\lambda_k (\mf u_{\ee_m}(t, \widetilde y_m)) \ge  \unpo \nu > 0$. On the other side, by assumption between $x_m$ and $\widetilde y_m$
 there is a shock with strength at least $\widetilde \nu>0$ and speed vanishing in the $m \to + \infty$ limit. By the Lax entropy condition, this implies that there is a sequence $\{ z_m\} \subseteq ]x_m, \widetilde y_m[$ such that $z_m$ is just at the right of the shock and $ \lambda_k (\mf u_{\ee_m}(t, z_m) \leq  - \unpo \widetilde \nu < 0$ for every $m \in \mathbb N$.  We claim that this contradicts the assumption 
that the total strength of all the waves different from the ones in~\eqref{e:case5} vanishes in the $m \to + \infty$ limit. Indeed, since $z_m < \widetilde y_m$,  $ \lambda_k (\mf u_{\ee_m}(t, z_m) \leq  - \unpo \widetilde \nu < 0$, $\lambda_k (\mf u_{\ee_m}(t, \widetilde y_m)) \ge  \unpo \nu > 0$ and the total strength of the wave-fronts of the $j$-th family, $j \neq k$, in $]z_m, \widetilde y_m[$ vanishes in the  
  in the $m \to + \infty$ limit then there is $\unpo \nu >0$ and a group $\mathcal I_m$ of rarefaction wave-fronts of the $k$-th family  such that 
  $$
       \sum_{\substack{\alpha \in \mathcal I_m \\ i_\alpha =k, s_\alpha <0 } }|s_\alpha| \ge \unpo \nu, \quad \lambda_k (\mf u^+_{m \alpha}) \ge \unpo \nu >0 \; \text{for every $\alpha \in \mathcal I_m$ },
  $$
  where $\mf u^+_{m \alpha}$ represents the state at the right-hand side of the wave-front $\alpha$. This however contradicts the assumption that the total strength of all the waves different from the ones in~\eqref{e:case5} vanishes in the $m \to + \infty$ limit and hence concludes the proof of the lemma.
 $\qedhere$
\end{itemize}
\end{proof}
\begin{lemma}
\label{l:bc}
For a.e. $t \in G$ the trace $\mf{\bar v}(t) : = \lim_{x \to 0^+} \mf v(t, x)$ satisfies $\mf{\bar v} (t) \sim_{\mf D} \mf v_b(t)$, in the sense of Definition~\ref{d:equiv}. 
In particular, conditions i) and ii) are satisfied provided $\mf{\underline v} (t): = \mf v (\mf {\underline u}(t))$ and $ \mf {\underline u} (t)$ is the same as in~\eqref{e:tracciainternalimite} and satisfies~\eqref{e:limitedasx} and~\eqref{e:tracciainterna}.
\end{lemma}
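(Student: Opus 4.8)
\textbf{Proof proposal for Lemma~\ref{l:bc}.}

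The plan is to fix a generic time $t \in G$ which also belongs to the full-measure set on which all the previously established pointwise convergence statements hold, and then check conditions i) and ii) of Definition~\ref{d:equiv} with the candidate intermediate state $\mf{\underline v}(t) := \mf v(\mf{\underline u}(t))$, where $\mf{\underline u}(t)$ is defined by~\eqref{e:tracciainternalimite}. Concretely, I would take $t \in G \setminus (F \cup H \cup D)$ such that $t$ additionally satisfies~\eqref{e:vallatraccia}, \eqref{e:limitedasx} and \eqref{e:tracciainterna}; since $F$, $H$, $D$ are all $\mathcal L^1$-negligible and the other three properties hold for a.e.\ $t \in G$, this excludes only a null set. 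For such $t$, condition i) is essentially immediate from the machinery already in place: \eqref{e:vallatraccia} gives $\lambda_k(\mf{\bar u}(t)) \leq 0$, hence $\underline s(\mf{\bar u}(t)) \geq 0$ is well-defined through~\eqref{e:underlines}, and~\eqref{e:tracciainternalimite} together with~\eqref{e:etw} (applied at $\mf{\bar u}(t)$) says precisely that $\mf{\underline u}(t)$ on the left and $\mf{\bar u}(t)$ on the right are joined by a $0$-speed shock satisfying the Lax admissibility condition. Passing to the $\mf v$ variables, $\mf f(\mf{\bar v}(t)) = \mf f(\mf{\underline v}(t))$ and the $0$-speed discontinuity between $\mf{\bar v}(t)$ (right) and $\mf{\underline v}(t)$ (left) is Lax admissible, which is condition i).

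For condition ii), the key is to produce a boundary layer $\mf w$ solving~\eqref{e:bl} (equivalently~\eqref{e:andiamoav2}) with asymptotic state $\mf{\underline u}(t)$ and boundary value compatible with $\mf u_b(t)$. Here I would exploit~\eqref{e:tracciainterna}: the traces $\mf{\bar u}_m(t)$ of the wave front-tracking approximations converge to $\mf{\underline u}(t)$, not to $\mf{\bar u}(t)$. Combining this with the convergence~\eqref{e:converge2} of the boundary data $\mf u_b^{\ee_m}(t) \to \mf u_b(t)$ and with~\eqref{e:brs0} (which holds for the approximations by Remark~\ref{r:1812}), namely $\boldsymbol{\beta}(\boldsymbol{\phi}(\mf{\bar u}_m(t), \xi^m_{\ell+1}(t), \dots, \xi^m_k(t)), \mf u_b^{\ee_m}(t)) = \mf 0_N$ with uniformly bounded $\xi^m_i$, I would pass to the limit using the Implicit Function Theorem exactly as in~\eqref{e:ubiti}, but with $\mf{\underline u}(t)$ in place of $\mf{\bar u}(t)$. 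This yields limit values $\xi_i(t)$ with
\be
   \boldsymbol{\beta}(\boldsymbol{\phi}(\mf{\underline u}(t), \xi_{\ell+1}(t), \dots, \xi_k(t)), \mf u_b(t)) = \mf 0_N.
\eq
Since $\lambda_k(\mf{\underline u}(t)) \geq 0$ (because $\mf{\underline u}(t) = \mf t_k(\mf{\bar u}(t), \underline s(\mf{\bar u}(t)))$ is the left state of a $0$-speed Lax shock and genuine nonlinearity forces $\lambda_k$ to be nonnegative there), the term $\boldsymbol{\psi}_p$ in~\eqref{e:bl10} vanishes by the third or fourth line of~\eqref{e:p}, so the displayed identity reduces to $\boldsymbol{\beta}(\boldsymbol{\psi}_s(\mf{\underline u}(t), \xi_{\ell+1}(t), \dots, \xi_{k-1}(t)), \mf u_b(t)) = \mf 0_N$. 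By the construction of $\boldsymbol{\psi}_s$ in~\eqref{e:phiesse}, there is then a solution of~\eqref{e:bltw} with initial datum $\boldsymbol{\psi}_s(\mf{\underline u}(t), \cdot)$ converging exponentially to $\mf{\underline u}(t)$; this is the desired boundary layer $\mf w$ with $\lim_{y\to+\infty}\mf w(y) = \mf{\underline u}(t)$ and $\boldsymbol{\widetilde\beta}(\mf w(0), \mf v_b(t)) = \mf 0$, which is condition ii). The argument structurally mirrors the case-a) analysis in the proof of Lemma~\ref{l:bl}, the difference being only the identification of the correct asymptotic state.

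The main obstacle I anticipate is not in the formal manipulation above but in justifying the passage to the limit carefully, i.e.\ verifying that the hypotheses of the Implicit Function Theorem / the Lipschitz-inverse argument are uniform in $m$ along the chosen subsequence, and that the various pointwise-a.e.\ convergence statements (\eqref{e:vallatraccia}, \eqref{e:limitedasx}, \eqref{e:tracciainterna}, \eqref{e:converge2}) can be arranged to hold simultaneously on a common full-measure subset of $G$ — this requires a diagonal extraction and a little bookkeeping, but no new idea. A secondary subtlety is that $\mf{\underline u}(t)$ might coincide with $\mf{\bar u}(t)$ (when $\underline s(\mf{\bar u}(t)) = 0$, i.e.\ $\lambda_k(\mf{\bar u}(t)) = 0$), in which case condition i) is satisfied trivially by the constant state and one is back in the situation already covered by Lemma~\ref{l:bl}; I would dispatch this borderline case first so that the main argument may assume $\underline s(\mf{\bar u}(t)) > 0$, i.e.\ a genuinely nontrivial $0$-speed shock sitting at the boundary. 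The case of Hypothesis~\ref{h:eulerlag}~C) is handled identically, while case~B) — relevant when the $k$-th field is linearly degenerate — is deferred to \S\ref{sss:952}, as indicated in \S\ref{ss:notinE}.
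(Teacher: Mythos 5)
Your skeleton coincides with the paper's: condition i) follows from the definition~\eqref{e:tracciainternalimite} of $\mf{\underline u}(t)$ together with~\eqref{e:vallatraccia} (the paper checks the flux identity through~\eqref{e:flupointwise} and~\eqref{e:tracciainterna}, but your direct Rankine--Hugoniot observation is acceptable), and your passage to the limit in the approximate boundary relation, using~\eqref{e:tracciainterna} and~\eqref{e:converge2}, is exactly the paper's~\eqref{e:xiemmet2}. The genuine gap is the step where you claim that $\lambda_k(\mf{\underline u}(t))\ge 0$ makes $\boldsymbol{\psi}_p$ vanish ``by the third or fourth line of~\eqref{e:p}'' and that the identity reduces to $\boldsymbol{\beta}(\boldsymbol{\psi}_s(\mf{\underline u}(t),\xi_{\ell+1}(t),\dots,\xi_{k-1}(t)),\mf u_b(t))=\mf 0_N$. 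The third line of~\eqref{e:p} gives only the bound $|\xi_k-\bar s|\sum_i|\xi_i|$, not zero; the fourth line applies only when $\xi_k(t)\ge \bar s(\mf{\underline u}(t))$; and even when $\boldsymbol{\psi}_p=\mf 0_N$, formula~\eqref{e:bl10} evaluates $\boldsymbol{\psi}_s$ at $\boldsymbol{\zeta}_k(\mf{\underline u}(t),\xi_k(t))$, which equals $\mf{\underline u}(t)$ only if $\xi_k(t)=0$ — a fact you never establish. It can fail: from~\eqref{e:protra} for the approximations and~\eqref{e:tracciainterna}, up to subsequences either $\lambda_k(\mf{\bar u}_m)>0$ and $\xi^m_k=0$ (then indeed $\xi_k(t)=0$ and your reduction is valid), or $\lambda_k(\mf{\bar u}_m)\le 0$ and $\xi^m_k\le \underline s(\mf{\bar u}_m)$, in which case the limit satisfies $\lambda_k(\mf{\underline u}(t))=0$, $\mf{\underline u}(t)=\mf{\bar u}(t)$ and $\xi_k(t)\le 0$, possibly strictly negative. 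In that second alternative the boundary layer has a nontrivial center-manifold component: $\boldsymbol{\zeta}_k(\mf{\underline u},\xi_k)=\boldsymbol{\pi}_{\mf u}\circ\mf b_k(\mf{\underline u},\xi_k)$ by~\eqref{e:uguale1}, and one must invoke the slaving-manifold description~\eqref{e:festanatale}--\eqref{e:slavingman} together with Lemma~\ref{l:min02} to see that the full boundary layer still converges (through the center orbit, not exponentially) to $\mf{\underline u}(t)$. This is precisely the case the paper's proof devotes its second bullet to, and it is absent from your argument.

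Your plan to dispatch the borderline case $\mf{\underline u}(t)=\mf{\bar u}(t)$ by declaring it ``already covered by Lemma~\ref{l:bl}'' is also not legitimate as stated: Lemmas~\ref{l:bl} and~\ref{l:bl2} are proved only for $t\notin\widetilde E$ and rest on~\eqref{e:Etilde}, which is not available on $G\subseteq\widetilde E$ (it is replaced there by~\eqref{e:tracciainterna}); moreover Lemma~\ref{l:bl} excludes the very configuration $\xi_k(t)=\underline s(\mf{\bar u}(t))$ that arises here. One could transplant those arguments using~\eqref{e:tracciainterna}, but the delicate subcase $\xi_k(t)<0$ again requires the Lemma~\ref{l:min02} plus slaving-manifold step described above, so the borderline case cannot be waved away — it is where the real work of this lemma lies. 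Finally, in your ``main case'' $\underline s(\mf{\bar u}(t))>0$ the missing ingredient is the deduction, via~\eqref{e:protra} and $\lambda_k(\mf{\underline u}(t))>0$, that $\xi^m_k=0$ for large $m$ and hence $\xi_k(t)=0$; with that added, your reduction to $\boldsymbol{\psi}_s(\mf{\underline u}(t),\cdot)$ and the exponential boundary layer from~\eqref{e:phiesse} is correct.
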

\begin{proof}
We proceed according to the following steps.\\
{\sc Step 1}: we establish property ii) in Definition~\ref{d:equiv}, provided $\mf{\underline v}: = \mf v (\mf {\underline u}(t))$ and $ \mf {\underline u} (t)$  is the same as in~\eqref{e:tracciainternalimite}. By construction, we have that\footnote{We recall that in \S\ref{ss:tinE}, as in \S\ref{ss:notinE}, we assume~\eqref{e:a11nz} and hence focus on case A in Hypothesis~\ref{h:eulerlag}} for a.e. $t \in \R_+$
\be \label{e:xiemmet}
  \boldsymbol{\beta}( \boldsymbol{\phi} (\mf{\bar u}_m(t), \xi^m_{\ell +1}(t), \dots, \xi^m_k(t)),   \mf u^{\ee_m}_{b}(t) )=\mf 0_{N} ,
\eq
with $\mf{\bar u}_m(t)$ and $\xi^m_k(t)$ satisfying~\eqref{e:protra}. Owing to~\eqref{e:converge2} we have that $\lim_{m \to + \infty} \mf u_{bm}(t) = \mf u_b(t)$ for a.e. $t \in \R_+$ and by~\eqref{e:tracciainterna} we can pass to the limit in~\eqref{e:xiemmet} and conclude that for a.e. $t \in G$ we have that
$\xi^m_1(t), \dots, \xi^m_k(t)$ converge as $m \to + \infty$ to some $\xi_1(t), \dots, \xi_k(t)$ satisfying 
\be \label{e:xiemmet2}
   \boldsymbol{\beta}( \boldsymbol{\phi} (\mf{\underline u}(t), \xi_{\ell +1}(t), \dots, \xi_k(t)), \mf u_{b}(t)) = \mf 0_{N}.
\eq
Owing to~\eqref{e:protra} for a.e. $t \in G$ we can assume that up to subsequences one of the following conditions is satisfied for every $m$. 
\begin{itemize}
\item $\lambda_k (\mf{\bar u}_m(t)) > 0$ and $\xi^m_k(t)=0$, which by~\eqref{e:tracciainterna} yields $\lambda_k (\mf{\underline u}(t)) \ge 0$ and $\xi_k(t)=0$. Owing to~\eqref{e:p} and~\eqref{e:bl10}, this implies 
$$
    \boldsymbol{\phi} (\mf{\underline u}(t), \xi_{\ell +1}(t), \dots, \xi_k(t))= 
    \boldsymbol{\psi}_s (\mf{\underline u}(t), \xi_{\ell +1}(t), \dots, \xi_{k-1}(t))
$$
and by plugging this equality into~\eqref{e:xiemmet2}, recalling the definition~\eqref{e:phiesse} of $\boldsymbol{\psi}_s$ and arguing as in the proof of Lemma~\ref{l:bl} this yields the existence of a boundary layer $\mf w$ satisfying~\eqref{e:andiamoav2}. 
\item $\lambda_k (\mf{\bar u}_m(t)) \leq  0$ and $\xi^m_k(t) \leq \underline s(\mf{\bar u}_m(t))$, which  by~\eqref{e:tracciainterna} yields  $\lambda_k (\mf{\underline u}(t)) \leq  0$ and $\xi_k(t) \leq \underline s(\mf{\underline u}(t))$. Since $\mf{\underline u}(t)$ is defined as in~\eqref{e:tracciainternalimite}, with $\lambda_k(\mf{\bar u}(t))\leq 0$ by~\eqref{e:vallatraccia}, we have  $\lambda_k (\mf{\underline u}(t)) \ge  0$. 
We conclude that $\lambda_k (\mf{\underline u}(t)) =0$, which implies $\underline s (\mf{\underline u}(t)) =0$ and hence $\xi_k \leq 0$ and, by~\eqref{e:tracciainternalimite}, $\mf{\underline u}(t) = \mf{\bar u}(t)$.
We conclude that 
\begin{equation*}
\begin{split}
    \boldsymbol{\phi} (\mf{\underline u}(t), \xi_{\ell +1}(t), \dots, \xi_k(t))& \stackrel{\eqref{e:bl10}}{=}
    \boldsymbol{\psi}_s (\underbrace{\boldsymbol{\zeta}_k (\mf{\underline u}(t)}_{=\boldsymbol{\pi}_{\mf u} \circ \mf b_k (\mf{\underline u}(t)) \ \text{by~\eqref{e:uguale1}}}, \xi_k(t)), \xi_{\ell +1}(t), \dots, \xi_{k-1}(t)) \\&
    +   \boldsymbol{\psi}_p (\mf{\underline u}(t), \xi_{\ell +1}(t), \dots, \xi_{k-1}(t), \xi_k(t)).\\
\end{split}
\end{equation*}
We now recall the construction of the Slaving Manifold given in \S\ref{ss:case2}. In particular, we recall~\eqref{e:bl00} and that the solution of the Cauchy problem obtained by coupling~\eqref{e:bltw2} and  the initial datum $\boldsymbol{\omega}(0)$ in~\eqref{e:bl00} satisfies~\eqref{e:slavingman}, where $\boldsymbol{\omega}_0$
is by~\eqref{e:festanatale} the solution of~\eqref{e:bltw2} with initial datum $\boldsymbol{\gamma}_k (\mf{\underline u}(t), \xi_k(t))= \mf b_k (\mf{\underline u}(t), \xi_k(t))$. Owing to Lemma~\ref{l:min02}, the orbit $\boldsymbol{\omega}_0$ converges to $(\mf{\underline u}(t), \mf 0_{N-h})$ as $y \to + \infty$. Wrapping up, we conclude that the solution of the Cauchy problem obtained by coupling~\eqref{e:bltw2} and  $\boldsymbol{\omega}(0)$ in~\eqref{e:bl00} converges to $(\mf{\underline u}(t), \mf 0_{N-h})$ as $y \to + \infty$. Recalling that the function $\mf a$ in~\eqref{e:bltw2} is given by~\eqref{e:gamma}, this yields the existence of a boundary layer $\mf w$ satisfying~\eqref{e:andiamoav2} with asymptotic state 
$\mf{\underline u}(t)$. 
\end{itemize}
{\sc Step 2:} we establish property i) in Definition~\eqref{d:equiv}. We have 
$$
  \mf f(\mf{\underline v}(t))= \mf f \circ \mf v \circ \mf{\underline u}(t)  
  \stackrel{\eqref{e:tracciainterna}}{=} \lim_{m \to + \infty} \mf f \circ \mf v \circ \mf{\bar u}_m (t) \stackrel{\eqref{e:flupointwise},\eqref{e:baruemme}}{=} \mf f (\mf{\bar v}(t)),
$$
and since $\lambda_k (\mf{\bar u}(t)) \leq 0$ by~\eqref{e:vallatraccia} the $0$-speed shock between  $\mf{\underline u}$ (on the left) and $\mf{\bar u}(t)$ (on the right) is Lax admissible. This concludes the proof of Lemma~\ref{l:bc}.
\end{proof}
\subsection{Case of a linearly degenerate case characteristic field} \label{ss:bclindeg}
We now establish the proof of Theorem~\ref{t:maintr} in the case of a linearly degenerate boundary characteristic field~\eqref{e:lindeg}. The proof follows the same outline as in the case of a genuinely nonlinear characteristic field, so we only discuss the points where the argument is different. 
\subsubsection{Proof of Lemma~\ref{t:tvf} (linearly degenerate $k$-th characteristic field)}
The argument is very similar to the one in the case of a genuinely nonlinear characteristic field, so we only provide a sketch. First, we recall \S\ref{sss:532} and also we point out that instead of~\eqref{e:dflux} we have the inequality 
\begin{equation}
\label{e:dflux2}
      | \mf f \circ \mf v (\mf{\widetilde u}) -  \mf f \circ \mf v (\mf t_k (\mf{\widetilde u}, s))|
      \leq
          \unpo  |s| | \lambda_k (\mf{\widetilde u})|. 
\eq
Next, we consider the same functional $\Lambda$ as in~\eqref{e:Lambda}. We recall that now the functional $S$ in $\Upsilon$ is given by the second line of~\eqref{e:S}.  We now discuss {\sc Cases I,$\dots$,III} in the proof of Lemma~\ref{t:tvf}. \\
{\sc Case I:} it suffices to establish~\eqref{e:deltaF2}. To this end, we term $\mf u^-$ and $\mf u^+$ the left and right state of the hitting wave of the $j$-th family, $j<k$, and by $\mf{\hat u}$ the same state as in~\eqref{e:hatu}.  Next, we point out that there are two possibilities:
\begin{itemize}
\item if $\lambda_k (\mf{\hat u})>0$ by following the same computations done  in~\eqref{e:deltaF} for cases i),$\dots$, iii) in \S\ref{ss:abrie}, we arrive at 
$$
    \Delta F(\tau) \leq  \unpo|s_j| +  |\mf f (\mf v (\mf{\hat u})) - \mf f (\mf v (\mf t_k (\mf{\hat u}, s_k) ))| 
$$
 and the second term at the right-hand side of the above expression can be controlled by arguing as in~\eqref{e:cvelocita}.
 \item  if $\lambda_k (\mf{\hat u}) \leq 0$ there is no wave front of the $k$-th family entering the domain after the interaction and hence the last term at the right hand side of~\eqref{e:deltaF} is not there, and in this way we arrive at~\eqref{e:deltaF2}. 
\end{itemize}
{\sc Case II:} we term $\mf u^-$ and $\mf u^+$ the left and right state of the hitting wave front of the $k$-th family, and $\mf{\hat u}$ the same state as in~\eqref{e:hatu}. Assume that we have shown that
\be \label{e:deltaFmarzo}
    \Delta F(\tau) \leq \unpo |s_k| \big( |\xi_k|+ [\lambda_k (\mf u^+)]^- \big);
\eq
then we can choose the constant $K_5$ in such a way that the function $\Lambda$ is decreasing at $\tau$. To establish~\eqref{e:deltaFmarzo} we first assume that $\lambda_k (\mf{\hat u}) > 0$ and we point out that 
\begin{equation*}
\begin{split}
    \Delta F(\tau) & \leq | \mf f (\mf v (\mf u^-))-  \mf f (\mf v (\mf t_k (\mf{\hat u}, s'_k) ))   | \\ &  \leq 
    | \mf f (\mf v (\mf u^-))-  \mf f (\mf v (\mf u^+))   | + | \mf f (\mf v (\mf u^+))-  \mf f (\mf v (\mf{\hat u}))| +
    |\mf f (\mf v (\mf{\hat u})) - \mf f (\mf v (\mf t_k (\mf{\hat u}, s'_k) ))|  \\ &
    \stackrel{\eqref{e:melindeg},\eqref{e:dflux2}}{\leq} 
   [\lambda_k (\mf u^+)]^-|s_k| + \unpo  |s_k|  |\xi_k|+  |\lambda_k (\mf{\hat u})||s'_k|  \\ & \leq 
     [\lambda_k (\mf u^+)]^-|s_k| + \unpo  |s_k|  |\xi_k|+ \delta ( [\lambda_k (\mf{\hat u})]^+
     -  [\lambda_k (\mf u^+)]^+ ) \\ & \leq  [\lambda_k (\mf u^+)]^-|s_k| + \unpo  |s_k|  |\xi_k|+ \delta \unpo |\mf{\hat u} - \mf u^+| 
     \stackrel{\eqref{e:melindeg}}{\leq}   [\lambda_k (\mf u^+)]^-|s_k| + \unpo  |s_k|  |\xi_k|+\delta |s_k| |\xi_k|,
    \end{split}
\end{equation*}
that is~\eqref{e:deltaFmarzo}. If $\lambda_k (\mf{\hat u}) \leq 0$ the estimate is actually simpler because we do not have the last term at the right hand side of the above expression since there is no $k$-th wave front entering the domain after the interaction. \\
{\sc Case III:} as in the proof of Lemma~\ref{t:tvf}  we use the following notation: 
\begin{itemize}
\item $\mf u_{bm}^+$ and $\mf u_{bm}^-$ are the right and left limits $\mf u^{\ee_m}_{b}(\tau^+)$ and $\mf u^{\ee_m}_{b}(\tau^-)$, respectively;
\item $\mf{\bar u}^-$ and $\mf{\bar u}^+$ are the traces of $\mf u_m(t, \cdot)$ for $t= \tau^-$ and $t=\tau^+$, respectively; 
\item  $\mf{\hat u}$ is the same state as in~\eqref{e:hatu}.  
\end{itemize}
As in the genuinely nonlinear case, it suffices to establish~\eqref{e:deltaFd2}.  We recall that if $\lambda_k (\mf{\hat u})\leq 0$ then there is no wave front of the $k$-th family entering the domain immediately after time $\tau$. This yields $\mf{\bar u}^+ =\mf{\hat u}$ and hence 
$$
    \Delta F(\tau) \leq   | \mf f (\mf v (\mf{\hat u}))-  \mf f (\mf v (\mf{u}^-)) | \leq \unpo \sum_{j=k+1}^N |s'_j| \stackrel{\eqref{e:jd1}}{\leq} \unpo |\mf u_{bm}^+ - \mf{u}^-_{bm}|,
$$
that is~\eqref{e:deltaF2}. We now consider the case $\lambda_k (\mf{\hat u})> 0$ and assume that there is a wave-front of the $k$-th family with strength $s_k$ entering the domain after the interaction. We  point out that 
\begin{equation} \label{e:discindata}
\begin{split}
    \Delta F(\tau) & \leq   | \mf f (\mf v (\mf{\hat u}))-  \mf f (\mf v (\mf{ \bar u}^-)) | +  | \mf f (\mf v (\mf{\hat u}))-  \mf f (\mf v (\underbrace{\mf t_k (\mf{\hat u}, s_k)}_{= \mf {\bar u}^+}) | 
     \! \! \stackrel{\eqref{e:jd1}, \eqref{e:dflux2}}{\leq} \! \! \unpo |\mf u_{bm}^+ - \mf{u}^-_{bm}| + 
         \unpo  |s_k| | \lambda_k (\mf{\hat u})| 
\end{split}
\end{equation}
If $\lambda_k (\mf{\bar u}^-) \leq 0$ then we can control the last term at the right-hand side of~\eqref{e:discindata} as follows: 
\begin{equation*} \begin{split}
    |s_k| | \lambda_k (\mf{\hat u})| =  |s_k| [\lambda_k (\mf{\hat u})]^+ \leq 
    |s_k| | \big( [\lambda_k (\mf{\hat u})]^+ -  [\lambda_k (\mf{\bar u}^-)]^+| \big) \leq \unpo |s_k | |\mf{\hat u} - \mf{\bar u}^-|  \stackrel{\eqref{e:jd1}}{\leq} \unpo |s_k| |\mf u_{bm}^+ - \mf{u}^-_{bm}|
    \end{split}
\end{equation*}
 and by plugging the above inequality into~\eqref{e:discindata} we arrive at~\eqref{e:deltaF2}. If $\lambda_k (\mf{\bar u}^-) > 0$ then we can apply~\eqref{e:protra2} with $\xi_k$ and $\mf{\bar u}^-$ in place of $\xi'_k$ and $\mf{\bar u}$, respectively, and conclude that $\xi_k=0$, which in turn owing to~\eqref{e:jd1} (with $s_k$ in place of $s'_k$) yields $|s_k| \leq \unpo |\mf u_{bm}^+ - \mf{u}^-_{bm}|$ and by plugging this inequality into~\eqref{e:discindata} we eventually arrive at~\eqref{e:deltaF2}. This concludes the proof of Lemma~\ref{t:tvf} in the linearly degenerate case.

\subsubsection{Definition~\ref{d:equiv} in the linearly degenerate case}\label{sss:952}
We now show that the limit function $\mf v$ in~\eqref{e:16bis} satisfies the boundary condition in the sense of Definition~\ref{d:equiv}. We recall that if the $k$-th characteristic field is linearly degenerate  the Lax admissibility requirement in condition i) is redundant because contact discontinuities are always admissible. To ease the exposition, we focus on case B) in Hypothesis~\ref{h:eulerlag}, the analysis in the other cases being similar but easier.
We consider the sequence $\{ \mf{\bar u}_m(t) \}$ defined as in~\eqref{e:baruemme} and point out that
\be \label{e:948}
       \boldsymbol{\beta} (\boldsymbol{\phi} (\mf{\bar u}_m(t), \xi^m_{\ell (\mf u^{\ee_m}_{b}(t)) +1}(t), \dots, \xi^m_k(t)), \mf u^{\ee_m}_{b}(t) )= \mf 0_{N} 
   \quad \text{for a.e. $t \in \R_+$ and every $m \in \mathbb N$}.
\eq
for suitable functions $\xi^m_{\ell (\mf u^{\ee_m}_{b}(t)) +1}(t), \dots, \xi^m_k(t))$. We now fix $t \in \R_+$ such that both~\eqref{e:948} and the pointwise convergence in~\eqref{e:flupointwise} hold true, and such that $\lim_{m \to + \infty} \mf{u}^{\ee_m}_{b} (t) = \mf u_{b}(t)$.  Up to subsequences\footnote{We explicitly point out that, in general, the particular subsequence such that we have convergence \emph{depends} on the point $t$, so we are not stating that the whole function $\mf{\bar u}_m$ converges at almost every point.}, we have 
\be \label{e:grave}
      \lim_{m \to + \infty} \mf{\bar u}_m (t) = \grave{\mf u}(t)
\eq 
for some limit value $\grave{\mf u}(t) \in \R^N$.  Owing to~\eqref{e:flupointwise}, we have
\be \label{e:949}
      \mf f (\mf v(\mf {\bar u} (t)))= \mf f (\mf v(\grave{\mf u} (t))),
\eq
where $\mf{\bar u}$ is the same as in~\eqref{e:traccialimite}.  By recalling~\eqref{e:rh}, formula~\eqref{e:949} implies that either
\be \label{e:950}
      \mf {\bar u} (t) = \grave{\mf u} (t)
\eq
or 
\be \label{e:951}
      \lambda_k (\grave{\mf u} (t))= \lambda_k (\bar{\mf u} (t))=0 \; \text{and} \; \grave{\mf u} (t) = \mf t_k ( \mf {\bar u}  (t), s_k(t)) \; \text{for some $s_k(t) \in [- \delta, \delta]$}. 
\eq
We now turn our attention to~\eqref{e:948}, recall the analysis in~\cite[\S2.2]{BianchiniSpinolo}, and in particular~\cite[formula (2.17)]{BianchiniSpinolo}, and we conclude that $\ell (\mf u^{\ee_m}_{b}(t))$ depends on the sign of a certain continuous function $\alpha$, and that the sign of $\alpha$ only depends on the last $N-h$ components of $\mf u^{\ee_m}_{b}(t)$, where we recall that $h$ denotes the dimension of the kernel of the matrix $\mf B$, see~\eqref{e:B}.  Since $\lim_{t \to + \infty} \mf u^{\ee_m}_{b} (t) = \mf u_b(t)$, up to subsequences one of the following conditions is satisfied: \\
{\sc Case 1:} $\alpha (\mf u^{\ee_m}_{b}(t)) \leq 0$ for every $m$, which implies $\alpha (\mf u_{b}(t)) \leq 0$. Owing to~\cite[formula (2.17)]{BianchiniSpinolo} in this case we have $\ell(\mf u_{bm})= h = \ell(\mf u_{b})$ for every $m$. We can then pass to the limit in~\eqref{e:948} and arrive at 
\be
\label{e:limitdc}
       \boldsymbol{\beta} (\boldsymbol{\phi} (\mf{\grave u}(t), \xi_{\ell (\mf u_{b}(t)) +1}(t), \dots, \xi_k(t)), \mf u_{b}(t) )= \mf 0_{N}.
\eq
for suitable values $\xi_{\ell (\mf u_{b}(t)) +1}(t), \dots, \xi_k(t)$. \\
{\sc Case 1a:} we have~\eqref{e:950}. In this case formula~\eqref{e:limitdc} boils down to 
\be \label{e:953}
     \boldsymbol{\beta} (\boldsymbol{\phi} (\mf{\bar u}(t), \xi_{\ell (\mf u_{b}(t)) +1}(t), \dots, \xi_k(t)), \mf u_{b}(t) )= \mf 0_{N}. 
\eq
We separately consider the following cases.
\begin{itemize}
\item $\lambda_k (\mf{\bar u}(t)))>0$. First, we point out that 
\be \label{e:giusto}
      \xi_k(t)=0 \; \text{if $\lambda_k (\mf{\bar u}(t)))>0$}.
\eq 
To establish~\eqref{e:giusto}, it suffices to recall~\eqref{e:protra2}, infer that $\lambda_k (\mf{\bar u}_m(t))>0$ and thus $\xi^m_k(t)=0$, and then pass to the limit. We then have 
\begin{equation*}
\begin{split}
       \boldsymbol{\phi} &(\mf{\bar u}(t), \xi_{\ell (\mf u_{b}(t)) +1}(t), \dots, \xi_k(t)) 
\stackrel{\eqref{e:giusto}}{=}
       \boldsymbol{\phi} (\mf{\bar u}(t), \xi_{\ell (\mf u_{b}(t)) +1}(t), \dots, \xi_{k-1}(t), 0)
       \\ &
     \stackrel{\eqref{e:bl10}}{=}
         \boldsymbol{\psi}_s (\mf{\bar u}(t), \xi_{\ell (\mf u_{b}(t)) +1}(t), \dots, \xi_{k-1}(t)) +
      \boldsymbol{\psi}_p (\mf{\bar u}(t), \xi_{\ell (\mf u_{b}(t)) +1}(t), \dots, \xi_{k-1}(t), 0)\\ & 
      \stackrel{\eqref{e:p2}}{=}  
      \boldsymbol{\psi}_s (\mf{\bar u}(t), \xi_{\ell (\mf u_{b}(t)) +1}(t), \dots, \xi_{k-1}(t))
\end{split}
\end{equation*}
By the definition~\eqref{e:phiesse} there is a solution of~\eqref{e:bltw} with initial datum given by the value $\boldsymbol{\psi}_s (\mf{\bar u}(t), \xi_{\ell (\mf u_{b}(t)) +1}(t), \dots, \xi_{k-1}(t))$ which converges to $\mf{\bar u}(t)$ as $ y \to + \infty$ and owing to~\eqref{e:953} we conclude that there is a solution of~\eqref{e:andiamoav2} with $\mf{\underline u} = \mf{\bar u}(t)$, which in turn yields condition i) (without the redundant Lax admissibility requirement) and  ii) in Definition~\ref{d:equiv} with
$\mf{\underline u} = \mf{\bar u}(t)$.  
\item $\lambda_k (\mf{\bar u}(t)))=0$. We have 
\begin{equation*}
\begin{split}
       \boldsymbol{\phi} &(\mf{\bar u}(t), \xi_{\ell (\mf u_{b}(t)) +1}(t), \dots, \xi_k(t))=        
     \stackrel{\eqref{e:bl10}}{=}
         \boldsymbol{\psi}_s (\boldsymbol{\zeta}_k(\mf{\bar u}(t), \xi_k(t)), \xi_{\ell (\mf u_{b}(t)) +1}(t), \dots, \xi_{k-1}(t)) \\ & +
      \boldsymbol{\psi}_p (\mf{\bar u}(t), \xi_{\ell (\mf u_{b}(t)) +1}(t), \dots, \xi_{k-1}(t), \xi_k(t))\\ & 
      \stackrel{\eqref{e:p2}, \ \lambda_k (\mf{\bar u}(t)))=0}{=}  
      \boldsymbol{\psi}_s (\boldsymbol{\zeta}_k(\mf{\bar u}(t), \xi_k(t)), \xi_{\ell (\mf u_{b}(t)) +1}(t), \dots, \xi_{k-1}(t))
\end{split}
\end{equation*}
and by definition of $ \boldsymbol{\psi}_s $ the above equality combined with~\eqref{e:953} dictates that there is a solution of~\eqref{e:andiamoav2} which converges to $\boldsymbol{\zeta}_k(\mf{\bar u}(t), \xi_k(t))$ as $y\to + \infty$. This yields conditions i) (without the redundant Lax admissibility requirement) and ii) in Definition~\ref{d:equiv} with $\mf{\underline u}= \boldsymbol{\zeta}_k(\mf{\bar u}(t), \xi_k(t))$. 
\item  $\lambda_k (\mf{\bar u}(t)))< 0$. In this case we recall~\eqref{e:bl00} and that $\boldsymbol{\omega}(0)$ and $\boldsymbol{\omega}_0(0)$ are as in~\eqref{e:festanatale} and~\eqref{e:festanatale2}, respectively. We also use the equality $\boldsymbol{\gamma}_k (\mf{\bar u}(t), \xi_k(t)) = \mf b_k (\mf{\bar u}(t), \xi_k(t))$, which follows from~\eqref{e:servepure}. Note that, by construction, $\boldsymbol{\omega}(0)$ is the initial datum for an orbit of~\eqref{e:bltw} that has the same asymptotic state as the orbit with initial datum $\boldsymbol{\omega}_0(0)$. Owing to~\eqref{e:allafineserve}, the $\lim_{y \to + \infty } \boldsymbol{\pi}_{\mf u} (\boldsymbol{\omega}_0(y))= \mf{\bar u}(t)$. Wrapping up, we conclude that there is a solution of~\eqref{e:bltw} with initial datum given by the value $\boldsymbol{\phi} (\mf{\bar u}(t), \xi_{\ell (\mf u_{b}(t)) +1}(t), \dots, \xi_k(t))$ converging to $\mf{\bar u}(t)$ as $y \to + \infty$. This yields conditions i) (without the redundant Lax admissibility requirement) and ii) in Definition~\ref{d:equiv} with $\mf{\underline u}= \mf{\bar u}(t)$. 
\end{itemize}
{\sc Case 1b:} we have~\eqref{e:951}. By arguing as in the second item of {\sc Case 1a} this yields  
\begin{equation*}
\begin{split}
      \boldsymbol{\phi} & (\mf{\grave u}(t), \xi_{\ell (\mf u_{b}(t)) +1}(t), \dots, \xi_k(t))=        
      \stackrel{\eqref{e:bl10}\eqref{e:p2}, \ \lambda_k (\mf{\grave u}(t)))=0}{=}  
      \boldsymbol{\psi}_s (\mf t_k(\mf{\grave u}(t), \xi_k(t)), \xi_{\ell (\mf u_{b}(t)) +1}(t), \dots, \xi_{k-1}(t)) \\ & 
     \stackrel{\eqref{e:951}}{=}
       \boldsymbol{\psi}_s (\mf t_k(\mf{\bar u}(t), s_k(t)+ \xi_k(t)), \xi_{\ell (\mf u_{b}(t)) +1}(t), \dots, \xi_{k-1}(t))
\end{split}
\end{equation*}
and owing to~\eqref{e:limitdc} this yields conditions i) (without the redundant Lax admissibility requirement) and ii) in Definition~\ref{d:equiv} with $\mf{\underline u}= \mf t_k(\mf{\bar u}(t), s_k(t) + \xi_k(t))$. \\
{\sc Case 2:} $\alpha (\mf u_{bm}(t)) > 0$ for every $m$ and $\alpha (\mf u_{b}(t)) > 0$. In this case  $\ell(\mf u_{bm}(t))= 0= \ell(\mf u_{b}(t))$ for every $m$ and by arguing as in the previous case we get Definition~\ref{d:equiv} without the redundant Lax admissibility requirement in condition i). \\
{\sc Case 3:} $\alpha (\mf u_{bm}(t)) > 0$ for every $m$ and $\alpha (\mf u_{b}(t)) =0$. In this case  $\ell(\mf u_{bm}(t))= 0$ for every $m$, but $\ell(\mf u_{b}(t))=h$. This means that the number of boundary condition imposed on the boundary layers drops in the limit. To handle this case we have to recall some facts from the analysis in~\cite{BianchiniSpinolo}. 
\begin{itemize}
\item First, we recall the explicit expression of the function $\boldsymbol{\beta}$ (see~\cite[(2.17)]{BianchiniSpinolo}), and that $h$ denotes the dimension of the kernel of the matrix $\mf B$ in~\eqref{e:B}. We conclude that since $\alpha (\mf u^{\ee_m}_{b}(t)) > 0$ formula~\eqref{e:948} 
boils down to 
\be \label{e:9482} 
       \boldsymbol{\phi} (\mf{\bar u}_m(t), \xi^m_{1}(t), \dots, \xi^m_k(t))= \mf u^{\ee_m}_{b}(t)    \quad \text{for every $m \in \mathbb N$}.
\eq
We extract a converging subsequence $\{ \xi^m_{1}(t), \dots, \xi^m_k(t) \}_{m \in \mathbb N}$ and by passing to the limit in~\eqref{e:9482} and recalling~\eqref{e:grave} we arrive at 
\be \label{e:94823} 
       \boldsymbol{\phi} (\mf{\grave u}(t), \xi_{1}(t), \dots, \xi_k(t))= \mf u_{b}(t).
\eq 
\item Next, we briefly overwiew the construction of the map $ \boldsymbol{\phi}$ given in the proof of~\cite[Lemma 6.1]{BianchiniSpinolo}\footnote{Note that the map $\boldsymbol{\phi}$ in~\eqref{e:94823} is given by the projection onto the first $N$ components of the map $\boldsymbol{\widetilde{\psi}}_b$ defined in the statement of~\cite[Lemma 6.1]{BianchiniSpinolo}.}
To construct $ \boldsymbol{\phi}$ one considers system~\cite[(1.14)]{BianchiniSpinolo}, where $\mf{h}$ is given by~\cite[(8.1)]{BianchiniSpinolo}, and poses $\sigma=0$, which yields\footnote{Note that in~\cite[\S6]{BianchiniSpinolo} one assumes $h=1$, and that the case $h>1$ is handled in~\cite[\S8]{BianchiniSpinolo}.}
\begin{equation}\label{e:accaqui}
\left\{
\begin{array}{ll}
   \dot{\mf{u}}_1= -  \mf E_{11}^{-1} \mf A_{21}^t \mf z_2 \\
    \dot{\mf u}_2 = \alpha \mf{z}_2 \\
   \dot{\mf z}_2=  
   \mf B^{-1}_{22} \displaystyle{\big[ 
   ( \mf G_1 -\mf A_{21} ) \mf E_{11}^{-1} \mf A_{21}^t  + \alpha [\mf A_{22}  - \mf G_2]  \big] \mf z_2}. \\
\end{array}
\right. \quad \mf u_1 \in \R^h, \; \mf u_2 \in \R^{N-h}, \; \mf z_2 \in \R^{N-h},
\end{equation}
where the functions $\mf E_{11}$, $\mf A_{21}$, $\mf B_{22}$, $\mf G_1$, $\mf A_{22}$, $\mf G_2$ are the same as in Hypothesis~\ref{h:normal}. In what follows, we will systemtically use the notation
$$
    \mf u = \left(
    \begin{array}{cc}
    \mf u_1 \\
     \mf u_2 \\
    \end{array}
  \right) \quad \mf u_1 \in \R^h, \; \mf u_2 \in \R^{N-h}. 
$$
In the proof of~\cite[Lemma 6.1]{BianchiniSpinolo} one applies the Slaving Manifold Lemma (see~\cite[\S11]{BianchiniSpinolo}) and constructs the manifold $\boldsymbol{\phi}$ in such a way that, under~\eqref{e:94823}, there is a solution $(\mf u_1(y), \mf u_2(y), \mf z_2(y))$ of~\eqref{e:accaqui} such that $(\mf u_1(0), \mf u_2(0))= \mf u_b$, and 
\be \label{e:doveva}
   \lim_{y \to + \infty} | (\mf u_1(y), \mf u_2 (y)) - ( \mf w_1(y), \mf w_2 (y))| =0,
\eq
where $(\mf w_1(y), \mf w_2(y))$ denote the first $N$ components of a suitable orbit of~\eqref{e:accaqui}, lying on the manifold $\mathcal M^0$ defined in~\cite[\S3]{BianchiniSpinolo}
and satisfying 
$(\mf w_1 (0), \mf w_2 (0))= \boldsymbol{\psi}_{sl} (\mf{\grave u}(t), \xi_{h}(t), \dots, \xi_k(t))$, where the function $ \boldsymbol{\psi}_{sl} $ is defined in~\cite[Theorem 5.2]{BianchiniSpinolo}.
\item We now recall that, owing to~\cite[(2.12)]{BianchiniSpinolo}, if $\alpha (\mf u(0)) =0$ and $(\mf u(y), \mf z_2(y))$ is a solution of~\eqref{e:accaqui} then $\alpha (\mf u(y)) =0$ for every $y$. Since $\alpha(\mf u_b(t)) =0$ by assumption, then by combining the second line of~\eqref{e:accaqui} with~\eqref{e:doveva} and the identities $(\mf u_1(0), \mf u_2(0))= \mf u_b$ and $(\mf w_1 (0), \mf w_2 (0))= \boldsymbol{\psi}_{sl} (\mf{\grave u}(t), \xi_{h}(t), \dots, \xi_k(t))$
we eventually arrive at 
\be \label{e:pi2}
   \mf p_{\mf u_2} (\boldsymbol{\psi}_{sl} (\mf{\grave u}(t), \xi_{h}(t), \dots, \xi_k(t)))=
   \mf p_{\mf u_2} (\mf u_b(t)),
\eq
where $\mf p_{\mf u_2}(\mf u)$ represents the projection of $\mf u$ onto the last $N-h$ components.
\item We now use again~\cite[(2.17)]{BianchiniSpinolo} and conclude that, since $\alpha (\mf u_b(t)) =0$, then~\eqref{e:pi2} is equivalent to 
\be \label{e:fine?}
      \boldsymbol{\beta}(\boldsymbol{\psi}_{sl} (\mf{\grave u}(t), \xi_{h}(t), \dots, \xi_k(t)),  \mf u_{b}(t)) = \mf 0_N. 
\eq
By using the properties of the function $\boldsymbol{\psi}_{sl} $ given in the statement of~\cite[Theorem 5.2]{BianchiniSpinolo} we can then argue as in {\sc Case 1} above and conclude the proof of Theorem~\ref{t:maintr}.  
\end{itemize}
\section*{Acknowledgments}
All the authors are members of the GNAMPA group of INDAM and of the PRIN 
Project 20204NT8W4 (PI Stefano Bianchini). FA and LVS are also members of the PRIN 2022 PNRR Project P2022XJ9SX (PI Roberta Bianchini). LVS is also member of the PRIN Project 2022YXWSLR (PI Paolo Antonelli). Part of this work was done when LVS was visiting the Department of Mathematics of the University of Padua: its kind hospitality is gratefully acknowledged. 
\bibliographystyle{plain}
\bibliography{wft}

\begin{thebibliography}{10}

\bibitem{Amadori}
D.~Amadori.
\newblock Initial-boundary value problems for nonlinear systems of conservation
  laws.
\newblock {\em NoDEA Nonlinear Differential Equations Appl.}, 4(1):1--42, 1997.

\bibitem{AmbFuPal}
L.~Ambrosio, N.~Fusco, and D.~Pallara.
\newblock {\em Functions of bounded variation and free discontinuity problems}.
\newblock Oxford Mathematical Monographs. The Clarendon Press, Oxford
  University Press, New York, 2000.

\bibitem{AnconaBianchini}
F.~Ancona and S.~Bianchini.
\newblock Vanishing viscosity solutions of hyperbolic systems of conservation
  laws with boundary.
\newblock In {\em ``{WASCOM} 2005''---13th {C}onference on {W}aves and
  {S}tability in {C}ontinuous {M}edia}, pages 13--21. World Sci. Publ.,
  Hackensack, NJ, 2006.

\bibitem{AnconaMarson}
F.~Ancona and A.~Marson.
\newblock Existence theory by front tracking for general nonlinear hyperbolic
  systems.
\newblock {\em Arch. Ration. Mech. Anal.}, 185(2):287--340, 2007.

\bibitem{BGSZ}
S.~Benzoni-Gavage, D.~Serre, and K.~Zumbrun.
\newblock Alternate {Evans} functions and viscous shock waves.
\newblock {\em SIAM J. Math. Anal.}, 32(5):929--962, 2001.

\bibitem{Bianchini}
S.~Bianchini.
\newblock On the {R}iemann problem for non-conservative hyperbolic systems.
\newblock {\em Arch. Ration. Mech. Anal.}, 166(1):1--26, 2003.

\bibitem{BianchiniBressan}
S.~Bianchini and A.~Bressan.
\newblock Vanishing viscosity solutions of nonlinear hyperbolic systems.
\newblock {\em Ann. of Math. (2)}, 161(1):223--342, 2005.

\bibitem{BianchiniSpinoloARMA}
S.~Bianchini and L.~V. Spinolo.
\newblock The boundary {R}iemann solver coming from the real vanishing
  viscosity approximation.
\newblock {\em Arch. Ration. Mech. Anal.}, 191(1):1--96, 2009.

\bibitem{BianchiniSpinolo}
S.~Bianchini and L.~V. Spinolo.
\newblock Characteristic boundary layers for mixed hyperbolic-parabolic systems
  in one space dimension and applications to the {N}avier-{S}tokes and {MHD}
  equations.
\newblock {\em Comm. Pure Appl. Math.}, 73(10):2180--2247, 2020.

\bibitem{Bressan}
A.~Bressan.
\newblock {\em Hyperbolic systems of conservation laws}, volume~20 of {\em
  Oxford Lecture Series in Mathematics and its Applications}.
\newblock Oxford University Press, Oxford, 2000.

\bibitem{BCZ2}
A.~Bressan, G.~Chen, and Q.~Zhang.
\newblock Lack of {BV} bounds for approximate solutions to the {$p$}-system
  with large data.
\newblock {\em J. Differential Equations}, 256(8):3067--3085, 2014.

\bibitem{BCZ}
A.~Bressan, G.~Chen, and Q.~Zhang.
\newblock On finite time {BV} blow-up for the p-system.
\newblock {\em Comm. Partial Differential Equations}, 43(8):1242--1280, 2018.

\bibitem{ChenFrid}
G.-Q. Chen and H.~Frid.
\newblock Vanishing viscosity limit for initial-boundary value problems for
  conservation laws.
\newblock In {\em Nonlinear partial differential equations ({E}vanston, {IL},
  1998)}, volume 238 of {\em Contemp. Math.}, pages 35--51. Amer. Math. Soc.,
  Providence, RI, 1999.

\bibitem{Clarke2}
F.~H. Clarke.
\newblock On the inverse function theorem.
\newblock {\em Pacific J. Math.}, 64(1):97--102, 1976.

\bibitem{Dafermos72}
C.~M. Dafermos.
\newblock Polygonal approximations of solutions of the initial value problem
  for a conservation law.
\newblock {\em J. Math. Anal. Appl.}, 38:33--41, 1972.

\bibitem{Dafermos}
C.~M. Dafermos.
\newblock {\em Hyperbolic conservation laws in continuum physics}, volume 325
  of {\em Grundlehren der mathematischen Wissenschaften [Fundamental Principles
  of Mathematical Sciences]}.
\newblock Springer-Verlag, Berlin, fourth edition, 2016.

\bibitem{DMS}
S.~Dovetta, E.~Marconi, and L.~V. Spinolo.
\newblock New regularity results for scalar conservation laws, and applications
  to a source-destination model for traffic flows on networks.
\newblock {\em SIAM J. Math. Anal.}, 54(3):3019--3053, 2022.

\bibitem{DuboisLeFloch}
F.~Dubois and P.~LeFloch.
\newblock Boundary conditions for nonlinear hyperbolic systems of conservation
  laws.
\newblock {\em J. Differential Equations}, 71(1):93--122, 1988.

\bibitem{Gisclon}
M.~Gisclon.
\newblock {\'E}tude des conditions aux limites pour un syst\`eme strictement
  hyperbolique, via l'approximation parabolique.
\newblock {\em J. Math. Pures Appl. (9)}, 75(5):485--508, 1996.

\bibitem{Glimm}
J.~Glimm.
\newblock Solutions in the large for nonlinear hyperbolic systems of equations.
\newblock {\em Commun. Pure Appl. Math.}, 18:697--715, 1965.

\bibitem{Goodman}
J.~Goodman.
\newblock {\em Initial Boundary Value Problems for Hyperbolic Systems of
  Conservation Laws}.
\newblock PhD thesis, University of California, 1982.

\bibitem{GrenierRousset}
E.~Grenier and F.~Rousset.
\newblock Stability of one-dimensional boundary layers by using {G}reen's
  functions.
\newblock {\em Comm. Pure Appl. Math.}, 54(11):1343--1385, 2001.

\bibitem{HoldenRisebro}
H.~Holden and N.~H. Risebro.
\newblock {\em Front tracking for hyperbolic conservation laws}, volume 152 of
  {\em Applied Mathematical Sciences}.
\newblock Springer, Heidelberg, second edition, 2015.

\bibitem{Jen}
H.~K. Jenssen.
\newblock Blowup for systems of conservation laws.
\newblock {\em SIAM J. Math. Anal.}, 31(4):894--908, 2000.

\bibitem{JosephLeFloch}
K.~T. Joseph and P.~G. LeFloch.
\newblock Boundary layers in weak solutions of hyperbolic conservation laws.
  {II}. {S}elf-similar vanishing diffusion limits.
\newblock {\em Commun. Pure Appl. Anal.}, 1(1):51--76, 2002.

\bibitem{KatokH}
A.~Katok and B.~Hasselblatt.
\newblock {\em Introduction to the modern theory of dynamical systems. {With} a
  supplement by {Anatole} {Katok} and {Leonardo} {Mendoza}}, volume~54 of {\em
  Encycl. Math. Appl.}
\newblock Cambridge: Cambridge University Press, 1997.

\bibitem{KawashimaShizuta1}
S.~Kawashima and Y.~Shizuta.
\newblock On the normal form of the symmetric hyperbolic-parabolic systems
  associated with the conservation laws.
\newblock {\em Tohoku Math. J. (2)}, 40(3):449--464, 1988.

\bibitem{Lax}
P.~D. Lax.
\newblock Hyperbolic systems of conservation laws. {II}.
\newblock {\em Comm. Pure Appl. Math.}, 10:537--566, 1957.

\bibitem{MishraSpinolo}
S.~Mishra and L.~V. Spinolo.
\newblock Accurate numerical schemes for approximating initial-boundary value
  problems for systems of conservation laws.
\newblock {\em J. Hyperbolic Differ. Equ.}, 12(1):61--86, 2015.

\bibitem{Rousset}
F.~Rousset.
\newblock Characteristic boundary layers in real vanishing viscosity limits.
\newblock {\em J. Differential Equations}, 210(1):25--64, 2005.

\bibitem{SableT}
M.~Sabl{\'e}-Tougeron.
\newblock M\'ethode de {Glimm} et probl\`eme mixte.
\newblock {\em Ann. Inst. Henri Poincar{\'e}, Anal. Non Lin{\'e}aire},
  10(4):423--443, 1993.

\bibitem{Serre1}
D.~Serre.
\newblock {\em Systems of conservation laws. {I}: {Hyperbolicity}, entropies,
  shock waves. {Translated} from the {French} by {I}. {N}. {Sneddon}}.
\newblock Cambridge: Cambridge University Press, 1999.

\bibitem{Serre2}
D.~Serre.
\newblock {\em Systems of conservation laws 2. {Geometric} structures,
  oscillations, and initial-boundary value problems. {Translated} from the
  {French} by {I}. {N}. {Sneddon}}.
\newblock Cambridge: Cambridge University Press, 2000.

\bibitem{SerreZumbrun}
D.~Serre and K.~Zumbrun.
\newblock Boundary layer stability in real vanishing viscosity limit.
\newblock {\em Comm. Math. Phys.}, 221(2):267--292, 2001.

\bibitem{Spinolo}
L.~V. Spinolo.
\newblock Vanishing viscosity solutions of a {$2\times 2$} triangular
  hyperbolic system with {D}irichlet conditions on two boundaries.
\newblock {\em Indiana Univ. Math. J.}, 56(1):279--364, 2007.

\bibitem{Xin}
Z.~Xin.
\newblock Viscous boundary layers and their stability. {I}.
\newblock {\em J. Partial Differential Equations}, 11(2):97--124, 1998.

\end{thebibliography}
\end{document}